\newtheorem{thm}{Theorem}[section]
\newtheorem*{thm*}{Theorem}
\newtheorem{lem}[thm]{Lemma}
\newtheorem*{lem*}{Lemma}
\newtheorem{cor}[thm]{Corollary}
\newtheorem{claim}[thm]{Claim}
\newtheorem{prop}[thm]{Proposition}
\theoremstyle{definition}
\newtheorem{assump}[thm]{Assumption}
\newtheorem*{case*}{Case}
\newtheorem{defn}[thm]{Definition}
\newtheorem*{defn*}{Definition}
\newtheorem*{exmp*}{Example}
\renewcommand{\thestep}{}
\theoremstyle{remark}
\renewcommand{\thecase}{}
\newtheorem{rmk}[thm]{Remark}
\newtheorem*{rmk*}{Remark}
\def\alphenumi{
  \def\theenumi{\alph{enumi}}
  \def\p@enumi{\theenumi}
  \def\labelenumi{(\@alph\c@enumi)}}
\def\thecase{\@arabic\c@case}
\def\thestep{\@arabic\c@step}
\def\hhmm{\number\hh:\ifnum\mm<10{}0\fi\number\mm}
\let\oldmarginpar\marginpar
\renewcommand\marginpar[1]{\-\oldmarginpar[\raggedleft\footnotesize #1]%
{\raggedright\footnotesize #1}}
\newcommand\HH{\mathbb{H}}
\newcommand\NN{\mathbb{N}}
\newcommand\RR{\mathbb{R}}
\newcommand\fa{{\mathfrak{a}}}
\newcommand\fw{{\mathfrak{w}}}
\newcommand\sA{{\mathscr{A}}}
\newcommand\sB{{\mathscr{B}}}
\newcommand\sH{{\mathscr{H}}}
\newcommand\sO{{\mathscr{O}}}
\newcommand\eps{\varepsilon}
\newcommand\less{\setminus}
\newcommand\diam{\operatorname{diam}}
\newcommand\dist{\operatorname{dist}}
\DeclareMathOperator{\height}{height}
\newcommand\loc{\operatorname{loc}}
\newcommand\supp{\operatorname{supp}}
\newcommand\vol{\operatorname{vol}}
\numberwithin{equation}{section}
\begin{document}

\title[Higher-order regularity for solutions to variational equations]
{Degenerate-elliptic operators in mathematical finance and higher-order regularity for solutions to variational equations}

\author[P. M. N. Feehan]{Paul M. N. Feehan}
\address{Department of Mathematics, Rutgers, The State University of New Jersey, 110 Frelinghuysen Road, Piscataway, NJ 08854-8019}
\email{feehan@math.rutgers.edu}

\author[C. A. Pop]{Camelia A. Pop}
\address{Department of Mathematics, University of Pennsylvania, 209 South 33rd Street, Philadelphia, PA 19104-6395}
\email{cpop@math.upenn.edu}
\date{November 13, 2014. To appear in Advances in Differential Equations. Incorporates final galley proof corrections corresponding to published version}

\begin{abstract}
We establish higher-order weighted Sobolev and H\"older regularity for solutions to variational equations defined by the elliptic Heston operator, a linear second-order degenerate-elliptic operator arising in mathematical finance \cite{Heston1993}. Furthermore,  given $C^\infty$-smooth data, we prove $C^\infty$-regularity of solutions up to the portion of the boundary where the operator is degenerate. In mathematical finance, solutions to obstacle problems for the elliptic Heston operator correspond to value functions for perpetual American-style options on the underlying asset.
\end{abstract}

%

\subjclass[2010]{Primary 35J70, 49J40, 35R45; Secondary 60J60}

\keywords{Campanato space, degenerate-elliptic differential operator, degenerate diffusion process, Heston stochastic volatility process, H\"older regularity, mathematical finance, Schauder a priori estimate, Sobolev regularity, variational equation, weighted Sobolev space}

\thanks{The first author was partially supported by NSF grant DMS-1059206 and the Max Planck Institut f\"ur Mathematik in der Naturwissenschaft.
}

\maketitle
\tableofcontents
\listoffigures

\section{Introduction}
\label{sec:Introduction}
Suppose $\sO\subseteqq\HH$ is a domain (possibly unbounded) in the open upper half-space $\HH := \RR^{d-1}\times\RR_+$ (where $d\geq 2$ and $\RR_+ := (0,\infty)$), and $\partial_1\sO := \partial\sO\cap\HH$ is the portion of the boundary $\partial\sO$ of $\sO$ which lies in $\HH$, and $\partial_0\sO $ is the interior of $\partial\HH\cap\partial\sO$, where $\partial\HH = \RR^{d-1}\times\{0\}$ is the boundary of $\bar\HH := \RR^{d-1}\times\bar\RR_+$ and $\bar\RR_+ := [0,\infty)$. We allow $\partial_0\sO$ to be non-empty and consider a second-order, linear elliptic differential operator, $A$, on $\sO$ which is degenerate along $\partial_0\sO$. In this article, when $d=2$ and the operator $A$ is given by \eqref{eq:OperatorHestonIntro}, we prove higher-order regularity up to the boundary portion, $\partial_0\sO$ --- as measured by certain weighted Sobolev spaces, $\sH^{k+2}(\sO,\fw)$ (Definition \ref{defn:HkWeightedSobolevSpaceNormPowery}),
and weighted H\"older spaces, $C^{k,2+\alpha}_s(\underline{\sO})$ (Definition \ref{defn:DH2spaces})
--- for suitably defined \emph{weak} solutions, $u \in H^1(\sO,\fw)$ (see \eqref{eq:H1WeightedSobolevSpace} for its definition),
to the elliptic boundary value problem,
\begin{align}
\label{eq:IntroBoundaryValueProblem}
Au &= f \quad \hbox{(a.e.) on }\sO,
\\
\label{eq:IntroBoundaryValueProblemBC}
u &= g \quad \hbox{on } \partial_1\sO,
\end{align}
where $f:\sO\to\RR$ is a source function and the function $g:\partial_1\sO\to\RR$ prescribes a Dirichlet boundary condition. We denote $\underline{\sO} := \sO\cup\partial_0\sO$ throughout our article. Furthermore, when $f \in C^\infty(\underline\sO)$, we will also show that $u\in C^\infty(\underline\sO)$ (see Corollary \ref{cor:CinftyGlobal}). Since $A$ becomes degenerate along $\partial_0\sO$, such regularity results do not follow from the standard theory for strictly elliptic differential operators \cite{GilbargTrudinger, Krylov_LecturesHolder}.

Because $\kappa\theta>0$ (see Assumption \ref{assump:HestonCoefficients} below), \emph{no boundary condition} is prescribed for the equation \eqref{eq:IntroBoundaryValueProblem} along $\partial_0\sO$. Indeed, we recall from \cite{Daskalopoulos_Feehan_statvarineqheston} that the problem \eqref{eq:IntroBoundaryValueProblem}, \eqref{eq:IntroBoundaryValueProblemBC} is well-posed, given $f\in L^2(\sO,\fw)$ and $g\in H^1(\sO,\fw)$ obeying mild pointwise growth conditions, when we seek weak solutions in $H^1(\sO,\fw)$ or strong solutions in $H^2(\sO,\fw)$. The \emph{elliptic Heston operator} is defined by
\begin{equation}
\label{eq:OperatorHestonIntro}
Av := -\frac{y}{2}\left(v_{xx} + 2\varrho\sigma v_{xy} + \sigma^2 v_{yy}\right) - \left(c_0-q-\frac{y}{2}\right)v_x - \kappa(\theta-y)v_y + c_0v, \quad v\in C^\infty(\HH),
\end{equation}
and $-A$ is the generator of the two-dimensional Heston stochastic volatility process with killing \cite{Heston1993}, a degenerate diffusion process well known in mathematical finance and a paradigm for a broad class of degenerate Markov processes, driven by $d$-dimensional Brownian motion, and corresponding generators which are degenerate-elliptic integro-differential operators. The coefficients of $A$ are required to satisfy the

\begin{assump}[Ellipticity condition for the coefficients of the Heston operator]
\label{assump:HestonCoefficients}
The coefficients defining $A$ in \eqref{eq:OperatorHestonIntro} are constants obeying
\begin{equation}
\label{eq:EllipticHeston}
\sigma \neq 0, \quad -1< \varrho < 1,
\end{equation}
and $\kappa>0$, $\theta>0$, $c_0\geq 0$, and\footnote{Although $q$ has a financial interpretation as a dividend yield, which is non-negative, our analysis allows $q\in\RR$.} $q \in \RR$.
\end{assump}

In \cite{Feehan_Pop_regularityweaksoln}, we proved that a weak solution, $u \in H^1(\sO,\fw)$, to \eqref{eq:IntroBoundaryValueProblem}, \eqref{eq:IntroBoundaryValueProblemBC} is H\"older continuous up to $\partial\sO$ in the sense that $u\in C^\alpha_{s,\loc}(\bar\sO)$
(Definition \ref{defn:Calphas}), while in \cite{Daskalopoulos_Feehan_statvarineqheston}, we proved that $u \in H^2(\sO,\fw)$, for suitable $f$ and $g$ in both cases. Before describing our main results, we provide in \S \ref{subsec:Motivations} some motivations for our article. In \S \ref{subsec:Summary}, we state the main results of our article and set them in context in \S \ref{subsec:Survey}, where we provide a survey of previous related research by other authors. We point out some of the mathematical difficulties and issues of broader interest in \S \ref{subsec:Highlights}. The results of this article may be generalized to a broader class of degenerate-elliptic operators and expected extensions of our results to such a class are discussed in \S \ref{subsec:Extensions}. We provide a guide in \S \ref{subsec:Guide} to the remainder of this article.  We refer the reader to \S \ref{subsec:Notation} for our notational conventions.

\subsection{Motivations}
\label{subsec:Motivations}
In \cite{Feehan_classical_perron_elliptic, Feehan_Pop_elliptichestonschauder}, we give a purely Schauder-theoretic approach to some of the regularity and existence results achieved by variational methods in this article and its companion articles \cite{Daskalopoulos_Feehan_statvarineqheston, Feehan_Pop_regularityweaksoln}. However, these results complement each other in some important respects. As noted in \S \ref{subsec:Highlights} and also explained in \cite{Feehan_classical_perron_elliptic, Feehan_Pop_elliptichestonschauder}, it appears difficult to 
adapt traditional Schauder methods such as those in \cite[\S 6.3 and \S 6.4]{GilbargTrudinger} to prove that the solutions to \eqref{eq:IntroBoundaryValueProblem}, \eqref{eq:IntroBoundaryValueProblemBC} are continuous up to the domain corner points,
$\overline{\partial_0\sO}\cap\overline{\partial_1\sO}$, let alone H\"older continuous as we show in \cite{Feehan_Pop_regularityweaksoln}, or even smooth. (The question of regularity up to domain corner points does not arise in \cite{DaskalHamilton1998, Koch}.) The variational approach using weighted Sobolev spaces \cite{Daskalopoulos_Feehan_statvarineqheston, Feehan_Pop_regularityweaksoln}, which we develop further in this article, may provide further insight into how a Schauder-theoretic approach could be strengthened to give better regularity near the corner points, whether using a blend of variational, weighted Sobolev space and Schauder methods via Campanato spaces \cite{Troianiello} and ideas of Caccioppoli \cite{Miranda} or more creative choices of barrier functions than those used for strictly elliptic operators \cite[\S 6.3]{GilbargTrudinger}.

The regularity and existence results developed in this article are applied in the approach of Daskalopoulos and Feehan \cite{Daskalopoulos_Feehan_statvarineqheston, Daskalopoulos_Feehan_optimalregstatheston} in their proofs of regularity of a variational (that is, weak) solution in a weighted Sobolev space, $H^1(\sO,\fw)$, to an obstacle problem,
\begin{equation}
\label{eq:Elliptic_obstacle_problem}
\min\{Au-f, \ u-\psi\} = 0 \quad \hbox{a.e. on }\sO,
\end{equation}
with partial Dirichlet boundary condition \eqref{eq:IntroBoundaryValueProblemBC}, given a suitably regular obstacle function, $\psi:\sO\cup\partial_1\sO\to\RR$, which is compatible with $g$ in the sense that
\begin{equation}
\label{eq:Boundarydata_obstacle_compatibility}
\psi\leq g \quad\hbox{on } \partial_1\sO.
\end{equation}
In \cite{Daskalopoulos_Feehan_statvarineqheston}, Daskalopoulos and Feehan apply the regularity results in this article and the penalization method \cite{Bensoussan_Lions, Friedman_1982, Rodrigues_1987} to prove existence of strong solutions to the obstacle problem \eqref{eq:Elliptic_obstacle_problem}, \eqref{eq:IntroBoundaryValueProblemBC}, namely, a solution belonging to a weighted Sobolev space, $H^2(\sO,\fw)$, while in \cite{Daskalopoulos_Feehan_optimalregstatheston} they combine the existence and regularity results in this article and \cite{Feehan_Pop_elliptichestonschauder} with ideas of Caffarelli \cite{Caffarelli_jfa_1998} and Daskalopoulos and Hamilton \cite{DaskalHamilton1998} to prove that a strong solution to this obstacle problem has the optimal regularity, $C^{1,1}_s(\underline\sO)$. We recall from \cite[Definition 2.2]{Daskalopoulos_Feehan_optimalregstatheston} that $u \in C^{1,1}_s(\bar \sO)$ if $u$ belongs to  $C^{1,1}(\sO) \cap C^1(\bar \sO)$ and
$$
\|u\|_{C^{1,1}_s(\bar \sO)} := \|yD^2 u\|_{L^\infty(\sO)} +  \|Du\|_{C(\bar\sO)} +  \|u\|_{C(\bar\sO)}  < \infty.
$$
We say that $u \in C^{1,1}_s(\underline\sO)$ if $u \in C^{1,1}_s(\bar U)$ for every precompact subdomain $U \Subset \underline\sO$. In mathematical finance, solutions to obstacle problems for the elliptic Heston operator correspond to value functions for perpetual American-style options on the underlying asset \cite[\S 8.3]{Shreve2}.

\subsection{Summary of main results}
\label{subsec:Summary}
We summarize our main results concerning interior higher-order Sobolev regularity in \S \ref{subsubsec:HigherOrderSobolevRegularity}, while our results on interior higher-order H\"older regularity are given in \S \ref{subsubsec:HigherOrderHolderRegularity}.  Here, our use of the term ``interior'' is in the sense intended by \cite{DaskalHamilton1998}, for example, $U\subset\sO$ is an \emph{interior subdomain} of a domain $\sO\subseteqq\HH$ if $\bar U \subset \underline{\sO}$ and by ``interior regularity'' of a function $u$ on $\sO$, we mean regularity of $u$ up to $\partial_0\sO$ --- see Figure \ref{fig:higher_order_heston_regularity_regions}.

\begin{figure}[htbp]
\centering
\begin{picture}(200,200)(0,0)
\put(0,0){\includegraphics[height=200pt]{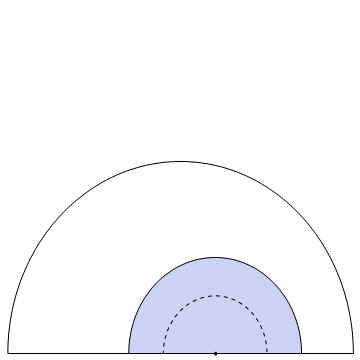}}
\put(12,60){$\scriptstyle \sO$}
\put(35,7){$\scriptstyle \partial_0\sO$}
\put(65,94){$\scriptstyle \partial_1\sO$}
\put(120,61){$\scriptstyle B_{R_0}^+(z_0)$}
\put(115,40){$\scriptstyle B_R^+(z_0)$}
\put(110,7){$\scriptstyle z_0$}
\end{picture}
\caption{Boundaries and regions in the statement of Theorem \ref{thm:HkSobolevRegularityInterior}.}
\label{fig:higher_order_heston_regularity_regions}
\end{figure}

The conditions \eqref{eq:EllipticHeston} ensure that $y^{-1}A$ is strictly elliptic on $\HH$,
that is there is a positive constant, $\nu_0=\nu_0(\sigma, \rho)$, such that
\begin{equation}
\label{eq:HestonModulusEllipticity}
\frac{y}{2}(\xi_1^2 + 2\varrho\sigma\xi_1\xi_2 + \sigma^2\xi_2^2) \geq \nu_0 y(\xi_1^2 + \xi_2^2), \quad\forall\, (\xi_1,\xi_2) \in \RR^2,
\end{equation}
It is convenient to denote the sum of the absolute values of the coefficients of the Heston operator, $A$, in \eqref{eq:OperatorHestonIntro} by
\begin{equation}
\label{eq:DefnXi}
\Lambda := 1 + 2|\varrho\sigma| + \sigma^2 + \kappa\theta + |r-q| + r,
\end{equation}
keeping in mind potential generalizations of our results in this article from the Heston operator to operators of the form \eqref{eq:OperatorHestonIntroHigherDimension}.

\subsubsection{Higher-order interior Sobolev regularity}
\label{subsubsec:HigherOrderSobolevRegularity}
We explain in \S \ref{subsec:HolderRegularityPreliminaries} how solutions, $u \in H^1(\sO,\fw)$, to the variational equation \eqref{eq:HestonVariationalEquation} defined by the operator, $A$, may be interpreted as weak solutions to \eqref{eq:IntroBoundaryValueProblem}.
We review our definitions of weighted Sobolev spaces from \cite[Definition 2.20]{Daskalopoulos_Feehan_statvarineqheston}. For $1\leq p<\infty$, let
\begin{align}
\label{eq:LqWeightedSpace}
L^p(\sO,\fw) &:= \{u\in L^1_{\loc}(\sO): \ \|u\|_{L^p(\sO,\fw)} < \infty\},
\\
\label{eq:H1WeightedSobolevSpace}
H^1(\sO,\fw) &:= \{u \in W^{1,2}_{\loc}(\sO): \ \|u\|_{H^1(\sO,\fw)} < \infty\},
\end{align}
where
\begin{align}
\label{eq:LqNormHeston}
\|u\|_{L^p(\sO,\fw)}^p &:= \int_\sO |u|^p\fw\,dx\,dy,
\\
\label{eq:H1NormHeston}
\|u\|_{H^1(\sO,\fw)}^2 &:= \int_\sO\left(y|Du|^2 + (1+y)u^2\right)\fw\,dx\,dy,
\end{align}
with weight function $\fw:\HH\to(0,\infty)$ given by\footnote{In \cite{Daskalopoulos_Feehan_statvarineqheston}, we used the equivalent factor, $|x|$, but we use $\sqrt{1+x^2}$ here since the resulting weight is in $C^\infty(\HH)$.}
\begin{equation}
\label{eq:HestonWeight}
\fw(x,y) := y^{\beta-1}e^{-\gamma\sqrt{1+x^2}-\mu y}, \quad (x,y) \in \HH,
\end{equation}
where
\begin{equation}
\label{eq:DefnBetaMu}
\beta := \frac{2\kappa\theta}{\sigma^2} \quad\hbox{and}\quad \mu := \frac{2\kappa}{\sigma^2},
\end{equation}
and $0<\gamma<\gamma_0(A)$, where $\gamma_0$ depends only on the constant coefficients of $A$ in \eqref{eq:OperatorHestonIntro}. We denote $H^0(\sO,\fw) = L^2(\sO,\fw)$.

\begin{rmk}[Role of $\gamma$]
\label{rmk:Rolegamma}
In \cite{Daskalopoulos_Feehan_statvarineqheston} and \cite{Feehan_maximumprinciple}, we require the constant, $\gamma$, in \eqref{eq:HestonWeight} to be positive for the purpose of proving existence and uniqueness, respectively, for solutions to \eqref{eq:HestonVariationalEquation} when $\sO$ is unbounded. However, while we shall continue to assume $\gamma>0$ in this article for consistency, this constant plays no role in regularity arguments or when $\sO$ is bounded and, for the latter purposes, one could set $\gamma=0$.
\end{rmk}

See Definitions \ref{defn:HkWeightedSobolevSpaceNormPowery} and \ref{defn:HkWeightedSobolevSpaceNormSingleWeight} for the technical descriptions of the weighted Sobolev spaces, $\sH^{k+2}(\sO,\fw)$ and $W^{k,p}(\sO,\fw)$, respectively.

We let $\NN:=\left\{0,1,2,3,\ldots\right\}$ denote the set of non-negative integers. For $r>0$ and $P_0=(x_0,y_0)\in\RR^2$, we let $B_r(P_0) := \{(x,y)\in\RR^2: (x-x_0)^2+(y-y_0)^2<r^2\}$ denote the open ball with center $P_0$ and radius $r$ and, given a domain $\sO\subset\RR^2$, we denote $B_r^+(P_0) := \sO\cap B_r(P_0)$, when the domain $\sO$ is understood from the context.

\begin{thm}[Interior $\sH^{k+2}$ regularity on half-balls]
\label{thm:HkSobolevRegularityInterior}
Let $R_0>R$ be positive constants and let $k\geq 0$ be an integer. Then there is a positive constant, $C=C(\Lambda,\nu_0,k,R,R_0)$, such that the following holds. Let $\sO\subseteqq\HH$ be a domain and let $z_0 \in \partial_0\sO$ be such that
$
\HH\cap B_{R_0}(z_0) \subset \sO.
$
Suppose that $f\in L^2(\sO,\fw)$ and that $u \in H^1(\sO,\fw)$ is a solution to the variational equation \eqref{eq:HestonVariationalEquation}.
If $f \in W^{k,2}(B_{R_0}^+(z_0),\fw)$, then
$
u \in \sH^{k+2}(B_R^+(z_0),\fw),
$
and $u$ solves \eqref{eq:IntroBoundaryValueProblem} on $B_R^+(z_0)$ and
\begin{equation}
\label{eq:HkSobolevRegularity}
\|u\|_{\sH^{k+2}(B_R^+(z_0),\fw)} \leq C\left(\|f\|_{W^{k,2}(B_{R_0}^+(z_0),\fw)} + \|u\|_{L^2(B_{R_0}^+(z_0),\fw)}\right).
\end{equation}
\end{thm}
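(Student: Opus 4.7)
The plan is to prove the theorem by induction on $k\geq 0$, with the core engine being a tangential difference quotient argument in the $x$-variable (since the coefficients of $A$ are independent of $x$) combined with an algebraic use of the equation $Au=f$ to recover the non-tangential (i.e., $y$-direction) regularity that is lost due to the degeneracy along $\partial_0\sO$.

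\textbf{Base case ($k=0$).} I would invoke the $H^2(\sO,\fw)$-regularity result for weak solutions from \cite{Daskalopoulos_Feehan_statvarineqheston} and localize it to half-balls via a family of cutoff functions $\zeta\in C^\infty_c(\HH\cup\partial\HH)$ with $\zeta\equiv 1$ on $B_R^+(z_0)$, $\supp\zeta\subset B_{R_0}^+(z_0)$, and $\zeta$ independent of $y$ near $\partial_0\sO$ so that the boundary weight behaves well. Testing the variational equation \eqref{eq:HestonVariationalEquation} with multipliers built from $\zeta$ gives the Caccioppoli-type estimate \eqref{eq:HkSobolevRegularity} with $k=0$, i.e.\ the $\sH^2(B_R^+(z_0),\fw)$ bound; here one has to note that $\sH^2$ encodes the natural scaling $y^{1/2}\partial_y$ and $\partial_x$ associated with the cycloidal metric attached to the Heston operator, which is exactly the regularity delivered by the Dirichlet form.

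\textbf{Inductive step.} Assume the result through order $k$; to prove order $k+1$, fix an intermediate half-ball $B_{R'}^+(z_0)$ with $R<R'<R_0$. Because the coefficients of $A$ are independent of $x$, the difference quotient $D_x^h u$ is itself a weak solution of $A(D_x^h u)=D_x^h f$ on $B_{R'}^+(z_0)$. Insert the test function $\varphi=\zeta^2 D_x^h u$ into the shifted variational identity, use the coercivity of the Heston bilinear form (with modulus of ellipticity governed by $\nu_0$ in \eqref{eq:HestonModulusEllipticity}) together with the Cauchy--Schwarz and weighted Hardy inequalities to absorb cutoff error terms. Passing to the limit $h\to 0$ yields
\[
\|\partial_x u\|_{\sH^{k+2}(B_{R'}^+(z_0),\fw)}\le C\bigl(\|f\|_{W^{k+1,2}(B_{R_0}^+(z_0),\fw)}+\|u\|_{L^2(B_{R_0}^+(z_0),\fw)}\bigr),
\]
after applying the inductive hypothesis to the pair $(D_x^h u,\,D_x^h f)$ on a slightly larger half-ball. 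Iterating this in the $x$-variable gives control of all purely tangential derivatives $\partial_x^{j}u$ for $1\le j\le k+1$ in $\sH^{k+2}$.

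\textbf{Recovering the normal direction.} The Heston equation \eqref{eq:OperatorHestonIntro} can be solved algebraically for the highest $y$-derivative term,
\[
\tfrac{y\sigma^2}{2}\,u_{yy}=-Au-\tfrac{y}{2}\bigl(u_{xx}+2\varrho\sigma u_{xy}\bigr)-\bigl(c_0-q-\tfrac{y}{2}\bigr)u_x-\kappa(\theta-y)u_y+c_0 u+f,
\]
so each $y$-derivative we need to bound is expressible in terms of $f$, one extra $x$-derivative, and lower-order derivatives whose norms are controlled by the inductive hypothesis. Differentiating this identity tangentially $j$ times and applying $y\partial_y$ (which is the natural ``normal'' derivative in the space $\sH^{k+2}(\sO,\fw)$) trades each normal derivative for an extra factor of $y$ plus an extra $x$-derivative; combining with the tangential estimate above and summing over the mixed derivatives $\partial_x^i(y\partial_y)^j u$ with $i+j\le k+3$ yields the claim $u\in\sH^{k+3}(B_R^+(z_0),\fw)$ together with the bound \eqref{eq:HkSobolevRegularity}.

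\textbf{Main obstacle.} The delicate point is not the tangential induction, which is essentially classical once one has the right weighted test functions, but rather the book-keeping of the weight $\fw$ under differentiation near $\partial_0\sO$. Because $\fw$ carries a power of $y$ tuned to the Fichera condition $\kappa\theta>0$, ordinary differentiation in $y$ does not preserve the weighted class, and one must consistently work with the ``Heston derivatives'' $\partial_x$ and $y\partial_y$ (equivalently, with the rescaled operator $y^{-1}A$). Ensuring that the cutoff errors, the Hardy-type absorption constants, and the use of the equation to solve for $y u_{yy}$ all remain compatible with these weighted norms as $y\downarrow 0$, uniformly in the inductive step, is the main technical difficulty; it is here that the constant $C=C(\Lambda,\nu_0,k,R,R_0)$ acquires its dependence on $k$.
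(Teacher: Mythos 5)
Your base case and tangential step are sound and match the paper: the $k=0$ case is Theorem \ref{thm:H2BoundSolutionHestonVarEqnSubdomainInterior}, and the $x$-difference-quotient argument (Propositions \ref{prop:VarEqn_H1Dx}, \ref{prop:VarEqn_H1Dkx}, \ref{prop:SobolevRegularity_ukx}) is exactly how the paper handles tangential derivatives. The gap is in your treatment of the normal direction. You propose to solve the PDE algebraically for $y u_{yy}$, differentiate that identity, and apply $y\partial_y$ to trade normal derivatives for powers of $y$ and tangential derivatives. This strategy recovers the $k=0$ bound (it is precisely Lemma \ref{lem:InteriorPartialH2Orthogonal}), but it cannot reach the exponents in $\sH^{k+2}(\sO,\fw)$ for $k\geq 1$. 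For instance, the $\sH^3$ norm in \eqref{eq:H3WeightedSobolevSpaceNormPowery} requires $y^{3/2}u_{yyy}\in L^2(\sO,\fw)$ and $(1+y)y^{1/2}u_{yy}\in L^2(\sO,\fw)$. Applying $y\partial_y$ to $y u_{yy}=F$ only controls $y^{2}u_{yyy}$ in $L^2(\fw)$, which is strictly weaker near $y=0$, while trying to boost it by dividing by $y^{1/2}$ pushes negative powers of $y$ onto the first-order terms $u_x,u_y,u$ in $F$, which are not integrable against $\fw$. The paper warns against exactly this (\S\ref{subsec:Highlights}): differentiating the PDE in $y$ produces \emph{unweighted} second derivatives that a degenerate-elliptic estimate cannot absorb.

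The paper's resolution, which your proposal does not reproduce, is Lemma \ref{lem:AlternativeDyAcommutator}: rather than working with the raw commutator $[D_y,A]$ (which is a full second-order operator), one writes
\[
D_y^m A v = A_m D_y^m v + m\,B\,D_y^{m-1}v,
\]
where $A_m$ is \emph{again a Heston operator} with shifted parameters $\theta_m=\theta+m\sigma^2/(2\kappa)$ (hence $\beta_m=\beta+m$), $q_m=q-m\varrho\sigma$, $c_{0,m}=c_0+m\kappa$, and $Bv=-\tfrac12 v_{xx}+\tfrac12 v_x$ involves only tangential derivatives. Thus $D_y^m u$ solves a \emph{variational} equation for the bilinear form $\fa_m$ with the shifted weight $\fw_m=y^m\fw$ (Proposition \ref{prop:VarEqn_Dkxy}), to which the base $H^2$-theory applies verbatim after replacing $\fw$ by $\fw_m$ (Proposition \ref{prop:SobolevRegularity_ukxy}). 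This is what produces the exact powers of $y$ appearing in Definition \ref{defn:HkWeightedSobolevSpaceNormPowery}, with the weight index $m$ tracking the number of $y$-derivatives. Without this coefficient- and weight-shifting device the induction on $y$-derivatives does not close; your proposal correctly identifies the weight bookkeeping as the main difficulty but does not supply the mechanism that resolves it.
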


We also have the following analogues of \cite[Theorem 8.10]{GilbargTrudinger}.

\begin{thm}[Interior $\sH^{k+2}$ regularity on domains]
\label{thm:HkSobolevRegularityDomain}
Let $k\geq 0$ be an integer and let $\sO\subseteqq\HH$ be a domain. Suppose that $f\in L^2(\sO,\fw)$ and $u \in H^1(\sO,\fw)$ is a solution to the variational equation \eqref{eq:HestonVariationalEquation}. If $f \in W^{k,2}_{\loc}(\underline\sO,\fw)$, then
$
u \in \sH^{k+2}_{\loc}(\underline\sO,\fw),
$
and $u$ solves \eqref{eq:IntroBoundaryValueProblem}. Moreover, for positive constants $d_1<\Upsilon$ and each pair of subdomains, $\sO'\subset\sO''\subset\sO$ with $\sO'\Subset\underline\sO''$ and $\dist(\partial_1\sO',\partial_1\sO'')\geq d_1$ and $\height(\sO'') \leq \Upsilon$, there is a positive constant, $C(\Lambda,\nu_0,d_1,k,\Upsilon)$,
such that $u$ obeys
\begin{equation}
\label{eq:HkSobolevRegularityDomain}
\|u\|_{\sH^{k+2}(\sO',\fw)} \leq C\left(\|f\|_{W^{k,2}(\sO'',\fw)} + \|u\|_{L^2(\sO'',\fw)}\right).
\end{equation}
\end{thm}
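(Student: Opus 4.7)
The plan is to deduce Theorem \ref{thm:HkSobolevRegularityDomain} from the half-ball result Theorem \ref{thm:HkSobolevRegularityInterior} together with the classical interior $H^{k+2}$-regularity theory for uniformly elliptic operators (e.g., \cite[Theorem 8.10]{GilbargTrudinger}), assembled via a covering of $\overline{\sO'}$ by balls of uniform radius with bounded overlap.

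First I would fix $R_0 = d_1/4$ and $R = R_0/2$, depending only on $d_1$, and cover $\overline{\sO'}$ by two types of sets. For each $z\in\overline{\sO'}$: in the \emph{boundary case} $\dist(z,\partial\HH)<R$, let $z_0\in\partial\HH$ be the vertical projection of $z$; since $\dist(z,\partial_1\sO'')\geq d_1>R_0$ and $\sO'\Subset\underline\sO''$, the half-ball $B_{R_0}^+(z_0)$ lies inside $\sO$, so Theorem \ref{thm:HkSobolevRegularityInterior} applies and yields
\[
\|u\|_{\sH^{k+2}(B_R^+(z_0),\fw)} \leq C_1\bigl(\|f\|_{W^{k,2}(B_{R_0}^+(z_0),\fw)} + \|u\|_{L^2(B_{R_0}^+(z_0),\fw)}\bigr),
\]
with $C_1=C_1(\Lambda,\nu_0,k,d_1)$. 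In the \emph{interior case} $\dist(z,\partial\HH)\geq R$, the coefficient $y$ satisfies $y\geq R/2$ on $B_{R}(z)\subset\HH\cap\sO''$, so $y^{-1}A$ is uniformly elliptic there with constants depending only on $\nu_0, \Lambda, d_1, \Upsilon$. The classical result \cite[Theorem 8.10]{GilbargTrudinger} applied to $(y^{-1}A)u = f/y$ gives
\[
\|u\|_{H^{k+2}(B_{R/2}(z))} \leq C_2\bigl(\|f/y\|_{H^k(B_R(z))} + \|u\|_{L^2(B_R(z))}\bigr),
\]
with $C_2=C_2(\Lambda,\nu_0,k,d_1,\Upsilon)$. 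Because $y\in[R/2,\Upsilon]$ on $B_R(z)$, the weight $\fw$ is comparable to $1$ there with constants depending only on $d_1, \Upsilon$, so the unweighted $H^{k+2}$ norm controls $\|u\|_{\sH^{k+2}(B_{R/2}(z),\fw)}$ and the weighted $W^{k,2}(B_R(z),\fw)$ norm of $f$ controls $\|f/y\|_{H^k(B_R(z))}$, up to such constants.

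Next I would extract a subcovering of $\overline{\sO'}$ from these (half-)balls using the Besicovitch covering lemma, obtaining balls of fixed radius $R$ with overlap bounded by a dimensional constant $N(d)$. Summing the squared local estimates over this subcovering and using bounded overlap gives
\[
\|u\|_{\sH^{k+2}(\sO',\fw)}^2 \;\leq\; N(d)\,\max(C_1,C_2)^2\,\bigl(\|f\|_{W^{k,2}(\sO'',\fw)}^2 + \|u\|_{L^2(\sO'',\fw)}^2\bigr),
\]
which is \eqref{eq:HkSobolevRegularityDomain}; note $\sO'$ is precompact in $\underline\sO''$, so the subcovering is finite. The local regularity $u\in\sH^{k+2}_{\loc}(\underline\sO,\fw)$ and the fact that $u$ solves \eqref{eq:IntroBoundaryValueProblem} a.e.\ on $\sO$ follow at once, since every compactly contained open subset of $\underline\sO$ fits into an admissible pair $(\sO',\sO'')$.

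The principal obstacle is the bookkeeping of constants: one must verify that the radii $R,R_0$, the local estimate constants $C_1,C_2$, the weight-equivalence constants on interior balls, and the Besicovitch overlap multiplicity all depend only on the listed parameters $(\Lambda,\nu_0,d_1,k,\Upsilon)$ and on dimension, and not on the specific pair $(\sO',\sO'')$ — in particular, not on the horizontal extent of $\sO'$ in the $x$-direction. Once the radii and overlap are dimensional/parameter-controlled, each local estimate is uniform, and summation gives a uniform global bound.
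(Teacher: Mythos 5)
Your approach matches the paper's: cover $\overline{\sO'}$ by half-balls near $\partial_0\sO$ (handled by Theorem~\ref{thm:HkSobolevRegularityInterior}) and interior balls where $y$ is bounded below (handled by classical elliptic regularity, \cite[Theorem 8.10]{GilbargTrudinger}), then sum the local estimates over a uniformly locally finite covering. The paper arranges bounded overlap with an explicit rectangular grid of centers rather than Besicovitch, but that difference is cosmetic.

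There is, however, an incorrect step in your treatment of the weight on interior balls. You write that because $y\in[R/2,\Upsilon]$ on $B_R(z)$, the weight $\fw$ is comparable to $1$ there with constants depending only on $d_1,\Upsilon$. This is false: the weight in \eqref{eq:HestonWeight} carries the factor $e^{-\gamma\sqrt{1+x^2}}$, which tends to zero as $|x|\to\infty$, so $\fw$ has no positive lower bound on interior balls uniform in the horizontal position of the ball. You do flag the ``weight-equivalence constants on interior balls'' as the principal obstacle at the end, but the resolution you offer is precisely this false claim. The correct statement is that on $B_R(z)$ the weight is comparable to its value $\fw(z)$ at the center, with a comparability \emph{ratio} depending only on $R$, $\Upsilon$, and the weight parameters: $x\mapsto\sqrt{1+x^2}$ is $1$-Lipschitz, so $e^{-\gamma\sqrt{1+x^2}}$ varies by at most a factor $e^{2\gamma R}$ across the ball. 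One then extracts the ball-dependent factor $\fw(z)^{1/2}$ on both sides of the local unweighted estimate, where it cancels; this is exactly what the paper makes explicit by inserting and removing the factors $e^{\mp\gamma|x_{1,j}|/2}$ in the chain of inequalities leading to \eqref{eq:GTWk+2locEstimateBallj}. With this correction your argument closes.
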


\begin{rmk}[Regularity up to the ``non-degenerate boundary'']
Standard elliptic regularity results for linear, second-order, strictly elliptic operators \cite[Theorem 8.13]{GilbargTrudinger} also imply that $u \in W^{k+2,2}_{\loc}(\sO\cup\partial_1\sO)$ if $f \in W^{k,2}_{\loc}(\sO\cup\partial_1\sO)$, and $g \in W^{k+2,2}_{\loc}(\sO\cup\partial_1\sO)\cap H^1(\sO,\fw)$, and $\partial_1\sO$ is $C^{k+2}$, and $u-g \in H^1_0(\underline\sO,\fw)$. We shall omit further mention of similarly straightforward generalizations.
\end{rmk}

Finally, we have an analogue of \cite[Theorem 8.9]{GilbargTrudinger}.

\begin{thm}[Existence and uniqueness of solutions with interior $\sH^{k+2}$ regularity]
\label{thm:ExistUniqueHkSobolevRegularityDomain}
Let $k\geq 0$ be an integer and let $\sO\subseteqq\HH$. Suppose that\footnote{The hypotheses on $f$ and $g$ are relaxed in \cite{Daskalopoulos_Feehan_statvarineqheston, Feehan_maximumprinciple} to allow for unbounded $f$ and $g$ with suitable growth properties.} $f\in L^\infty(\sO)\cap W^{k,2}_{\loc}(\underline\sO,\fw)$, and $(1+y)g\in W^{2,\infty}(\sO)$, and the constant, $c_0$, in \eqref{eq:OperatorHestonIntro} obeys $c_0>0$. Then there exists a unique solution, $u \in H^1(\sO,\fw)\cap \sH^{k+2}_{\loc}(\underline{\sO},\fw)$, to the variational equation \eqref{eq:HestonVariationalEquation} with boundary condition $u-g\in H^1_0(\underline{\sO},\fw)$. Moreover, $u$ solves \eqref{eq:IntroBoundaryValueProblem}, \eqref{eq:IntroBoundaryValueProblemBC} and, for positive constants $d_1<\Upsilon$ and each pair of subdomains, $\sO'\subset\sO''\subset\sO$ with $\sO'\Subset\underline\sO''$ and $\dist(\partial_1\sO',\partial_1\sO'')\geq d_1$ and $\height(\sO'') \leq \Upsilon$, there is a positive constant,
$C=C(\Lambda,\nu_0,d_1,k,\Upsilon)$, such that the estimate \eqref{eq:HkSobolevRegularityDomain} holds.
\end{thm}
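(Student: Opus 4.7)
The plan is to combine an existence-uniqueness result for weak solutions in $H^1(\sO,\fw)$, previously established in \cite{Daskalopoulos_Feehan_statvarineqheston}, with the interior higher-order Sobolev regularity result from Theorem \ref{thm:HkSobolevRegularityDomain}. The argument is modeled on \cite[Theorem 8.9]{GilbargTrudinger}: first obtain a weak solution, then upgrade its regularity using the a priori estimate already established.

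First, I would verify that the hypotheses of the existence and uniqueness result for weak solutions in \cite{Daskalopoulos_Feehan_statvarineqheston} are satisfied. Since $f \in L^\infty(\sO)$, and the weight $\fw$ is a finite measure on bounded strips (or at least $L^\infty$ functions with suitable growth lie in $L^2(\sO,\fw)$), we have $f \in L^2(\sO,\fw)$. The condition $(1+y)g \in W^{2,\infty}(\sO)$ is tailored precisely so that $g \in H^1(\sO,\fw)$ and, more importantly, so that the function $Ag$ — which contains the factor $y$ multiplying $D^2 g$ and the factor $y$ multiplying $D_y g$ — lies in $L^\infty(\sO) \subset L^2(\sO,\fw)$. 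Writing $v := u-g$, the variational problem \eqref{eq:HestonVariationalEquation} with boundary condition $u - g \in H^1_0(\underline\sO,\fw)$ is equivalent to finding $v \in H^1_0(\underline\sO,\fw)$ satisfying $a(v,\varphi) = (f,\varphi)_\fw - a(g,\varphi)$ for all test $\varphi$, where the right-hand side is a bounded linear functional on $H^1(\sO,\fw)$. The hypothesis $c_0 > 0$ guarantees coercivity of the Heston bilinear form $a(\cdot,\cdot)$ on $H^1_0(\underline\sO,\fw)$ (via the $\kappa\theta > 0$ and $c_0 > 0$ conditions, which yield absorption of the non-divergence-free drift terms), so the Lax--Milgram theorem applied as in \cite{Daskalopoulos_Feehan_statvarineqheston} delivers a unique $v \in H^1_0(\underline\sO,\fw)$, and hence a unique $u = v + g \in H^1(\sO,\fw)$ solving the variational equation with the prescribed boundary trace.

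Next, having obtained this weak solution $u \in H^1(\sO,\fw)$, I would apply Theorem \ref{thm:HkSobolevRegularityDomain} directly, using the hypothesis $f \in W^{k,2}_{\loc}(\underline\sO,\fw)$, to conclude that $u \in \sH^{k+2}_{\loc}(\underline\sO,\fw)$, that $u$ solves \eqref{eq:IntroBoundaryValueProblem} pointwise a.e.\ on $\sO$, and that the a priori estimate \eqref{eq:HkSobolevRegularityDomain} holds on every pair of subdomains $\sO' \Subset \underline{\sO''} \subset \sO$ with the stated separation and height constraints, with constant $C = C(\Lambda,\nu_0,d_1,k,\Upsilon)$. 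Uniqueness follows already from step one, since any two candidate solutions in $H^1(\sO,\fw) \cap \sH^{k+2}_{\loc}(\underline\sO,\fw)$ with the same boundary trace are in particular weak solutions with the same trace, and so must coincide.

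The main obstacle I anticipate is purely at the level of bookkeeping rather than analysis: one must check carefully that the $L^\infty$ hypothesis on $f$ and the $(1+y)W^{2,\infty}$ hypothesis on $g$ really do meet the precise integrability assumptions required to invoke the $H^1(\sO,\fw)$ well-posedness theorem from \cite{Daskalopoulos_Feehan_statvarineqheston}. In particular, the factor $(1+y)$ multiplying $g$ is calibrated so that $Ag \in L^2(\sO,\fw)$ even though $A$ has unbounded coefficients (the coefficient $\kappa(\theta - y)$ of $D_y$ grows linearly in $y$ and multiplies $D_y g$, which the factor $(1+y)$ controls). Once this compatibility is in place, no new analytic ingredient beyond Theorem \ref{thm:HkSobolevRegularityDomain} is required, and the conclusion follows immediately by combining the two inputs.
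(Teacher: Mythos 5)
Your proposal follows the same route as the paper: reduce to homogeneous boundary data by setting $\tilde u = u - g$, using the factor $(1+y)$ to control the unbounded coefficient multiplying $D_y g$ in $Ag$ so that $\tilde f = f - Ag \in L^\infty(\sO)$, then invoke the $H^1(\sO,\fw)$ existence result from \cite{Daskalopoulos_Feehan_statvarineqheston} (their Theorem 3.16) and apply Theorem \ref{thm:HkSobolevRegularityDomain} to upgrade the regularity and obtain \eqref{eq:HkSobolevRegularityDomain}. The one small discrepancy is that the paper obtains uniqueness from the weak maximum principle in \cite{Feehan_maximumprinciple} rather than from a coercivity/Lax--Milgram argument; your assertion that $c_0>0$ alone gives coercivity of $\fa$ is not quite right (the first-order drift terms are not absorbed so simply, which is why the cited existence proof is more delicate), though since you defer to the reference for the actual construction this does not affect the validity of the argument.
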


\subsubsection{Higher-order interior H\"older regularity}
\label{subsubsec:HigherOrderHolderRegularity}
See Definitions \ref{defn:Calphas} and \ref{defn:DHspaces} for descriptions of the Daskalopoulos-Hamilton family of $C^{k,\alpha}_s$ H\"older norms. We have the following analogue of Theorem \ref{thm:HkSobolevRegularityInterior}.

\begin{thm} [Interior $C^{k,\alpha}_s$ regularity for a solution to the variational equation]
\label{thm:HolderContinuity_Dku_Interior}
Let $k\geq 0$ be an integer, let $p>\max\{4, 2+k+\beta\}$, and let $R_0$ be a positive constant. Then there are positive constants
$R_1=R_1(k,R_0) < R_0$, and $C=C(\Lambda,\nu_0,k,p,R_0)$, and $\alpha=\alpha(\Lambda,\nu_0,k,p,R_0)\in(0,1)$ such that the following holds. Let $\sO\subseteqq\HH$ be a domain. If $f \in L^2(\sO,\fw)$, and $u \in H^1(\sO,\fw)$ is a solution to the variational equation \eqref{eq:HestonVariationalEquation}, and $z_0 \in \partial_0\sO$ is such that
$
\HH\cap B_{R_0}(z_0) \subset \sO,
$
and $f \in W^{2k,p}(B_{R_0}^+(z_0),y^{\beta-1})$, then
$
u \in C^{k,\alpha}_s(\bar B_{R_1}^+(z_0)),
$
and $u$ solves \eqref{eq:IntroBoundaryValueProblem} on $B_{R_1}^+(z_0)$. Moreover, $u$ obeys
\begin{equation}
\label{eq:HolderSobolevContinuity_Dku}
\|u\|_{C^{k,\alpha}_s(\bar B_{R_1}^+(z_0))} \leq C\left(\|f\|_{W^{2k,p}(B^+_{R_0}(z_0),y^{\beta-1})}
+ \|u\|_{L^2(B^+_{R_0}(z_0),y^{\beta-1})}\right).
\end{equation}
\end{thm}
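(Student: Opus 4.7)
The plan is to combine the higher-order weighted Sobolev regularity of Theorem~\ref{thm:HkSobolevRegularityInterior} with a weighted Sobolev--Morrey embedding adapted to the Daskalopoulos--Hamilton cycloidal metric underlying the $C^{k,\alpha}_s$ norm. First, I would fix an intermediate radius $R_1<R<R_0$ and apply Theorem~\ref{thm:HkSobolevRegularityInterior} to promote the weak solution $u\in H^1(\sO,\fw)$ to $u\in\sH^{k+2}(B^+_R(z_0),\fw)$ with the estimate
\[
\|u\|_{\sH^{k+2}(B^+_R(z_0),\fw)} \le C\bigl(\|f\|_{W^{k,2}(B^+_{R_0}(z_0),\fw)}+\|u\|_{L^2(B^+_{R_0}(z_0),\fw)}\bigr).
\]
On the bounded half-ball $B^+_{R_0}(z_0)$ the weight $\fw$ is comparable to $y^{\beta-1}$ up to multiplicative constants, so H\"older's inequality bounds $\|f\|_{W^{k,2}(B^+_{R_0}(z_0),\fw)}$ by a constant multiple of $\|f\|_{W^{2k,p}(B^+_{R_0}(z_0),y^{\beta-1})}$ as soon as $p>2$, and similarly for the $L^2$-term.

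Next, I would bootstrap from this $L^2$-Sobolev regularity to an $L^p$-Sobolev regularity for $u$. This may be achieved by iterating an $L^p$-analogue of Theorem~\ref{thm:HkSobolevRegularityInterior}, obtained by combining its $L^2$-version with the strict ellipticity of $y^{-1}A$ on $\{y\ge\eta\}$ (where standard Calder\'on--Zygmund theory delivers $W^{k+2,p}$ bounds), a weighted cutoff argument inside the boundary layer $\{y<\eta\}$, and a uniform-in-$\eta$ limiting procedure as $\eta\downarrow 0$. The extra derivatives in the hypothesis --- $f\in W^{2k,p}$ rather than $W^{k,p}$ --- are precisely what is needed to absorb the loss of a factor of $y$ at each stage of the bootstrap, yielding $u\in W^{2k+2,p}(B^+_R(z_0),y^{\beta-1})$ after possibly shrinking $R$ slightly.

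The final step is a weighted Morrey-type embedding
\[
W^{2k+2,p}(B^+_R(z_0),y^{\beta-1})\hookrightarrow C^{k,\alpha}_s(\bar B^+_{R_1}(z_0))
\]
for some $\alpha=\alpha(\Lambda,\nu_0,k,p,R_0)\in(0,1)$. The thresholds $p>4$ and $p>2+k+\beta$ in the hypothesis are precisely those dictated by this embedding: the former arises from the base-case weighted Morrey inequality on cycloidal half-balls against the measure $y^{\beta-1}\,dx\,dy$, while the latter ensures that after $k$ cycloidal derivatives of $u$ are absorbed into the $C^k_s$ norm, the remaining $W^{2,p}$ content produces a strictly positive H\"older exponent in the anisotropic metric $s(z_1,z_2)\sim|x_1-x_2|+|\sqrt{y_1}-\sqrt{y_2}|$. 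Chaining these three estimates produces~\eqref{eq:HolderSobolevContinuity_Dku}.

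The principal obstacle is the final weighted Morrey embedding up to the degenerate boundary $\partial_0\sO$: the anisotropy of the cycloidal metric and the singular/degenerate nature of the weight $y^{\beta-1}$ near $\{y=0\}$ interact nontrivially and are responsible for the precise form of the exponent thresholds. The intermediate $L^p$-bootstrap is also technically delicate, since Theorem~\ref{thm:HkSobolevRegularityInterior} is set in an $L^2$-framework and either a parallel $L^p$-regularity statement must be established, or the strictly elliptic interior $\{y\ge\eta\}$ must be carefully decoupled from a narrow weighted boundary layer, with uniform weighted estimates allowing passage to the limit $\eta\downarrow 0$.
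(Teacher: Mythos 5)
Your plan diverges from the paper's and rests on two unproved components. The paper proves Theorem~\ref{thm:HolderContinuity_Dku_Interior} by induction on $k$: the cases $k=0,1$ come from Theorems~\ref{thm:MainContinuityInteriorL2RHSu} and~\ref{thm:HolderContinuity_Du}, and for $k\geq 2$ it suffices (via the decomposition~\eqref{eq:InductiveCkalphasBound} of the $C^{k,\alpha}_s$ norm) to show that each mixed derivative $D_x^{k-m}D_y^m u$ lies in $C^\alpha_s$, which is Proposition~\ref{prop:HolderContinuity_ukxy}. That proposition is proved by differentiating the variational equation --- Proposition~\ref{prop:VarEqn_Dkxy} shows that $D_x^{k-m}D_y^m u$ solves a variational equation of the same type with shifted weight $\fw_m$ --- and then applying the \emph{base-case} $C^\alpha_s$-regularity result, Theorem~\ref{thm:MainContinuityInteriorL2RHSu}, which is proved in \cite{Feehan_Pop_regularityweaksoln} via Moser iteration and the John--Nirenberg inequality. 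No Sobolev-to-H\"older embedding appears anywhere in this chain; the threshold $p>2+k+\beta$ enters through the shifted Heston parameter $\beta_m=\beta+m$ in the iterated use of the base-case theorem, not through a Morrey exponent.

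The genuine gap in your proposal is the final embedding $W^{2k+2,p}(B^+_R(z_0),y^{\beta-1})\hookrightarrow C^{k,\alpha}_s(\bar B^+_{R_1}(z_0))$: this is established neither in the paper nor in any cited reference, and the Remark on Campanato spaces following Theorem~\ref{thm:ExistUniqueCk2+alphasHolderContinuityDomain} states explicitly that constructing a Sobolev-to-H\"older bridge adapted to these weights and the cycloidal metric is an \emph{open} direction, not a tool the article uses. The $C^\alpha_s$ seminorm is measured against $s(z_1,z_2)^\alpha$, with $s(z,z_0)\sim|z-z_0|^{1/2}$ near $y=0$, and the associated $C^{k,2+\alpha}_s$ spaces further require $yD^2u$ to be $C^\alpha_s$; that this anisotropic, degeneracy-adapted control follows from weighted Morrey estimates against the measure $y^{\beta-1}\,dx\,dy$ is precisely what would need to be proved, and you cannot assert it. Your intermediate bootstrap from $L^2$- to $L^p$-based weighted Sobolev regularity is likewise undeveloped in the paper's framework: Theorem~\ref{thm:HkSobolevRegularityInterior} is strictly an $L^2$ result, and the proposed decoupling of the strictly elliptic region $\{y\geq\eta\}$ from a boundary layer, with estimates uniform as $\eta\downarrow 0$, is exactly the kind of delicate degenerate-operator argument that would require a separate proof, which the article neither gives nor relies on.
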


Given Theorem \ref{thm:HolderContinuity_Dku_Interior}, one easily obtains --- but via purely Sobolev space and Moser iteration methods --- the following degenerate-elliptic analogue of the $C^\infty$-regularity result for the degenerate-parabolic model for the linearization of the porous medium equation \cite[Theorem I.1.1]{DaskalHamilton1998}.

\begin{cor} [Interior $C^\infty$-regularity]
\label{cor:CinftyGlobal}
Let $\sO\subseteqq\HH$ be a domain. If $f \in L^2(\sO,\fw)$ and $u \in H^1(\sO,\fw)$ is a solution to the variational equation \eqref{eq:HestonVariationalEquation}, and $f \in C^\infty(\underline\sO)$, then $u \in C^\infty(\underline\sO)$ and $u$ solves \eqref{eq:IntroBoundaryValueProblem}, \eqref{eq:IntroBoundaryValueProblemBC}.
\end{cor}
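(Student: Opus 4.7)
The plan is to deduce $C^\infty$-regularity by bootstrapping Theorem \ref{thm:HolderContinuity_Dku_Interior} in $k$, and to combine this with classical interior elliptic regularity away from $\partial_0\sO$. First, for any $z_0 \in \sO$, pick a ball $B_r(z_0) \Subset \sO$ on which the factor $y$ is bounded away from zero, so that $A$ is a strictly elliptic second-order operator with smooth (in fact affine) coefficients on $B_r(z_0)$. Since $u \in H^1(B_r(z_0))$ is a weak solution of $Au = f$ with $f \in C^\infty(B_r(z_0))$, classical interior elliptic regularity (for instance \cite[Theorem 8.10 and Corollary 8.11]{GilbargTrudinger}, iterated in $k$) gives $u \in C^\infty(B_r(z_0))$ and $Au = f$ pointwise on $B_r(z_0)$.

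Next, fix $z_0 \in \partial_0\sO$ and choose $R_0 > 0$ so that $\HH \cap B_{R_0}(z_0) \subset \sO$. For every integer $k \geq 0$ and every $p \in (2,\infty)$, smoothness of $f$ on $\underline\sO$ implies that $f$ together with all its derivatives of order up to $2k$ is bounded on the compact set $\bar B^+_{R_0}(z_0) \subset \underline\sO$, whence $f \in W^{2k,p}(B^+_{R_0}(z_0), y^{\beta-1})$ (the weight $y^{\beta-1}$ is integrable on this half-ball by the standing conventions on $\fw$). Choosing $p = p_k > \max\{4, 2+k+\beta\}$ and applying Theorem \ref{thm:HolderContinuity_Dku_Interior} then produces a radius $R_1 = R_1(k, R_0) \in (0, R_0)$ and an exponent $\alpha_k \in (0,1)$ such that
$$
u \in C^{k,\alpha_k}_s\bigl(\bar B^+_{R_1}(z_0)\bigr)
$$
and $u$ solves \eqref{eq:IntroBoundaryValueProblem} on $B^+_{R_1}(z_0)$. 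Letting $k$ range over all non-negative integers, $u$ is $C^\infty$ in a neighborhood of $z_0$ in $\underline\sO$ (in the Daskalopoulos--Hamilton sense of \cite{DaskalHamilton1998}). Patching with the interior regularity from the previous paragraph gives $u \in C^\infty(\underline\sO)$, and the pointwise identity $Au = f$ holds throughout $\underline\sO$ by continuity of both sides.

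The principal (and rather mild) obstacle is that the radius $R_1$ furnished by Theorem \ref{thm:HolderContinuity_Dku_Interior} shrinks as $k$ grows, so one does not obtain uniform $C^{k,\alpha}_s$-bounds on a single neighborhood of $z_0$; however, this is harmless because $C^\infty$-regularity at a point only requires $C^{k,\alpha_k}_s$-regularity on some possibly $k$-dependent neighborhood, for every $k$. A second minor point to check is consistency between the weighted spaces $C^{k,\alpha}_s$ near $\partial_0\sO$ and ordinary $C^k$ smoothness in the interior: on any compact subset of $\sO$ the coordinate $y$ is bounded above and away from zero, so the weighted and unweighted norms are mutually comparable and the two regularity statements patch without friction.
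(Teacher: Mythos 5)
Your argument is correct and is essentially the same as the paper's: smoothness near $\partial_0\sO$ comes from iterating Theorem~\ref{thm:HolderContinuity_Dku_Interior} in $k$ (with $k$-dependent shrinking half-balls, which is harmless), and interior smoothness away from $\partial_0\sO$ comes from classical elliptic regularity (the paper cites \cite[Theorem 6.17]{GilbargTrudinger}, you cite \cite[Theorem 8.10 and Corollary 8.11]{GilbargTrudinger}; the latter applies more directly since it starts from an $H^1$ weak solution rather than assuming $C^{2,\alpha}$ a priori, so your citation is in fact the more careful one). Your two ancillary observations --- that the $k$-dependent radius $R_1(k,R_0)$ poses no obstacle to $C^\infty$-regularity, and that weighted $C^{k,\alpha}_s$ and unweighted $C^k$ norms are comparable on compact subsets of $\sO$ away from $\{y=0\}$ --- are both correct and worth stating.
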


We also have an analogue of Theorem \ref{thm:HkSobolevRegularityDomain} and of \cite[Theorem 6.17]{GilbargTrudinger}.

\begin{thm} [Interior $C^{k,\alpha}_s$ regularity on domains]
\label{thm:CkalphasHolderContinuityDomain}
Let $k\geq 0$ be an integer and let $p> \max\{4, 3+k+\beta\}$. Then there is a positive constant $\alpha=\alpha(\Lambda,\nu_0,k,p)\in(0,1)$ such that the following holds. Let $\sO\subseteqq\HH$ be a domain. If $f \in L^2(\sO,\fw)$ and $u \in H^1(\sO,\fw)$ is a solution to the variational equation \eqref{eq:HestonVariationalEquation}, and $f \in W^{2k+2,p}_{\loc}(\underline\sO,\fw)$, then
$
u \in C^{k,\alpha}_s(\underline\sO).
$
Moreover, $u$ solves \eqref{eq:IntroBoundaryValueProblem} and, for positive constants $d_1<\Upsilon$ and each pair of subdomains, $\sO'\subset\sO''\subset\sO$ with $\sO'\Subset\underline\sO''$ and $\dist(\partial_1\sO',\partial_1\sO'')\geq d_1$ and \footnote{While the equation \eqref{eq:IntroBoundaryValueProblem} is translation-invariant in the $x$-direction, the estimate \eqref{eq:CkalphasHolderContinuityDomain} is not when $\gamma\neq 0$ in the definition \eqref{eq:HestonWeight} of the weight, $\fw$.}
$\sO''\subset(-\Upsilon,\Upsilon)\times(0,\Upsilon)$, there is a positive constant, $C=C(\Lambda,\nu_0,d_1,k,p,\Upsilon)$,
such that
\begin{equation}
\label{eq:CkalphasHolderContinuityDomain}
\|u\|_{C^{k,\alpha}_s(\bar\sO')} \leq C\left(\|f\|_{W^{2k+2,p}(\sO'',\fw)} + \|u\|_{L^2(\sO'',\fw)}\right).
\end{equation}
\end{thm}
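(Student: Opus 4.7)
The plan is to derive Theorem \ref{thm:CkalphasHolderContinuityDomain} from the local half-ball estimate in Theorem \ref{thm:HolderContinuity_Dku_Interior} by a finite covering argument, combined with classical non-degenerate elliptic regularity in the portion of $\sO'$ bounded away from $\partial_0\sO$. Since the hypothesis $\sO'' \subset (-\Upsilon,\Upsilon)\times(0,\Upsilon)$ makes $\overline{\sO''}$ compact, and the hypothesis $\dist(\partial_1\sO',\partial_1\sO'')\geq d_1$ gives uniform room inside $\sO''$ around each point of $\overline{\sO'}$, a covering argument with a uniform scale is feasible.

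First, I would split points $z_0\in\overline{\sO'}$ into two regimes. Fix a threshold $\delta_0=\delta_0(d_1)>0$. When $\dist(z_0,\partial_0\sO)<\delta_0$, I would choose $z_0^{*}\in\overline{\partial_0\sO}$ with $|z_0-z_0^{*}|<\delta_0$; the constraint $\dist(\partial_1\sO',\partial_1\sO'')\geq d_1$ ensures that for an appropriate $R_0=R_0(d_1)$ (with $R_0>2\delta_0$), one has $\HH\cap B_{R_0}(z_0^{*})\subset\sO''$. Theorem \ref{thm:HolderContinuity_Dku_Interior} then provides $R_1=R_1(k,R_0)$, a H\"older exponent $\alpha_1=\alpha_1(\Lambda,\nu_0,k,p,R_0)\in(0,1)$, and a constant $C_1$ such that $u\in C^{k,\alpha_1}_s(\bar B_{R_1}^{+}(z_0^{*}))$ with the estimate controlled by $\|f\|_{W^{2k,p}(B_{R_0}^{+}(z_0^{*}),y^{\beta-1})} + \|u\|_{L^2(B_{R_0}^{+}(z_0^{*}),y^{\beta-1})}$, and in particular by the right-hand side of \eqref{eq:CkalphasHolderContinuityDomain}, since $\fw$ and $y^{\beta-1}$ are comparable on $B_{R_0}^{+}(z_0^{*})$ up to constants depending only on $\Upsilon$. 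By choosing $\delta_0\leq R_1/2$, one ensures $z_0\in B_{R_1}^{+}(z_0^{*})$.

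Second, when $\dist(z_0,\partial_0\sO)\geq\delta_0$, the operator $A$ is uniformly elliptic on $B_r(z_0)\subset\sO''$ with $r=\min(\delta_0/2,d_1/2)$, because the coefficient $y/2$ is bounded below by $\delta_0/4$ there, and $\fw$ is bounded above and below by positive constants depending on $\delta_0$ and $\Upsilon$. Classical interior $W^{2k+2,p}$ regularity for uniformly elliptic operators applied to $u$ (together with the Sobolev embedding $W^{2k+2,p}\hookrightarrow C^{k+1,\alpha_2}$ for some $\alpha_2=\alpha_2(k,p)\in(0,1)$, which is available since $p>3+k+\beta$) yields $u\in C^{k+1,\alpha_2}(\bar B_{r/2}(z_0))$ with a bound by $\|f\|_{W^{2k+2,p}(B_r(z_0),\fw)}+\|u\|_{L^2(B_r(z_0),\fw)}$. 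On this ball bounded away from $\{y=0\}$, the norm $C^{k+1,\alpha_2}$ dominates $C^{k,\alpha_2}_s$, since $C^{k,\alpha}_s$ coincides with the standard $C^{k,\alpha}$ norm up to factors depending on the lower and upper bounds on $y$.

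Finally, a finite subcover of $\overline{\sO'}$ by the half-balls $B_{R_1}^{+}(z_0^{*})$ and interior balls $B_{r/2}(z_0)$ produced above can be extracted by compactness, with cardinality bounded in terms of $d_1,\delta_0,\Upsilon$. Taking $\alpha:=\min(\alpha_1,\alpha_2)\in(0,1)$ and summing the local estimates yields \eqref{eq:CkalphasHolderContinuityDomain}. The main technical obstacle is to verify that the piecewise local $C^{k,\alpha}_s$ bounds combine into a single global $C^{k,\alpha}_s(\bar\sO')$ bound, namely that the H\"older seminorm computed between two points $z_1,z_2\in\sO'$ lying in different members of the cover is still controlled by the local seminorms. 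This is handled in the standard way by arranging the cover so that any two points of $\sO'$ within distance (say) $R_1/4$ of one another lie in a common ball of the cover, and by estimating the difference quotient trivially when they are farther apart using the uniform $C^k$ bound already provided by the local estimates. A minor bookkeeping issue is the $y$-weighted derivatives appearing in the definition of $C^{k,\alpha}_s$, but these are straightforwardly harmonized because $y$ is bounded above by $\Upsilon$ throughout $\sO''$ and is effectively accommodated by $C^{k,\alpha}_s$'s own scaling near $\{y=0\}$.
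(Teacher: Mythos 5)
Your covering scheme proves Proposition~\ref{prop:CkalphasHolderContinuityDomain_Precompact} but not Theorem~\ref{thm:CkalphasHolderContinuityDomain}: the two differ precisely in whether the H\"older exponent $\alpha$ is allowed to depend on the subdomain. You take $R_0 = R_0(d_1)$ in Theorem~\ref{thm:HolderContinuity_Dku_Interior}, which yields $\alpha_1 = \alpha_1(\Lambda,\nu_0,k,p,R_0) = \alpha_1(A,k,p,d_1)$, and your final $\alpha = \min(\alpha_1,\alpha_2)$ therefore shrinks as $d_1\to 0$. The theorem asserts $\alpha = \alpha(A,k,p)$, independent of $d_1$ and of the choice of $\sO',\sO''$ (only the constant $C$ is allowed to depend on $d_1$). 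This is exactly the distinction the paper emphasizes in the remark immediately preceding Proposition~\ref{prop:CkalphasHolderContinuityDomain_Precompact}.

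The missing ingredient is the cutoff argument of the paper's proof. One fixes $R_0 = 1$ once and for all, so that $\alpha = \alpha(A,k,p)$. Then for a point $z_0\in\partial_0\sO$ with $\HH\cap B_{R_0}(z_0)\not\subset\sO$, one cannot apply Theorem~\ref{thm:HolderContinuity_Dku_Interior} directly; instead one introduces $\zeta\in C^\infty_0(\bar\HH)$ with $\supp\zeta\subset\underline{B}^+_r(z_0)$ for suitably small $r\leq R_1$, and observes via Lemma~\ref{lem:SimpleCommutatorInnerProduct} that $\zeta D_x^{k-m}D_y^m u$ satisfies a variational equation on the \emph{entire} half-plane $\HH$ with right-hand side $\zeta f_{k,m,u} + [A,\zeta]D_x^{k-m}D_y^m u$, so Theorem~\ref{thm:HolderContinuity_Dku_Interior} applies on $\HH$ with the fixed $R_0=1$. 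Controlling the commutator terms in $L^p(B_r^+(z_0),y^{\beta-1})$ then requires an $L^\infty$ bound on derivatives of $u$ up to order $k+1$, which is supplied by Proposition~\ref{prop:CkalphasHolderContinuityDomain_Precompact} applied with $k$ replaced by $k+1$, and this is precisely why the stronger hypothesis $f\in W^{2k+2,p}_{\loc}$ appears in Theorem~\ref{thm:CkalphasHolderContinuityDomain} but not in Proposition~\ref{prop:CkalphasHolderContinuityDomain_Precompact} (which only needs $W^{2k,p}_{\loc}$). The fact that your version works with $W^{2k,p}$ rather than $W^{2k+2,p}$ is a telltale sign that the argument is strictly weaker. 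To repair the proof, you would need to insert this cutoff-and-commutator step in the boundary regime, replacing your direct application of Theorem~\ref{thm:HolderContinuity_Dku_Interior} at scale $R_0(d_1)$.
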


\begin{cor} [Interior a priori $C^{k,\alpha}_s$ estimate on domains of finite height]
\label{cor:CkalphasHolderContinuityDomainStrip}
If in addition to the hypotheses of Theorem \ref{thm:CkalphasHolderContinuityDomain}, the hypothesis on $f$ is strengthened to $f \in W^{2k+2,p}(\sO, y^{\beta-1})$, then for positive constants $d_1<\Upsilon$ and each pair of subdomains, $\sO'\subset\sO''\subset\sO$ with $\sO'\Subset\underline\sO''$ and $\dist(\partial_1\sO',\partial_1\sO'')\geq d_1$ and $\height(\sO'')\leq\Upsilon$, there is a positive constant,
$C=C(\Lambda,\nu_0,d_1,k,p,\Upsilon)$,
such that
\begin{equation}
\label{eq:CkalphasHolderContinuityDomainStrip}
\|u\|_{C^{k,\alpha}_s(\bar\sO')} \leq C\left(\|f\|_{W^{2k+2,p}(\sO'', y^{\beta-1})} + \|u\|_{L^2(\sO'', y^{\beta-1})}\right).
\end{equation}
\end{cor}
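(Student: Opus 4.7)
The plan is to deduce the corollary from Theorem \ref{thm:CkalphasHolderContinuityDomain} by exploiting translation invariance of the problem in the $x$-direction together with a covering argument. The differential operator $A$ in \eqref{eq:OperatorHestonIntro} has constant coefficients and so commutes with $x$-translations; the H\"older space $C^{k,\alpha}_s(\bar\sO')$ is translation-invariant in $x$ (its defining cycloidal metric depends only on $y$-values and differences of $x$-coordinates); and, crucially, the weight $y^{\beta-1}$ appearing on the right-hand side of the strengthened hypothesis carries no $x$-dependence. By contrast, the weight $\fw(x,y)$ that appears in Theorem \ref{thm:CkalphasHolderContinuityDomain} depends on $x$ (via $\gamma$ in \eqref{eq:HestonWeight}), which is precisely why that estimate is not translation-invariant and hence carries the restriction $\sO''\subset(-\Upsilon,\Upsilon)\times(0,\Upsilon)$.

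First, I choose a fixed length $L=L(d_1)>0$ (for instance $L:=d_1$) and cover $\sO'$ by a locally finite family of vertical strips
$$
T_i \;:=\; \sO'\cap \bigl((x_i-L,\;x_i+L)\times(0,\Upsilon)\bigr), \qquad i\in I,
$$
with corresponding enlargements
$$
T_i^* \;:=\; \sO''\cap \bigl((x_i-2L,\;x_i+2L)\times(0,\Upsilon)\bigr),
$$
where $\{x_i\}_{i\in I}\subset\RR$ is a discrete set chosen so that $\{T_i\}$ has uniformly bounded overlap, $T_i^*\subset\sO''$, $T_i\Subset\underline{T_i^*}$, and $\dist(\partial_1 T_i,\partial_1 T_i^*)\geq \min(d_1,L)$. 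For each $i$, define the translated functions $u_i(x,y):=u(x+x_i,y)$ and $f_i(x,y):=f(x+x_i,y)$ on the translated tiles $\wt T_i:=T_i-(x_i,0)$ and $\wt T_i^*:=T_i^*-(x_i,0)$. Since $A$ commutes with $x$-translation, $u_i\in H^1(\wt T_i^*,\fw)$ is a weak solution to $Au_i=f_i$ on $\wt T_i^*$. By construction $\wt T_i^*\subset(-2L,2L)\times(0,\Upsilon)$, so Theorem \ref{thm:CkalphasHolderContinuityDomain} applies with $\Upsilon$ replaced by $\max(2L,\Upsilon)$ and yields a constant $C=C(\Lambda,\nu_0,d_1,k,p,\Upsilon)$ (independent of $i$) such that
$$
\|u_i\|_{C^{k,\alpha}_s(\bar{\wt T_i})} \;\leq\; C\bigl(\|f_i\|_{W^{2k+2,p}(\wt T_i^*,\fw)} + \|u_i\|_{L^2(\wt T_i^*,\fw)}\bigr).
$$

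On the bounded strip $(-2L,2L)\times(0,\Upsilon)$, the weight $\fw$ and $y^{\beta-1}$ are comparable up to a multiplicative constant depending only on $L$ and the parameter $\gamma$. Combined with translation invariance of the unweighted Lebesgue measure and of the $y^{\beta-1}$ weight, this gives
$$
\|f_i\|_{W^{2k+2,p}(\wt T_i^*,\fw)} \leq C'\|f\|_{W^{2k+2,p}(T_i^*,y^{\beta-1})}, \qquad \|u_i\|_{L^2(\wt T_i^*,\fw)}\leq C'\|u\|_{L^2(T_i^*,y^{\beta-1})},
$$
and similarly $\|u_i\|_{C^{k,\alpha}_s(\bar{\wt T_i})}=\|u\|_{C^{k,\alpha}_s(\bar T_i)}$ by translation invariance of the cycloidal H\"older seminorm. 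Patching the local estimates: pointwise bounds on $u$ and its $D^{k}_s$-derivatives transfer immediately to a uniform bound on $\sO'$, since $\{T_i\}$ covers $\sO'$; the H\"older seminorm of order $\alpha$ between two points $P,Q\in\bar\sO'$ lying in a common $T_i$ (or in overlapping $T_i,T_j$, which we arrange by taking the spacing of $\{x_i\}$ less than $L$) is controlled directly by $\|u\|_{C^{k,\alpha}_s(\bar T_i)}$, while for $P,Q$ in tiles $T_i,T_j$ whose $s$-distance is bounded below (i.e.\ $|x_i-x_j|$ large) we reduce the seminorm to the pointwise sup bound. Summing (or taking the supremum of) the tilewise estimates and using $\sum_i \|\cdot\|_{W^{2k+2,p}(T_i^*,y^{\beta-1})}^p \leq (\text{overlap constant})\cdot \|\cdot\|_{W^{2k+2,p}(\sO'',y^{\beta-1})}^p$ yields \eqref{eq:CkalphasHolderContinuityDomainStrip}.

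The main technical point is handling the H\"older seminorm part of $\|u\|_{C^{k,\alpha}_s(\bar\sO')}$ across tile boundaries, which is why I work with overlapping tiles of size $2L$ spaced less than $L$ apart: every pair of points at $s$-distance less than some fixed threshold lies in a common tile, and pairs of larger $s$-distance contribute a seminorm term controlled by the sup norm (since $s$-distance is bounded below and the difference quotient reduces to twice the sup of $|D^k u|$). A subsidiary point is a careful verification that $\fw\asymp y^{\beta-1}$ on each translated tile with constants uniform in $i$, which follows directly from the explicit formula for $\fw$ once one checks that it depends on $x$ only through a factor of the form $e^{-\gamma|x|}$ (or analogous) that becomes constant up to multiplicative factors on any strip of bounded $x$-width.
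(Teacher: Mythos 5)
Your covering-plus-translation strategy reaches the same conclusion as the paper, but takes a different route, and it has one genuine soft spot that needs to be addressed.

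The paper does not translate. Its proof of Corollary~\ref{cor:CkalphasHolderContinuityDomainStrip} goes back to the local ball/half-ball estimates \eqref{eq:CkalphasHolderContinuityDomain_NearCorner} and \eqref{eq:uCk+1lambdaBestimate_FullyInteriorBall} established \emph{inside} the proof of Theorem~\ref{thm:CkalphasHolderContinuityDomain}, replaces the $\fw$-weights (respectively, unweighted norms) on the right-hand side by $y^{\beta-1}$-weighted norms --- which carry no $x$-dependence and hence no factor $e^{\gamma|x_{0,i}|/2}$ survives --- and then patches over a uniformly locally finite cover by balls and half-balls, exactly as in the proof of Theorem~\ref{thm:HkSobolevRegularityDomain}. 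The translation invariance you want to exploit is already encoded in the fact that Theorem~\ref{thm:HolderContinuity_Dku_Interior} gives half-ball estimates with $y^{\beta-1}$-weighted right-hand sides and a constant independent of $z_0$, so no change of variables is ever performed. Your approach instead applies Theorem~\ref{thm:CkalphasHolderContinuityDomain} as a black box on translated tiles, which is a perfectly reasonable second-level version of the same patching, and which makes the translation invariance explicit rather than leaving it implicit in the $e^{\pm\gamma|x_0|/p}$ cancellation.

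The gap is in the single sentence ``Since $A$ commutes with $x$-translation, $u_i\in H^1(\wt T_i^*,\fw)$ is a weak solution to $Au_i=f_i$ on $\wt T_i^*$.'' Theorem~\ref{thm:CkalphasHolderContinuityDomain} requires $u_i$ to solve the \emph{variational equation} \eqref{eq:HestonVariationalEquation}, and that equation is built from the bilinear form $\fa$ of \eqref{eq:HestonWithKillingBilinearForm}, which contains the first-order drift term $-\frac{\gamma}{2}\int(u_x+\varrho\sigma u_y)v\,\frac{x}{\sqrt{1+x^2}}\,y\,\fw$ as well as the weight $\fw(x,y)=y^{\beta-1}e^{-\gamma\sqrt{1+x^2}-\mu y}$, neither of which is invariant under $x$-translation. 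Translating $u$ and $f$ gives you a solution of the variational equation for the bilinear form built from $\fw_i(x,y)=\fw(x+x_i,y)$ and the factor $(x+x_i)/\sqrt{1+(x+x_i)^2}$, \emph{not} for $\fa$ itself. To close the argument you need to pass through the interior regularity theory: by Theorem~\ref{thm:InteriorH2}, $u\in H^2_{\loc}(\underline\sO,\fw)$, hence $u$ solves $Au=f$ a.e.\ on $\sO$, hence $u_i$ solves $Au_i=f_i$ a.e.\ on $\wt T_i^*$, hence (since $\fw$ and $\fw_i$ are comparable on the bounded strip, so $u_i\in H^2_{\loc}(\underline{\wt T_i^*},\fw)$) Lemma~\ref{lem:HestonIntegrationByParts} gives $\fa(u_i,v)=(f_i,v)_{L^2(\wt T_i^*,\fw)}$ for every $v\in H^1_0(\underline{\wt T_i^*},\fw)$. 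Only then is the application of Theorem~\ref{thm:CkalphasHolderContinuityDomain} justified. The remainder of your argument (comparability $\fw\asymp y^{\beta-1}$ uniformly on translated tiles, patching the H\"older seminorm by distinguishing near and far pairs of points with overlapping tiles) is sound and corresponds to what the paper does in the final paragraph of its proof.
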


Lastly, we give an analogue of the existence and interior Schauder a priori estimate results \cite[Theorems I.1.1, I.1.2, and I.12.2]{DaskalHamilton1998} for the initial value problem for a degenerate-parabolic model for the linearization of the porous medium equation on a half-space, and of \cite[Theorems 6.13 and 6.19]{GilbargTrudinger}, in the case of boundary value problems for strictly elliptic operators. We recall the description of the weighted H\"older space, $C^{k,2+\alpha}_s(\underline\sO)$, due to Daskalopoulos and Hamilton \cite{DaskalHamilton1998}, in Definition \ref{defn:DH2spaces}. While the interior a priori estimate \eqref{eq:ExistUniqueCk2+alphasHolderContinuityDomain} is stated in Theorem \ref{thm:ExistUniqueCk2+alphasHolderContinuityDomain} for bounded subdomains, $\sO''\subset\HH$, for the sake of clarity, the estimate should easily extend to unbounded domains using the family of H\"older spaces and H\"older norms defined in \cite{Feehan_Pop_mimickingdegen_pde}.

\begin{thm} [Existence and uniqueness of solutions with interior $C^{k,2+\alpha}_s$ regularity]
\label{thm:ExistUniqueCk2+alphasHolderContinuityDomain}
Let $k\geq 0$ be an integer and let $K$ be a finite right-circular cone. Then there is a positive constant $\alpha=\alpha(\Lambda,\nu_0,k,K)\in(0,1)$ such that the following holds. Let $\sO\subseteqq\HH$ be a domain obeying a uniform exterior cone condition along $\partial_1\sO$ with cone $K$. If $f \in C^{2k+6,\alpha}_s(\underline\sO)\cap C(\bar \sO)$ and $g \in C(\bar\sO)$ with $(1+y)g \in  C^2(\bar\sO)$,
and the constant, $c_0$, in \eqref{eq:OperatorHestonIntro} obeys $c_0>0$, then there is a unique solution,
$
u \in C^{k,2+\alpha}_s(\underline\sO)\cap C^\alpha(\sO\cup\partial_1\sO)\cap L^\infty(\sO),
$
to the boundary value problem, \eqref{eq:IntroBoundaryValueProblem}, \eqref{eq:IntroBoundaryValueProblemBC}. Moreover, for positive constants $d_1<\Upsilon$ and each pair of subdomains, $\sO'\subset\sO''\subset\sO$ with $\sO'\Subset\underline\sO''$ and $\dist(\partial_1\sO',\partial_1\sO'')\geq d_1$ and $\diam(\sO'')\leq\Upsilon$, there is a positive constant, $C=C(\Lambda,\nu_0,d_1,k,p,\Upsilon)$,
such that
\begin{equation}
\label{eq:ExistUniqueCk2+alphasHolderContinuityDomain}
\|u\|_{C^{k,2+\alpha}_s(\bar\sO')} \leq C\left(\|f\|_{C^{2k+6,\alpha}_s(\bar\sO'')} + \|u\|_{C(\bar\sO'')}\right).
\end{equation}
\end{thm}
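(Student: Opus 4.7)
The plan is to assemble existence, uniqueness, and the interior a priori estimate by combining the variational existence theory from Theorem~\ref{thm:ExistUniqueHkSobolevRegularityDomain}, the weighted H\"older regularity of Theorem~\ref{thm:CkalphasHolderContinuityDomain}, and a Schauder-type bootstrap drawing on the companion work of Feehan and Pop~\cite{Feehan_Pop_elliptichestonschauder}, together with classical barrier arguments near $\partial_1\sO$ and the weak maximum principle.

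First I would produce a candidate weak solution. The hypotheses on $f$ imply $f\in L^\infty(\sO)\cap W^{k,2}_{\loc}(\underline\sO,\fw)$ and the hypothesis $(1+y)g\in C^2(\bar\sO)$ gives, on each precompact subdomain, the condition $(1+y)g\in W^{2,\infty}$ required by Theorem~\ref{thm:ExistUniqueHkSobolevRegularityDomain} (with the extension to unbounded $\sO$ supplied by the growth-allowing versions of the existence theorem in \cite{Daskalopoulos_Feehan_statvarineqheston, Feehan_maximumprinciple}). This yields a unique $u\in H^1(\sO,\fw)\cap\sH^{k+2}_{\loc}(\underline\sO,\fw)$ solving $Au=f$ a.e.\ on $\sO$ with $u-g\in H^1_0(\underline\sO,\fw)$. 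A weighted Sobolev embedding converts $f\in C^{2k+6,\alpha}_s(\underline\sO)$ into $f\in W^{2k+2,p}_{\loc}(\underline\sO,\fw)$ for some $p>\max\{4,3+k+\beta\}$, so Theorem~\ref{thm:CkalphasHolderContinuityDomain} then delivers $u\in C^{k,\alpha}_s(\underline\sO)$ together with the estimate \eqref{eq:CkalphasHolderContinuityDomain}. To lift this to the full $C^{k,2+\alpha}_s$-regularity I would bootstrap via the interior Schauder a priori estimate for the elliptic Heston operator from \cite{Feehan_Pop_elliptichestonschauder}, applied iteratively on a chain of nested subdomains $\sO'\Subset\sO_1\Subset\cdots\Subset\sO_m\Subset\sO''$; each step gains two orders of weighted H\"older regularity, and the margin $2k+6$ in the hypothesis on $f$ is exactly what is needed to absorb the losses in the H\"older exponent and the number of derivatives incurred at the Sobolev-to-Schauder transition. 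Chaining the resulting estimates yields \eqref{eq:ExistUniqueCk2+alphasHolderContinuityDomain}, with the separation $d_1$ keeping the constants controlled away from the corner set $\overline{\partial_0\sO}\cap\overline{\partial_1\sO}$.

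It remains to verify boundary behaviour on $\partial_1\sO$, global boundedness, and uniqueness. The operator $A$ is strictly elliptic on any region of the form $\{y\geq\delta\}\cap\sO$, and the uniform exterior cone condition together with the classical barrier construction of \cite[\S 6.3]{GilbargTrudinger} produces a local modulus of continuity for $u$ at each point of $\partial_1\sO$ depending only on $A$, $k$, and $K$; this gives $u\in C^\alpha(\sO\cup\partial_1\sO)$ with $u=g$ on $\partial_1\sO$ and is the sole place where the cone $K$ enters the dependence of $\alpha$. The bound $u\in L^\infty(\sO)$ and uniqueness then both follow from the weak maximum principle for $A$ as developed in \cite{Daskalopoulos_Feehan_statvarineqheston, Feehan_maximumprinciple}, whose hypothesis $c_0>0$ is indispensable in order to compare $u$ against constant (or mildly growing) test functions on possibly unbounded $\sO$.

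The hardest step will be the Schauder bootstrap in the second paragraph. The weighted H\"older spaces $C^{k,2+\alpha}_s$ are anisotropic near $\partial_0\sO$, encoded by the Koch-type cycloidal metric underlying \cite{Koch, DaskalHamilton1998}, so the cycloidal scaling must be tracked carefully when chaining interior Schauder estimates near the degenerate boundary and, especially, near its junction with $\partial_1\sO$; matching the tangential regularity absorbed from the strictly-elliptic boundary barriers with the degenerate regularity coming from the Schauder estimate, without inflating the constants, is what ultimately dictates the somewhat generous regularity index $2k+6$ imposed on $f$.
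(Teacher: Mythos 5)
Your proposal reaches the stated conclusions, but it takes a genuinely different and, as written, partly circular route from the paper's proof. The key observation the paper uses — and which you miss — is that there is a trivial continuous embedding $C^{k+2,\alpha}_s(\bar U)\hookrightarrow C^{k,2+\alpha}_s(\bar U)$ for bounded $U\subset\bar\HH$ (the factor $y$ is $C^\alpha_s$ on bounded sets, so if $D^{k+2}u\in C^\alpha_s$ then $yD^{k+2}u\in C^\alpha_s$, and the $C^{k+1,\alpha}_s$ part is immediate from Definition \ref{defn:DH2spaces}). Consequently, the paper simply applies Theorem \ref{thm:CkalphasHolderContinuityDomain} with $k+2$ in place of $k$ — which demands $f\in W^{2(k+2)+2,p}_{\loc}(\underline\sO,\fw)=W^{2k+6,p}_{\loc}(\underline\sO,\fw)$, exactly accounting for the index $2k+6$ in the hypothesis on $f$ — to obtain $u\in C^{k+2,\alpha}_s(\underline\sO)$, and then invokes the embedding. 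No Schauder iteration is needed. By contrast, you convert $f$ only to $W^{2k+2,p}_{\loc}$ (the wrong index for what follows), obtain only $u\in C^{k,\alpha}_s(\underline\sO)$, and then propose to lift the remaining two weighted derivatives by bootstrapping an interior Schauder estimate imported from \cite{Feehan_Pop_elliptichestonschauder}. This is exactly the external machinery the paper's variational approach is designed to avoid — it is announced as an alternative to the present proof, not an ingredient of it — so invoking it here both undermines the self-containedness of the argument and leaves your proof incomplete given only the tools actually established in the paper. (Moreover, if one is willing to use the full Schauder a priori estimate from \cite{Feehan_Pop_elliptichestonschauder}, a single application already yields $C^{k,2+\alpha}_s$, so the ``chain of nested subdomains'' is not doing the work you describe.)

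Your handling of existence and uniqueness via Theorem \ref{thm:ExistUniqueHkSobolevRegularityDomain}, the $L^\infty$ bound via the maximum principle \cite{Feehan_maximumprinciple}, and the H\"older continuity up to $\partial_1\sO$ via the uniform exterior cone condition and a barrier argument match the paper in substance. But the heart of the argument — passing from weighted Sobolev data to $C^{k,2+\alpha}_s$ regularity — is where you diverge, and your route requires an external Schauder estimate that the paper deliberately does without.
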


\begin{rmk}[Schauder a priori estimates and approach to existence of solutions]
As we explain in \cite{Feehan_Pop_elliptichestonschauder}, the proof of existence of solutions, $u \in C^{k,2+\alpha}_s(\underline\sO)\cap C(\bar\sO)$, to the boundary value problem, \eqref{eq:IntroBoundaryValueProblem}, \eqref{eq:IntroBoundaryValueProblemBC}, given $f\in C^{k,\alpha}_s(\underline\sO)$ and $g \in C(\bar\sO)$, is considerably more difficult when $\partial_0\sO$ is non-empty because, unlike in \cite{DaskalHamilton1998}, one must consider the impact of the ``corner'' points, $\overline{\partial_0\sO}\cap\overline{\partial_1\sO}$, of the subdomain, $\sO\subset\HH$, where the ``non-degenerate boundary'', $\partial_1\sO$, intersects the ``degenerate boundary'', $\partial\HH$.
Note that when $d=2$ and $\sO$ is a bounded domain
such that $\sO$ obeys uniform exterior and interior cone conditions along $\overline{\partial_0\sO}\cap\overline{\partial_1\sO}$, in the sense of
Definition \ref{defn:RegularDomain} in the sequel,
then the set
$\overline{\partial_0\sO}\cap\overline{\partial_1\sO}$
consists of finitely many points.
\end{rmk}

\begin{rmk}[Refinements of Theorem \ref{thm:ExistUniqueCk2+alphasHolderContinuityDomain}]
Our existence result and Schauder a priori estimate in Theorem \ref{thm:ExistUniqueCk2+alphasHolderContinuityDomain} may appear far from optimal because of the strong hypothesis that $f\in C^{2k+6,\alpha}_s(\underline\sO)$, the fact that the H\"older exponent, $\alpha\in(0,1)$, is not arbitrary, and the presence of the cone condition hypothesis. However, the regularity hypothesis for $f$ in Theorem \ref{thm:ExistUniqueCk2+alphasHolderContinuityDomain} may be relaxed to $f\in C^{k,\alpha}_s(\underline\sO)$, with $\alpha\in(0,1)$ arbitrary, and the cone conditions on $\sO$ removed, using an interior Schauder a priori estimate which we develop by quite different methods in \cite{Feehan_Pop_elliptichestonschauder}.
\end{rmk}

\begin{rmk}[Campanato spaces]
In the context of non-degenerate elliptic equations, Campanato spaces \cite{Troianiello} provide a natural bridge between Sobolev spaces and H\"older spaces and allow one to prove Schauder a priori estimates and H\"older regularity using Sobolev space methods. It would be interesting to explore whether the conclusions of Theorems \ref{thm:HolderContinuity_Dku_Interior} and \ref{thm:CkalphasHolderContinuityDomain}, and thus Theorem \ref{thm:ExistUniqueCk2+alphasHolderContinuityDomain} in particular, could be sharpened with the aid of a suitable version of Campanato spaces adapted to the weights appearing in our definitions of weighted Sobolev and H\"older spaces.
\end{rmk}

Given an additional geometric hypothesis on $\sO$ near points in $\overline{\partial_0\sO}\cap\overline{\partial_1\sO}$, the property $u \in C^\alpha(\sO\cup\partial_1\sO)\cap L^\infty(\sO)$ simplifies to $u \in C^\alpha_{s,\loc}(\bar\sO)\cap C(\bar\sO)$.

We say that a domain, $U\subset\HH$, obeys an \emph{exterior cone condition relative to $\HH$ at a point $z_0\in \partial U$} if there exists a finite, right circular cone, $K = K_{z_0}\subset \bar\HH$, with vertex $z_0$ such that $\bar U\cap K_{z_0} = \{z_0\}$ (compare \cite[p. 203]{GilbargTrudinger}).  A domain, $U$, obeys a \emph{uniform exterior cone condition relative to $\HH$ on $T\subset\partial U$} if $U$ satisfies an exterior cone condition relative to $\HH$ at every point $z_0\in T$ and the cones $K_{z_0}$ are all congruent to some fixed finite cone, $K$ (compare with \cite[p. 205]{GilbargTrudinger}).

\begin{defn}[Interior and exterior cone conditions]
\label{defn:RegularDomain}
Let $K$ be a finite, right circular cone. We say that $\sO$ obeys \emph{interior and exterior cone conditions at $z_0\in \overline{\partial_0\sO}\cap\overline{\partial_1\sO}$ with cone $K$} if the domains $\sO$ and $\HH\less\bar\sO$ obey exterior cone conditions
relative to $\HH$ at $z_0$ with cones congruent to $K$. We say that $\sO$ obeys \emph{uniform interior and exterior cone conditions on $\overline{\partial_0\sO}\cap\overline{\partial_1\sO}$ with cone $K$} if the domains $\sO$ and $\HH\less\bar\sO$ obey exterior cone
conditions relative to $\HH$ at each point $z_0\in \overline{\partial_0\sO}\cap\overline{\partial_1\sO}$ with cones congruent to $K$.
\end{defn}

\begin{cor} [Existence and uniqueness of globally continuous $C^{k,2+\alpha}_s(\underline\sO)$ solutions]
\label{cor:ExistUniqueCk2+alphasHolderContinuityDomain}
Suppose, in addition to the hypotheses of Theorem \ref{thm:ExistUniqueCk2+alphasHolderContinuityDomain}, that the domain, $\sO$, satisfies a uniform exterior and interior cone condition on $\overline{\partial_0\sO}\cap\overline{\partial_1\sO}$ with cone $K$ in the sense of
Definition \ref{defn:RegularDomain}.
Then the solution, $u$, obeys
$$
u \in C^{k,2+\alpha}_s(\underline\sO)\cap C^\alpha_{s,\loc}(\bar\sO)\cap C(\bar\sO),
$$
and, if $\sO$ is bounded, then $u \in C^{k,2+\alpha}_s(\underline\sO)\cap C^\alpha_s(\bar\sO)$.
\end{cor}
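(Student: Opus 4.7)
The plan is to combine the existence and interior $C^{k,2+\alpha}_s$ regularity already furnished by Theorem~\ref{thm:ExistUniqueCk2+alphasHolderContinuityDomain} with the boundary H\"older regularity for weak solutions in $H^1(\sO,\fw)$ that the authors established in the companion paper \cite{Feehan_Pop_regularityweaksoln}. Indeed, Theorem~\ref{thm:ExistUniqueCk2+alphasHolderContinuityDomain} already produces a unique solution $u$ lying in $C^{k,2+\alpha}_s(\underline\sO)\cap C^\alpha(\sO\cup\partial_1\sO)\cap L^\infty(\sO)$; by construction $u$ belongs to $H^1(\sO,\fw)$ and attains the prescribed Dirichlet trace on $\partial_1\sO$. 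Only the regularity of $u$ near the corner set $\overline{\partial_0\sO}\cap\overline{\partial_1\sO}$, and more generally up to $\partial_0\sO$, remains to be upgraded, and this is precisely where the additional uniform interior and exterior cone condition enters.

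First I would verify that the hypotheses in force here — Assumption~\ref{assump:HestonCoefficients} on the coefficients of $A$, the boundedness and regularity of $f$ and $g$, the positivity of $c_0$, and the uniform interior and exterior cone condition on $\overline{\partial_0\sO}\cap\overline{\partial_1\sO}$ in the sense of Definition~\ref{defn:RegularDomain} — are precisely those required to invoke the global H\"older regularity theorem of \cite{Feehan_Pop_regularityweaksoln}, whose conclusion (recalled in the paragraph preceding subsection~\ref{subsec:Motivations}) asserts that any weak solution $u \in H^1(\sO,\fw)$ satisfies $u \in C^\alpha_{s,\loc}(\bar\sO)$ for some $\alpha \in (0,1)$. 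Combining this with the interior regularity from Theorem~\ref{thm:ExistUniqueCk2+alphasHolderContinuityDomain} — after shrinking, if necessary, the common H\"older exponent $\alpha$ to the minimum of the two — yields $u \in C^{k,2+\alpha}_s(\underline\sO)\cap C^\alpha_{s,\loc}(\bar\sO)$. To pass from $C^\alpha_{s,\loc}(\bar\sO)$ to $C(\bar\sO)$, it suffices to observe that on any bounded subset of $\bar\sO$ the cycloidal metric controlling the $C^\alpha_s$ seminorm dominates the Euclidean distance up to a multiplicative constant, so local $C^\alpha_s$-continuity forces ordinary continuity up to all of $\partial\sO$, including the corner points supplied by the cone hypothesis.

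When $\sO$ is bounded, the localization in $C^\alpha_{s,\loc}(\bar\sO)$ may be removed by a finite covering argument using cycloidal half-balls adapted to $\partial_0\sO$ and standard half-balls away from it, upgrading the conclusion to $u \in C^\alpha_s(\bar\sO)$. I anticipate the main technical obstacle to be verifying that the solution produced by Theorem~\ref{thm:ExistUniqueCk2+alphasHolderContinuityDomain} lies in the exact class to which the regularity theorem of \cite{Feehan_Pop_regularityweaksoln} applies — that is, checking that the pointwise growth, integrability, and boundary-trace conditions imposed there are implied by the hypotheses $f\in C^{2k+6,\alpha}_s(\underline\sO)\cap C(\bar\sO)$, $(1+y)g\in C^2(\bar\sO)$, and $u-g\in H^1_0(\underline\sO,\fw)$ present here. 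Once this compatibility is confirmed, which should follow from the $L^\infty$-control on $f$ and $g$ and the well-posedness of \eqref{eq:HestonVariationalEquation} in $H^1(\sO,\fw)$ with prescribed boundary trace, the two regularity statements glue together without further work.
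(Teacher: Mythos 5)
Your proposal is correct and follows essentially the same route as the paper: Theorem \ref{thm:ExistUniqueCk2+alphasHolderContinuityDomain} supplies the interior $C^{k,2+\alpha}_s(\underline\sO)$ solution in $H^1(\sO,\fw)$, and the corner-point H\"older estimate from \cite{Feehan_Pop_regularityweaksoln} (recalled here as Theorem \ref{thm:MainContinuityBoundaryL2RHSu}, whose hypotheses are exactly what the added interior/exterior cone condition unlocks) upgrades $u$ to $C^\alpha_{s,\loc}(\bar\sO)\cap C(\bar\sO)$, with a finite cover and the uniform H\"older exponent yielding $C^\alpha_s(\bar\sO)$ when $\sO$ is bounded. (Minor: the comparison $s(z_1,z_2)\le|z_1-z_2|^{1/2}$ in \eqref{eq:CycloidLessEuclidDistance} — not a lower bound on the cycloidal distance — is what makes the $C^\alpha_s$-seminorm control ordinary continuity.)
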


In a different direction, given additional hypotheses on $f$, we easily obtain

\begin{cor} [Interior a priori $C^{k,2+\alpha}_s$ estimate on domains of finite height]
\label{cor:Ck2+alphasHolderContinuityDomainStrip}
If in addition to the hypotheses of Theorem \ref{thm:ExistUniqueCk2+alphasHolderContinuityDomain}, the hypothesis on $f$ is strengthened to $f \in C^{2k+6,\alpha}_s(\bar\sO)$ then, for positive constants $d_1<\Upsilon$ and each pair of subdomains, $\sO'\subset\sO''\subset\sO$ with $\sO'\Subset\underline\sO''$ and $\dist(\partial_1\sO',\partial_1\sO'')\geq d_1$ and $\height(\sO'')\leq\Upsilon$, there is a positive constant, $C=C(\Lambda,\nu_0,d_1,k,p,\Upsilon)$,
such that
\begin{equation}
\label{eq:Ck2+alphasHolderContinuityDomainStrip}
\|u\|_{C^{k,2+\alpha}_s(\bar\sO')} \leq C\left(\|f\|_{C^{2k+6,\alpha}_s(\bar\sO'')} + \|u\|_{C(\bar\sO'')}\right).
\end{equation}
\end{cor}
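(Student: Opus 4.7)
The plan is to deduce Corollary \ref{cor:Ck2+alphasHolderContinuityDomainStrip} from the interior a priori estimate \eqref{eq:ExistUniqueCk2+alphasHolderContinuityDomain} of Theorem \ref{thm:ExistUniqueCk2+alphasHolderContinuityDomain} through a covering argument that exploits the $x$-translation invariance of $A$. Since the coefficients of the Heston operator in \eqref{eq:OperatorHestonIntro} depend only on $y$ and the Daskalopoulos--Hamilton weighted H\"older norms (Definition \ref{defn:DH2spaces}) are invariant under the shift $x\mapsto x+c$, and since $\sO''$ has $\height(\sO'')\leq\Upsilon$ but possibly unbounded $x$-extent, the natural strategy is to reduce to a sequence of bounded-diameter translates of a reference piece and then patch.

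Concretely, I would fix a length $\ell=\ell(d_1)\in(0,d_1/4]$ and, for each $n\in\ZZ$, set
$$
V_n := \sO''\cap\bigl((n\ell-3\Upsilon,\,n\ell+3\Upsilon)\times(0,\Upsilon)\bigr),\qquad
U_n := \sO'\cap\bigl((n\ell-\ell,\,n\ell+\ell)\times(0,\Upsilon)\bigr).
$$
Then $\{U_n\}_{n\in\ZZ}$ is a cover of $\sO'$ of uniformly bounded overlap, $\diam(V_n)\leq 7\Upsilon$, $U_n\Subset\underline{V_n}$, and $\dist(\partial_1 U_n,\partial_1 V_n)\geq d_1/2$. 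Applying the interior estimate \eqref{eq:ExistUniqueCk2+alphasHolderContinuityDomain} to the pair $(U_n,V_n)\subset\sO$ therefore yields
$$
\|u\|_{C^{k,2+\alpha}_s(\bar U_n)} \leq C\bigl(\|f\|_{C^{2k+6,\alpha}_s(\bar V_n)} + \|u\|_{C(\bar V_n)}\bigr),
$$
with a constant $C=C(\Lambda,\nu_0,d_1,k,p,\Upsilon)$ \emph{independent of $n$}, precisely because the coefficients of $A$, the geometric data, and the norms are all invariant under the translation $x\mapsto x-n\ell$.

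The second step is to patch these local estimates into a global estimate on $\sO'$: because the $C^{k,2+\alpha}_s$ seminorms are sup-type quantities and the cover has uniformly bounded overlap, a localization argument yields a constant $C'=C'(\Upsilon)$ such that
$$
\|u\|_{C^{k,2+\alpha}_s(\bar\sO')} \leq C'\sup_{n\in\ZZ}\|u\|_{C^{k,2+\alpha}_s(\bar U_n)},
$$
and combining with the per-$n$ bound and with $\bar V_n\subset\bar\sO''$ produces \eqref{eq:Ck2+alphasHolderContinuityDomainStrip}. The step I expect to require the most care is this last patching: for a pair $z,z'\in\bar\sO'$ lying in different pieces $\bar U_n,\bar U_m$, one must inspect Definition \ref{defn:DH2spaces} to verify that the cycloidal denominator $s(z,z')$ makes the corresponding H\"older difference quotient absorbable either into a single $\|u\|_{C^{k,2+\alpha}_s(\bar U_n)}$ (when $z,z'$ lie jointly in a moderately enlarged piece) or into the $C^0$-part of the norm, which is itself controlled by $\|u\|_{C(\bar V_n)}$ when $|x-x'|$ is large and $s(z,z')$ is correspondingly large. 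Since the cycloidal metric is translation-invariant in $x$ but not in $y$, this absorption is routine but requires a careful case analysis in the definition of the weighted seminorms; everything else in the argument is bookkeeping.
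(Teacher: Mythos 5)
Your proof is correct, and it takes a genuinely different route from the paper's. The paper dispatches this Corollary in one line: re-run the proof of Theorem~\ref{thm:ExistUniqueCk2+alphasHolderContinuityDomain}, replacing the invocation of the a~priori estimate \eqref{eq:CkalphasHolderContinuityDomain} by the finite-height version \eqref{eq:CkalphasHolderContinuityDomainStrip}. That is, the paper covers at the $C^{k+2,\alpha}_s$/Sobolev level, then passes to $C^{k,2+\alpha}_s$ via the embedding. You instead cover directly at the $C^{k,2+\alpha}_s$ level, applying the already-proved interior Schauder estimate \eqref{eq:ExistUniqueCk2+alphasHolderContinuityDomain} to a family of translates $(U_n,V_n)$ and then patching. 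Both are covering arguments exploiting the $x$-translation invariance of $A$ and of the $C^{k,2+\alpha}_s$ norms, but yours stays entirely in the world of sup-type H\"older norms.

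This difference is not merely cosmetic. The paper's route passes through $\|f\|_{W^{2k+6,p}(\sO'', y^{\beta-1})}$ and $\|u\|_{L^2(\sO'', y^{\beta-1})}$, and then needs to dominate these by $\|f\|_{C^{2k+6,\alpha}_s(\bar\sO'')}$ and $\|u\|_{C(\bar\sO'')}$. Since the Corollary only imposes $\height(\sO'')\leq\Upsilon$, not $\diam(\sO'')\leq\Upsilon$, the domain $\sO''$ may be unbounded, in which case $\int_{\sO''}y^{\beta-1}\,dx\,dy=\infty$ and the domination step fails as stated; one must first truncate $\sO''$ to a bounded subdomain containing a $d_1$-neighborhood of $\bar\sO'$, and some care is needed to see that the resulting constant does not depend on $\diam(\sO')$. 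Your approach sidesteps this entirely: the right-hand side of the per-piece estimate is already $\|f\|_{C^{2k+6,\alpha}_s(\bar V_n)}+\|u\|_{C(\bar V_n)}$, which is monotone under inclusion $V_n\subset\sO''$, so the sup over $n$ is immediately dominated by the right-hand side of \eqref{eq:Ck2+alphasHolderContinuityDomainStrip} with no measure-finiteness issue. So your route is in this respect cleaner.

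Two small remarks on your patching step, which you correctly flag as the part requiring care. First, when $z,z'$ lie in different pieces, the statement that the difference quotient is ``controlled by $\|u\|_{C(\bar V_n)}$'' is slightly imprecise: for the seminorm $[D^m(yD^2u)]_{C^\alpha_s}$ you need the $C^0$ part of the $C^{k,2+\alpha}_s(\bar U_n)$ norms (i.e.\ $\sup|D^m(yD^2u)|$), not the $C^0$ norm of $u$ itself; of course this is still dominated by $\sup_n\|u\|_{C^{k,2+\alpha}_s(\bar U_n)}$, so the conclusion stands. Second, to cover the near-diagonal case ($|z-z'|<\ell$) one should work with the trebled pieces $W_n:=U_{n-1}\cup U_n\cup U_{n+1}$ rather than the $U_n$ themselves, since two nearby points may fall into adjacent rather than identical $U_n$'s; this only changes constants. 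The key structural input, which you identify correctly, is that $\height(\sO'')\leq\Upsilon$ forces $s(z,z')\geq \ell/\sqrt{2\Upsilon+\ell}$ whenever $|z-z'|\geq\ell$, so the far-off-diagonal quotient is absorbed by the $C^0$ parts of the norms with a constant depending only on $\Upsilon$ and $d_1$. Finally, you may strengthen $\dist(\partial_1U_n,\partial_1V_n)\geq d_1/2$ to $\geq d_1$: the potentially delicate case (a point on $\bar\sO'\cap\{x=n\ell\pm\ell\}\cap\HH$ versus a point of $\partial_1\sO''$) is handled by noting that any segment joining two points of $\HH$ stays in $\HH$ by convexity, so its exit point from $\bar\sO'$ lies on $\partial_1\sO'$, whence the distance is at least $d_1$.
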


\subsection{Connections with previous research}
\label{subsec:Survey}

The literature on degenerate elliptic and parabolic equations is vast, with the well-known articles of Fabes, Kenig, and Serapioni \cite{Fabes_1982, Fabes_Kenig_Serapioni_1982a}, Fichera \cite{Fichera_1956, Fichera_1960}, Kohn and Nirenberg \cite{Kohn_Nirenberg_1967}, Murthy and Stampacchia \cite{Murthy_Stampacchia_1968} and the monographs of Levendorski{\u\i} \cite{LevendorskiDegenElliptic} and Ole{\u\i}nik and Radkevi{\v{c}} \cite{Oleinik_Radkevic, Radkevich_2009a, Radkevich_2009b}, being merely the tip of the iceberg. However, there has been far less research on higher-order regularity of solutions up to the portion of the domain boundary where the operator becomes degenerate. In this context, we recall the work of Daskalopoulos and her collaborators \cite{DaskalHamilton1998, Daskalopoulos_Rhee_2003}, Koch \cite{Koch}, Epstein and Mazzeo \cite{Epstein_Mazzeo_2010, Epstein_Mazzeo_annmathstudies, Epstein_Mazzeo_cont_est, Epstein_Mazzeo_cont_est_diag} and the second author \cite{Pop_2013b}.

Our work is most closely related to that of Koch \cite{Koch}. However, while Koch uses Sobolev weights which are comparable to ours, his methods --- which use Moser iterations and pointwise estimates for fundamental solutions --- are very different from those we employ in \cite{Feehan_Pop_regularityweaksoln}, which use Moser iterations and the abstract John-Nirenberg inequality, and so we avoid
any need to consider pointwise estimates for the fundamental solution for the Heston operator \eqref{eq:OperatorHestonIntro}. Moreover, the structure of the lower-order terms in the linear operators is simpler in \cite{Koch}, whereas the new $u_x$ term present in \eqref{eq:OperatorHestonIntro} causes considerable difficulty. Finally, Koch does not consider the case where $\partial\sO = \partial_0\sO\cup\overline{\partial_1\sO}$, where $A$ is degenerate along $\partial_0\sO$ but non-degenerate along $\partial_1\sO$.

\subsection{Some mathematical highlights of this article}
\label{subsec:Highlights}
Our approach in \S \ref{sec:H2Regularity} and \S \ref{sec:SobolevRegularity} to the higher-order Sobolev regularity theory for weak solutions to equation \eqref{eq:IntroBoundaryValueProblem} may appear to proceed by adapting a traditional strategy \cite[\S 8.3 and \S 8.4]{GilbargTrudinger}, but the degeneracy of the operator, $A$, in \eqref{eq:OperatorHestonIntro} makes this strategy far more complicated than one might expect.

The main difficulty is to improve the $L^2(\sO,\fw)$-estimate for $y^{1/2}Du$ implicit in the a priori $H^1(\sO,\fw)$-estimate for a solution to \eqref{eq:HestonVariationalEquation} to an $L^2(\sO,\fw)$-estimate for the gradient, $Du$. For this purpose, we use a trick due to Koch \cite[Lemma 4.6.1]{Koch} which yields an interior $L^2(\sO',\fw)$-estimate for $Du$ on a subdomain $\sO'\subset\sO\subsetneqq\HH$ with $\bar\sO'\subset\underline\sO$ and $\dist(\partial_1\sO', \partial_1\sO)>0$. It is unclear how to justify a global $L^2(\sO,\fw)$-estimate for $Du$ on a subdomain $\sO\subsetneqq\HH$ without a priori knowledge of the existence of smooth solutions, $u \in C^\infty(\bar\sO)$, to the boundary value problem \eqref{eq:IntroBoundaryValueProblem}, \eqref{eq:IntroBoundaryValueProblemBC}.

The essential idea in the development of higher-order Sobolev regularity is to take derivatives of the equation \eqref{eq:IntroBoundaryValueProblem} and estimate $k+2$ derivatives of $u$ in terms of $k$ derivatives of $f$. Such an approach is complicated by the presence of the degeneracy factor, $y$, because differentiating \eqref{eq:IntroBoundaryValueProblem} once with respect to $y$ yields unweighted second-order derivative terms, $u_{xx}, u_{xy}, u_{yy}$, \emph{without} the degeneracy factor, $y$, and these are even harder to estimate, precisely because the operator, $A$, in \eqref{eq:OperatorHestonIntro} is degenerate elliptic. It is this feature which partly accounts for the complexity of our Definition \ref{defn:HkWeightedSobolevSpaceNormPowery} of the higher-order weighted Sobolev spaces, $\sH^{k+2}(\sO, \fw)$.

Naturally, the same difficulty arises in \S \ref{sec:HolderRegularity} when we consider higher-order H\"older regularity, $C^{k,\alpha}_s(\underline\sO)$, with $k\geq 1$, and $C^{k,2+\alpha}_s(\underline\sO)$, of a solution $u \in H^1(\sO, \fw)$. However, at this stage, the difficulties have largely been overcome in \S \ref{sec:SobolevRegularity}.
The challenges to prove existence of solutions, $u$ --- whether in $C^{k,2+\alpha}_s(\underline\sO)\cap C(\bar\sO)$ or $C^{k,2+\alpha}_s(\bar\sO)$ --- to the elliptic boundary value problem \eqref{eq:IntroBoundaryValueProblem}, \eqref{eq:IntroBoundaryValueProblemBC} entirely within a Schauder framework is explained in more detail in \cite{Feehan_Pop_elliptichestonschauder}.

\subsection{Extensions}
\label{subsec:Extensions}
The Heston stochastic volatility process and its associated generator serve as paradigms for degenerate Markov processes and their degenerate-elliptic generators which appear widely in mathematical finance, so we briefly comment on two directions for extending our work in this article.

First, the two-dimensional Heston process has natural $d$-dimensional analogues \cite{DaFonseca_Grasselli_Tebaldi_2008} defined, for example, by coupling non-degenerate $(d-1)$-diffusion processes with degenerate one-dimensional processes
\cite{Cherny_Engelbert_2005, Mandl, Zettl}. We expect that all of the main results of this article should extend to the case of a degenerate-elliptic operator on a subdomain $\sO$ of a half-space $\HH=\RR^{d-1} \times \RR_+$,
\begin{equation}
\label{eq:OperatorHestonIntroHigherDimension}
Av :=  -x_d a^{ij} v_{x_ix_j} - b^iv_{x_i} + cv, \quad v\in C^\infty(\sO),
\end{equation}
under the assumptions that the matrix $(a^{ij})$ is strictly elliptic,  $b^d \geq \nu >0$,  for some constant $\nu >0$, and $c \geq 0$ and the coefficients $(a^{ij})$, $(b^i)$, and $c$ have suitable growth and regularity properties. See \cite{Feehan_Pop_mimickingdegen_pde} for an analysis with applications to probability theory based on a parabolic version of this type of elliptic operator as well as \cite{Feehan_maximumprinciple} for weak maximum principles for a general class of degenerate-elliptic operators.

Second, the important question of higher-order regularity of solutions up to the corner points $\overline{\partial_0\sO}\cap\overline{\partial_1\sO}$ is deferred to a subsequent article.

\subsection{Outline of the article}
\label{subsec:Guide}
For the convenience of the reader, we provide a brief outline of the article. In \S \ref{sec:ReviewHolderContinuity},
we review the variational inequality associated to the Heston operator \eqref{eq:OperatorHestonIntro} and we give the definition of the H\"older spaces used in our article.
In \S \ref{sec:H2Regularity}, we establish the $H^2(\underline\sO,\fw)$-regularity for solutions, $u\in H^1(\sO,\fw)$, concluding with Theorem \ref{thm:H2BoundSolutionHestonVarEqnSubdomainInterior}. In \S \ref{sec:SobolevRegularity}, we establish the $\sH^{k+2}(\underline\sO,\fw)$-regularity for solutions, $u\in H^1(\sO,\fw)$, for all $k\geq 0$, with proofs of Theorems \ref{thm:HkSobolevRegularityInterior} and \ref{thm:HkSobolevRegularityDomain}, together with Theorem \ref{thm:ExistUniqueHkSobolevRegularityDomain} and Corollary \ref{cor:CinftyGlobal}. Section \ref{sec:HolderRegularity} contains our proofs of $C^{k,\alpha}_s(\underline\sO)$-
regularity of solutions, $u\in H^1(\sO,\fw)$, in the form of Theorems \ref{thm:HolderContinuity_Dku_Interior} and \ref{thm:CkalphasHolderContinuityDomain}, together with proofs of $C^{k,2+\alpha}_s(\underline\sO)$-regularity and a Schauder a priori estimate, as part of Theorem \ref{thm:ExistUniqueCk2+alphasHolderContinuityDomain}, and Corollary \ref{cor:ExistUniqueCk2+alphasHolderContinuityDomain}.
Appendix \ref{sec:Appendix} collects some technical definitions and observations used in the proofs.

\subsection{Notation and conventions}
\label{subsec:Notation}
In the definition and naming of function spaces, including spaces of continuous functions, H\"older spaces, or Sobolev spaces, we follow Adams \cite{Adams_1975} and alert the reader to occasional differences in definitions between \cite{Adams_1975} and standard references such as Gilbarg and Trudinger \cite{GilbargTrudinger} or Krylov \cite{Krylov_LecturesHolder, Krylov_LecturesSobolev}.

If $V\subset U\subset \RR^d$ are open subsets, we write $V\Subset U$ when $U$ is bounded with closure $\bar U \subset V$. By $\supp\zeta$, for any $\zeta\in C(\RR^2)$, we mean the \emph{closure} in $\RR^2$ of the set of points where $\zeta\neq 0$.

We use $C=C(*,\ldots,*)$ to denote a constant which depends at most on the quantities appearing on the parentheses. In a given context, a constant denoted by $C$ may have different values depending on the same set of arguments and may increase from one inequality to the next.

\section{Preliminaries}
\label{sec:ReviewHolderContinuity}
In \S \ref{subsec:HolderRegularityPreliminaries}, we review our definition of the variational equation \eqref{eq:HestonVariationalEquation} corresponding to \eqref{eq:IntroBoundaryValueProblemBC}, in our framework of weighted Sobolev spaces. In \S \ref{subsec:LocalSupremumBounds}, we recall our local supremum estimates for solutions, $u\in H^1(\sO,\fw)$, to the variational equation \eqref{eq:HestonVariationalEquation} which we proved in \cite{Feehan_Pop_regularityweaksoln}, while in \S \ref{subsec:HolderContinuity}, we review our H\"older continuity results for solutions, $u\in H^1(\sO,\fw)$, which we also proved in \cite{Feehan_Pop_regularityweaksoln}. Finally, in \S \ref{subsec:DHspaces}, we give the definitions of higher-order weighted H\"older spaces due to Daskalopoulos
and Hamilton \cite{DaskalHamilton1998}.

\subsection{Variational equality}
\label{subsec:HolderRegularityPreliminaries}
We recall that \cite[Definition 2.22]{Daskalopoulos_Feehan_statvarineqheston}
\begin{equation}
\label{eq:HestonWithKillingBilinearForm}
\begin{aligned}
\fa(u,v) &:= \frac{1}{2}\int_\sO\left(u_xv_x + \varrho\sigma u_yv_x
+ \varrho\sigma u_xv_y + \sigma^2u_yv_y\right)y\,\fw\,dx\,dy
\\
&\quad - \frac{\gamma}{2}\int_\sO\left(u_x + \varrho\sigma u_y\right)v \,\frac{x}{\sqrt{1+x^2}}y\,\fw\,dx\,dy
\\
&\quad - \int_\sO(a_1y + b_1)u_xv\,\fw\,dx\,dy + \int_\sO c_0uv\,\fw\,dx\,dy, \quad\forall u, v \in H^1(\sO,\fw),
\end{aligned}
\end{equation}
is the \emph{bilinear form associated with the Heston operator}, $A$, in \eqref{eq:OperatorHestonIntro}, where
\begin{equation}
\label{eq:DefinitionA1B1}
a_1 := \frac{\kappa\varrho}{\sigma}-\frac{1}{2} \quad\hbox{and}\quad b_1 := c_0-q-\frac{\kappa\theta\varrho}{\sigma}.
\end{equation}
We shall also avail of the

\begin{assump}[Condition on the coefficients of the Heston operator]
\label{assump:HestonCoefficientb1}
The coefficients defining $A$ in \eqref{eq:OperatorHestonIntro} have the property that $b_1=0$ in \eqref{eq:DefinitionA1B1}.
\end{assump}

Assumption \ref{assump:HestonCoefficientb1} involves no loss of generality because, using a simple affine changes of variables on $\RR^2$ which map $(\HH,\partial\HH)$ onto $(\HH,\partial\HH)$ (see \cite[Lemma 2.2]{Daskalopoulos_Feehan_statvarineqheston}), we can arrange that $b_1 = 0$.

We recall the definition of the weighted Sobolev space $H^1(\sO, \fw)$ in \eqref{eq:H1WeightedSobolevSpace} and \eqref{eq:H1NormHeston}. Given a subset $T\subset\partial\sO$, we let $H^1_0(\sO\cup T,\fw)$ be the closure\footnote{Note that $H^1_0(\sO\cup \bar T,\fw)=H^1_0(\sO\cup T,\fw)=H^1_0(\sO\cup \mathring{T},\fw)$, since  $C^\infty_0(\sO\cup \bar T) =  C^\infty_0(\sO\cup T) =  C^\infty_0(\sO\cup \mathring{T})$, where $\mathring{T}$ and $\bar T$ denote the interior and closure, respectively, of $T$ in $\partial\sO$.} in $H^1(\sO,\fw)$ of $C^\infty_0(\sO\cup T)$. Given a source function $f\in L^2(\sO,\fw)$ and recalling that $\underline{\sO}=\sO\cup\partial_0\sO$, we call a function $u\in H^1(\sO,\fw)$ a \emph{solution to the variational equation} for the Heston operator if
\begin{equation}
\label{eq:HestonVariationalEquation}
\fa(u,v) = (f,v)_{L^2(\sO,\fw)}, \quad \forall v \in H^1_0(\underline{\sO},\fw).
\end{equation}
Given a subset $T\subset\partial\sO$ and $g\in H^1(\sO,\fw)$, we say that $u\in H^1(\sO,\fw)$ obeys \emph{$u=g$ on $T\subset\partial\sO$ in the sense of $H^1$} if
\begin{equation}
\label{eq:HestonVariationalEquation_BC}
u-g \in H^1_0(\sO\cup T^c,\fw),
\end{equation}
where $T^c := \partial\sO\less T$. In our application, we shall only consider $T\subseteqq\partial_1\sO$.

Recall from \cite[Definition 2.20]{Daskalopoulos_Feehan_statvarineqheston} that
\begin{equation}
\label{eq:H2WeightedSobolevSpace}
H^2(\sO,\fw) := \{u \in W^{2,2}_{\loc}(\sO): \|u\|_{H^2(\sO,\fw)} < \infty\},
\end{equation}
where
\begin{equation}
\label{eq:H2NormHeston}
\|u\|_{H^2(\sO,\fw)}^2 := \int_\sO\left( y^2|D^2u|^2 + (1+y)^2|Du|^2 + (1+y)u^2 \right)\,\fw\,dx\,dy.
\end{equation}
If $u \in H^2(\sO,\fw)$ and $g \in H^1(\sO,\fw)$, we recall from \cite[Lemma 2.29]{Daskalopoulos_Feehan_statvarineqheston}
that $u$ is a solution to \eqref{eq:IntroBoundaryValueProblem} (a.e. on $\sO$) and \eqref{eq:IntroBoundaryValueProblemBC} (in the sense of $H^1$) if and only if $u-g\in H^1_0(\underline{\sO},\fw)$ and $u$ is a solution to the variational equation \eqref{eq:HestonVariationalEquation}.


\subsection{Local supremum bounds near the degenerate boundary}
\label{subsec:LocalSupremumBounds}
If $S \subset \bar{\HH}$ is a Borel measurable subset, we let $|S|_{\beta}$ denote the volume of $S$ with respect to the measure $y^\beta \,dx\,dy$, and $|S|_{\fw}$ denote the volume of $S$ with respect to the measure $\fw \,dx\,dy$.
We recall from \cite{Feehan_Pop_regularityweaksoln} the following analogues of \cite[Proposition 4.5.1]{Koch} and \cite[Theorem 8.15]{GilbargTrudinger}.

\begin{thm}[Supremum estimates near points in $\partial_0\sO$]
\label{thm:MainSupremumEstimatesInterior}
\cite[Theorem 1.5]{Feehan_Pop_regularityweaksoln}
Let $p>2+\beta$ and let $R_0$ be a positive constant. Then there are positive constants, $C=C(\Lambda,\nu_0,R_0,p)$ and $R_1=R_1(R_0)<R_0$, such that the following holds. Let $\sO\subseteqq\HH$ be a domain. If $u \in H^1(\sO,\fw)$ satisfies the variational equation \eqref{eq:HestonVariationalEquation} with source function $f \in L^2(\sO,\fw)$, and $z_0 \in \partial_0\sO$ is such that
$$
\HH\cap B_{R_0}(z_0) \subset \sO,
$$
and $f$ obeys
\begin{equation}
\label{eq:Lsfcondition_ball}
f \in L^p(B_{R_0}^+(z_0),y^{\beta-1}),
\end{equation}
then $u \in L^\infty(B_{R_1}^+(z_0))$, and
\begin{equation}
\label{eq:MainSupremumEstimates2}
\|u\|_{L^\infty(B_{R_1}^+(z_0))}
\leq  C\left( \|f\|_{L^p(B_{R_0}^+(z_0),y^{\beta-1})} + \|u\|_{L^2(B_{R_0}^+(z_0),y^{\beta-1})} \right).
\end{equation}
\end{thm}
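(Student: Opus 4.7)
The plan is to prove \eqref{eq:MainSupremumEstimates2} via weighted Moser iteration adapted to the degenerate Heston bilinear form $\fa$, following the general outline of \cite[Theorem 8.15]{GilbargTrudinger} in the non-degenerate case but recast in the weighted Sobolev framework of \cite{Daskalopoulos_Feehan_statvarineqheston}. The exponent condition $p > 2+\beta$ plays the role of the classical ``$p>d$'' hypothesis, reflecting the fact that the measure $y^{\beta-1}\,dx\,dy$ together with a quadratic form whose leading part carries an extra factor of $y$ has effective dimension $2+\beta$.

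Because $\HH \cap B_{R_0}(z_0) \subset \sO$ with $z_0 \in \partial_0\sO$, any $\varphi \in C^\infty_0(B_{R_0}(z_0))$ extended by zero lies in $C^\infty_0(\underline{\sO})$ and is therefore a valid test function in \eqref{eq:HestonVariationalEquation}; this is precisely why no boundary condition at $y=0$ needs to be imposed. Without loss of generality I may assume $u\geq 0$ (otherwise split $u = u_+ - u_-$ and treat each piece separately) and invoke Assumption \ref{assump:HestonCoefficientb1} to set $b_1=0$. For a nested family of cutoffs $\eta_j \in C^\infty_0(B_{R_{j-1}}(z_0))$ with $\eta_j \equiv 1$ on $B_{R_j}(z_0)$, exponents $q_j := \chi^j$ for a $\chi>1$ to be fixed below, and a truncation $u_N := \min(u, N)$ introduced to ensure that all integrals are finite, I would use the test functions
$$
v_j := \eta_j^2\,u_N^{2q_j-1} \in H^1_0(\underline{\sO},\fw).
$$

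Substituting $v_j$ into \eqref{eq:HestonVariationalEquation} and invoking the ellipticity \eqref{eq:HestonModulusEllipticity}, the principal part of $\fa(u,v_j)$ dominates a positive multiple of
$$
\int_{B_{R_{j-1}}^+(z_0)} y\,\eta_j^2\,u_N^{2q_j-2}\,|Du_N|^2\,\fw\,dx\,dy,
$$
while the $\eta_j D\eta_j$ cross terms and the first-order pieces of $A$ are absorbed by Cauchy-Schwarz and Young's inequality together with a weighted Hardy-type inequality that uses $\beta>0$ to control $\int u_N^{2q_j}\fw$ in terms of the principal part. A weighted Sobolev-Poincar\'e inequality of the form
$$
\Bigl(\int |\eta_j u_N^{q_j}|^{2\chi}\,y^{\beta-1}\,dx\,dy\Bigr)^{1/\chi} \leq C \int \Bigl( y|D(\eta_j u_N^{q_j})|^2 + (1+y)(\eta_j u_N^{q_j})^2 \Bigr)\,\fw\,dx\,dy,
$$
valid with $\chi = (2+\beta)/\beta > 1$ and obtained by combining the standard planar Sobolev inequality with the degenerate weight, converts the energy estimate into a reverse-H\"older-type bound between the $L^{2q_j\chi}$ and $L^{2q_j}$ norms (with respect to $y^{\beta-1}\,dx\,dy$) on the nested half-balls. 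The source term is handled by H\"older's inequality on the conjugate pair $(p,p')$, where the hypothesis $p > 2+\beta$ ensures that the dual exponent $p'$ integrates correctly against the weight.

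Iterating over $j=0,1,2,\ldots$ on a geometrically shrinking sequence of radii $R_j \searrow R_1$ with exponents $q_j=\chi^j$, controlling the product $\prod (Cq_j)^{1/q_j}$ by convergence of $\sum j\chi^{-j}$, and finally letting $N\to\infty$ by monotone convergence, produces the stated bound \eqref{eq:MainSupremumEstimates2}. \textbf{The main obstacle} is controlling the first-order terms of $A$, whose coefficients are not proportional to $y$ and therefore do not self-absorb into the degenerate principal part: after eliminating $b_1$ via Assumption \ref{assump:HestonCoefficientb1}, the residual $yu_x$ and $(\theta-y)u_y$ contributions must be tamed using a weighted Hardy inequality that crucially exploits $\beta = 2\kappa\theta/\sigma^2 > 0$. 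A secondary technical point is the precise formulation of the weighted Sobolev embedding with exponent $\chi=(2+\beta)/\beta$ needed to close the iteration; in the more intricate proof of H\"older continuity in \cite{Feehan_Pop_regularityweaksoln} this is derived from an abstract John-Nirenberg inequality, but for the $L^\infty$ bound alone a direct Sobolev-Hardy argument on half-balls suffices.
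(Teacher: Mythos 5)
This theorem is not proved in the present paper; it is quoted from \cite[Theorem 1.6]{Feehan_Pop_regularityweaksoln}, and the paper's own discussion in \S\ref{subsec:Survey} confirms that the method used there is Moser iteration (together with the abstract John--Nirenberg inequality for the accompanying H\"older estimates). Your outline is therefore on the right track at the level of strategy, but two of your technical assertions need correcting, and both concern steps you label as the core of the argument.

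First, you flag the $\kappa(\theta-y)u_y$ drift as the chief difficulty, to be ``tamed by a weighted Hardy inequality.'' In fact that term does \emph{not} appear in the bilinear form $\fa(u,v)$ of \eqref{eq:HestonWithKillingBilinearForm}. The weight $\fw = y^{\beta-1}e^{-\gamma\sqrt{1+x^2}-\mu y}$, with $\beta = 2\kappa\theta/\sigma^2$ and $\mu = 2\kappa/\sigma^2$ as in \eqref{eq:DefnBetaMu}, is chosen precisely so that integrating $-\tfrac{\sigma^2}{2}y u_{yy}\,v\,\fw$ by parts in $y$ produces a factor $(\beta y^{-1} - \mu)$ whose $\kappa\theta\, u_y v\,\fw$ and $-\kappa y\,u_y v\,\fw$ pieces exactly cancel the contribution of $-\kappa(\theta-y)u_y\,v\,\fw$ from the first-order operator. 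Every first-order term that survives in \eqref{eq:HestonWithKillingBilinearForm}, including $a_1 y\,u_x v\,\fw$ once $b_1 = 0$ via Assumption \ref{assump:HestonCoefficientb1}, carries the additional factor $y$ and so lives at the same weight as the principal part; these absorb by Cauchy--Schwarz and Young in the usual Moser fashion without any Hardy-type input. The role of $\beta > 0$ is already encoded in the weight, not in a separate inequality.

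Second, the weighted Sobolev--Poincar\'e inequality you propose is not dimensionally consistent. Testing it with a cutoff $v \equiv 1$ on $B_\rho^+(z_0)$ supported in $B_{2\rho}^+(z_0)$ with $z_0 \in \partial\HH$ and $|Dv| \lesssim \rho^{-1}$, the right-hand side scales as $\rho^\beta$ while $\bigl(\int v^{2\chi} y^{\beta-1}\,dx\,dy\bigr)^{1/\chi}$ scales as $\rho^{(1+\beta)/\chi}$, forcing $\chi \leq (1+\beta)/\beta$ for the inequality to hold uniformly on shrinking half-balls. Your proposed $\chi = (2+\beta)/\beta$ exceeds this bound; that value is instead consistent with the weight $y^\beta$ on the left-hand side. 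You should pin down the precise form of the degenerate Sobolev embedding actually used in the iteration and verify how the threshold $p > 2+\beta$ then emerges from the H\"older step applied to the source term. That embedding and its interaction with the nested half-balls, rather than the drift, is where the genuine technical content of \cite[Theorem 1.6]{Feehan_Pop_regularityweaksoln} lies.
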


\begin{thm}[Supremum estimates near points in $\overline{\partial_0\sO}\cap\overline{\partial_1\sO}$]
\label{thm:MainSupremumEstimatesBoundary}
\cite[Theorem 1.6]{Feehan_Pop_regularityweaksoln}
Let $K$ be a finite right circular cone, let $p>2+\beta$, and let $R_0>0$ be a positive constant. Then there are positive constants, $C=C(K,\Lambda,\nu_0,R_0,p)$ and $R_1=R_1(K,\Lambda,\nu_0, R_0)<R_0$ such that the following holds. Let $\sO\subsetneqq\HH$ be a domain. If $u \in H^1(\sO,\fw)$ satisfies the variational equation \eqref{eq:HestonVariationalEquation} with source function $f \in L^2(\sO,\fw)$ and $z_0 \in \overline{\partial_0\sO}\cap\overline{\partial_1\sO}$ is such that $\sO$ obeys an interior cone condition at $z_0$ with cone $K$, and
$$
u = 0 \hbox{ on } \partial_1\sO\cap B_{R_0}(z_0) \quad\hbox{(in the sense of $H^1$)},
$$
and $f$ obeys \eqref{eq:Lsfcondition_ball}, then $u \in L^\infty(B_{R_1}^+(z_0))$ and $u$ satisfies \eqref{eq:MainSupremumEstimates2}.
\end{thm}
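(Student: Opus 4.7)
My plan is to reduce the boundary case to the interior supremum estimate of Theorem \ref{thm:MainSupremumEstimatesInterior} by first extending $u$ by zero across $\partial_1\sO\cap B_{R_0}(z_0)$ and then running a weighted Moser iteration on half-balls centered at the corner point $z_0$. Since $u=0$ on $\partial_1\sO\cap B_{R_0}(z_0)$ in the sense of $H^1$, the extension $\tilde u$ by zero to $\HH\cap B_{R_0}(z_0)$ lies in $H^1(\HH\cap B_{R_0}(z_0),\fw)$ and its distributional gradient agrees with $Du$ on $\sO$ and vanishes on the complement. In particular, any test function of the form $v=\eta^{2}\Phi(\tilde u)$ with $\eta\in C^{\infty}_{c}(B_{R_0}(z_0))$ and $\Phi$ locally Lipschitz with $\Phi(0)=0$ lies in $H^{1}_{0}(\sO\cup\partial_{0}\sO,\fw)$ and is therefore admissible in the variational equation \eqref{eq:HestonVariationalEquation}.

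Next I plug in test functions $v=\eta^{2}|\tilde u_{M}|^{q-2}\tilde u_{M}$, where $\tilde u_{M}:=\min(|\tilde u|,M)\,\mathrm{sgn}(\tilde u)$ is a standard truncation needed to ensure integrability and $q\ge 2$. Using the ellipticity estimate \eqref{eq:HestonModulusEllipticity}, the first-order drift terms in $\fa$ absorbed via weighted Cauchy--Schwarz (the $\gamma$-term and the $b_{1},a_{1}y$ terms), and the standard trick of separating the first-order $u_{x}$ coefficient by splitting into the degenerate part $\tfrac{y}{2}u_{x}$ and the non-degenerate part $(c_{0}-q)u_{x}$ and applying the interpolation inequality \cite[Lemma 3.2]{Feehan_Pop_mimickingdegen_pde}, I obtain a Caccioppoli-type inequality
\begin{equation*}
\int_{B_{R_0}^{+}(z_{0})} y\,\eta^{2}\bigl|D(|\tilde u|^{q/2})\bigr|^{2}\fw\,dx\,dy
\;\le\; C\,q^{2}\!\!\int_{B_{R_0}^{+}(z_{0})}\!\!\!\bigl(|D\eta|^{2}+\eta^{2}\bigr) y|\tilde u|^{q}\fw\,dx\,dy + C\,q\int_{B_{R_0}^{+}(z_{0})}|f||\tilde u|^{q-1}\eta^{2}\fw\,dx\,dy,
\end{equation*}
after letting $M\to\infty$. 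The crucial weighted Sobolev embedding (of the type $H^{1}(B^{+},y\fw)\hookrightarrow L^{2\chi}(B^{+},y^{\beta-1})$ for some $\chi=\chi(\beta)>1$) then converts this into a reverse-H\"older type inequality relating $\|\tilde u\|_{L^{q\chi}}$ on $B_{R}$ to $\|\tilde u\|_{L^{q}}$ on $B_{R'}$ with $R<R'\le R_{0}$, provided I can verify a \emph{weighted Poincar\'e inequality} for $\eta\tilde u$. This is precisely where the interior cone hypothesis enters: on a ball $B_{R}(z_{0})$ with $R\le R_{1}$ chosen depending on the aperture of $K$, the set $(\HH\cap B_{R}(z_{0}))\setminus\sO$ contains a truncated cone congruent to a fixed portion of $K$ and hence carries positive weighted capacity, uniformly in $R$, which allows the Poincar\'e inequality for the zero-extended $\tilde u$ on $B_{R}^{+}(z_{0})$.

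Finally I run the Moser iteration with exponents $q_{k}=2\chi^{k}$ on a nested sequence of radii $R_{k}\downarrow R_{1}$, absorbing the $\|f\|_{L^{p}(\cdot,y^{\beta-1})}$ source term via H\"older's inequality (here $p>2+\beta$ is what makes the geometric series converge), to obtain
\begin{equation*}
\|\tilde u\|_{L^{\infty}(B_{R_{1}}^{+}(z_{0}))}\le C\bigl(\|\tilde u\|_{L^{2}(B_{R_{0}}^{+}(z_{0}),y^{\beta-1})}+\|f\|_{L^{p}(B_{R_{0}}^{+}(z_{0}),y^{\beta-1})}\bigr),
\end{equation*}
which is the desired estimate \eqref{eq:MainSupremumEstimates2}. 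The main obstacle I anticipate is verifying the weighted Poincar\'e inequality uniformly in the geometry along $\overline{\partial_{0}\sO}\cap\overline{\partial_{1}\sO}$, because near $y=0$ the weight $y^{\beta-1}$ may be singular (when $\beta<1$) and one cannot directly invoke a Euclidean cone/capacity argument; instead one needs to track constants through a cone-adapted change of variables and verify that the portion of $K$ lying in $\HH$ carries sufficient $y^{\beta}$-mass to yield a Poincar\'e constant depending only on $K$ and $\beta$. A secondary obstacle is that the non-divergence first-order term $(c_{0}-q-y/2)u_{x}$ is not absorbable into the degenerate-elliptic principal part in the standard Moser way, so its contribution must be controlled by the interpolation inequality and folded into the $\|u\|_{L^{2}(\cdot,y^{\beta-1})}$ term on the right-hand side.
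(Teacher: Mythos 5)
The paper does not contain a proof of this statement; it is quoted from \cite[Theorem 1.7]{Feehan_Pop_regularityweaksoln}, so there is no printed argument here to compare against. From the remark in \S\ref{subsec:Survey} that the methods of \cite{Feehan_Pop_regularityweaksoln} ``use Moser iteration and the abstract John-Nirenberg inequality,'' your overall Moser-iteration framework is in the right family, and your reduction to admissible test functions $v=\eta^2\Phi(\tilde u)$ via the extension of $u$ by zero across $\partial_1\sO\cap B_{R_0}(z_0)$ is the natural way to exploit the vanishing boundary data.

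There is, however, a concrete gap in the step where you bring in the cone hypothesis. You introduce a weighted Poincar\'e inequality and justify it by asserting that $(\HH\cap B_R(z_0))\setminus\sO$ contains a truncated cone of positive weighted capacity. A cone in the complement of $\sO$ is the \emph{exterior} cone condition for $\sO$, whereas the theorem hypothesizes the \emph{interior} cone condition, which in the terminology preceding Definition~\ref{defn:RegularDomain} means that $\HH\setminus\bar\sO$ obeys an exterior cone condition at $z_0$, i.e., a cone sits \emph{inside} $\sO$. These are opposite geometric requirements, so you have invoked a fact you are not given. Furthermore, for a pure supremum estimate by Moser iteration, no Poincar\'e inequality is required: the reverse-H\"older gain comes directly from a weighted Sobolev embedding applied to the compactly supported function $\eta\,\tilde u^{q/2}$, and that embedding makes no reference to the geometry of $\partial_1\sO$ near $z_0$; contrast this with Theorem~\ref{thm:MainContinuityBoundaryL2RHSu} on H\"older continuity, where \emph{both} interior and exterior cone conditions are assumed, precisely because oscillation decay does call on a capacity-type argument. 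Your proposal therefore inserts a superfluous step, assigns the cone hypothesis a role it cannot play given the interior/exterior mismatch, and leaves unexplained what the interior cone condition is actually for in the statement of the theorem.
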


\subsection{H\"older continuity up to the degenerate boundary for solutions to the variational equation}
\label{subsec:HolderContinuity}
We recall the definition of the \emph{Koch distance function}, $s(\cdot,\cdot)$, on $\HH$ introduced by Koch in \cite[p.~11]{Koch},
\begin{equation}
\label{eq:IntroKochDistance}
\begin{aligned}
s(z,z_0) := \frac{|z-z_0|}{\sqrt{y + y_0 + |z-z_0|}}, \quad \forall z=(x,y), z_0=(x_0,y_0)\in \bar\HH,
\end{aligned}
\end{equation}
where $|z-z_0|^2 = (x-x_0)^2 + (y-y_0)^2$. The Koch distance function is equivalent to the \emph{cycloidal distance function} introduced by Daskalopoulos and Hamilton in \cite[p.~901]{DaskalHamilton1998} for the study of the porous medium equation.

Following \cite[\S 1.26]{Adams_1975}, for a domain $U\subset\HH$, we let $C(U)$ denote the vector space of continuous functions on $U$ and let $C(\bar U)$ denote the Banach space of functions in $C(U)$ which are bounded and uniformly continuous on $U$, and thus have unique bounded, continuous extensions to $\bar U$, with norm
$$
\|u\|_{C(\bar U)} := \sup_{U}|u|.
$$
Noting that $U$ may be \emph{unbounded}, we let $C_{\loc}(\bar U)$ denote the linear subspace of functions $u\in C(U)$ such that $u\in C(\bar V)$ for every precompact open subset $V\Subset \bar U$. Daskalopoulos and Hamilton provide the

\begin{defn}[$C^\alpha_s$ norm and Banach space]
\label{defn:Calphas}
\cite[p. 901]{DaskalHamilton1998}
Given $\alpha \in (0,1)$ and a domain $U\subset\HH$, we say that $u\in C^\alpha_s(\bar U)$ if $u\in C(\bar U)$ and
\begin{equation}
\label{eq:CalphasNorm}
\|u\|_{C^\alpha_s(\bar U)} := [u]_{C^\alpha_s(\bar U)} + \|u\|_{C(\bar U)} < \infty,
\end{equation}
where
\begin{equation}
\label{eq:CalphasSeminorm}
[u]_{C^\alpha_s(\bar U)} := \sup_{\stackrel{z_1,z_2\in U}{z_1\neq z_2}}\frac{|u(z_1)-u(z_2)|}{s(z_1,z_2)^\alpha}.
\end{equation}
We say that $u\in C^\alpha_s(\underline{U})$ if $u\in C^\alpha_s(\bar V)$ for all precompact open subsets $V\Subset \underline{U}$, recalling that $\underline{U} := U\cup\partial_0 U$. We let $C^\alpha_{s,\loc}(\bar U)$ denote the linear subspace of functions $u \in C^\alpha_s(U)$ such that $u\in C^\alpha_s(\bar V)$ for every precompact open subset $V\Subset \bar U$.
\end{defn}

It is known that $C^\alpha_s(\bar U)$ is a Banach space \cite[\S I.1]{DaskalHamilton1998} with respect to the norm \eqref{eq:CalphasNorm}. We recall the following analogues of \cite[Theorem 8.27 and 8.29]{GilbargTrudinger} and \cite[Theorem 4.5.5 and 4.5.6]{Koch}.

\begin{thm} [H\"older continuity near points in $\partial_0\sO$ for solutions to the variational equation]
\label{thm:MainContinuityInteriorL2RHSu}
\cite[Theorem 1.11]{Feehan_Pop_regularityweaksoln}
Let $p > \max\{4,2+\beta\}$ and let $R_0$ be a positive constant. Then there are positive constants, $R_1 = R_1(R_0) < R_0$, and $C=C(\Lambda,\nu_0,R_0,p)$, and $\alpha = \alpha(\Lambda,\nu_0,R_0,p) \in (0,1)$
such that the following holds. Let $\sO\subseteqq\HH$ be a domain. If $u \in H^1(\sO,\fw)$ satisfies the variational equation \eqref{eq:HestonVariationalEquation} with source function $f \in L^2(\sO,\fw)$ and $z_0 \in \partial_0\sO$ is such that
$$
\HH\cap B_{R_0}(z_0) \subset \sO,
$$
and $f$ obeys \eqref{eq:Lsfcondition_ball}, then $u \in C^\alpha_s(\bar B_{R_1}^+(z_0))$ and
\begin{equation}
\label{eq:MainContinuity3L2RHSu}
\|u\|_{C^\alpha_s(\bar B_{R_1}^+(z_0))} \leq C\left(\|f\|_{L^p(B_{R_0}^+(z_0),y^{\beta-1})} + \|u\|_{L^2(B_{R_0}^+(z_0))}\right).
\end{equation}
\end{thm}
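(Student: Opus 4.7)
\textbf{Proof plan for Theorem \ref{thm:MainContinuityInteriorL2RHSu}.}
The strategy is a De~Giorgi--Nash--Moser-type argument adapted to the cycloidal geometry on $\HH$ induced by the Koch distance \eqref{eq:IntroKochDistance}. The basic plan is to derive an oscillation-decay inequality on a nested sequence of cycloidal balls centered at the boundary point $z_0 \in \partial_0\sO$, and then iterate it to obtain a power-type modulus of continuity measured by $s(\cdot,\cdot)$. The natural ``scales'' in this problem are the cycloidal balls $\sB_r^+(z_0)$, which, for $z_0 \in \partial\HH$, satisfy $\sB_r(z_0) \subset \HH \cap B_{2r^2}(z_0)$ by \eqref{eq:SimpleCycloidBallInsideEuclid}, so that the supremum estimates from Theorem \ref{thm:MainSupremumEstimatesInterior} may be applied on Euclidean half-balls obtained from cycloidal ones, provided $R_1$ is chosen so small that $2R_1^2 \le R_0$.

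First, I would derive a Caccioppoli-type energy inequality for solutions and, more importantly, for functions of the form $(u-k)_+$ and $(u-k)_-$, on pairs of nested cycloidal half-balls $\sB_{r'}^+(z_0) \subset \sB_r^+(z_0)$. The point is to exploit the degeneracy factor $y$ in the principal part of $\fa$ together with the balance $\beta = 2\kappa\theta/\sigma^2$ built into the weight $\fw$, so that the resulting weighted Sobolev norm is compatible with a \emph{weighted Sobolev--Poincar\'e inequality} on cycloidal balls; this step uses in an essential way that $p > 2+\beta$ to absorb the source-term contribution via H\"older's inequality against $y^{\beta-1}$. Next, applying Theorem \ref{thm:MainSupremumEstimatesInterior} to both $u$ and $-u$ on suitable subballs yields an $L^\infty$ bound controlled by an $L^2$ average plus a weighted $L^p$ norm of $f$; this is the ``supersolution'' half of a Moser-type scheme.

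The core of the proof is a \emph{weak Harnack inequality} for non-negative weak supersolutions: there exist $q>0$ and $C,\theta>0$ such that
\[
\inf_{\sB_{r/2}^+(z_0)} u + C r^{\delta}\|f\|_{L^p(B_{R_0}^+(z_0),y^{\beta-1})}
\;\ge\; \theta \left(\fint_{\sB_r^+(z_0)} u^q \, y^{\beta-1}\,dx\,dy\right)^{1/q},
\]
where $\delta>0$ arises from the Sobolev scaling and the hypothesis $p>\max\{4,2+\beta\}$. I would prove this by running the Moser iteration on $u^{-s}$ for small positive exponents using logarithmic test functions, and then ``crossing the zero exponent'' via the abstract John--Nirenberg inequality applied to $\log u$ on cycloidal balls, exactly as in Koch's approach to the porous-medium model \cite{Koch} and as referenced in \S\ref{subsec:Survey}. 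Combining the weak Harnack bound with the sup estimate yields the oscillation-decay inequality
\[
\osc_{\sB_{r/2}^+(z_0)} u \;\le\; \gamma \osc_{\sB_r^+(z_0)} u + C r^{\delta}\|f\|_{L^p(B_{R_0}^+(z_0),y^{\beta-1})},
\]
with $\gamma \in (0,1)$ independent of $r$.

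Finally, standard iteration on the dyadic sequence $r_j = 2^{-j} R_1$ converts the oscillation decay into a power bound $\osc_{\sB_{r_j}^+(z_0)} u \le C r_j^{\alpha}(\|f\|_{L^p} + \|u\|_{L^2})$ for some $\alpha \in (0,1)$ depending on $\gamma$ and $\delta$. Since the same scheme can be centered at any point $z_1 \in \bar B_{R_1}^+(z_0)$ (using the point-dependent versions of Theorem \ref{thm:MainSupremumEstimatesInterior} together with uniformity of the cycloidal ball geometry near $\partial\HH$), one deduces the pointwise H\"older bound $|u(z_1) - u(z_1')| \le C s(z_1,z_1')^\alpha \bigl(\|f\|_{L^p} + \|u\|_{L^2}\bigr)$, which together with the $L^\infty$ bound gives \eqref{eq:MainContinuity3L2RHSu}. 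The main obstacle is the weak Harnack inequality: the Heston operator's first-order term $-(c_0-q-y/2)v_x$ is \emph{not} subordinate to the degenerate principal part $-(y/2)v_{xx}$, so the Moser iteration and the John--Nirenberg step must be carried out in a weighted space in which the $u_x$ term can be absorbed, relying on the precise choice of weight $\fw$ and on the cycloidal (rather than Euclidean) doubling properties.
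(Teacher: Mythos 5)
Theorem~\ref{thm:MainContinuityInteriorL2RHSu} is quoted verbatim from the companion article \cite{Feehan_Pop_regularityweaksoln}; the present paper contains no proof of it, only a one-line characterization of the method used there --- Moser iteration together with the \emph{abstract} John--Nirenberg inequality on cycloidal balls, deliberately avoiding Koch's pointwise fundamental-solution estimates. Your outline matches that characterization at the architectural level: Caccioppoli estimates on nested cycloidal half-balls, local boundedness via Theorem~\ref{thm:MainSupremumEstimatesInterior}, a weak Harnack inequality obtained by Moser iteration on negative powers and a John--Nirenberg ``crossing of the zero exponent,'' then oscillation decay and dyadic iteration in the Koch metric $s$. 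Two caveats. First, you attribute the John--Nirenberg step to Koch, but \S\ref{subsec:Survey} emphasizes that this is exactly where \cite{Feehan_Pop_regularityweaksoln} \emph{departs} from Koch, who instead uses Moser iteration with pointwise fundamental-solution estimates; the abstract John--Nirenberg route is the distinctive feature of the cited proof, and conflating the two obscures the motivation for the hypothesis $p > \max\{4, 2+\beta\}$. Second, the obstacle you flag only in your final sentence --- that the drift $(c_0-q-y/2)u_x$ is not subordinate to $-(y/2)u_{xx}$ and so cannot be absorbed by interpolation in the usual Moser/John--Nirenberg machinery --- is exactly what \S\ref{subsec:Survey} calls ``considerable difficulty'' and is the genuinely hard step; your plan names the problem but does not supply the device (the precise use of the weight $\fw$ with the balance $\beta = 2\kappa\theta/\sigma^2$ and $\mu = 2\kappa/\sigma^2$, and the anisotropic test-function choices it enables) that makes the iteration close. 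As a blind sketch the route is the right one, but it would not compile into a proof without filling in that step.
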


\begin{thm} [H\"older continuity near points in $\overline{\partial_0\sO}\cap\overline{\partial_1\sO}$ for solutions to the variational equation]
\label{thm:MainContinuityBoundaryL2RHSu}
\cite[Theorem 1.13]{Feehan_Pop_regularityweaksoln}
Let $K$ be a finite, right circular cone, let $p > \max\{4,2+\beta\}$, and let $R_0$ be a positive constant. Then there are positive constants, $R_1 = R_1(K,\Lambda,\nu_0,R_0) < R_0$, and $C=C(K,\Lambda,\nu_0,R_0,p)$, and $\alpha = \alpha(K,\Lambda,\nu_0,R_0,p) \in (0,1)$ such that the following holds. Let $\sO\subsetneqq\HH$ be a domain. If $u \in H^1(\sO,\fw)$ satisfies the variational equation \eqref{eq:HestonVariationalEquation} with source function $f \in L^2(\sO,\fw)$ and $z_0 \in \overline{\partial_0\sO}\cap\overline{\partial_1\sO}$ is such that $f$ obeys \eqref{eq:Lsfcondition_ball}, and
$$
u = 0 \hbox{ on }\partial_1\sO\cap B_{R_0}(z_0) \quad\hbox{(in the sense of $H^1$)},
$$
and $\sO$ obeys an interior and exterior cone condition with cone $K$ at $z_0$ and a uniform exterior cone condition with cone $K$ along $\overline{\partial_1\sO} \cap B_{R_0}(z_0)$, then $u \in C^\alpha_s(\bar B_{R_1}^+(z_0))$ and satisfies \eqref{eq:MainContinuity3L2RHSu}.
\end{thm}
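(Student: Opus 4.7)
The plan is to reduce the corner-point problem to the already established interior case (Theorem \ref{thm:MainContinuityInteriorL2RHSu}) combined with the supremum bound (Theorem \ref{thm:MainSupremumEstimatesBoundary}) by running an oscillation-decay argument on cycloidal balls $\sB_r(z_0)$. First I would extend $u$ by zero across $\partial_1\sO\cap B_{R_0}(z_0)$; because $u=0$ on $\partial_1\sO\cap B_{R_0}(z_0)$ in the sense of $H^1$ and the coefficients of $A$ are constant, this extension $\tilde u$ lies in $H^1(B_{R_0}^+(z_0)\cup(\HH\setminus\bar\sO),\fw)$ and still satisfies a variational equation with source equal to $f$ on $\sO$ and $0$ on $\HH\setminus\bar\sO$. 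The exterior cone condition ensures that $\HH\setminus\bar\sO$ has a cycloidal-measure-definite portion of each half-ball $B_r^+(z_0)$, so the zero-extension carries quantitative vanishing information into the interior problem.

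Next, I would set $M(r):=\sup_{\sB_r^+(z_0)}u$, $m(r):=\inf_{\sB_r^+(z_0)}u$ and $\om(r):=M(r)-m(r)$, and prove a decay estimate
\begin{equation*}
\om(\tau r)\leq \gamma\,\om(r) + C r^{\delta}\bigl(\|f\|_{L^p(B_{R_0}^+(z_0),y^{\beta-1})}+\|u\|_{L^2(B_{R_0}^+(z_0))}\bigr),
\end{equation*}
for some $\tau,\gamma\in(0,1)$ and $\delta>0$ independent of $r$. Iterating this in $r$ via the standard lemma (as in \cite[Lemma 8.23]{GilbargTrudinger}) yields $\om(r)\leq Cr^\alpha$ for some $\alpha\in(0,1)$, hence pointwise H\"older continuity at $z_0$; combining this with the interior estimate Theorem \ref{thm:MainContinuityInteriorL2RHSu} applied at nearby points of $\partial_0\sO$ and with the supremum bound Theorem \ref{thm:MainSupremumEstimatesBoundary} at boundary points gives membership in $C^\alpha_s(\bar B_{R_1}^+(z_0))$ and the quantitative estimate \eqref{eq:MainContinuity3L2RHSu}.

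To obtain the decay, I would apply the boundary supremum estimate Theorem \ref{thm:MainSupremumEstimatesBoundary} to $w_1:=M(2r)-\tilde u$ and $w_2:=\tilde u-m(2r)$, which are non-negative supersolutions/subsolutions on $\sB_{2r}^+(z_0)$ after the zero-extension. The exterior cone condition on $\sO$ forces $w_1\geq M(2r)$ on a cycloidal-measure-positive subset of $B_{2r}^+(z_0)\setminus\sO$, and the interior cone condition guarantees a similar fact for $w_2$; this is exactly the ``positive density'' input required to run a De\,Giorgi-type weak Harnack inequality. Since Theorem \ref{thm:MainSupremumEstimatesBoundary} is already quantitative in the cone aperture, the resulting decay constants $\gamma,\tau,\delta$ depend only on $K,\Lambda,\nu_0,R_0,p$, as required.

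The main obstacle is the De\,Giorgi/weak-Harnack step in the degenerate, weighted setting: the Moser/De\,Giorgi iteration machinery behind Theorem \ref{thm:MainSupremumEstimatesBoundary} must be refined into a two-sided oscillation lemma that respects both the cycloidal geometry from \eqref{eq:IntroKochDistance} and the fact that the weight $y^{\beta-1}$ is only $A_2$-like away from $y=0$. Concretely, one has to verify that the Sobolev/Poincar\'e pair used in the supremum proof survives restriction to the cone-excised subset $\sB_r^+(z_0)\setminus K_{z_0}$ with uniform constants as $r\downarrow 0$. This is where the uniform exterior cone condition on $\overline{\partial_1\sO}\cap B_{R_0}(z_0)$ is essential: it yields a scale-invariant trace/Poincar\'e inequality with the zero-extension, and without it the corner regularity would fail even in the non-degenerate case.
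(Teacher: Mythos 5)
This theorem is not proved in the present paper: it is quoted verbatim as \cite[Theorem 1.13]{Feehan_Pop_regularityweaksoln}, a companion article, and appears in \S\ref{subsec:HolderContinuity} only as recalled background, alongside Theorems \ref{thm:MainSupremumEstimatesInterior}, \ref{thm:MainSupremumEstimatesBoundary}, and \ref{thm:MainContinuityInteriorL2RHSu}. There is therefore no proof in this paper against which to compare your sketch; the argument you are trying to reconstruct lives entirely in the cited reference.

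That said, a few remarks on your outline. Its overall shape --- zero extension across $\partial_1\sO$, oscillation decay on cycloidal balls, a weak-Harnack / positive-density input driven by the cone condition, iteration via the standard lemma --- is the right family of ideas, and it is the kind of argument the authors themselves describe (their H\"older results are analogues of \cite[Theorems 8.27, 8.29]{GilbargTrudinger} and \cite[Theorems 4.5.5, 4.5.6]{Koch}, and the companion paper's method is Moser iteration plus the abstract John--Nirenberg inequality rather than the classical pointwise De Giorgi decay). Two points in your sketch are imprecise enough to be wrong as written. First, the zero extension $\tilde u$ does not satisfy a variational \emph{equation} on the enlarged half-ball: in general a distributional term supported on $\partial_1\sO$ appears. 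What is true, and what the iteration actually uses, is that truncations $(\tilde u - k)^+$ with $k\geq 0$ are supported in $\sO$, so the original variational identity holds with those test functions; one therefore works with the sub/super-solution structure, not an extended equation. Second, in the oscillation step it is the \emph{exterior} cone condition that supplies the positive-measure complement on which both $w_1 = M(2r)-\tilde u$ and $w_2 = \tilde u - m(2r)$ take their maximal values; the interior cone condition does not play a parallel role for $w_2$ as you suggest, but rather guarantees a uniform lower bound on $|B_r^+(z_0)\cap\sO|$ so that the weak Harnack constants do not degenerate. Finally, your last paragraph correctly identifies the hard analytic content --- a scale-invariant weighted Sobolev--Poincar\'e pair compatible with the cycloidal metric near $y=0$ --- but this is not something the present paper addresses; it is resolved in \cite{Feehan_Pop_regularityweaksoln}, and one cannot simply ``refine'' Theorem \ref{thm:MainSupremumEstimatesBoundary} into an oscillation lemma without redoing that work.
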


\subsection{Higher-order Daskalopoulos-Hamilton H\"older spaces}
\label{subsec:DHspaces}
We shall need the following higher-order weighted H\"older $C^{k,\alpha}_s$ and $C^{k,2+\alpha}_s$ norms and Banach spaces pioneered by Daskalopoulos and Hamilton \cite{DaskalHamilton1998}. We record their definition here for later reference.

\begin{defn}[$C^{k,\alpha}_s$ norms and Banach spaces]
\label{defn:DHspaces}
\cite[p. 902]{DaskalHamilton1998}
Given an integer $k\geq 0$, $\alpha \in (0,1)$, and a domain $U\subset\HH$, we say that $u\in C^{k,\alpha}_s(\bar U)$ if
$u \in C^k(\bar U)$ and
\begin{equation}
\label{eq:CkalphasNorm}
\|u\|_{C^{k,\alpha}_s(\bar U)} := \sum_{j=0}^k\|D^ju\|_{C^\alpha_s(\bar U)} < \infty.
\end{equation}
When $k=0$, we denote $C^{0,\alpha}_s(\bar U) = C^\alpha_s(\bar U)$.
\end{defn}

\begin{defn}[$C^{k,2+\alpha}_s$ norms and Banach spaces]
\label{defn:DH2spaces}
\cite[pp. 901--902]{DaskalHamilton1998}
Given an integer $k\geq 0$, $\alpha \in (0,1)$, and a domain $U\subset\HH$, we say that $u \in C^{k,2+\alpha}_s(\bar U)$ if $u\in C^{k+1,\alpha}_s(\bar U)$,
the derivatives, $D_x^{k+2-m}D_y^m$, $0\leq m\leq k+2$, of order $k+2$ are continuous on $U$, and the functions, $yD_x^{k+2-m}D_y^m$, $0\leq m\leq k+2$, extend continuously up to the boundary, $\partial U$, and those extensions belong to $C^\alpha_s(\bar U)$. We define
$$
\| u \|_{C^{k, 2+\alpha}_s(\bar U)} :=  \| u \|_{C^{k+1,\alpha}_s(\bar U)} + \| yD^2 u \|_{C^{k,\alpha}_s(\bar U)}.
$$
We say that\footnote{In \cite[p. 901]{DaskalHamilton1998}, when defining the spaces $C^{k,\alpha}_s(\sA)$ and $C^{k,2+\alpha}_s(\sA)$, it is assumed that $\sA$ is a compact subset of the \emph{closed} half-plane, $\{y\geq 0\}$.} $u\in C^{k,2+\alpha}_s(\underline{U})$ if $u\in C^{k,2+\alpha}_s(\bar V)$ for all precompact open subsets $V\Subset \underline{U}$. When $k=0$, we denote $C^{0,2+\alpha}_s(\bar U) = C^{2+\alpha}_s(\bar U)$.
\end{defn}

For any non-negative integer $k$, we let $C^k_0(\underline{U})$ denote the linear subspace of functions $u\in C^k(U)$ such that $u\in C^k(\bar V)$ for every precompact open subset $V\Subset \underline{U}$ and define $C^{\infty}_0(\underline{U}) := \cap_{k\geq 0} C^k_0(\underline{U})$. Note that we also have $C^{\infty}_0(\underline{U}) = \cap_{k\geq 0}C^{k,\alpha}_s(\underline{U}) = \cap_{k\geq 0}C^{k,2+\alpha}_s(\underline{U})$.

\section{$H^2$ regularity for solutions to the variational equation}
\label{sec:H2Regularity}
In this section, we develop $H^2_{\loc}(\underline\sO,\fw)$-regularity results for a solution, $u\in H^1(\sO,\fw)$, to the variational equation \eqref{eq:HestonVariationalEquation}, whose existence was established in \cite{Daskalopoulos_Feehan_statvarineqheston}. In \S \ref{subsec:Koch}, we give a self-contained proof (that is, independent of results in \cite{Daskalopoulos_Feehan_statvarineqheston}) of an important and powerful interior a priori estimate (Proposition \ref{prop:InteriorKochEstimate}) for solutions, $u \in H^1(\sO,\fw)$, to the variational equation \eqref{eq:HestonVariationalEquation} by exploiting an idea of Koch \cite{Koch}. In \S \ref{subsec:InteriorH2Regularity}, we prove $H^2_{\loc}(\underline\sO, \fw)$-regularity (Theorem \ref{thm:H2BoundSolutionHestonVarEqnSubdomainInterior}) for a solution, $u \in H^1(\sO,\fw)$, using finite-difference methods independent of results in \cite{Daskalopoulos_Feehan_statvarineqheston}).

\subsection{Interior Koch estimate and interior $W^{1,2}$ regularity}
\label{subsec:Koch}
We first recall the following elementary a priori estimate for a solution to the variational equation \eqref{eq:HestonVariationalEquation}.

\begin{lem}[A priori estimate for solutions to the variational equation]
\label{lem:ExistenceUniquenessEllipticHestonAprioriEstimate}
\cite[Lemma 3.20]{Daskalopoulos_Feehan_statvarineqheston}
Let $\sO\subseteqq\HH$ be a domain. Then there is a positive constant, $C=C(\Lambda,\nu_0)$, such that the following holds. If $f \in L^2(\sO,\fw)$ and $u \in H^1(\sO,\fw)$ is a solution to the variational equation \eqref{eq:HestonVariationalEquation}, then\footnote{The result trivially holds  if $(1+y)u\notin L^2(\sO,\fw)$.}
\begin{equation}
\label{eq:VariationalEqualityHestonBoundH}
\|u\|_{H^1(\sO, \fw)} \leq C\left(\|f\|_{L^2(\sO, \fw)} + \|(1+y)u\|_{L^2(\sO, \fw)}\right).
\end{equation}
\end{lem}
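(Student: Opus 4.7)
The plan is to test the variational equation \eqref{eq:HestonVariationalEquation} with $v = u$ and extract $H^1(\sO,\fw)$-coercivity from the principal part of $\fa(u,u)$. Using the change of variables described after Assumption \ref{assump:HestonCoefficientb1}, I reduce to the case $b_1 = 0$ without loss of generality. Substituting $v = u$ in \eqref{eq:HestonWithKillingBilinearForm} and equating with $(f,u)_{L^2(\sO,\fw)}$ gives
\begin{align*}
\fa(u,u) = \tfrac{1}{2}\int_\sO y(u_x^2 + 2\varrho\sigma u_xu_y + \sigma^2 u_y^2)\fw\,dx\,dy
&- \tfrac{\gamma}{2}\int_\sO(u_x + \varrho\sigma u_y)u\,\tfrac{x}{\sqrt{1+x^2}}y\fw\,dx\,dy \\
&- a_1\int_\sO y u_x u\,\fw\,dx\,dy + c_0\int_\sO u^2\fw\,dx\,dy,
\end{align*}
and the ellipticity bound \eqref{eq:HestonModulusEllipticity} controls the principal part from below by $\nu_0\int_\sO y|Du|^2\fw\,dx\,dy$, while $c_0\int_\sO u^2\fw \geq 0$ is harmless.

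The cross terms are handled by Young's inequality, $|ab|\leq\eps a^2 + (4\eps)^{-1}b^2$, combined with $|x|/\sqrt{1+x^2}\leq 1$: for any $\eps > 0$, both the $\gamma$-term and the $a_1$-term are bounded by $\eps\int_\sO y|Du|^2\fw + C_\eps\int_\sO y u^2\fw$, where $C_\eps$ depends only on $\Lambda$ and $\eps$. The right-hand side satisfies $|(f,u)_{L^2(\sO,\fw)}| \leq \tfrac{1}{2}\|f\|^2_{L^2(\sO,\fw)} + \tfrac{1}{2}\|u\|^2_{L^2(\sO,\fw)}$. Picking $\eps = \nu_0/4$ to absorb the gradient contributions into the coercive lower bound produces
$$
\tfrac{\nu_0}{2}\int_\sO y|Du|^2\fw\,dx\,dy \leq C\Bigl(\|f\|^2_{L^2(\sO,\fw)} + \int_\sO (1+y)u^2\fw\,dx\,dy\Bigr),
$$
with $C = C(\Lambda,\nu_0)$. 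Adding $\int_\sO (1+y)u^2\fw$ to both sides and using the pointwise bound $(1+y) \leq (1+y)^2$ on $\HH$ to estimate $\int_\sO (1+y)u^2\fw \leq \|(1+y)u\|^2_{L^2(\sO,\fw)}$ gives \eqref{eq:VariationalEqualityHestonBoundH} after extracting square roots.

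The main obstacle is justifying $v = u$ as an admissible test function, since the variational equation \eqref{eq:HestonVariationalEquation} only tests against $v \in H^1_0(\underline{\sO},\fw)$, whereas $u$ itself need neither vanish on $\partial_1\sO$ nor decay sufficiently at infinity when $\sO$ is unbounded. My plan is to resolve this by testing instead with $v = u\chi_n^2$, where $\chi_n \in C_0^\infty(\underline{\sO})$ are smooth cutoffs monotonically approaching $1$ with $|D\chi_n|$ uniformly bounded, and to absorb the commutator terms arising from $D\chi_n$ using precisely the weighted integrability $(1+y)u \in L^2(\sO,\fw)$. This hypothesis (and the footnote noting that the estimate is trivial when it fails) is exactly what makes the approximation argument close: it ensures the $D\chi_n \cdot u$ terms produced by replacing $u$ with $u\chi_n^2$ are integrable, so one may pass to the limit $n \to \infty$ and recover $\fa(u,u) = (f,u)_{L^2(\sO,\fw)}$, after which the term-by-term estimation above yields the conclusion.
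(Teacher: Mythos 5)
Your coercivity computation is exactly the right plan and is carried out correctly: \eqref{eq:HestonModulusEllipticity} bounds the principal part of $\fa(u,u)$ from below by $\nu_0\int_\sO y|Du|^2\fw$, Young's inequality with $\eps=\nu_0/4$ absorbs the $\gamma$- and $a_1$-cross terms at the cost of $C\int_\sO y u^2\fw$, the zero-order term is harmless since $c_0\geq 0$, and the final bookkeeping yields \eqref{eq:VariationalEqualityHestonBoundH}. The cutoff argument you use to make $v=u$ admissible, however, does not close. A function $\chi_n\in C^\infty_0(\underline\sO)$ must vanish in a neighborhood of $\partial_1\sO$, so when $\partial_1\sO\neq\emptyset$ the requirements ``$\chi_n\to1$ on $\sO$'' and ``$|D\chi_n|$ uniformly bounded'' are incompatible: a cutoff rising from $0$ near $\partial_1\sO$ to $1$ across a layer of width $\delta_n$ has $\|D\chi_n\|_{L^\infty}\gtrsim 1/\delta_n$, which blows up as $\chi_n\to1$. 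Even setting that aside, integrability of the commutator terms does not make them vanish: with $|D\chi_n|$ bounded, $\supp D\chi_n$ necessarily contains a band of fixed positive width near $\partial_1\sO$ where $u$ need not be small, and the hypothesis $(1+y)u\in L^2(\sO,\fw)$ controls behavior at infinity, not near the non-degenerate boundary. The gap is genuine: on a ball $B\Subset\HH$ with $\bar B\cap\partial\HH=\emptyset$, $A$ is uniformly elliptic and all weights are comparable to constants, and solutions of $Au=0$ tested only against $C^\infty_0(B)$ (the analogues of the harmonic polynomials $r^n\cos n\theta$) satisfy $\|Du\|_{L^2(B)}/\|u\|_{L^2(B)}\to\infty$, so no global bound of $\|u\|_{H^1(\sO,\fw)}$ by $\|(1+y)u\|_{L^2(\sO,\fw)}$ can hold without a condition on $u$ along $\partial_1\sO$.

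The resolution is that the estimate, as cited from \cite[Lemma 3.20]{Daskalopoulos_Feehan_statvarineqheston} and as actually invoked in this paper --- always through the localization \eqref{eq:LocalizedHestonVariationalEquation} with $\sO=\HH$, where $\partial_1\HH=\emptyset$ and hence $H^1_0(\underline\HH,\fw)=H^1(\HH,\fw)$ --- implicitly carries the Dirichlet condition $u\in H^1_0(\underline\sO,\fw)$. Once that hypothesis is in place no cutoff is needed at all: under Assumption \ref{assump:HestonCoefficientb1} the bilinear form \eqref{eq:HestonWithKillingBilinearForm} is bounded on $H^1(\sO,\fw)\times H^1(\sO,\fw)$, so $v=u$ is directly admissible and both $\fa(u,u)$ and $(f,u)_{L^2(\sO,\fw)}$ are finite, after which your term-by-term estimate closes immediately.
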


As in \cite[Definition 3.1]{Daskalopoulos_Feehan_statvarineqheston}, we let $A_\lambda := A+\lambda(1+y)$, for any constant $\lambda\geq 0$. We recall

\begin{thm}[Existence of smooth solutions on the half-plane]
\label{thm:HestonExistenceSmoothSolutionsHalfSpace}
\cite{Feehan_Pop_elliptichestonschauder}
Let $f\in C^\infty_0(\underline{\HH})$ and $\lambda\geq 0$. Then there is a function $u\in C^\infty(\bar\HH)$ such that
$$
A_\lambda u = f \quad\hbox{on }\HH.
$$
\end{thm}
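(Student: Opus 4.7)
The plan is to reduce the existence of a classical smooth solution on $\bar\HH$ to the weighted-Sobolev existence result (Theorem \ref{thm:ExistUniqueHkSobolevRegularityDomain}) combined with the $C^\infty$-regularity upgrade (Corollary \ref{cor:CinftyGlobal}), both applied with domain $\sO = \HH$. For this choice, $\partial_1\HH = \partial\HH\cap\HH = \emptyset$ and $\partial_0\HH = \partial\HH$, hence $\underline\HH = \bar\HH$, no Dirichlet condition is imposed along $\partial\HH$, and we may take $g = 0$ (which trivially satisfies every growth and regularity hypothesis).

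First I would identify $A_\lambda = A + \lambda(1+y)$ as a Heston-type operator whose constant zeroth-order coefficient is $c_0 + \lambda$, with the additional $\lambda y v$ term contributing a non-negative lower-order piece to the bilinear form \eqref{eq:HestonWithKillingBilinearForm}, so that Assumption \ref{assump:HestonCoefficients} remains in force and the coercivity argument of \cite{Daskalopoulos_Feehan_statvarineqheston} goes through whenever $c_0 + \lambda > 0$. Since $f \in C^\infty_0(\bar\HH)$ is bounded with compact support, $f \in L^\infty(\HH)\cap W^{k,2}_{\loc}(\underline\HH,\fw)$ for every $k \geq 0$. Theorem \ref{thm:ExistUniqueHkSobolevRegularityDomain} (with $\sO = \HH$, $g = 0$, and arbitrary $k$) then yields a unique $u \in H^1(\HH,\fw)\cap \sH^{k+2}_{\loc}(\underline\HH,\fw)$ solving the variational equation \eqref{eq:HestonVariationalEquation}; in particular $A_\lambda u = f$ a.e.\ on $\HH$.

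Next, because $f \in C^\infty(\underline\HH) = C^\infty(\bar\HH)$ and $u \in H^1(\HH,\fw)$ is a solution of \eqref{eq:HestonVariationalEquation}, Corollary \ref{cor:CinftyGlobal} applied with $\sO = \HH$ immediately upgrades $u$ to $u \in C^\infty(\underline\HH) = C^\infty(\bar\HH)$, so the equation $A_\lambda u = f$ holds pointwise on $\HH$, as required.

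The one genuine obstacle is the borderline case $c_0 = 0 = \lambda$, where the hypothesis $c_0 > 0$ in Theorem \ref{thm:ExistUniqueHkSobolevRegularityDomain} just fails and coercivity of $\fa$ is lost. The plan to circumvent this is a vanishing-coercivity approximation: for each $\epsilon \in (0,1]$ the two steps above produce a smooth solution $u_\epsilon \in C^\infty(\bar\HH)$ to $A_\epsilon u_\epsilon = f$; the local supremum bound of Theorem \ref{thm:MainSupremumEstimatesInterior} and the higher-order Sobolev estimate \eqref{eq:HkSobolevRegularityDomain} provide bounds on $u_\epsilon$ on every precompact $V \Subset \underline\HH$ that are uniform in $\epsilon \in (0,1]$; a diagonal subsequence extracted via the compactness of Sobolev embeddings then yields $u_\epsilon \to u$ in $C^k_{\loc}(\bar\HH)$ for every $k$, with limit $u \in C^\infty(\bar\HH)$ satisfying $A u = f$ on $\HH$.
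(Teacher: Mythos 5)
Your proposal is circular. Theorem \ref{thm:HestonExistenceSmoothSolutionsHalfSpace} is not proved inside this paper: it is imported from the companion Schauder-theoretic article \cite{Feehan_Pop_elliptichestonschauder}, and within the present paper it serves as a logical \emph{input} to the very results you invoke. As explained in Remark \ref{rmk:KochEstimate}, Theorem \ref{thm:HestonExistenceSmoothSolutionsHalfSpace} is precisely what is needed to prove Proposition \ref{prop:KochEstimate}, the global Koch estimate on the half-plane; this localizes to Proposition \ref{prop:InteriorKochEstimate}, which underlies the interior $H^2$ regularity results (Theorem \ref{thm:InteriorH2} and hence Theorem \ref{thm:H2BoundSolutionHestonVarEqnSubdomainInterior}), which in turn form the base case for the higher-order Sobolev regularity theory (Theorems \ref{thm:HkSobolevRegularityInterior} and \ref{thm:HkSobolevRegularityDomain}), Theorem \ref{thm:ExistUniqueHkSobolevRegularityDomain}, and the entire H\"older chain culminating in Theorem \ref{thm:HolderContinuity_Dku_Interior} and Corollary \ref{cor:CinftyGlobal}. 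Both of the results you appeal to therefore already presuppose Theorem \ref{thm:HestonExistenceSmoothSolutionsHalfSpace}, and invoking them to prove it is not an admissible argument.

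The reason the paper imports this theorem from \cite{Feehan_Pop_elliptichestonschauder} is exactly to avoid this circularity: that companion article constructs smooth solutions on $\HH$ by independent Schauder methods, not by the variational and weighted-Sobolev machinery developed here. A legitimate proof within the scope of the present paper's framework would have to supply an independent argument for the global $L^2(\HH,\fw)$ estimate on $Du$ (the content of Proposition \ref{prop:KochEstimate}) that does not rely on a priori existence of smooth solutions --- and the paper itself flags this as nontrivial in Remark \ref{rmk:KochEstimate} and in \S \ref{subsec:Highlights}. Your $\epsilon$-regularization idea for the borderline case $c_0=\lambda=0$ is sensible but is a side issue; it does not touch the fundamental obstruction.
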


\begin{prop}[Koch estimate on the half-plane]
\label{prop:KochEstimate}
There is a positive constant, $C=C(\Lambda,\nu_0)$, such that the following holds. If $f \in L^2(\HH,\fw)$ and $u \in H^1(\HH,\fw)$ is a solution to the variational equation \eqref{eq:HestonVariationalEquation} with $\sO=\HH$, then\footnotemark[\value{footnote}] 
$$
\|Du\|_{L^2(\HH, \fw)} \leq C\left(\|f\|_{L^2(\HH, \fw)} + \|(1+y)u\|_{L^2(\HH, \fw)}\right).
$$
\end{prop}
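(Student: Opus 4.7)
The strategy is to test the equation against $v = u_y$, exploiting the fact that the degeneracy of $A$ occurs only in the $y$-direction while $\HH$ is translation invariant in $x$. Since $u_y$ need not lie in $H^1_0(\underline{\HH},\fw)$ for a general $u \in H^1(\HH,\fw)$, I first establish the estimate for the smooth solutions $u \in C^\infty(\bar\HH)$ guaranteed by Theorem \ref{thm:HestonExistenceSmoothSolutionsHalfSpace} (applied to $A_\lambda = A + \lambda(1+y)$ for a suitable $\lambda \geq 0$ enforcing existence and uniqueness in $H^1(\HH,\fw)$), and then transfer the estimate to a general $u \in H^1(\HH,\fw)$ by approximating the source $f$ by $f_n \in C^\infty_0(\underline{\HH})$, extracting a weakly convergent subsequence of the corresponding smooth solutions in $H^1(\HH,\fw)$, and using uniqueness together with lower semi-continuity of the weighted Sobolev norm.

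For the smooth case, I multiply $Au = f$ by $u_y\fw$ and integrate over $\HH$, then integrate by parts (in $x$ for the $u_{xx}$ and $u_{xy}$ terms, in $y$ for the $u_{yy}$ and $u_y$ terms). The essential algebraic identity, exploited repeatedly, is
$$
\partial_y(y\fw) = \beta\fw - \mu y\fw,
$$
whose validity with $\beta = 2\kappa\theta/\sigma^2$ and $\mu = 2\kappa/\sigma^2$ is built into the definition \eqref{eq:HestonWeight}--\eqref{eq:DefnBetaMu} of $\fw$. Collecting all contributions and simplifying using Assumption \ref{assump:HestonCoefficientb1} (which gives $c_0-q = \kappa\theta\varrho/\sigma$), the resulting identity takes the form
$$
\frac{\kappa\theta}{2}\int_\HH\left[\frac{u_x^2}{\sigma^2} - \frac{2\varrho}{\sigma}u_x u_y + u_y^2\right]\fw\,dx\,dy = \cR,
$$
where $\cR$ consists of: remainder integrals bounded by $\|y^{1/2}Du\|_{L^2(\HH,\fw)}^2$, controlled by Lemma \ref{lem:ExistenceUniquenessEllipticHestonAprioriEstimate}; the pairing $-(f,u_y)_{L^2(\HH,\fw)}$, handled via Cauchy--Schwarz and Young's inequality with a small absorbable parameter; cross terms of the form $\int u\,Du\,\fw$, also treated via Young's; and $\gamma$-weighted remainders from $\fw_x = -\gamma x(1+x^2)^{-1/2}\fw$. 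Boundary terms at $y=0$ vanish since the relevant integrands carry a factor $y^\beta$ with $\beta>0$ after the integrations by parts.

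The quadratic form on the left-hand side is positive definite: by the factorization
$$
\frac{u_x^2}{\sigma^2} - \frac{2\varrho}{\sigma}u_x u_y + u_y^2 = \frac{1}{\sigma^2}\left(u_x - \varrho\sigma u_y\right)^2 + (1-\varrho^2)u_y^2
$$
and Assumption \ref{assump:HestonCoefficients} (with $-1<\varrho<1$ and $\sigma\neq 0$), it dominates $\nu_0(u_x^2+u_y^2)/\sigma^2$, where $\nu_0$ is as in \eqref{eq:DefnNuZero}. Absorbing the small $\|Du\|_{L^2(\HH,\fw)}^2$ contributions in $\cR$ into the left-hand side produces the desired estimate for smooth $u$, whereupon the density argument completes the proof. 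The main obstacle is the careful bookkeeping of the integration-by-parts calculation: verifying that the IBP contributions combine to yield precisely the positive-definite quadratic form above, with no extra terms of unfavorable sign. The positivity is not accidental; it reflects the precise tuning of $\beta$ and $\mu$ in $\fw$ to the coefficients of the Heston operator, together with the reduction $b_1=0$ afforded by Assumption \ref{assump:HestonCoefficientb1}.
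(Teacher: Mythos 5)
Your strategy — test the equation $Au=f$ against $u_y$, integrate by parts exploiting $\partial_y(y\fw)=(\beta-\mu y)\fw$, and use Assumption \ref{assump:HestonCoefficientb1} to collapse the $u_xu_y$ cross term into the positive-definite quadratic form — is exactly the Koch trick that the paper invokes (via \cite[Proposition 5.8]{Daskalopoulos_Feehan_statvarineqheston} and Remark \ref{rmk:KochEstimate}), and the numerology of $\beta$, $\mu$, and $\nu_0$ checks out. Two small notes. First, a sign: after integrating $-\tfrac12\int yu_{xx}u_y\fw$ in $x$ and then $y$, and collecting with the $-(c_0-q)\int u_xu_y\fw$ term under $b_1=0$, the leading quadratic form comes out as $\tfrac{\kappa\theta}{2}\bigl(\sigma^{-2}u_x^2 + 2\varrho\sigma^{-1}u_xu_y + u_y^2\bigr)$ with a \emph{plus} sign on the cross term, matching the off-diagonal sign of $A$'s principal symbol; this factors as $\sigma^{-2}(u_x+\varrho\sigma u_y)^2 + (1-\varrho^2)u_y^2$, still with smallest eigenvalue $\nu_0/\sigma^2$, so nothing in the argument changes. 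Second, the approximation step deserves more care than the sketch acknowledges: Theorem \ref{thm:HestonExistenceSmoothSolutionsHalfSpace} delivers a solution in $C^\infty(\bar\HH)$ but says nothing about membership in $H^1(\HH,\fw)$, so one must additionally argue that the smooth solution coincides with the unique $H^1(\HH,\fw)$ weak solution (e.g., by first running the estimate for the $\lambda>0$ perturbation $A_\lambda$ where coercivity gives uniqueness, verifying decay so the integrations by parts at $|x|\to\infty$ and $y\to\infty$ are legitimate, and then sending $\lambda\to0$); this is precisely the part where the cited \cite[Proposition 5.8]{Daskalopoulos_Feehan_statvarineqheston} does the work that your outline compresses into one sentence.
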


\begin{rmk}[Proof of the Koch estimate on the half-plane]
\label{rmk:KochEstimate}
Proposition \ref{prop:KochEstimate} is proved as \cite[Proposition 5.8]{Daskalopoulos_Feehan_statvarineqheston} when $\sO=\HH$ with the aid of Theorem \ref{thm:HestonExistenceSmoothSolutionsHalfSpace} (this is \cite[Theorem 5.2]{Daskalopoulos_Feehan_statvarineqheston} when $\sO=\HH$). However, the hypothesis in \cite[Proposition 5.8]{Daskalopoulos_Feehan_statvarineqheston} that \cite[Theorem 5.2]{Daskalopoulos_Feehan_statvarineqheston} holds for $\sO\subsetneqq\HH$ and $u\in H^1_0(\underline\sO, \fw)$ appears difficult to verify.
\end{rmk}

In order to prove an interior version of the Koch estimate on subdomains of the half-plane, we shall need the following commutator identity.

\begin{lem}[Heston bilinear map commutator identity]
\label{lem:SimpleCommutatorInnerProduct}
\cite[Corollary 2.46]{Daskalopoulos_Feehan_statvarineqheston}
Let $u, v \in H^1(\sO,\fw)$ and let $\zeta \in C^\infty(\bar\sO)$ be such that $\supp\zeta\subset\underline\sO$. Then\footnote{This a simpler version of \cite[Corollary 2.46]{Daskalopoulos_Feehan_statvarineqheston}.}
\begin{equation}
\label{eq:SimpleBilinearFormSquaredFunction}
\fa(\zeta u, v) = \fa(u, \zeta v) + ([A,\zeta]u, v)_{L^2(\sO,\fw)},
\end{equation}
where $[A,\zeta]$ is defined by identity \eqref{eq:ACommutator}.
\end{lem}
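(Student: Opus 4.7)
The plan is to verify the identity \eqref{eq:SimpleBilinearFormSquaredFunction} by direct term-by-term expansion, using the product rule $(\zeta u)_\xi = \zeta_\xi u + \zeta u_\xi$ (for $\xi \in \{x,y\}$) inside the eight integrals defining $\fa$ in \eqref{eq:HestonWithKillingBilinearForm}. In each integral, the ``undifferentiated-$\zeta$'' contribution (e.g.\ $\tfrac{1}{2}\int \zeta u_x v_x\, y\fw$) appears identically on both sides and cancels in the difference $\fa(\zeta u, v) - \fa(u, \zeta v)$. What survives are the ``cross terms'' in which exactly one $\zeta_x$ or $\zeta_y$ appears, paired with $u v_x$, $u v_y$, $u_x v$, $u_y v$, or $u v$.

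Next, I would apply integration by parts in $x$ and $y$ against the weight $\fw$ to convert each residual term in which $v$ carries a derivative into a term in which only $u$ (or neither of $u, v$) carries a derivative. Using the identities $\fw_x = -\gamma(x/\sqrt{1+x^2})\fw$ and $\partial_y(y\fw) = (\beta - \mu y)\fw$ from \eqref{eq:HestonWeight} and \eqref{eq:DefnBetaMu}, the computation produces precisely (i) the first-order $\gamma$-weighted coupling terms in $\fa$, which then cancel against the residual contributions from the analogous ``Terms 5--6'' of \eqref{eq:HestonWithKillingBilinearForm}, (ii) the first-order coefficients $-(c_0-q-y/2)\zeta_x u$ and $-\kappa(\theta - y)\zeta_y u$, and (iii) the second-derivative terms $-\tfrac{y}{2}\zeta_{xx}u - y\varrho\sigma \zeta_{xy}u - \tfrac{y\sigma^2}{2}\zeta_{yy}u$. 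Assembling the surviving terms, one recognizes the pointwise expression
$$
[A,\zeta]u = A(\zeta u) - \zeta(Au),
$$
computed directly from \eqref{eq:OperatorHestonIntro} by the product and chain rules, multiplied against $v\,\fw$, as required.

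The main obstacle is controlling the boundary contributions from the integrations by parts. At $\{y=0\}$ the relevant boundary integrand carries the factor $y\fw = y^{\beta}e^{-\gamma\sqrt{1+x^2}-\mu y}$, which vanishes as $y \to 0^{+}$ precisely because $\beta = 2\kappa\theta/\sigma^2 > 0$ by Assumption~\ref{assump:HestonCoefficients}; along $\partial_1\sO$, the boundary terms vanish because $\supp\zeta \subset \underline\sO$; and as $|x| \to \infty$ or $y \to \infty$, they vanish by the exponential decay of $\fw$. Taken together, these cancellations encode exactly the two structural features of the problem — the factor $y$ multiplying the second-order coefficients of $A$, and the Fichera-type condition $\kappa\theta > 0$ which suppresses boundary data on $\partial_0\sO$ — that make the identity hold \emph{without} any boundary correction.

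Finally, to pass from smooth test functions to general $u, v \in H^1(\sO,\fw)$, I would first carry out the computation for $u, v \in C^\infty(\bar\sO)$ whose supports meet $\supp\zeta$ in a compact subset of $\underline\sO$, where the integrations by parts above are classical. The extension then follows by a density argument on a neighborhood of $\supp\zeta$, together with the continuity of both sides of \eqref{eq:SimpleBilinearFormSquaredFunction} as bilinear forms in $(u, v)$ on $H^1(\sO,\fw)$, with continuity constants depending only on pointwise bounds for $\zeta$, $D\zeta$, and $D^2\zeta$ on $\supp\zeta$.
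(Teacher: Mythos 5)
Your proposal is correct, but it takes a more elementary, computation-heavy route than the paper does, and it is worth seeing what the shorter argument buys you.

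The paper's proof does not touch the eight integrals in \eqref{eq:HestonWithKillingBilinearForm} at all. Instead, for $u\in C^\infty(\bar\sO)$ and $v\in C^\infty_0(\underline\sO)$ it invokes the Green-type identity of Lemma~\ref{lem:HestonIntegrationByParts}, $\fa(w,\phi)=(Aw,\phi)_{L^2(\sO,\fw)}$, twice: once forward to write $\fa(\zeta u,v)=(A(\zeta u),v)_{L^2(\sO,\fw)}$, and once backward to convert $(Au,\zeta v)_{L^2(\sO,\fw)}$ into $\fa(u,\zeta v)$. Between these two steps, everything reduces to the purely algebraic identity $A(\zeta u)=\zeta Au+[A,\zeta]u$ (the definition of the commutator) and the observation that $\zeta$ can be moved across the $L^2(\sO,\fw)$ pairing since it is a scalar multiplier. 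Density in $H^1(\sO,\fw)$ then gives the general case. All the hard analytic content — the weighted integration by parts, the vanishing of boundary contributions at $\partial_0\sO$ via $\beta>0$, the decay of $\fw$ at infinity — is already packaged into Lemma~\ref{lem:HestonIntegrationByParts} and does not need to be re-derived.

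Your approach instead unpacks those integrations by parts by hand: expand both sides of \eqref{eq:SimpleBilinearFormSquaredFunction} via the Leibniz rule, cancel the terms where $\zeta$ is undifferentiated, integrate by parts against $\fw$ to reorganize the residual cross terms, and recognize the surviving expression as $[A,\zeta]u$, which you then compare against \eqref{eq:ACommutator}. This works, and your treatment of the boundary terms (the factor $y\fw=y^\beta e^{-\gamma\sqrt{1+x^2}-\mu y}$ killing the $\{y=0\}$ contribution because $\beta>0$; the compact-support condition on $\zeta$ killing the $\partial_1\sO$ contribution; exponential decay at infinity) is exactly right. The cost is that you are, in effect, re-proving the relevant portion of Lemma~\ref{lem:HestonIntegrationByParts} inside this proof rather than citing it. The benefit is that your argument is self-contained and makes the role of $\beta>0$ and $\supp\zeta\subset\underline\sO$ visible at the point where they are used. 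Either way the density step at the end has the same character in both arguments: both sides of \eqref{eq:SimpleBilinearFormSquaredFunction} are continuous bilinear forms in $(u,v)\in H^1(\sO,\fw)\times H^1(\sO,\fw)$ once one notes that the commutator is controlled as in Lemma~\ref{lem:CommutatorEstimate}, using that $\supp\zeta$ has finite height.
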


\begin{proof}
For $u\in C^\infty(\bar\sO)$ and $v\in C^\infty(\underline\sO)$, then
\begin{align*}
\fa(\zeta u, v) &= (A(\zeta u), v)_{L^2(\sO,\fw)} \quad\hbox{(by
Lemma \ref{lem:HestonIntegrationByParts}
)}
\\
&= (\zeta Au, v)_{L^2(\sO,\fw)} + ([A,\zeta]u, v)_{L^2(\sO,\fw)}
\\
&= (Au, \zeta v)_{L^2(\sO,\fw)} + ([A,\zeta]u, v)_{L^2(\sO,\fw)}
\\
&= \fa(u, \zeta v) + ([A,\zeta]u, v)_{L^2(\sO,\fw)}.
\end{align*}
By approximation, the result continues to hold for $u \in H^1(\sO,\fw)$ and $v \in H^1(\sO,\fw)$.
\end{proof}

Hence, if $u \in H^1(\sO,\fw)$ is a solution to the variational equation \eqref{eq:HestonVariationalEquation} and $\zeta \in C^\infty(\bar\sO)$ is such that $\supp\zeta\subset\underline\sO$, then $\zeta u \in H^1(\HH,\fw)$ obeys, for all $v \in H^1_0(\underline{\sO},\fw)$,
\begin{align*}
\fa(\zeta u, v) &= \fa(u, \zeta v) + ([A,\zeta]u, v)_{L^2(\sO,\fw)}
\\
&= (f, \zeta v)_{L^2(\sO,\fw)} + ([A,\zeta]u, v)_{L^2(\sO,\fw)}
\\
&= (\zeta f + [A,\zeta]u, v)_{L^2(\HH,\fw)}.
\end{align*}
Because $\supp\zeta\subset\underline\sO$, the preceding variational equation remains unchanged when the space of test functions, $H^1_0(\underline{\sO},\fw)$, is replaced by $H^1(\HH,\fw)$ and so $\zeta u \in H^1(\HH,\fw)$ obeys the variational equation,
\begin{equation}
\label{eq:LocalizedHestonVariationalEquation}
\fa(\zeta u, v) = (f_{\zeta,u}, v)_{L^2(\HH,\fw)}, \quad \forall v \in H^1(\HH,\fw),
\end{equation}
where, if $\height(\supp\zeta)<\infty$,
\begin{equation}
\label{eq:LocalizedSource}
f_{\zeta, u} := \zeta f + [A,\zeta]u \in L^2(\HH, \fw).
\end{equation}
We recall from \cite[Equation (2.33)]{Daskalopoulos_Feehan_statvarineqheston}) that
\begin{equation}
\label{eq:ACommutator}
\begin{aligned}
{}[A,\zeta]v &= -y\left((\zeta_x + \varrho\sigma\zeta_y)v_x + (\varrho\sigma\zeta_x + \sigma^2\zeta_y)v_y\right)
\\
&\quad - \frac{y}{2}\left(\zeta_{xx} + 2\varrho\sigma\zeta_{xy} + \sigma^2\zeta_{yy}\right)v
\\
&\quad - \left(r-q-\frac{y}{2}\right)\zeta_x v - \kappa(\theta-y)\zeta_y v.
\end{aligned}
\end{equation}
Noting that the derivatives $v_x$ and $v_y$ in \eqref{eq:ACommutator} are multiplied by the factor $y$, we immediately obtain the

\begin{lem}[$L^p$ commutator estimate]
\label{lem:CommutatorEstimate}
Let $\sO\subseteqq\HH$ be a domain and let $M$ be a positive constant. Then there is a positive constant, $C=C(\Lambda,\nu_0,M)$, such that the following holds. If $\zeta \in C^\infty(\bar\sO)$ is such that $\|\zeta\|_{C^2(\bar\sO)}\leq M$ and $v \in W^{1,p}_{\loc}(\sO)$, for $1\leq p\leq\infty$, then\footnote{The result trivially holds if $yDv\notin L^p(\sO,\fw)$ or $(1+y)v\notin L^p(\sO,\fw)$.}
$$
\|[A,\zeta]v\|_{L^p(\sO,\fw)} \leq C\left(\|yDv\|_{L^p(\sO,\fw)} + \|(1+y)v\|_{L^p(\sO,\fw)}\right).
$$
Moreover, if $\height(\supp\zeta)\leq\Upsilon<\infty$ and $p=2$, then there is a positive constant, $C=C(\Lambda,\nu_0,M,\Upsilon)$, such that
$$
\|[A,\zeta]v\|_{L^2(\sO,\fw)} \leq C\|v\|_{H^1(\sO,\fw)}.
$$
\end{lem}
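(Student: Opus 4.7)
The strategy is to read the estimate directly off the explicit commutator formula \eqref{eq:ACommutator}. The key structural observation is that on the right-hand side of \eqref{eq:ACommutator} every term involving a first derivative of $v$ carries an explicit factor of $y$, while every zeroth-order-in-$v$ term either carries a factor of $y$ (those coming from the second-order part of $A$) or is multiplied by a drift coefficient, $r-q-y/2$ or $\kappa(\theta-y)$, each of which is controlled by a constant multiple of $1+y$. Moreover each term is supported in $\supp\zeta$ since all factors of $\zeta$, $D\zeta$, $D^2\zeta$ appearing in \eqref{eq:ACommutator} vanish where $\zeta$ vanishes.

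For the first assertion, I would apply the triangle inequality term-by-term in \eqref{eq:ACommutator}, use $|D\zeta|,|D^2\zeta| \leq \|\zeta\|_{C^2(\bar\sO)} \leq M$, and bound the numerical constants $1,\varrho\sigma,\sigma^2,\kappa,\kappa\theta,r-q$ in terms of $\Lambda$. The elementary inequalities $|r-q-y/2| \leq (|r-q|+1/2)(1+y)$, $|\kappa(\theta-y)| \leq (\kappa\theta+\kappa)(1+y)$, and $y \leq 1+y$ then collapse all zeroth-order contributions into a single $(1+y)|v|$ term, yielding the pointwise bound
\[
|[A,\zeta]v| \leq C(\Lambda,\nu_0,M)\bigl(y|Dv| + (1+y)|v|\bigr)\quad\text{on }\sO.
\]
Taking the $L^p(\sO,\fw)$-norm of both sides and applying the triangle inequality gives the first inequality.

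For the second assertion I would exploit that $[A,\zeta]v$ is supported in $\supp\zeta$, on which $0 \leq y \leq \Upsilon$. Restricting the two norms on the right-hand side of the first assertion to $\supp\zeta$ and using the support bound on $y$, one has
\[
\int_{\supp\zeta} y^2|Dv|^2\,\fw\,dx\,dy \leq \Upsilon \int_\sO y|Dv|^2\,\fw\,dx\,dy \leq \Upsilon\,\|v\|_{H^1(\sO,\fw)}^2,
\]
while the analogous pointwise inequality $(1+y)^2 \leq (1+\Upsilon)(1+y)$ on $\supp\zeta$ yields $\|(1+y)v\|_{L^2(\supp\zeta,\fw)} \leq (1+\Upsilon)^{1/2}\|v\|_{H^1(\sO,\fw)}$. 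Substituting these two estimates into the $p=2$ case of the first assertion produces the claimed $H^1$ bound. No essential obstacle is anticipated: the argument is routine algebra once \eqref{eq:ACommutator} is in hand, and the only care required is to keep track of which factors of $y$ are present, so that zeroth-order terms are absorbed into $(1+y)|v|$ and first-order terms into the weighted $H^1$-gradient seminorm when $\supp\zeta$ has finite height.
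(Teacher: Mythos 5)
Your proof is correct and takes essentially the same route as the paper, which treats the first estimate as immediate from inspecting \eqref{eq:ACommutator}; your term-by-term pointwise bound $|[A,\zeta]v|\leq C(y|Dv|+(1+y)|v|)$ simply makes that observation explicit. The derivation of the $H^1$ bound by restricting to $\supp\zeta$ and using $y\leq\Upsilon$ there is also exactly what the paper intends, and your handling of the support of $[A,\zeta]v$ (all terms carry a factor of $\zeta$, $D\zeta$, or $D^2\zeta$, hence vanish off $\supp\zeta$) is the right justification.
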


Recall from \cite[Proposition 5.1]{Daskalopoulos_Feehan_statvarineqheston} that one has the

\begin{lem}[Weighted a priori first-order derivative estimate for a solution to the variational equation]
\label{lem:AuxiliaryWeightedH1Estimate}
\cite[Proposition 5.1 (1)]{Daskalopoulos_Feehan_statvarineqheston}
There is a positive constant, $C=C(\Lambda,\nu_0)$, such that the following holds. Let $\sO\subseteqq\HH$ be a domain, $f \in L^2(\sO,\fw)$, and $u \in H^1(\sO,\fw)$ be a solution to the variational equation \eqref{eq:HestonVariationalEquation}. Then\footnote{The result trivially holds if $(1+y)^{1/2}f \notin L^2(\sO,\fw)$ or $(1+y)u \notin L^2(\sO,\fw)$.}
\begin{equation}
\label{eq:AuxiliaryWeightedH1Estimate}
\|yDu\|_{L^2(\sO,\fw)} \leq C\left(\|y^{1/2}f\|_{L^2(\sO,\fw)} + \|(1+y)u\|_{L^2(\sO,\fw)}\right).
\end{equation}
\end{lem}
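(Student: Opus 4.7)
The approach is to test the variational equation \eqref{eq:HestonVariationalEquation} against the multiplier $v = yu$, which is designed to extract the $\|yDu\|_{L^2(\sO,\fw)}^2$ norm from the principal part of the Heston bilinear form \eqref{eq:HestonWithKillingBilinearForm}. Using $(yu)_x = y u_x$ and $(yu)_y = u + y u_y$, the quadratic part of $\fa(u,yu)$ becomes
\begin{equation*}
\frac{1}{2}\int_\sO \left(u_x^2 + 2\varrho\sigma u_x u_y + \sigma^2 u_y^2\right) y^2 \fw\, dx\, dy + \frac{1}{2}\int_\sO (\varrho\sigma u_x + \sigma^2 u_y)\, u\, y\, \fw\, dx\, dy,
\end{equation*}
and the first summand is bounded below by $\nu_0\|yDu\|_{L^2(\sO,\fw)}^2$ by the ellipticity inequality \eqref{eq:HestonModulusEllipticity} (applied with $(\xi_1,\xi_2)=(u_x,u_y)$ and multiplied through by $y$). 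The remaining contributions to $\fa(u,yu)$ --- the mixed term displayed above, the $\gamma$-weight term, the $a_1 y u_x (yu)$ term (the $b_1$ contribution vanishes by Assumption~\ref{assump:HestonCoefficientb1}), and the killing term $c_0 u (yu)$ --- each couple at most one derivative of $u$ to a factor of $u$ itself, weighted by a power of $y$ no larger than $y^{3/2}$.

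Each cross term is estimated by Cauchy--Schwarz followed by Young's inequality $2ab \leq \varepsilon a^2 + \varepsilon^{-1} b^2$, producing bounds of the form $\varepsilon\|yDu\|_{L^2(\sO,\fw)}^2 + C(\varepsilon)\|(1+y)u\|_{L^2(\sO,\fw)}^2$. Similarly, the right-hand side $(f,yu)_{L^2(\sO,\fw)} \leq \|y^{1/2}f\|_{L^2(\sO,\fw)} \|y^{1/2}u\|_{L^2(\sO,\fw)}$ is dominated by $\frac{1}{2}\|y^{1/2}f\|_{L^2(\sO,\fw)}^2 + \frac{1}{2}\|(1+y)u\|_{L^2(\sO,\fw)}^2$, using the elementary bound $y^{1/2}\leq 1+y$ on $[0,\infty)$. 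Choosing $\varepsilon$ small enough in terms of $\nu_0$ and $\Lambda$ so as to absorb the $\varepsilon\|yDu\|^2$ errors into the leading $\nu_0\|yDu\|^2$ on the left-hand side yields \eqref{eq:AuxiliaryWeightedH1Estimate}.

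The main technical obstacle is that $v = yu$ is not a priori admissible as a test function in $H^1_0(\underline\sO,\fw)$: its $H^1$-norm already involves $\|y^{3/2}Du\|_{L^2(\sO,\fw)}$, the very quantity under estimation, and $yu$ need not vanish on $\partial_1\sO$ in the trace sense required by $H^1_0(\underline\sO,\fw)$. To resolve both difficulties I would employ a two-stage regularization: first replace $y$ by a bounded smooth truncation $\chi_M(y) := y\,\eta(y/M)$, with $\eta\in C^\infty_c([0,\infty))$ and $\eta\equiv 1$ near $0$, which makes $\chi_M u$ unambiguously an element of $H^1(\sO,\fw)$ and eliminates the circular dependence; second, multiply by an exhausting sequence of cutoffs $\zeta_n\in C^\infty_0(\underline\sO)$ with $\zeta_n \to 1$ pointwise on $\underline\sO$, so that $\zeta_n \chi_M u$ is a legitimate element of $H^1_0(\underline\sO,\fw)$ (after mollification). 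The product-rule terms involving $D\zeta_n$ are controlled using $u\in H^1(\sO,\fw)$ (via Lemma~\ref{lem:ExistenceUniquenessEllipticHestonAprioriEstimate}) and vanish as $n\to\infty$ once $\zeta_n$ is chosen with a suitably slowly varying transition region, reducing the variational identity to $\fa(u,\chi_M u)=(f,\chi_M u)_{L^2(\sO,\fw)}$. Finally, running the estimate with $\chi_M u$ in place of $yu$ and passing $M\to\infty$ --- monotone convergence ($\chi_M\nearrow y$) for the principal term and dominated convergence for the lower-order contributions --- delivers \eqref{eq:AuxiliaryWeightedH1Estimate}.
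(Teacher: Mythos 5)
Your core strategy---testing the variational equation against the multiplier $v=yu$ (truncated to $v=\chi_M u$) to extract $\nu_0\|yDu\|_{L^2(\sO,\fw)}^2$ from the principal part of $\fa$, then absorbing the cross terms via Young's inequality---is the natural one, and the algebra is right: the quadratic part of $\fa(u,yu)$ splits exactly as you display, the coercivity bound \eqref{eq:HestonModulusEllipticity} produces the leading term, the remaining terms couple $yDu$ against $u$ with at most a $(1+y)$ weight, and the pairing $(f,yu)_{L^2(\sO,\fw)}$ is handled correctly. One point to tighten when you pass $M\to\infty$: you must choose $\eta$ with $(\eta')^2\leq C\eta$ so that $(\chi_M'y)^2/(\chi_M y)$ stays uniformly bounded. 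Otherwise the Young step for the Leibniz term $\tfrac12\int(\varrho\sigma u_x+\sigma^2u_y)\,\chi_M'u\,y\,\fw$ produces a term $\eps\|yDu\|_{L^2(\sO,\fw)}^2$ on the right which cannot be absorbed into the truncated coercive term $\nu_0\int|Du|^2\chi_M y\,\fw$ on the left, and you cannot yet assert $\|yDu\|_{L^2(\sO,\fw)}<\infty$.

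The genuine gap is the cutoff stage. To place $\zeta_n\chi_M u$ in $H^1_0(\underline\sO,\fw)$, each $\zeta_n$ must vanish in a neighborhood of $\partial_1\sO$. If $u$ does not itself vanish there in the $H^1$-trace sense, then as $\zeta_n\to 1$ away from $\partial_1\sO$ the Leibniz contributions $\int (yDu)\cdot(\chi_M u)\,D\zeta_n\,\fw$ are supported in a collar of $\partial_1\sO$ of thickness $\eps_n$ on which $|D\zeta_n|\sim\eps_n^{-1}$; they have size $O(1)$, \emph{not} $o(1)$, no matter how slowly the transition region varies. The estimate really does fail without a condition on $u$ along $\partial_1\sO$: take $\sO=(-1,1)\times(1,2)$, so $\partial_0\sO=\emptyset$, $\underline\sO=\sO$, $H^1_0(\underline\sO,\fw)$ is the ordinary $H^1_0(\sO)$, the operator is uniformly elliptic, and all weights are comparable to $1$; then $A$-harmonic functions with rapidly oscillating Dirichlet data on $\partial_1\sO$ give $\|Du_N\|_{L^2}/\|u_N\|_{L^2}\to\infty$ while $f_N\equiv 0$, contradicting \eqref{eq:AuxiliaryWeightedH1Estimate}. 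The paper does not itself prove this lemma but imports it from \cite[Prop.~5.1]{Daskalopoulos_Feehan_statvarineqheston}, where either $\sO=\HH$ (so that $H^1_0(\underline\HH,\fw)=H^1(\HH,\fw)$ and your truncated multiplier is admissible with no cutoff at all) or $u\in H^1_0(\underline\sO,\fw)$ is in force---and every application of the lemma in the present article is to a localized function $\zeta u$ satisfying the variational equation on $\HH$. Your argument needs that hypothesis made explicit; once it is, the cutoff step reduces to a standard density argument rather than a limit of non-vanishing boundary contributions.
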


We have the following interior version of Proposition \ref{prop:KochEstimate} for a solution, $u \in H^1(\sO,\fw)$, to the variational equation \eqref{eq:HestonVariationalEquation}, given $f \in L^2(\sO,\fw)$.

\begin{prop}[Interior Koch estimate]
\label{prop:InteriorKochEstimate}
Let $\sO\subseteqq\HH$ be a domain and let $d_1>0$. Then there is a constant $C=C(\Lambda,\nu_0,d_1)$ such that the following holds. Let $f\in L^2(\sO,\fw)$ and suppose that $u\in H^1(\sO,\fw)$ satisfies the variational equation \eqref{eq:HestonVariationalEquation}. If $\sO'\subset\sO$ is a subdomain such that $\bar\sO'\subset\underline{\sO}$ and $\dist(\partial_1\sO',\partial_1\sO)\geq d_1$, then\footnotemark[\value{footnote}]
\begin{equation}
\label{eq:InteriorKochEstimate}
\|Du\|_{L^2(\sO', \fw)} \leq C\left(\|(1+y)^{1/2}f\|_{L^2(\sO, \fw)} + \|(1+y)u\|_{L^2(\sO, \fw)}\right).
\end{equation}
\end{prop}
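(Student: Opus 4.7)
The plan is to reduce the problem on the subdomain $\sO'$ to the global Koch estimate on the half-plane (Proposition \ref{prop:KochEstimate}) via a cutoff function. The crucial observation is that since $\bar\sO'\subset\underline\sO$ and $\dist(\partial_1\sO',\partial_1\sO)\geq d_1>0$, we can insert a smooth cutoff $\zeta\in C^\infty(\bar\HH)$ with $\supp\zeta\subset\underline\sO$, $\zeta\equiv 1$ on a neighborhood of $\sO'$, $0\leq\zeta\leq 1$, and $\|\zeta\|_{C^2(\bar\sO)}\leq M(d_1)$. (The cutoff does not need to be compactly supported in $y$; it only needs to separate $\sO'$ from $\partial_1\sO$, leaving it equal to $1$ all the way up to $\partial_0\sO$.) Extending by zero, $\zeta u\in H^1(\HH,\fw)$, and by the localization identity recorded in \eqref{eq:LocalizedHestonVariationalEquation}--\eqref{eq:LocalizedSource}, the function $\zeta u$ satisfies the variational equation on all of $\HH$ with source
\[
f_{\zeta,u}=\zeta f+[A,\zeta]u\in L^2(\HH,\fw).
\]

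Applying Proposition \ref{prop:KochEstimate} to $\zeta u$ yields
\[
\|D(\zeta u)\|_{L^2(\HH,\fw)}\leq C\bigl(\|f_{\zeta,u}\|_{L^2(\HH,\fw)}+\|(1+y)\zeta u\|_{L^2(\HH,\fw)}\bigr).
\]
The commutator term is controlled by the $L^2$ version of Lemma \ref{lem:CommutatorEstimate},
\[
\|[A,\zeta]u\|_{L^2(\sO,\fw)}\leq C\bigl(\|yDu\|_{L^2(\sO,\fw)}+\|(1+y)u\|_{L^2(\sO,\fw)}\bigr),
\]
where the first version of the lemma (valid without any restriction on $\height(\supp\zeta)$) is crucial so that the estimate applies uniformly even when $\sO'$ is unbounded in $y$. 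Note from \eqref{eq:ACommutator} that every derivative term in $[A,\zeta]u$ is multiplied by $y$, which is exactly what makes this weighted bound possible.

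The factor $\|yDu\|_{L^2(\sO,\fw)}$ on the right is then absorbed using the weighted first-order estimate of Lemma \ref{lem:AuxiliaryWeightedH1Estimate}:
\[
\|yDu\|_{L^2(\sO,\fw)}\leq C\bigl(\|y^{1/2}f\|_{L^2(\sO,\fw)}+\|(1+y)u\|_{L^2(\sO,\fw)}\bigr).
\]
Combining these estimates, and using $\|y^{1/2}f\|_{L^2(\sO,\fw)}\leq\|(1+y)^{1/2}f\|_{L^2(\sO,\fw)}$ together with the trivial bounds $\|\zeta f\|_{L^2(\HH,\fw)}\leq\|f\|_{L^2(\sO,\fw)}$ and $\|(1+y)\zeta u\|_{L^2(\HH,\fw)}\leq\|(1+y)u\|_{L^2(\sO,\fw)}$, we obtain
\[
\|D(\zeta u)\|_{L^2(\HH,\fw)}\leq C\bigl(\|(1+y)^{1/2}f\|_{L^2(\sO,\fw)}+\|(1+y)u\|_{L^2(\sO,\fw)}\bigr).
\]
Since $\zeta\equiv 1$ on $\sO'$, we have $D(\zeta u)=Du$ there, and the left-hand side dominates $\|Du\|_{L^2(\sO',\fw)}$, yielding \eqref{eq:InteriorKochEstimate}.

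The main conceptual hurdle is really just the observation that the Koch estimate on $\HH$ can be used \emph{as a black box} after multiplication by the cutoff: the global estimate of Proposition \ref{prop:KochEstimate} is insensitive to domain boundaries because $\zeta u$ can be extended trivially across $\partial_1\sO$, and the price of localization is entirely paid by the commutator, which Lemma \ref{lem:CommutatorEstimate} controls in terms of the weighted first-order quantity $yDu$ that Lemma \ref{lem:AuxiliaryWeightedH1Estimate} already estimates. The only genuine technical subtlety is choosing $\zeta$ so that $\supp\zeta\subset\underline\sO$ while $\zeta\equiv 1$ on $\sO'$ with derivative bounds depending only on $d_1$; this is possible precisely because of the separation hypothesis $\dist(\partial_1\sO',\partial_1\sO)\geq d_1$.
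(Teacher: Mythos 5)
Your proposal is correct and takes essentially the same approach as the paper: cut off with $\zeta$ supported in $\underline\sO$, use the localization identity \eqref{eq:LocalizedHestonVariationalEquation}--\eqref{eq:LocalizedSource}, apply Proposition \ref{prop:KochEstimate} to $\zeta u$ on $\HH$, control the commutator via the first (height-independent) estimate of Lemma \ref{lem:CommutatorEstimate}, and absorb the $\|yDu\|_{L^2(\sO,\fw)}$ term via Lemma \ref{lem:AuxiliaryWeightedH1Estimate}. The paper's proof is stated tersely but invokes exactly these ingredients in the same order.
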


\begin{proof}
Choose a cutoff function, $\zeta \in C^\infty(\bar\sO)$, such that $0\leq\zeta\leq 1$ on $\sO$ and $\zeta=1$ on $\sO'$ and $\supp\zeta\subset\underline\sO''$, for a subdomain $\sO''\subset\sO$ such that $\bar\sO''\subset\underline{\sO}$ and $\dist(\partial_1\sO'',\partial_1\sO)\geq d_1/2$. The conclusion now follows from Proposition \ref{prop:KochEstimate}, Equations \eqref{eq:LocalizedHestonVariationalEquation} and \eqref{eq:LocalizedSource}, and Lemmas \ref{lem:CommutatorEstimate} and \ref{lem:AuxiliaryWeightedH1Estimate}.
\end{proof}

The far more elementary a priori estimate in Lemma \ref{lem:ExistenceUniquenessEllipticHestonAprioriEstimate} may also be localized by an argument very similar to that used to prove Proposition \ref{prop:InteriorKochEstimate}.

\begin{lem}[Interior $H^1$ a priori estimate for a solution to the variational equation]
\label{lem:H1BoundSolutionHestonVarEqnSubdomainInterior}
Let $\sO\subseteqq\HH$ be a domain and let $d_1>0$. Then there is a constant $C=C(\Lambda,\nu_0,d_1)$ such that the following holds. Let $f\in L^2(\sO,\fw)$ and suppose that $u\in H^1(\sO,\fw)$ satisfies the variational equation \eqref{eq:HestonVariationalEquation}. If $\sO'\subset\sO$ is a subdomain such that $\bar\sO'\subset\underline{\sO}$ and $\dist(\partial_1\sO',\partial_1\sO)\geq d_1$, then\footnotemark[\value{footnote}]
\begin{equation}
\label{eq:H1BoundSolutionHestonVarEqnSubdomain}
\|u\|_{H^1(\sO',\fw)} \leq C\left(\|(1+y)^{1/2}f\|_{L^2(\sO,\fw)} + \|(1+y)u\|_{L^2(\sO,\fw)}\right).
\end{equation}
\end{lem}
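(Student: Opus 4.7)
The plan is to mimic the localization argument used for Proposition \ref{prop:InteriorKochEstimate}, but to apply the elementary global a priori estimate of Lemma \ref{lem:ExistenceUniquenessEllipticHestonAprioriEstimate} in place of the harder Koch estimate on the half-plane (Proposition \ref{prop:KochEstimate}). Since the $H^1(\sO,\fw)$-norm weights first derivatives only by $y$, no appeal to the Koch estimate is needed.

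First I would choose a cutoff $\zeta \in C^\infty(\bar\sO)$ with $0 \leq \zeta \leq 1$, $\zeta \equiv 1$ on $\sO'$, $\supp\zeta \subset \underline{\sO''}$ for an intermediate subdomain $\sO''\subset\sO$ with $\dist(\partial_1\sO'',\partial_1\sO)\geq d_1/2$, and $\|\zeta\|_{C^2(\bar\sO)} \leq C(d_1)$; note that $\zeta$ need not be compactly supported in the $y$-direction, so the case of unbounded $\sO'$ is accommodated. (If $\partial_1\sO = \emptyset$ we may simply take $\zeta\equiv 1$, and the conclusion follows directly from Lemma \ref{lem:ExistenceUniquenessEllipticHestonAprioriEstimate}.) By the computation leading to \eqref{eq:LocalizedHestonVariationalEquation} (restricting the test functions to $H^1_0(\underline\sO,\fw)$ rather than $H^1(\HH,\fw)$, which is permitted because Lemma \ref{lem:SimpleCommutatorInnerProduct} already works within $H^1(\sO,\fw)$), the function $\zeta u \in H^1(\sO,\fw)$ solves the variational equation \eqref{eq:HestonVariationalEquation} on $\sO$ with source $f_{\zeta,u} = \zeta f + [A,\zeta]u$.

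Next I would apply Lemma \ref{lem:ExistenceUniquenessEllipticHestonAprioriEstimate} to $\zeta u$ to obtain
\begin{equation*}
\|\zeta u\|_{H^1(\sO,\fw)} \leq C\bigl(\|\zeta f\|_{L^2(\sO,\fw)} + \|[A,\zeta]u\|_{L^2(\sO,\fw)} + \|(1+y)\zeta u\|_{L^2(\sO,\fw)}\bigr).
\end{equation*}
Since $|\zeta|\leq 1$, the first and third terms are bounded by $\|f\|_{L^2(\sO,\fw)}$ and $\|(1+y)u\|_{L^2(\sO,\fw)}$ respectively. The commutator term is controlled via Lemma \ref{lem:CommutatorEstimate} by $C(\|yDu\|_{L^2(\sO,\fw)} + \|(1+y)u\|_{L^2(\sO,\fw)})$, and then Lemma \ref{lem:AuxiliaryWeightedH1Estimate} gives
\begin{equation*}
\|yDu\|_{L^2(\sO,\fw)} \leq C\bigl(\|y^{1/2}f\|_{L^2(\sO,\fw)} + \|(1+y)u\|_{L^2(\sO,\fw)}\bigr).
\end{equation*}
Using $y^{1/2} \leq (1+y)^{1/2}$ and $\|u\|_{H^1(\sO',\fw)} \leq \|\zeta u\|_{H^1(\sO,\fw)}$ (since $\zeta=1$ on $\sO'$), the desired estimate \eqref{eq:H1BoundSolutionHestonVarEqnSubdomain} follows by combining these inequalities.

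There is no real obstacle here: all of the tools are already in place and the argument is essentially a routine chaining of the cited lemmas. The only subtlety worth checking is that the cutoff can be chosen with bounded $C^2$-norm depending only on $d_1$ (so that the constant in Lemma \ref{lem:CommutatorEstimate} depends only on $\Lambda,\nu_0,d_1$) and without imposing support conditions in the $y$-direction, so that the argument remains valid when $\sO'$ is unbounded in $y$.
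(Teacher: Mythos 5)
Your proposal is correct and is essentially identical to the paper's own (very terse) proof: localize with a cutoff $\zeta$ adapted to $d_1$, use the identity \eqref{eq:LocalizedHestonVariationalEquation}--\eqref{eq:LocalizedSource} so that $\zeta u$ solves a variational equation with source $\zeta f + [A,\zeta]u$, apply Lemma \ref{lem:ExistenceUniquenessEllipticHestonAprioriEstimate} to $\zeta u$, and control the commutator via Lemmas \ref{lem:CommutatorEstimate} and \ref{lem:AuxiliaryWeightedH1Estimate}. Your observation that the Koch estimate is not needed here (unlike in Proposition \ref{prop:InteriorKochEstimate}) is exactly the point that distinguishes this lemma's proof from the preceding one.
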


\begin{proof}
Choose a cutoff function, $\zeta \in C^\infty(\bar\sO)$, such that $0\leq\zeta\leq 1$ on $\sO$ and $\zeta=1$ on $\sO'$ and $\supp\zeta\subset\underline\sO''$, for a subdomain $\sO''\subset\sO$ such that $\bar\sO''\subset\underline{\sO}$ and $\dist(\partial_1\sO'',\partial_1\sO)\geq d_1/2$. The conclusion now follows from Lemma \ref{lem:ExistenceUniquenessEllipticHestonAprioriEstimate}, Equations \eqref{eq:LocalizedHestonVariationalEquation} and \eqref{eq:LocalizedSource}, and Lemmas \ref{lem:CommutatorEstimate} and \ref{lem:AuxiliaryWeightedH1Estimate}.
\end{proof}

\subsection{Interior $H^2$ regularity}
\label{subsec:InteriorH2Regularity}

We denote the finite difference with respect to $x$ of a function $v$ on $\sO$ by
\begin{equation}
\label{eq:FiniteDifferencex}
\delta_x^hv(x,y) := \frac{1}{h}\left(v(x+h,y)-v(x,y)\right),
\end{equation}
for $h\in\RR\less\{0\}$ and all $(x,y) \in \sO$ with $(x+h,y)\in\sO$. We have the following analogue and extension of \cite[Theorem 5.8.3]{Evans} or \cite[Lemmas 7.23 and 7.24]{GilbargTrudinger}.

\begin{lem}[Convergence and bounds on finite differences]
\label{lem:FDBounds}
Let $\sO\subseteqq\HH$ be a domain and let $\sO'\subset\sO$ be a subdomain such that $\bar\sO' \subset \underline{\sO}$.
\begin{enumerate}
\item
\label{item:FDBounds_DiffBound}
There is a constant $C=C(\dist(\partial_1\sO',\partial_1\sO))$ such that the following holds. If $u \in L^2(\sO,\fw)$ with $u_x \in L^2(\sO,\fw)$, then
$
\|\delta_x^h u\|_{L^2(\sO',\fw)} \leq C\|u_x\|_{L^2(\sO,\fw)},
$
for all $h\in\RR$ such that $0<2|h|<\dist(\partial_1\sO',\partial_1\sO)$.
\item
\label{item:FDBounds_WeakDerivExist}
If $u \in L^2(\sO,\fw)$ and there is a constant $K>0$ such that
$
\|\delta_x^h u\|_{L^2(\sO',\fw)} \leq K,
$
for all $h\in\RR$ such that $0<2|h|<\dist(\partial_1\sO',\partial_1\sO)$, then $u_x \in L^2(\sO',\fw)$ exists and obeys $\|u_x\|_{L^2(\sO',\fw)} \leq K$.
\end{enumerate}
\end{lem}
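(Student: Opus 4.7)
The strategy parallels the unweighted argument in \cite[Lemmas 7.23 \& 7.24]{GilbargTrudinger}, with the additional ingredient that the weight $\fw(x,y) = y^{\beta-1} e^{-\gamma\sqrt{1+x^2}-\mu y}$ is quasi-invariant under translations in the $x$-direction of bounded size. Specifically, the elementary inequality $|\sqrt{1+(x+s)^2} - \sqrt{1+x^2}| \leq |s|$ yields
$$
\fw(x,y) \leq e^{\gamma|s|}\fw(x+s,y), \quad \forall (x,y)\in\HH,\ s\in\RR,
$$
so $\fw(x,y)$ and $\fw(x+s,y)$ are comparable up to a constant depending only on $\gamma$ and an upper bound for $|s|$. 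This will let us use change of variables $x'=x+s$ with only a bounded loss.

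For part \eqref{item:FDBounds_DiffBound}, I would first establish the estimate for $u \in C^\infty(\sO)\cap L^2(\sO,\fw)$ with $u_x \in L^2(\sO,\fw)$ and then pass to general $u$ by approximation. For smooth $u$ and $h>0$ with $2h < \dist(\partial_1\sO',\partial_1\sO)$, the fundamental theorem of calculus gives $\delta_x^h u(x,y) = h^{-1}\int_0^h u_x(x+s,y)\,ds$. Cauchy--Schwarz then yields $|\delta_x^h u(x,y)|^2 \leq h^{-1}\int_0^h |u_x(x+s,y)|^2\,ds$. Integrating against $\fw$ over $\sO'$, applying Fubini, invoking the comparability of $\fw(x,y)$ and $\fw(x+s,y)$ for $0\leq s\leq h$, and changing variables $x'=x+s$ (noting that $\sO'+se_1 \subset \sO$ for $|s|<\dist(\partial_1\sO',\partial_1\sO)/2$) produces the desired bound with constant $C$ depending only on $\gamma$ and $\dist(\partial_1\sO',\partial_1\sO)$. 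The case $h<0$ is symmetric.

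For part \eqref{item:FDBounds_WeakDerivExist}, pick any sequence $h_k\to 0$ with $0<2|h_k|<\dist(\partial_1\sO',\partial_1\sO)$. The hypothesis gives a sequence $\{\delta_x^{h_k} u\}$ bounded in the reflexive Hilbert space $L^2(\sO',\fw)$, so along a subsequence (still denoted $h_k$) we obtain weak convergence $\delta_x^{h_k} u \rightharpoonup v$ to some $v \in L^2(\sO',\fw)$ with $\|v\|_{L^2(\sO',\fw)}\leq K$ by lower semicontinuity. It remains to identify $v$ with $u_x$ in the distributional sense on $\sO'$. For any $\varphi \in C^\infty_c(\sO')$, the summation-by-parts identity $\int_{\sO'} \delta_x^{h_k}u \cdot \varphi\,dx\,dy = -\int_{\sO'} u \cdot \delta_x^{-h_k}\varphi \,dx\,dy$ holds once $|h_k|$ is smaller than the distance from $\supp\varphi$ to $\partial\sO'$; the right-hand side converges to $-\int_{\sO'} u\varphi_x\,dx\,dy$ by dominated convergence since $\delta_x^{-h_k}\varphi\to \varphi_x$ uniformly, and $u$ is integrable on the compact set $\supp\varphi \Subset \sO' \subset \HH$ (where $\fw$ is bounded away from zero and infinity).

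To match this with weak convergence in $L^2(\sO',\fw)$, write $\int_{\sO'}\delta_x^{h_k}u \cdot \varphi\,dx\,dy = \int_{\sO'}\delta_x^{h_k}u \cdot (\varphi/\fw)\, \fw\,dx\,dy$ and observe that $\varphi/\fw \in L^2(\sO',\fw)$, because $\supp\varphi \Subset \HH$ so $\fw^{-1}$ is bounded on $\supp\varphi$. Hence the left-hand side tends to $\int_{\sO'} v\varphi\,dx\,dy$, yielding $\int_{\sO'} v\varphi = -\int_{\sO'} u\varphi_x$ for every $\varphi\in C^\infty_c(\sO')$, i.e., $v = u_x$ as distributions on $\sO'$. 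Thus $u_x \in L^2(\sO',\fw)$ with $\|u_x\|_{L^2(\sO',\fw)}\leq K$. The only technical care in the whole argument is the weight comparison in part \eqref{item:FDBounds_DiffBound} and the verification that test functions $\varphi/\fw$ lie in the weighted space in part \eqref{item:FDBounds_WeakDerivExist}; no genuinely new obstacle arises beyond the classical unweighted proof.
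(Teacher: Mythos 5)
Your proof is correct and takes essentially the same route as the paper, which simply invokes the classical arguments of Gilbarg--Trudinger Lemmas 7.23--7.24 and notes that $L^2(\sO,\fw)$ is a reflexive separable Hilbert space. Your write-up has the merit of making explicit the one non-trivial ingredient hidden in the paper's ``adapts line-by-line,'' namely the quasi-translation-invariance $\fw(x,y)\leq e^{\gamma|s|}\fw(x+s,y)$ of the weight, which is exactly what makes the unweighted argument carry over.
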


\begin{proof}
The proof of Item \eqref{item:FDBounds_DiffBound} adapts line-by-line from the proofs of \cite[Theorem 5.8.3 (i)]{Evans} or \cite[Lemma 7.23]{GilbargTrudinger}. To prove Item \eqref{item:FDBounds_WeakDerivExist},  it is enough to notice that $L^2(\sO,\fw)$ is a separable Hilbert space (therefore, reflexive also) and so \cite[Problem 5.4]{GilbargTrudinger} applies. The proof of \cite[Lemma 7.24]{GilbargTrudinger} now adapts line-by-line.
\end{proof}

We shall adapt the proof of \cite[Theorem 8.8]{GilbargTrudinger} in order to establish

\begin{thm}[Interior regularity of second-order derivatives parallel to the degenerate boundary]
\label{thm:InteriorPartialH2Parallel}
Let $\sO\subseteqq\HH$ be a domain and let $d_1>0$. Then there is a constant $C=C(\Lambda,\nu_0,d_1)$ such that the following holds. Let $f\in L^2(\sO,\fw)$ and suppose that $u\in H^1(\sO,\fw)$ satisfies the variational equation \eqref{eq:HestonVariationalEquation}. If $\sO'\subset\sO$ is a subdomain such that $\bar\sO'\subset\underline{\sO}$ and $\dist(\partial_1\sO',\partial_1\sO)\geq d_1$, then\footnote{The result trivially holds if $(1+y)u_x \notin L^2(\sO,\fw)$.}
$
yu_{xx}, \ yu_{xy} \in L^2(\sO',\fw),
$
and
$$
\|yDu_x\|_{L^2(\sO',\fw)} \leq C\left(\|f\|_{L^2(\sO,\fw)} + \|y^{1/2}Du\|_{L^2(\sO,\fw)} + \|(1+y)u_x\|_{L^2(\sO,\fw)} + \|u\|_{L^2(\sO,\fw)}\right).
$$
\end{thm}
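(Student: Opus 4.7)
The plan is to adapt the finite-difference method of \cite[Theorem 8.8]{GilbargTrudinger} to the degenerate weighted setting. Two structural features make differencing in the $x$-direction natural: the coefficients of the Heston operator $A$ in \eqref{eq:OperatorHestonIntro} are constants (hence independent of $x$), so $\delta_x^h$ commutes pointwise with $A$; and the $x$-dependence of the weight $\fw$ in \eqref{eq:HestonWeight} comes only through the factor $e^{-\gamma\sqrt{1+x^2}}$, which yields the uniform estimate $|\delta_x^{\pm h}\fw/\fw|\leq C(\gamma)$ for $|h|\leq 1$. Choose an intermediate subdomain $\sO''$ with $\sO'\Subset\underline{\sO''}\Subset\underline\sO$ and $\dist(\partial_1\sO'',\partial_1\sO)\geq d_1/2$, together with a cutoff $\zeta\in C^\infty_0(\underline\sO)$ satisfying $\zeta\equiv 1$ on $\sO'$, $\supp\zeta\subset\underline{\sO''}$, $0\leq\zeta\leq 1$, and $\|\zeta\|_{C^2}\leq C(d_1)$. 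For $0<|h|<d_1/4$, substitute the test function
$$
v_h:=-\delta_x^{-h}\bigl(\zeta^2\delta_x^h u\bigr)\in H^1_0(\underline\sO,\fw)
$$
into the variational equation \eqref{eq:HestonVariationalEquation}.

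Expanding $\fa(u,v_h)$ via \eqref{eq:HestonWithKillingBilinearForm} and moving one copy of $\delta_x^{\pm h}$ off $v_h$ onto $u$ via the discrete analogue of integration by parts in $x$ (the $x$-derivatives of $\fw$ produce only bounded errors in view of the uniform bound on $\delta_x^{\pm h}\fw/\fw$), one rearranges the left-hand side into
$$
\fa(u,v_h) = \fa\bigl(\zeta\delta_x^h u,\,\zeta\delta_x^h u\bigr) + R_h,
$$
where the remainder $R_h$ collects commutators arising from Lemma \ref{lem:SimpleCommutatorInnerProduct}/\eqref{eq:ACommutator} and the weight-difference errors. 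By Lemmas \ref{lem:CommutatorEstimate} and \ref{lem:FDBounds}\eqref{item:FDBounds_DiffBound},
$$
|R_h|\leq C\bigl(\|y^{1/2}Du\|_{L^2(\sO'',\fw)}+\|(1+y)u\|_{L^2(\sO'',\fw)}+\|u_x\|_{L^2(\sO'',\fw)}\bigr)\bigl(\|y^{1/2}D(\zeta\delta_x^h u)\|_{L^2(\sO,\fw)}+\|(1+y)^{1/2}\zeta\delta_x^h u\|_{L^2(\sO,\fw)}\bigr)
$$
uniformly in $h$. Coercivity of $\fa$ via \eqref{eq:HestonModulusEllipticity} (with Cauchy-Schwarz absorption of the first-order terms in \eqref{eq:HestonWithKillingBilinearForm}) yields
$$
\fa\bigl(\zeta\delta_x^h u,\zeta\delta_x^h u\bigr)\geq \nu_0\|y^{1/2}D(\zeta\delta_x^h u)\|_{L^2(\sO,\fw)}^2 - C_1\|(1+y)^{1/2}\zeta\delta_x^h u\|_{L^2(\sO,\fw)}^2.
$$
For the right-hand side of the variational equation, Cauchy-Schwarz and Lemma \ref{lem:FDBounds}\eqref{item:FDBounds_DiffBound} give
$$
|(f,v_h)_{L^2(\sO,\fw)}|\leq \|f\|_{L^2(\sO,\fw)}\|\partial_x(\zeta^2\delta_x^h u)\|_{L^2(\sO'',\fw)}\leq C\|f\|_{L^2(\sO,\fw)}\bigl(\|\zeta\delta_x^h u_x\|_{L^2(\sO'',\fw)}+\|\delta_x^h u\|_{L^2(\sO'',\fw)}\bigr).
$$

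Combining the three estimates, using Young's inequality to absorb the square of $\|y^{1/2}D(\zeta\delta_x^h u)\|_{L^2(\sO,\fw)}$, and applying the interior bounds of Lemma \ref{lem:H1BoundSolutionHestonVarEqnSubdomainInterior} (or Proposition \ref{prop:InteriorKochEstimate}) to control $\|Du\|_{L^2(\sO'',\fw)}$ and $\|\delta_x^h u\|_{L^2(\sO'',\fw)}\leq\|u_x\|_{L^2(\sO'',\fw)}$, one obtains a bound on $\|y^{1/2}D(\zeta\delta_x^h u)\|_{L^2(\sO,\fw)}$ that is uniform in $h$ and controlled by the right-hand side of the conclusion. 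To upgrade this $y^{1/2}$-weighted gradient bound to the required $y$-weighted bound $\|yDu_x\|$, observe that $\tilde u_h:=\zeta\delta_x^h u\in H^1_0(\underline\sO,\fw)$ satisfies a localized variational equation $\fa(\tilde u_h,w)=(F_h,w)_{L^2(\sO,\fw)}$ whose source $F_h$ is assembled from a discrete-difference-of-$f$ piece, $[A,\zeta]$-commutator terms, and weight-derivative errors. A second dual-pairing argument (pairing $F_h$ against $w$ and moving $\delta_x^h$ off $f$ onto $w$ so that no genuine derivative of $f$ is ever required) shows $\|y^{1/2}F_h\|_{L^2(\sO,\fw)}$ is bounded by the right-hand side of the conclusion. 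Lemma \ref{lem:AuxiliaryWeightedH1Estimate} applied to $\tilde u_h$ then delivers an $h$-uniform bound on $\|yD\tilde u_h\|_{L^2(\sO,\fw)}$; passage to the limit $h\to 0$ via Lemma \ref{lem:FDBounds}\eqref{item:FDBounds_WeakDerivExist} produces $yu_{xx}, yu_{xy}\in L^2(\sO',\fw)$ with the stated bound.

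The main obstacle will be the careful bookkeeping of weight-derivative errors generated by the $x$-dependence of $\fw$, both in the bilinear form and in the source pairing, and in particular the upgrade from the natural $y^{1/2}$-weighted gradient estimate to the $y$-weighted estimate appearing in the conclusion. This upgrade depends on the dual interpretation of the formal difference $\delta_x^h f$ (which avoids any genuine $x$-derivative of $f$) combined with Lemma \ref{lem:AuxiliaryWeightedH1Estimate}, which encodes the Koch-type trick calibrated to the degenerate weight.
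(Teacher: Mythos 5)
Your proposal follows the same broad finite-difference strategy as the paper and correctly identifies that the coercivity estimate will only deliver a $y^{1/2}$-weighted gradient bound if one uses the vanilla \cite[Theorem~8.8]{GilbargTrudinger}-style test function. However, your ``upgrade'' step has a concrete gap, and the paper takes a cleaner route that avoids needing an upgrade at all.

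The paper's key device is to put the factor of $y$ \emph{directly into the test function}: it substitutes $v = y\zeta^2\delta_x^h u$, not $v_h = -\delta_x^{-h}\bigl(\zeta^2\delta_x^h u\bigr)$. Because the bilinear form already carries a $y$, the test function's extra $y$ makes the coercivity term equal to $\nu_0\int (\fw^h/\fw)\,|\zeta D\delta_x^h u|^2\, y^2\,\fw\,dx\,dy$, which \emph{is} the $y$-weighted quantity $\|\zeta y D\delta_x^h u\|^2_{L^2(\fw)}$ (up to the bounded ratio $\fw^h/\fw$). The product-rule expansions $y\zeta^2\delta_x^h u_x = v_x - 2y\zeta\zeta_x\delta_x^h u$ and $y\zeta^2\delta_x^h u_y = v_y - (\zeta^2+2y\zeta\zeta_y)\delta_x^h u$ keep the error terms at the level of $(1+y)\delta_x^h u$, which Lemma~\ref{lem:FDBounds}\eqref{item:FDBounds_DiffBound} controls by $(1+y)u_x$. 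After one application of Young's inequality the conclusion follows in a single pass; Lemma~\ref{lem:AuxiliaryWeightedH1Estimate} is never invoked inside this proof.

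The gap in your two-pass scheme is the claim that $\|y^{1/2}F_h\|_{L^2(\sO,\fw)}$ is bounded by the right-hand side of the conclusion. As you construct it, $F_h$ contains the term $\zeta\,\delta_x^h f$ (or its weight-corrected variant), and $\|y^{1/2}\zeta\,\delta_x^h f\|_{L^2(\fw)}$ is \emph{not} bounded uniformly as $h\to 0$ when all you know is $f\in L^2(\sO,\fw)$: that would require $y^{1/2} f_x$ in $L^2$ on a neighbourhood of $\supp\zeta$, which is not a hypothesis. The dual-pairing maneuver you invoke only bounds the functional $w\mapsto (F_h,w)$ against sufficiently regular $w$; it does not produce an $L^2$-representative with the stated norm bound, which is what Lemma~\ref{lem:AuxiliaryWeightedH1Estimate} requires as a black box. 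To repair your route you would need to re-derive the Lemma~\ref{lem:AuxiliaryWeightedH1Estimate} estimate for $\tilde u_h$ with the source kept in duality form, pair $\delta_x^h f$ against $y\tilde u_h$, transfer the difference via \eqref{eq:FDIntegrationByParts} onto $y\delta_x^{-h}\tilde u_h = \delta_x^{-h}(y\tilde u_h)$, bound that by $\|yD\tilde u_h\|_{L^2(\fw)}$ using Lemma~\ref{lem:FDBounds}\eqref{item:FDBounds_DiffBound} and the fact that $y$ is $x$-independent, and then absorb. That absorption is exactly the role of the $y$-factor in the paper's test function, so you would be re-deriving the trick at a later and more awkward stage; putting the $y$ into the test function from the start is both shorter and sidesteps the spurious $\|y^{1/2}\delta_x^h f\|$ term entirely.
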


\begin{rmk}[Comparison with regularity results and their proofs in \cite{Daskalopoulos_Feehan_statvarineqheston}]
While stronger results than Theorem \ref{thm:InteriorPartialH2Parallel} are proved as Corollary 5.15 and Theorem 5.17 in \cite{Daskalopoulos_Feehan_statvarineqheston}, where (in the case of \cite[Corollary 5.15]{Daskalopoulos_Feehan_statvarineqheston}) the subdomain $\sO'$ is replaced by $\sO$ under suitable hypotheses on $\partial_1\sO$ and $Du_x$ is replaced by $D^2u$, the proof of \cite[Corollary 5.15]{Daskalopoulos_Feehan_statvarineqheston} relies on a hypothesis (see \cite[Theorem 5.2]{Daskalopoulos_Feehan_statvarineqheston}) in \cite{Daskalopoulos_Feehan_statvarineqheston} that there exist solutions $u\in C^\infty(\bar\sO)$ to $Au=f$ on $\sO$ and $u=0$ on $\partial_1\sO$ when $f\in C^\infty_0(\sO)$ and $\partial_1\sO$ is $C^\infty$-transverse to $\partial\HH$. In contrast, our proof of Theorem \ref{thm:InteriorPartialH2Parallel} does not rely on \cite[Theorem 5.2]{Daskalopoulos_Feehan_statvarineqheston}, whose proof appears difficult, and instead uses more elementary methods (finite differences, in
particular). See also Remark \ref{rmk:KochEstimate}.
\end{rmk}

Using the $L^2(\sO,\fw)$-analogue\footnote{The proof adapts line-by-line.} of the finite-difference integration-by-parts formula \cite[Equation (6.3.16)]{Evans}, we find that, for $f, v \in L^2(\sO,\fw)$,
\begin{equation}
\label{eq:FDIntegrationByParts}
-(f, \delta_x^{-h}v)_{L^2(\sO,\fw)} = ((\fw^h/\fw)\delta_x^{h}f, v)_{L^2(\sO,\fw)} + ((\delta_x^{h}\fw/\fw)f, v)_{L^2(\sO,\fw)},
\end{equation}
where the finite-difference product rule \cite[Equation (6.3.17)]{Evans} gives
$$
\delta_x^{h}(\fw f) = \fw^h \delta_x^{h} f + f\delta_x^{h}\fw \quad\hbox{a.e. on }\sO,
$$
with $\fw^h(x,y) := \fw(x+h,y)$.

\begin{proof}[Proof of Theorem \ref{thm:InteriorPartialH2Parallel}]
We may assume without loss of generality that $(1+y)u_x \in L^2(\sO,\fw)$. From the integral identities \eqref{eq:HestonWithKillingBilinearForm} and \eqref{eq:HestonVariationalEquation} (using our Assumption \ref{assump:HestonCoefficientb1} that $b_1=0$), we have
\begin{align*}
{}&\frac{1}{2}\int_\sO\left(u_xv_x + \varrho\sigma u_yv_x
+ \varrho\sigma u_xv_y + \sigma^2u_yv_y\right)y\,\fw\,dx\,dy
\\
&= \frac{\gamma}{2}\int_\sO\left(u_x + \varrho\sigma u_y\right)v \,\frac{x}{\sqrt{1+x^2}}y\,\fw\,dx\,dy
\\
&\quad + \int_\sO a_1u_xv y\,\fw\,dx\,dy - \int_\sO c_0uv\,\fw\,dx\,dy  + \int_\sO fv\,\fw\,dx\,dy, \quad\forall v \in C^\infty_0(\underline{\sO}).
\end{align*}
We may replace $v$ by the difference quotient, $\delta_x^{-h}v$, in the preceding identity, provided $|h|<\frac{1}{2}\dist(\supp v,\partial_1\sO)$, and use
the $L^2(\sO,\fw)$ finite-difference integration-by-parts formula \eqref{eq:FDIntegrationByParts} to find that
\begin{align*}
{}&\int_\sO (\fw^h/\fw)\left((\delta_x^h u_x)v_x + \varrho\sigma (\delta_x^h u_y)v_x
+ \varrho\sigma (\delta_x^h u_x)v_y + \sigma^2(\delta_x^h u_y)v_y\right)y\,\fw\,dx\,dy
\\
& + \int_\sO (\delta_x^h\fw/\fw)\left(u_xv_x + \varrho\sigma u_yv_x
+ \varrho\sigma u_xv_y + \sigma^2u_yv_y\right)y\,\fw\,dx\,dy
\\
&= -\int_\sO\left(u_x(\delta_x^{-h}v_x) + \varrho\sigma u_y(\delta_x^{-h}v_x)
+ \varrho\sigma u_x(\delta_x^{-h}v_y) + \sigma^2u_y(\delta_x^{-h}v_y)\right)y\,\fw\,dx\,dy
\end{align*}
and give
\begin{align*}
{}&-\frac{1}{2}\int_\sO (\fw^h/\fw)\left((\delta_x^h u_x)v_x + \varrho\sigma (\delta_x^h u_y)v_x
+ \varrho\sigma (\delta_x^h u_x)v_y + \sigma^2(\delta_x^h u_y)v_y\right)y\,\fw\,dx\,dy
\\
&= \frac{1}{2}\int_\sO (\delta_x^h\fw/\fw)\left(u_xv_x + \varrho\sigma u_yv_x
+ \varrho\sigma u_xv_y + \sigma^2u_yv_y\right)y\,\fw\,dx\,dy
\\
&\quad + \frac{\gamma}{2}\int_\sO\left(u_x + \varrho\sigma u_y\right)(\delta_x^{-h}v) \,\frac{x}{\sqrt{1+x^2}}y\,\fw\,dx\,dy
\\
&\quad + \int_\sO a_1u_x(\delta_x^{-h}v) y\,\fw\,dx\,dy - \int_\sO c_0u(\delta_x^{-h}v)\,\fw\,dx\,dy  + \int_\sO f(\delta_x^{-h}v)\,\fw\,dx\,dy.
\end{align*}
Therefore,
\begin{align*}
{}&\frac{1}{2}\int_\sO (\fw^h/\fw)\left((\delta_x^h u_x)v_x + \varrho\sigma (\delta_x^h u_y)v_x
+ \varrho\sigma (\delta_x^h u_x)v_y + \sigma^2(\delta_x^h u_y)v_y\right)y\,\fw\,dx\,dy
\\
&\leq C\|y^{1/2}Du\|_{L^2(\sO,\fw)}\left(\|y^{1/2}Dv\|_{L^2(\sO,\fw)} + \|y^{1/2}\delta_x^{-h}v\|_{L^2(\sO,\fw)}\right)
\\
&\quad  + C\left(\|u\|_{L^2(\sO,\fw)} + \|f\|_{L^2(\sO,\fw)}\right)\|\delta_x^{-h}v\|_{L^2(\sO,\fw)}
\\
&\leq C\|y^{1/2}Du\|_{L^2(\sO,\fw)}\|y^{1/2}Dv\|_{L^2(\sO,\fw)} + C\left(\|u\|_{L^2(\sO,\fw)} + \|f\|_{L^2(\sO,\fw)}\right)\|v_x\|_{L^2(\sO,\fw)},
\end{align*}
where the final inequality follows from Lemma \ref{lem:FDBounds} \eqref{item:FDBounds_DiffBound}. Now choose $\zeta\in C^\infty(\underline{\sO})$ with $0\leq \zeta\leq 1$ on $\sO$ and $\zeta=1$ on $\sO'$ and $\supp\zeta\subset\underline\sO$, and set $v = y\zeta^2\delta_x^h u$ with $|h|<\frac{1}{2}\dist(\supp\zeta,\partial_1\sO)$. Therefore, applying \eqref{eq:HestonModulusEllipticity}, we obtain
\begin{align*}
{}&\nu_0\int_\sO (\fw^h/\fw)|\zeta D\delta_x^h u|^2 y^2\,\fw\,dx\,dy
\\
&\leq \frac{1}{2}\int_\sO (\fw^h/\fw)\zeta^2\left((\delta_x^h u_x)^2 + 2\varrho\sigma (\delta_x^h u_y)(\delta_x^h u_x)
+ \sigma^2(\delta_x^h u_y)^2\right)y^2\,\fw\,dx\,dy,
\end{align*}
and using
\begin{align*}
y\zeta^2\delta_x^hu_x &= (y\zeta^2\delta_x^hu)_x - 2y\zeta\zeta_x\delta_x^hu = v_x - 2y\zeta\zeta_x\delta_x^hu,
\\
y\zeta^2\delta_x^hu_y &= (y\zeta^2\delta_x^hu)_y - \zeta^2\delta_x^hu - 2y\zeta\zeta_y\delta_x^hu = v_y - (\zeta^2 + 2y\zeta\zeta_y)\delta_x^hu,
\end{align*}
we obtain
\begin{align*}
{}&\nu_0\int_\sO (\fw^h/\fw)|\zeta D\delta_x^h u|^2 y^2\,\fw\,dx\,dy
\\
&\leq \frac{1}{2}\int_\sO (\fw^h/\fw)\left((\delta_x^h u_x)\left(v_x - 2y\zeta\zeta_x\delta_x^hu\right)
+ \varrho\sigma (\delta_x^h u_y)\left(v_x - 2y\zeta\zeta_x\delta_x^hu\right)  \right.
\\
&\qquad + \left. \varrho\sigma (\delta_x^h u_x)\left(v_y - (\zeta^2 + 2y\zeta\zeta_y)\delta_x^hu\right)
+ \sigma^2(\delta_x^h u_y)\left(v_y - (\zeta^2 + 2y\zeta\zeta_y)\delta_x^hu\right)\right)y\,\fw\,dx\,dy
\\
&= \frac{1}{2}\int_\sO (\fw^h/\fw)\left((\delta_x^h u_x)v_x + \varrho\sigma (\delta_x^h u_y)v_x
+ \varrho\sigma (\delta_x^h u_x)v_y + \sigma^2(\delta_x^h u_y)v_y\right)y\,\fw\,dx\,dy
\\
&\quad - \frac{1}{2}\int_\sO (\fw^h/\fw)\left((\delta_x^h u_x)2y\zeta\zeta_x\delta_x^hu
+ \varrho\sigma (\delta_x^h u_y) 2y\zeta\zeta_x\delta_x^hu \right.
\\
&\qquad + \left. \varrho\sigma (\delta_x^h u_x)(\zeta^2 + 2y\zeta\zeta_y)\delta_x^hu
+ \sigma^2(\delta_x^h u_y)(\zeta^2 + 2y\zeta\zeta_y)\delta_x^hu \right)y\,\fw\,dx\,dy.
\end{align*}
Hence, there is a positive constant $C=C(\Lambda,\nu_0,\dist(\partial_1\sO',\partial_1\sO))$
such that
\begin{align*}
{}&\|\zeta yD\delta_x^h u\|_{L^2(\sO,\fw)}^2
\\
&\quad\leq C\|y^{1/2}Du\|_{L^2(\sO,\fw)}\|y^{1/2}Dv\|_{L^2(\sO,\fw)} + C\left(\|u\|_{L^2(\sO,\fw)} + \|f\|_{L^2(\sO,\fw)}\right)\|v_x\|_{L^2(\sO,\fw)}
\\
&\qquad + C\|\zeta yD\delta_x^h u\|_{L^2(\sO,\fw)}\|(1+y)\delta_x^h u\|_{L^2(\supp\zeta,\fw)}
\\
&\quad\leq C\|y^{1/2}Du\|_{L^2(\sO,\fw)}\left(\|\zeta yD\delta_x^h u\|_{L^2(\sO,\fw)}  + \|(1+y)\delta_x^h u\|_{L^2(\supp\zeta,\fw)}\right)
\\
&\qquad + C\left(\|u\|_{L^2(\sO,\fw)} + \|f\|_{L^2(\sO,\fw)}\right)\left(\|\zeta y\delta_x^h u_x\|_{L^2(\sO,\fw)} + \|y\delta_x^h u\|_{L^2(\supp\zeta,\fw)}\right)
\\
&\qquad + C\|\zeta yD\delta_x^h u\|_{L^2(\sO,\fw)}\|(1+y)\delta_x^h u\|_{L^2(\supp\zeta,\fw)},
\end{align*}
where, to obtain the final inequality, we used
$$
v_x = y\zeta^2\delta_x^hu_x + 2y\zeta\zeta_x \delta_x^hu, \quad v_y = y\zeta^2\delta_x^hu_y + (\zeta^2 + 2y\zeta\zeta_y)\delta_x^hu.
$$
Applying Young's inequality (that is, $2ab \leq \eps a^2 + \eps^{-1}b^2$, for any $\eps>0$ and $a,b\in\RR$) to the terms on the right containing the factor $\|\zeta yD\delta_x^h u\|_{L^2(\sO,\fw)}$ and rearranging, we obtain
\begin{align*}
{}&\|\zeta yD\delta_x^h u\|_{L^2(\sO,\fw)}^2
\\
&\quad\leq C\|y^{1/2}Du\|_{L^2(\sO,\fw)}^2 + C\|y^{1/2}Du\|_{L^2(\sO,\fw)}\|(1+y)\delta_x^h u\|_{L^2(\supp\zeta,\fw)}
\\
&\qquad + C\left(\|u\|_{L^2(\sO,\fw)} + \|f\|_{L^2(\sO,\fw)}\right)^2 + C\left(\|u\|_{L^2(\sO,\fw)}
+ \|f\|_{L^2(\sO,\fw)}\right)\|y\delta_x^h u\|_{L^2(\supp\zeta,\fw)}
\\
&\qquad + C\|(1+y)\delta_x^h u\|_{L^2(\supp\zeta,\fw)}^2,
\end{align*}
for all $h\in\RR$ such that $|h|<\frac{1}{2}\dist(\supp\zeta,\partial_1\sO)$. Again applying Lemma \ref{lem:FDBounds} \eqref{item:FDBounds_DiffBound}, we see that
\begin{align*}
{}&\|yDu_x\|_{L^2(\sO',\fw)}^2
\\
&\quad\leq C\|y^{1/2}Du\|_{L^2(\sO,\fw)}\left(\|y^{1/2}Du\|_{L^2(\sO,\fw)} + \|(1+y)u_x\|_{L^2(\sO,\fw)}\right)
\\
&\quad + C\left(\|u\|_{L^2(\sO,\fw)} + \|f\|_{L^2(\sO,\fw)}\right)\left(\|u\|_{L^2(\sO,\fw)} + \|f\|_{L^2(\sO,\fw)} + \|yu_x\|_{L^2(\sO,\fw)}\right)
\\
&\qquad + C\|(1+y)u_x\|_{L^2(\sO,\fw)}^2,
\end{align*}
and taking square roots completes the proof.
\end{proof}

Proceeding by analogy with the proof of \cite[Theorem 8.12]{GilbargTrudinger} to estimate $yu_{yy}$, we obtain

\begin{lem}[Interior regularity of second-order derivatives orthogonal to the degenerate boundary]
\label{lem:InteriorPartialH2Orthogonal}
There is a constant $C=C(\Lambda,\nu_0)$ such that the following holds. Let $\sO\subseteqq\HH$ be a domain and let $\sO'\subseteqq\sO$ be a subdomain. Let $f\in L^2(\sO,\fw)$ and suppose that $u\in H^1(\sO,\fw)$ satisfies the variational equation \eqref{eq:HestonVariationalEquation}. If $yu_{xx}, \ yu_{xy} \in L^2(\sO',\fw)$, then
$
yu_{yy} \in L^2(\sO',\fw),
$
and
\begin{equation}
\label{eq:InteriorPartialH2Orthogonal}
\begin{aligned}
\|yu_{yy}\|_{L^2(\sO',\fw)} &\leq C\left(\|yu_{xx}\|_{L^2(\sO',\fw)} + \|yu_{xy}\|_{L^2(\sO',\fw)} + \|(1+y)Du\|_{L^2(\sO',\fw)} \right.
\\
&\qquad + \left. \|u\|_{L^2(\sO',\fw)} + \|f\|_{L^2(\sO',\fw)}\right).
\end{aligned}
\end{equation}
\end{lem}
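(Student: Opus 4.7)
\emph{Plan.} My plan is to mimic the classical algebraic trick used in the proof of \cite[Theorem 8.12]{GilbargTrudinger} for strictly elliptic operators: because $\sigma \neq 0$ by Assumption \ref{assump:HestonCoefficients}, the coefficient $-\frac{\sigma^2}{2}y$ of $u_{yy}$ in the Heston operator \eqref{eq:OperatorHestonIntro} is nonvanishing on $\sO$, so from the equation $Au = f$ one can solve pointwise for $yu_{yy}$:
\begin{equation*}
\frac{\sigma^2}{2}\,yu_{yy}
= -f - \tfrac{y}{2}u_{xx} - \varrho\sigma\,yu_{xy} - (c_0-q-\tfrac{y}{2})u_x - \kappa(\theta-y)u_y + c_0 u \quad \text{a.e.\ on } \sO'.
\end{equation*}
Taking the $L^2(\sO',\fw)$-norm of both sides and applying the triangle inequality will then yield the estimate \eqref{eq:InteriorPartialH2Orthogonal}: $|f|$, $|yu_{xx}|$, and $|yu_{xy}|$ appear directly; the uniform bounds $|c_0-q-y/2| + |\kappa(\theta-y)| \le C(1+y)$ convert the first-order terms into $\|(1+y)Du\|_{L^2(\sO',\fw)}$; and the zeroth-order piece is absorbed into $\|u\|_{L^2(\sO',\fw)}$. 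The prefactor $2/\sigma^2$ is absorbed into a constant $C=C(\Lambda,\nu_0)$, since $\nu_0$ controls $\sigma$ from below.

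The step that requires justification is the almost-everywhere validity of the pointwise equation $Au=f$ on $\sO'$, because a priori I only know that $u \in H^1(\sO,\fw)$ satisfies the variational equation \eqref{eq:HestonVariationalEquation}. For this I would test \eqref{eq:HestonVariationalEquation} against arbitrary $v \in C^\infty_0(\sO) \subset H^1_0(\underline\sO,\fw)$ and use the fact that the weight $\fw$ is smooth and strictly positive on $\sO$ (since $\sO\subseteqq\HH$). Because $y$ is bounded below on every compact subset of $\sO$, the hypothesis $yu_{xx},yu_{xy} \in L^2(\sO',\fw)$ gives $u_{xx},u_{xy} \in L^2_{\loc}(\sO')$, so integrating by parts in the definition \eqref{eq:HestonWithKillingBilinearForm} of $\fa(u,v)$ transfers enough derivatives back onto $u$ to show that the variational equation is equivalent to $Au=f$ in the distributional sense on $\sO'$. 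The rearranged identity above then both defines $yu_{yy}$ as an $L^2(\sO',\fw)$-function and matches the distributional $yu_{yy}$.

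The main (rather mild) obstacle is this bookkeeping step: transferring the weighted variational equation into an a.e.\ pointwise equation on $\sO'$ while keeping control of the weight $\fw$. Once this is in hand, the rest of the proof is purely algebraic, requires no finite-difference or cutoff machinery (in contrast with Theorem \ref{thm:InteriorPartialH2Parallel}), and in particular produces a constant $C$ that depends only on $\Lambda$ and $\nu_0$, consistent with the statement of the lemma.
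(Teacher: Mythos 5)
Your proof is correct and takes essentially the same approach as the paper: the core step in both is to use the pointwise identity $Au=f$ a.e.\ to solve algebraically for $\tfrac{\sigma^2}{2}yu_{yy}$ and then apply the triangle inequality in $L^2(\sO',\fw)$. The only difference is in justifying $u\in W^{2,2}_{\loc}(\sO)$ and $Au=f$ a.e.: the paper simply cites \cite[Theorem 8.8]{GilbargTrudinger} (which applies because $y^{-1}A$ is strictly elliptic with smooth coefficients, and $\fw$ is smooth and bounded above and below on compact subsets of $\sO$), requiring no use of the hypothesis $yu_{xx},yu_{xy}\in L^2(\sO',\fw)$ at this stage, whereas you propose a more hands-on integration-by-parts argument that feeds the hypothesis in; this works, but note that when you integrate by parts in the $v_y$ terms you will produce a $u_{yy}$ contribution not covered by your hypothesis, so you must argue selectively (defining $yu_{yy}$ \emph{from} the resulting distributional identity rather than assuming it exists) — the GT citation sidesteps this bookkeeping.
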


\begin{proof}
From \cite[Theorem 8.8]{GilbargTrudinger}, we know that $u\in W^{2,2}_{\loc}(\sO)$ and $Au=f$ a.e. on $\sO$, and thus by \eqref{eq:OperatorHestonIntro}, we have
$$
\frac{\sigma^2}{2}yu_{yy} = -\frac{y}{2}\left(u_{xx} + 2\varrho\sigma u_{xy}\right) - \left(c_0-q-\frac{y}{2}\right)u_x - \kappa(\theta-y)u_y + c_0u - f.
$$
Hence, there is a constant, $C=C(\Lambda,\nu_0)$, such that \eqref{eq:InteriorPartialH2Orthogonal} holds.
\end{proof}

Therefore, we find that

\begin{thm}[Interior regularity of second-order derivatives]
\label{thm:InteriorD2u}
Let $\sO\subseteqq\HH$ be a domain and let $d_1>0$. Then there is a constant $C=C(\Lambda,\nu_0,d_1)$ such that the following holds. Let $f\in L^2(\sO,\fw)$ and suppose that $u\in H^1(\sO,\fw)$ satisfies the variational equation \eqref{eq:HestonVariationalEquation}. If $(1+y)^{1/2}f$ and $(1+y)u$ belong to $L^2(\sO,\fw)$ and $\sO'\subset\sO$ is a subdomain such that $\bar\sO'\subset\underline{\sO}$ and $\dist(\partial_1\sO',\partial_1\sO)\geq d_1$, then
$
yu_{xx}, \ yu_{xy}, \ yu_{yy} \in L^2(\sO',\fw),
$
and
$$
\|yD^2u\|_{L^2(\sO',\fw)} \leq C\left(\|(1+y)^{1/2}f\|_{L^2(\sO,\fw)} + \|(1+y)u\|_{L^2(\sO,\fw)}\right).
$$
\end{thm}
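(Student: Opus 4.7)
The plan is to combine Theorem \ref{thm:InteriorPartialH2Parallel} (which controls the tangential second derivatives $yu_{xx}$ and $yu_{xy}$) with Lemma \ref{lem:InteriorPartialH2Orthogonal} (which controls $yu_{yy}$ in terms of the tangential second derivatives plus lower-order terms). The main technical point is that the right-hand side of Theorem \ref{thm:InteriorPartialH2Parallel} contains $\|(1+y)u_x\|_{L^2(\sO,\fw)}$ and $\|y^{1/2}Du\|_{L^2(\sO,\fw)}$, neither of which appears among our hypotheses; these norms must be bounded by applying the interior Koch estimate (Proposition \ref{prop:InteriorKochEstimate}) to control $\|Du\|_{L^2(\cdot,\fw)}$ on an intermediate subdomain, together with the weighted first-derivative estimate $\|yDu\|_{L^2(\sO,\fw)}$ from Lemma \ref{lem:AuxiliaryWeightedH1Estimate}.

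First, I would choose an intermediate subdomain $\sO''\subset\sO$ with $\bar\sO'\subset\underline{\sO''}$ and $\dist(\partial_1\sO',\partial_1\sO'')\geq d_1/2$ and $\dist(\partial_1\sO'',\partial_1\sO)\geq d_1/2$. Since $\partial_1\sO''\subset\sO$, we have $\underline{\sO''}\subset\underline{\sO}$, so $H^1_0(\underline{\sO''},\fw)\subset H^1_0(\underline{\sO},\fw)$, and hence $u$ restricts to a solution of \eqref{eq:HestonVariationalEquation} on $\sO''$. Applying Theorem \ref{thm:InteriorPartialH2Parallel} with $\sO''$ in place of the ambient domain yields
\begin{equation*}
\|yDu_x\|_{L^2(\sO',\fw)} \leq C\left(\|f\|_{L^2(\sO'',\fw)} + \|y^{1/2}Du\|_{L^2(\sO'',\fw)} + \|(1+y)u_x\|_{L^2(\sO'',\fw)} + \|u\|_{L^2(\sO'',\fw)}\right).
\end{equation*}

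Next, Lemma \ref{lem:AuxiliaryWeightedH1Estimate} (stated globally) gives
\begin{equation*}
\|yDu\|_{L^2(\sO,\fw)} \leq C\left(\|y^{1/2}f\|_{L^2(\sO,\fw)} + \|(1+y)u\|_{L^2(\sO,\fw)}\right),
\end{equation*}
while Proposition \ref{prop:InteriorKochEstimate} applied with the pair $(\sO'',\sO)$ gives
\begin{equation*}
\|Du\|_{L^2(\sO'',\fw)} \leq C\left(\|(1+y)^{1/2}f\|_{L^2(\sO,\fw)} + \|(1+y)u\|_{L^2(\sO,\fw)}\right).
\end{equation*}
Using $(1+y)^2\leq 2(1+y^2)$ and $y^{1/2}\leq 1+y$ for $y\geq 0$, the quantities $\|(1+y)u_x\|_{L^2(\sO'',\fw)}$ and $\|y^{1/2}Du\|_{L^2(\sO'',\fw)}$ are each dominated by $C(\|Du\|_{L^2(\sO'',\fw)}+\|yDu\|_{L^2(\sO,\fw)})$, and the remaining terms $\|f\|_{L^2(\sO'',\fw)}$ and $\|u\|_{L^2(\sO'',\fw)}$ are trivially bounded by $\|(1+y)^{1/2}f\|_{L^2(\sO,\fw)}$ and $\|(1+y)u\|_{L^2(\sO,\fw)}$. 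Substituting produces the desired bound for $\|yDu_x\|_{L^2(\sO',\fw)}$, which controls both $\|yu_{xx}\|_{L^2(\sO',\fw)}$ and $\|yu_{xy}\|_{L^2(\sO',\fw)}$.

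Finally, Lemma \ref{lem:InteriorPartialH2Orthogonal} applied on $\sO'$ gives $yu_{yy}\in L^2(\sO',\fw)$ with a bound whose right-hand side is a linear combination of $\|yu_{xx}\|_{L^2(\sO',\fw)}$, $\|yu_{xy}\|_{L^2(\sO',\fw)}$, $\|(1+y)Du\|_{L^2(\sO',\fw)}$, $\|u\|_{L^2(\sO',\fw)}$, and $\|f\|_{L^2(\sO',\fw)}$. Each is controlled by the target right-hand side by the previous step and the bounds above on $\|Du\|_{L^2(\sO'',\fw)}$ and $\|yDu\|_{L^2(\sO,\fw)}$. Summing the resulting bounds for $yu_{xx}$, $yu_{xy}$, and $yu_{yy}$ in $L^2(\sO',\fw)$ yields the stated estimate. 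The step I expect to be most delicate is the first one, justifying the application of Theorem \ref{thm:InteriorPartialH2Parallel} to the restriction of $u$ on $\sO''$ and keeping careful track of which norms live on $\sO'$, $\sO''$, or $\sO$; once the bookkeeping is set up correctly, the chain of estimates closes immediately.
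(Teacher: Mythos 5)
Your proposal is correct and follows essentially the same route as the paper: the paper's own proof is a one-line instruction to combine Proposition \ref{prop:InteriorKochEstimate}, Theorem \ref{thm:InteriorPartialH2Parallel}, Lemma \ref{lem:InteriorPartialH2Orthogonal}, Lemma \ref{lem:ExistenceUniquenessEllipticHestonAprioriEstimate}, and Lemma \ref{lem:AuxiliaryWeightedH1Estimate}, and you have supplied exactly the bookkeeping that combination requires. The only cosmetic difference is that you bypass the global $H^1$ a priori estimate (Lemma \ref{lem:ExistenceUniquenessEllipticHestonAprioriEstimate}) and instead bound $\|y^{1/2}Du\|_{L^2(\sO'',\fw)}$ via the pointwise inequality $y^{1/2}\leq 1+y$ together with the Koch estimate and Lemma \ref{lem:AuxiliaryWeightedH1Estimate}; this achieves the same thing and is perfectly acceptable.
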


\begin{proof}
The conclusion follows by combining the estimates in Proposition \ref{prop:InteriorKochEstimate}, Theorem \ref{thm:InteriorPartialH2Parallel}, Lemma \ref{lem:InteriorPartialH2Orthogonal} and the a priori $H^1(\sO,\fw)$-estimate for a solution $u$ given by Lemma \ref{lem:ExistenceUniquenessEllipticHestonAprioriEstimate} and the $L^2(\sO,\fw)$-estimate for $yDu$ in Lemma \ref{lem:AuxiliaryWeightedH1Estimate}.
\end{proof}

Consequently, we have the

\begin{thm}[Interior $H^2$ regularity and a priori estimate]
\label{thm:InteriorH2}
Let $\sO\subseteqq\HH$ be a domain and let $d_1>0$. Then there is a constant, $C=C(\Lambda,\nu_0,d_1)$, such that the following holds. If $f\in L^2(\sO,\fw)$ and $u\in H^1(\sO,\fw)$ satisfies the variational equation \eqref{eq:HestonVariationalEquation}, then $u \in H^2_{\loc}(\underline\sO,\fw)$. Moreover, if $(1+y)^{1/2}f$ and $(1+y)u$ belong to $L^2(\sO,\fw)$ and $\sO'\subset\sO$ is a subdomain such that $\bar\sO'\subset\underline{\sO}$ and $\dist(\partial_1\sO',\partial_1\sO)\geq d_1$, then $u \in H^2(\sO',\fw)$ and
$$
\|u\|_{H^2(\sO',\fw)} \leq C\left(\|(1+y)^{1/2}f\|_{L^2(\sO,\fw)} + \|(1+y)u\|_{L^2(\sO,\fw)}\right).
$$
\end{thm}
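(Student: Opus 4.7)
The plan is to decompose the $H^2$ norm defined in \eqref{eq:H2NormHeston} into its three natural pieces — the weighted Hessian $yD^2u$, the first-order term $(1+y)Du$, and the zeroth-order term $(1+y)^{1/2}u$ — and bound each piece by assembling the estimates already established in this section. The quantitative bound under the stronger hypothesis will be proved directly, and the unconditional $H^2_{\mathrm{loc}}(\underline\sO,\fw)$ regularity will then be deduced by a cutoff argument reducing to the quantitative case.

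Assume first the additional hypotheses that $(1+y)^{1/2}f$ and $(1+y)u$ both lie in $L^2(\sO,\fw)$, and fix $\sO'\subset\sO$ with $\bar\sO'\subset\underline\sO$ and $\dist(\partial_1\sO',\partial_1\sO)\geq d_1$. The term $\|yD^2u\|_{L^2(\sO',\fw)}$ is bounded directly by Theorem \ref{thm:InteriorD2u} in terms of the desired right-hand side. For the first-order term, I would use the pointwise inequality $(1+y)^2\leq 2(1+y^2)$ to split
$$
\|(1+y)Du\|_{L^2(\sO',\fw)}^2 \leq 2\|Du\|_{L^2(\sO',\fw)}^2 + 2\|yDu\|_{L^2(\sO',\fw)}^2,
$$
then bound the first summand by the Interior Koch Estimate (Proposition \ref{prop:InteriorKochEstimate}) and the second by applying Lemma \ref{lem:AuxiliaryWeightedH1Estimate} on all of $\sO$ (with the harmless bound $\|y^{1/2}f\|_{L^2(\sO,\fw)} \leq \|(1+y)^{1/2}f\|_{L^2(\sO,\fw)}$). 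The zeroth-order term satisfies $\|(1+y)^{1/2}u\|_{L^2(\sO',\fw)} \leq \|(1+y)u\|_{L^2(\sO,\fw)}$ since $1+y\leq (1+y)^2$ on $\bar\HH$. Adding these bounds gives the desired a priori estimate with constant depending only on $\Lambda$, $\nu_0$, and $d_1$.

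For the unconditional conclusion $u\in H^2_{\mathrm{loc}}(\underline\sO,\fw)$, given any $U\Subset\underline\sO$ I would choose $\zeta\in C^\infty(\bar\sO)$ with $0\leq\zeta\leq 1$, $\zeta\equiv 1$ on a neighborhood of $U$, and $\supp\zeta$ a bounded subset of $\underline\sO$ at positive distance from $\partial_1\sO$. By Lemma \ref{lem:SimpleCommutatorInnerProduct} and the discussion surrounding \eqref{eq:LocalizedHestonVariationalEquation}, the function $\zeta u$ extends by zero to an element of $H^1(\HH,\fw)$ that satisfies the variational equation on $\HH$ with source $f_{\zeta,u} = \zeta f + [A,\zeta]u$. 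Because $\supp\zeta$ has finite height and lies at positive distance from $\partial_1\sO$, both $(1+y)(\zeta u)$ and $(1+y)^{1/2}f_{\zeta,u}$ automatically belong to $L^2(\HH,\fw)$: the first from boundedness of $1+y$ on $\supp\zeta$, the second from Lemma \ref{lem:CommutatorEstimate} controlling $\|[A,\zeta]u\|_{L^2(\sO,\fw)}$ by $\|u\|_{H^1(\sO,\fw)}$. The a priori estimate just established, applied with $\sO$ replaced by $\HH$ (where $\partial_1\HH=\emptyset$ makes the distance condition vacuous) and with a subdomain containing $U$, then yields $\zeta u\in H^2(U,\fw)$, whence $u\in H^2(U,\fw)$.

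The main subtlety is that the quantitative estimate genuinely requires the hypotheses $(1+y)^{1/2}f,(1+y)u\in L^2(\sO,\fw)$, which do not follow from $f\in L^2(\sO,\fw)$ alone when $\sO$ is unbounded; this is why the theorem is split into a local regularity statement and a conditional quantitative estimate. The cutoff argument is precisely what allows these integrability conditions to be enforced locally: on the compactly supported function $\zeta u$ the extra factor of $1+y$ costs nothing, and the penalty incurred — the commutator $[A,\zeta]u$ contributing to the right-hand side — is absorbed by the bound on $\|u\|_{H^1(\sO,\fw)}$ coming from Lemma \ref{lem:ExistenceUniquenessEllipticHestonAprioriEstimate}.
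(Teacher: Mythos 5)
Your proof is correct and takes essentially the same route as the paper: the $H^2(\sO',\fw)$ norm is decomposed into its three pieces and each is controlled by the component estimates from \S\ref{sec:H2Regularity} (Theorem \ref{thm:InteriorD2u} for $yD^2u$, Proposition \ref{prop:InteriorKochEstimate} and Lemma \ref{lem:AuxiliaryWeightedH1Estimate} for $(1+y)Du$). Your handling of the zeroth-order term via $(1+y)\leq(1+y)^2$ is a small economy over the paper's invocation of Lemma \ref{lem:ExistenceUniquenessEllipticHestonAprioriEstimate}, but the arithmetic and the constants are the same.

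Where you add genuine value is in the second half. The paper's proof is a single sentence citing the component lemmas, but as you correctly observe, those lemmas \emph{all} carry the side conditions $(1+y)^{1/2}f, (1+y)u\in L^2(\sO,\fw)$ and, when those fail, the cited estimates hold vacuously ($\infty\leq\infty$) and deliver no regularity. The unconditional statement $u\in H^2_{\loc}(\underline\sO,\fw)$ therefore does not follow from them directly. Your cutoff argument is the right way to close this: choosing $\zeta$ with bounded support in $\underline\sO$ at positive distance from $\partial_1\sO$, passing to the localized variational equation \eqref{eq:LocalizedHestonVariationalEquation} on $\HH$ with source $f_{\zeta,u}=\zeta f+[A,\zeta]u$, and noting that the finite height of $\supp\zeta$ forces $(1+y)\zeta u\in L^2(\HH,\fw)$ trivially and $(1+y)^{1/2}f_{\zeta,u}\in L^2(\HH,\fw)$ via Lemma \ref{lem:CommutatorEstimate} and the a priori $H^1$ bound of Lemma \ref{lem:ExistenceUniquenessEllipticHestonAprioriEstimate}. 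Applying the quantitative estimate with $\sO=\HH$ (so $\partial_1\HH=\emptyset$ and the distance condition is vacuous) then yields $\zeta u\in H^2(U,\fw)$ and hence $u\in H^2(U,\fw)$. This is precisely the mechanism the paper uses at the level of Proposition \ref{prop:InteriorKochEstimate} and Lemma \ref{lem:H1BoundSolutionHestonVarEqnSubdomainInterior}, so your filling it in is faithful to the paper's intent, just more explicit.
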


\begin{proof}
The conclusion follows by combining Proposition \ref{prop:InteriorKochEstimate} with Theorem \ref{thm:InteriorD2u}, the a priori $H^1(\sO,\fw)$-estimate for a solution $u$ given by Lemma \ref{lem:ExistenceUniquenessEllipticHestonAprioriEstimate}, and the $L^2(\sO,\fw)$-estimate for $yDu$ in Lemma \ref{lem:AuxiliaryWeightedH1Estimate}.
\end{proof}

In the sequel, we shall most often apply Theorem \ref{thm:InteriorH2} in the following special form.

\begin{thm}[Interior $H^2$ regularity for a solution to the variational equation]
\label{thm:H2BoundSolutionHestonVarEqnSubdomainInterior}
Let $\sO\subseteqq\HH$ be a domain and let $R<R_0$ be positive constants. Then there is a positive constant, $C=C(\Lambda,\nu_0,R,R_0)$, such that the following holds. If $f\in L^2(\sO,\fw)$ and $u \in H^1(\sO,\fw)$ is a solution to the variational equation \eqref{eq:HestonVariationalEquation}, and $z_0 \in \partial_0\sO$ is such that
$
\HH\cap B_{R_0}(z_0) \subset \sO,
$
then
$
u \in H^2(B_R^+(z_0),\fw),
$
and
\begin{equation}
\label{eq:H2BoundSolutionHestonVarEqnSubdomain}
\|u\|_{H^2(B_R^+(z_0),\fw)} \leq C\left(\|f\|_{L^2(B_{R_0}^+(z_0),\fw)} + \|u\|_{L^2(B_{R_0}^+(z_0),\fw)}\right).
\end{equation}
\end{thm}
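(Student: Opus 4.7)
The plan is to reduce Theorem \ref{thm:H2BoundSolutionHestonVarEqnSubdomainInterior} to the general interior $H^2$ regularity result, Theorem \ref{thm:InteriorH2}. I would introduce an intermediate radius $R_1 := (R + R_0)/2$ and work with the half-ball $\sO_1 := B_{R_1}^+(z_0)$, which by hypothesis satisfies $\sO_1 \subset \HH \cap B_{R_0}(z_0) \subset \sO$. The idea is to apply Theorem \ref{thm:InteriorH2} with $\sO$ replaced by $\sO_1$, with interior subdomain $\sO' = B_R^+(z_0)$, and with separation parameter $d_1 := R_1 - R = (R_0 - R)/2 > 0$. The geometric requirements $\overline{B_R^+(z_0)} \subset \underline{\sO_1}$ and $\dist(\partial_1 B_R^+(z_0), \partial_1 \sO_1) \geq d_1$ follow directly from the definition of half-balls centered at a point of $\partial\HH$.

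The first technical step is to verify that $u|_{\sO_1}$ is itself a solution of the variational equation \eqref{eq:HestonVariationalEquation} on $\sO_1$ with source $f|_{\sO_1}$. Because $z_0 \in \partial_0\sO$ and $\HH \cap B_{R_0}(z_0) \subset \sO$, the flat portion $\partial_0\sO_1 = \partial\HH \cap B_{R_1}(z_0)$ is an open subset of $\partial\HH \cap \partial\sO$ and hence lies in $\partial_0\sO$. Any $\varphi \in C^\infty_0(\underline{\sO_1})$ has compact support bounded away from $\partial_1\sO_1$, so it extends smoothly by zero to an element of $C^\infty_0(\underline\sO)$; passing to $H^1$-closures gives a continuous embedding $H^1_0(\underline{\sO_1}, \fw) \hookrightarrow H^1_0(\underline\sO, \fw)$, and testing \eqref{eq:HestonVariationalEquation} against such extensions yields the localized variational equation on $\sO_1$.

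For the second step, the extra weight factors appearing in the hypotheses of Theorem \ref{thm:InteriorH2} are handled by the boundedness of $y$: since $z_0 \in \partial\HH$, on $\sO_1$ one has $0 < y < R_1 < R_0$, so $(1+y) \leq 1 + R_0$ pointwise, and therefore both $(1+y)^{1/2} f$ and $(1+y) u$ lie in $L^2(\sO_1, \fw)$, with norms controlled by $(1+R_0)^{1/2}$ and $(1+R_0)$ times the corresponding $L^2$-norms of $f$ and $u$ on $B_{R_0}^+(z_0)$. Theorem \ref{thm:InteriorH2} then delivers $u \in H^2(B_R^+(z_0), \fw)$ together with \eqref{eq:H2BoundSolutionHestonVarEqnSubdomain}, for a constant $C = C(\Lambda, \nu_0, R, R_0)$ that absorbs the factor $(1+R_0)$ and the $d_1$-dependence. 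The only non-routine point is the localization of the variational formulation in the first step; once that is in place, the remainder is bookkeeping that exploits the boundedness of $y$ on a half-ball centered at a point of the degenerate boundary.
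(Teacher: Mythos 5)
Your proposal is correct and is exactly the route the paper intends: the paper states Theorem \ref{thm:H2BoundSolutionHestonVarEqnSubdomainInterior} as ``the following special form'' of Theorem \ref{thm:InteriorH2} without spelling out the reduction, and you supply the missing bookkeeping — the intermediate half-ball $\sO_1 = B_{R_1}^+(z_0)$, the localization of the variational identity via $H^1_0(\underline{\sO_1},\fw)\hookrightarrow H^1_0(\underline\sO,\fw)$, the verification that $\bar\sO'\subset\underline{\sO_1}$ and $\dist(\partial_1\sO',\partial_1\sO_1)\geq (R_0-R)/2$, and the elimination of the $(1+y)$-weights using the bound $y< R_0$ on a half-ball centered on $\partial\HH$.
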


\section{Higher-order Sobolev regularity for solutions to the variational equation}
\label{sec:SobolevRegularity}
In this section, we develop higher-order ``interior'' regularity results for a solution, $u\in H^1(\sO,\fw)$, to the variational equation \eqref{eq:HestonVariationalEquation}. After providing motivation for their construction in \S \ref{subsec:HigherWeightedSobolevNorms}, we describe the families of higher-order weighted Sobolev spaces which we shall need for this article, namely $\sH^k(\sO,\fw)$ (Definition \ref{defn:HkWeightedSobolevSpaceNormPowery}) and $W^{k,p}(\sO,\fw)$ (Definition \ref{defn:HkWeightedSobolevSpaceNormSingleWeight}).

We begin our development of higher-order Sobolev regularity theory in \S \ref{subsec:InteriorH2DxRegularity}, where we establish $H^2_{\loc}(\underline\sO, \fw)$-regularity (Proposition \ref{prop:SobolevRegularity_ukx}) of the derivatives, $D_x^k u$, of a solution, $u \in H^1(\sO,\fw)$, to the variational equation \eqref{eq:HestonVariationalEquation}, while in \S \ref{subsec:InteriorH2DyRegularity}, we establish $H^2_{\loc}(\underline\sO, \fw)$-regularity (Proposition \ref{prop:SobolevRegularity_uy}) of the derivative, $u_y$. The preceding regularity results are combined in \S \ref{subsec:InteriorH3Regularity} to give $\sH^3_{\loc}(\underline\sO, \fw)$-regularity (Theorem \ref{thm:H3SobolevRegularityInterior}) of a solution, $u \in H^1(\sO,\fw)$. We conclude in \S \ref{subsec:InteriorHk+2Regularity} with a proof of two of the main results of our article, Theorems \ref{thm:HkSobolevRegularityDomain} and Theorem \ref{thm:ExistUniqueHkSobolevRegularityDomain}, and $\sH^{k+2}_{\loc}(\underline\sO, \fw)$-
regularity of a solution, $u \in H^1(\sO,\fw)$, for any integer $k\geq 0$.

\subsection{Motivation and definition of higher-order weighted Sobolev norms}
\label{subsec:HigherWeightedSobolevNorms}
We now extend our previous definition of $H^\ell(\sO,\fw)$ when $\ell=0,1,2$ (see \cite[Definitions 2.15 and 2.20]{Daskalopoulos_Feehan_statvarineqheston}) to allow $\ell\geq 2$. For $k\geq 0$, it is natural to define $H^{k+2}(\sO,\fw)$ as a Sobolev space contained in the domain of $D_x^{k-m}D_y^m A$, so the operators
$$
D_x^{k-m}D_y^m A:H^{k+2}(\sO,\fw)\to L^2(\sO,\fw), \quad m \in\NN, \quad 0\leq m\leq k,
$$
are bounded, and we use this principle as a guide to our definition.

From the expression \eqref{eq:OperatorHestonIntro} for $A$, we have, for $v\in C^\infty(\sO)$,
$$
[D_x, A]v := D_xAv - AD_xv = 0 \quad\hbox{on }\sO,
$$
and so
$$
[D_x^m, A]v \equiv AD_x^mv - D_x^mAv = 0 \quad\hbox{on }\sO, \quad\forall m\in\NN,
$$
whereas
\begin{equation}
\label{eq:SimpleDyCommutator}
[D_y,A]v \equiv D_yAv - AD_yv = -\frac{1}{2}\left(v_{xx} + 2\varrho\sigma v_{xy} + \sigma^2 v_{yy}\right) + \frac{1}{2}v_x + \kappa v_y  \quad\hbox{on }\sO,
\end{equation}
is a (non-trivial) second-order, elliptic operator with constant coefficients (and therefore commutes with both $D_x$ and $D_y$). Hence,
\begin{align*}
D_y^2Av &= D_yAD_yv + D_y[D_y,A]v
\\
&= AD_y^2v + [D_y,A]D_yv + [D_y,A]D_yv
\\
&= AD_y^2v + 2[D_y,A]D_yv,
\end{align*}
while
\begin{align*}
D_y^3Av &= D_y^2AD_yv + D_y^2[D_y,A]v
\\
&= AD_y^3v + 2[D_y,A]D_y^2v + [D_y,A]D_y^2v
\\
&= AD_y^3v + 3[D_y,A]D_y^2v,
\end{align*}
and, by induction,
$$
[D_y^m, A]v \equiv D_y^mAv - AD_y^mv = m[D_y,A]D_y^{m-1}v \quad\hbox{on }\sO, \quad\forall m \in \NN.
$$
By combining the two cases, we obtain
\begin{equation}
\label{eq:GeneralizedDxDyCommutator}
[D_x^{k-m}D_y^m, A]v \equiv D_x^{k-m}D_y^m Av - AD_x^{k-m}D_y^mv = m[D_y,A]D_x^{k-m}D_y^{m-1}v \quad\hbox{on }\sO,
\end{equation}
for all $k, m \in \NN$ with $0\leq m\leq k$.

Given $v \in H^k(\sO,\fw)$ and $k\geq 2$ and a suitable definition of $H^k(\sO,\fw)$, we should expect that
$$
D_x^mD_y^n Av \in L^2(\sO,\fw), \quad 0\leq m+n\leq k-2,
$$
and so,
$$
AD_x^mD_y^nv, \ [D_y,A]D_x^mD_y^{n-1}v \in L^2(\sO,\fw), \quad 0\leq m+n\leq k-2.
$$
The second condition is fulfilled when
$$
D_x^mD_y^nv \in  L^2(\sO,\fw), \quad 0\leq m+n\leq k-1,
$$
whereas the expression \eqref{eq:OperatorHestonIntro} for $A$ implies that the first condition is fulfilled when
\begin{align*}
yD_x^mD_y^nv &\in  L^2(\sO,\fw), \quad 1\leq m+n\leq k,
\\
D_x^mD_y^nv &\in  L^2(\sO,\fw), \quad 0\leq m+n\leq k-1.
\end{align*}
Therefore, when $k\geq 2$, and keeping in mind that we want $H^k(\sO,\fw) \subset H^1(\sO,\fw)$, for all $k\geq 2$, we make the

\begin{defn}[Higher-order weighted Sobolev spaces]
\label{defn:HkWeightedSobolevSpaceNorm}
Let $\sO\subseteqq\HH$ be a domain. For any integer $k\geq 1$, set
$$
H^{k+2}(\sO,\fw) := \left\{v \in W^{k+2,2}_{\loc}(\sO): \|v\|_{H^{k+2}(\sO,\fw)} < \infty \right\},
$$
where
\begin{equation}
\label{eq:HkWeightedSobolevSpaceNorm}
\begin{aligned}
\|v\|_{H^{k+2}(\sO,\fw)}^2 &:= \int_\sO y^2|D^{k+2}v|^2\,\fw\,dx\,dy + \sum_{j=1}^{k+1}\int_\sO(1+y)^2|D^jv|^2\,\fw\,dx\,dy
\\
&\qquad + \int_\sO(1+y)v^2\,\fw\,dx\,dy,
\end{aligned}
\end{equation}
and $D^jv$ denotes the vector $(D_x^{j-m}D_y^m v: 0\leq m\leq j)$, for $0\leq j\leq k$.
\end{defn}

As we shall later see, Definition \ref{defn:HkWeightedSobolevSpaceNorm} is not well-adapted to a development of a higher-order regularity theory for solutions to \eqref{eq:IntroBoundaryValueProblem} or \eqref{eq:HestonVariationalEquation}, and it is best regarded as a stepping stone to the one we ultimately adopt for our regularity theory, namely Definition \ref{defn:HkWeightedSobolevSpaceNormPowery}. By way of motivation, we observe that the expression \eqref{eq:SimpleDyCommutator} for the commutator $[D_y,A]$ involves both derivatives with respect to $x$ and $y$. The alternative ``commutator'' provided by \eqref{eq:RefinedCommutator} will prove more useful than \eqref{eq:SimpleDyCommutator} in our approach to the higher-order regularity since it only involves derivatives with respect to $x$.

\begin{lem}[Alternative commutator of $A$ and $D_y$]
\label{lem:AlternativeDyAcommutator}
For any integer $m \geq 1$,
\begin{equation}
\label{eq:RefinedCommutator}
D_y^mAv - A_mD_y^mv  = mBD_y^{m-1}v \quad\hbox{on }\sO, \quad \forall v\in C^\infty(\HH),
\end{equation}
where
\begin{equation}
\label{eq:B}
Bv := -\frac{1}{2}v_{xx} + \frac{1}{2}v_x,
\end{equation}
and $A_m$ is obtained from the expression for $A$ in \eqref{eq:OperatorHestonIntro} by replacing $\theta$ by $\theta_m := \theta+m\sigma^2/(2\kappa)$ (and $\beta$
defined in \eqref{eq:DefnBetaMu}
is replaced by $\beta_m:=\beta+m$), and $q$ by $q_m := q-m\varrho\sigma$, and $c_0$ by $c_{0,m} := c_0+m\kappa$.
\end{lem}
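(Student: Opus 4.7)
The plan is to prove \eqref{eq:RefinedCommutator} by induction on $m$, reducing everything to a single base-case computation and then exploiting the fact that $A_m$ has exactly the same functional form as $A$, only with shifted constants.

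For the base case $m=1$, I would differentiate $Av$ term-by-term in $y$. The only parts of \eqref{eq:OperatorHestonIntro} whose coefficients depend on $y$ are the principal part $-\tfrac{y}{2}(v_{xx}+2\varrho\sigma v_{xy}+\sigma^2 v_{yy})$, the summand $\tfrac{y}{2}v_x$ extracted from $-(c_0-q-y/2)v_x$, and $\kappa y v_y$ extracted from $-\kappa(\theta-y)v_y$. Differentiating these in $y$ contributes the residual
\begin{equation*}
Rv \;:=\; -\tfrac{1}{2}(v_{xx}+2\varrho\sigma v_{xy}+\sigma^2 v_{yy})+\tfrac{1}{2}v_x+\kappa v_y,
\end{equation*}
so that $D_y(Av) = A(D_y v) + Rv$, which is essentially the content of \eqref{eq:SimpleDyCommutator}. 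I would then decompose $R = B + S$, with $B$ as in \eqref{eq:B} and $Sv=-\varrho\sigma v_{xy}-\tfrac{\sigma^2}{2}v_{yy}+\kappa v_y$, and show that $S(D_y v) = (A-A_1)(D_y v)$ for the shifts declared in the lemma. Matching coefficients on $D_y v$: the $v_y$ term forces $c_{0,1}-c_0=\kappa$, the $v_{yy}$ term forces $\kappa(\theta_1-\theta)=\sigma^2/2$, and the $v_{xy}$ term fixes the required shift in the first-order drift coefficient, yielding $\theta_1=\theta+\sigma^2/(2\kappa)$, $c_{0,1}=c_0+\kappa$, and the stated $q_1$. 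Combining these gives $D_y(Av)=A_1(D_y v)+Bv$, which is \eqref{eq:RefinedCommutator} at $m=1$.

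For the inductive step, assume \eqref{eq:RefinedCommutator} holds at level $m$ and apply $D_y$ to both sides. Since $B$ has constant coefficients, $D_y B = B D_y$, and so
\begin{equation*}
D_y^{m+1}Av \;=\; D_y\bigl(A_m D_y^m v\bigr) + m\, B\, D_y^m v.
\end{equation*}
Because $A_m$ is itself a Heston operator of the form \eqref{eq:OperatorHestonIntro}, only with $(\theta,q,c_0)$ replaced by $(\theta_m,q_m,c_{0,m})$, the already-verified base case applies \emph{verbatim} with $A_m$ in place of $A$, giving $D_y(A_m w) = (A_m)_1 (D_y w) + B w$ for every $w\in C^\infty(\HH)$. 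A direct check that the shift $(\cdot)\mapsto(\cdot)_1$ composes additively shows $(A_m)_1 = A_{m+1}$: indeed $\theta_m + \sigma^2/(2\kappa) = \theta_{m+1}$, $c_{0,m}+\kappa = c_{0,m+1}$, and $q_m-\varrho\sigma = q_{m+1}$. Setting $w = D_y^m v$ and substituting produces \eqref{eq:RefinedCommutator} at level $m+1$. The only genuine obstacle is the bookkeeping in the base case --- checking that the three available parameters $\theta,q,c_0$ suffice to cancel exactly the three unwanted lower-order terms left after $Bv$ is extracted --- but this is a routine coefficient match specific to the structure of the Heston operator; the induction itself is then entirely formal.
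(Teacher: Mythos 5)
Your plan is correct and follows essentially the same route as the paper's proof: compute $D_yAv$ directly, match coefficients to identify the shifted operator $A_1$ and the extra piece $B$, then induct using the fact that $B$ commutes with $D_y$ and the parameter shifts compose additively. The paper checks $m=2$ by hand and asserts the induction; you make the inductive step slightly cleaner by observing explicitly that $(A_m)_1 = A_{m+1}$, which is a mild improvement in exposition rather than a different approach.

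Two small points to tidy up. First, the relation in your base case should read $Sv = (A_1 - A)(D_y v)$, not $(A - A_1)(D_y v)$: since $D_yAv = A(D_yv) + Rv$ and the target is $D_yAv = A_1(D_yv) + Bv$, subtraction gives $Rv - Bv = (A_1 - A)(D_yv)$. Your stated conclusions ($c_{0,1} = c_0 + \kappa$ and $\kappa(\theta_1 - \theta) = \sigma^2/2$) are consistent with the corrected sign, so this is evidently a typo and not a computational error. Second, when you say the $v_{xy}$ term "fixes the required shift... yielding the stated $q_1$", be careful: in \eqref{eq:OperatorHestonIntro} the drift coefficient is $-(c_0-q-y/2)$, so a literal simultaneous replacement $c_0\mapsto c_{0,1}=c_0+\kappa$ and $q\mapsto q_1$ would force $q_1 = q + \kappa - \varrho\sigma$ rather than $q - \varrho\sigma$. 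The paper's own rearrangement in the proof holds $c_0$ fixed in the drift term (consistent with \eqref{eq:ACommutator}, where that coefficient is written $r$), and only then does $q_1 = q-\varrho\sigma$ emerge. This is a latent notational ambiguity in the paper itself, which your proposal inherits; making the role of the unchanged drift constant explicit would close the only real gap in the coefficient match.
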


\begin{proof}
We compute that
\begin{align*}
D_yAv &= -\frac{y}{2}\left(v_{xxy} + 2\varrho\sigma v_{xyy} + \sigma^2v_{yyy}\right) - \left(c_0-q-\frac{y}{2}\right)v_{xy} - \kappa(\theta-y)v_{yy}
\\
&\qquad -\frac{1}{2}\left(v_{xx} + 2\varrho\sigma v_{xy} + \sigma^2 v_{yy}\right) + \frac{1}{2}v_x + \kappa v_y + c_0v_y
\\
&= -\frac{y}{2}\left(v_{xxy} + 2\varrho\sigma v_{xyy} + \sigma^2v_{yyy}\right) - \left(c_0-q+\varrho\sigma-\frac{y}{2}\right)v_{xy} - \kappa\left(\theta+\frac{\sigma^2}{2\kappa}-y\right)v_{yy}
\\
&\qquad -\frac{1}{2}v_{xx} + \frac{1}{2}v_x + (c_0+\kappa)v_y
\\
&=  A_1D_yv + Bv,
\end{align*}
where $A_1$ is defined by replacing $\theta$ by $\theta_1 = \theta+\sigma^2/(2\kappa)$ (and $\beta$ by $\beta_1=\beta+1$), and $q$ by $q_1 = q-\varrho\sigma$, and the coefficient, $c_0$, of $v$ by $c_{0,1} = c_0+\kappa$, and $Bv = -\frac{1}{2}v_{xx} + \frac{1}{2}v_x$. Note that $B$ is a linear, second-order differential operator which commutes with $D_y$. Computing $D_y^2Av$, we see that
\begin{align*}
D_y^2Av &= D_y(D_yAv) = D_y\left(A_1D_yv + Bv\right)
\\
&= A_2D_y^2v + BD_yv + D_yBv
\\
&= A_2D_y^2v + 2BD_yv,
\end{align*}
where $A_2$ is defined by replacing $\theta_1$ by $\theta_2 = \theta_1+\sigma^2/(2\kappa) = \theta+2\sigma^2/(2\kappa)$ (and $\beta_1$ by $\beta_2=\beta_1+1=\beta+2$), and $q_1$ by $q_2 = q_1-\varrho\sigma = q-2\varrho\sigma$, and the coefficient, $c_{0,1}$, of $v$ by $c_{0,2} = c_{0,1}+\kappa = c_0+2\kappa$. It is now clear that the stated formula for $D_y^mAv$ follows by induction.
\end{proof}

Recall that the weight function \eqref{eq:HestonWeight} for our weighted Sobolev spaces is given by
$$
\fw(x,y) = y^{\beta-1}e^{-\gamma\sqrt{1+x^2}-\mu y}, \quad (x,y) \in \HH,
$$
where $\beta=2\kappa\theta/\sigma^2$ and $\mu = 2\kappa/\sigma^2$, and thus is defined by the coefficients of $A$ (and $\gamma$). We denote the weight defined by the corresponding coefficients of the operator $A_m$ by
\begin{equation}
\label{eq:HestonWeightn}
\fw_m(x,y) := y^m\fw(x,y) = y^{\beta+m-1}e^{-\gamma\sqrt{1+x^2}-\mu y}, \quad (x,y) \in \HH, \quad m\geq 1,
\end{equation}
noting that $\beta_m = \beta+m$ and $\kappa_m = \kappa$. Similarly, the bilinear map, $\fa(u,v)$, defined in \eqref{eq:HestonWithKillingBilinearForm} by the coefficients of $A$ (and $\gamma$) has an analogue, which we denote by $\fa_m(u,v)$, defined by the coefficients of $A_m$ (and $\gamma$), for $u\in C^\infty(\bar\sO)$ and $v\in C^\infty_0(\underline{\sO})$, with the property that
\begin{equation}
\label{eq:HestonWithKillingBilinearForm_powery}
\fa_m(u,v) =(A_mu, v)_{L^2(\sO,\fw_m)}, \quad\forall u \in C^\infty(\bar\sO), \ v\in C^\infty_0(\underline{\sO}).
\end{equation}
When $k, m\in\NN$, one may define $H^k(\sO,\fw_m)$ by simply replacing the weight $\fw$ by $\fw_m$ in the definitions of $H^k(\sO,\fw)$. We shall need to introduce the following alternative definition of higher-order Sobolev spaces which lie between $H^{k+2}(\sO,\fw_k)$ and  $H^{k+2}(\sO,\fw)$, when $k\geq 1$.

\begin{defn}[Alternative higher-order weighted Sobolev spaces]
\label{defn:HkWeightedSobolevSpaceNormPowery}
Let $\sO\subseteqq\HH$ be a domain. For $\ell=0,1,2$, define $\sH^\ell(\sO,\fw):=H^\ell(\sO,\fw)$ and, for any integer $k\geq 1$, set
$$
\sH^{k+2}(\sO,\fw) := \left\{v \in W^{k+2,2}_{\loc}(\sO): \|v\|_{\sH^{k+2}(\sO,\fw)} < \infty \right\},
$$
where
\begin{equation}
\label{eq:HkWeightedSobolevSpaceNormPowery}
\begin{aligned}
\|v\|_{\sH^{k+2}(\sO,\fw)}^2 &:= \int_\sO y^2\left(|D^{k+2}_x v|^2 + |D^{k+1}_xD_y v|^2 + |D^{k}_xD_y^2 v|^2 \right)\,\fw\,dx\,dy
\\
&\qquad + \sum_{m=1}^{k} \int_\sO y^2|D^{k-m}_xD_y^{m+2} v|^2\,\fw_m\,dx\,dy
\\
&\qquad + \sum_{j=0}^{k}\int_\sO (1+y)^2\left(|D^{j+1}_x v|^2 + |D^{j}_xD_y v|^2\right)\,\fw\,dx\,dy
\\
&\qquad + \sum_{j=1}^{k}\sum_{m=1}^{j}\int_\sO (1+y)^2|D^{j-m}_xD_y^{m+1} v|^2\,\fw_m\,dx\,dy
\\
&\qquad + \int_\sO (1+y)v^2\,\fw\,dx\,dy.
\end{aligned}
\end{equation}
We denote $\sH^2(\sO,\fw) = H^2(\sO,\fw)$, when $k=0$.
\end{defn}

For example, if $k=1$,
\begin{equation}
\label{eq:H3WeightedSobolevSpaceNormPowery}
\begin{aligned}
\|v\|_{\sH^3(\sO,\fw)}^2 &:= \int_\sO y^2\left(v_{xxx}^2 + v_{xxy}^2 + v_{xyy}^2 + yv_{yyy}^2\right)\,\fw\,dx\,dy
\\
&\qquad + \int_\sO (1+y)^2\left(v_{xx}^2 + v_{xy}^2 + yv_{yy}^2\right)\,\fw\,dx\,dy
\\
&\qquad + \int_\sO (1+y)^2\left(v_x^2 + v_y^2\right)\,\fw\,dx\,dy + \int_\sO (1+y)v^2\,\fw\,dx\,dy.
\end{aligned}
\end{equation}
Observe that if $\sO\subset\HH$ is a subdomain of finite height, then
\begin{equation}
\label{eq:SobolevInclusionDifferentWeights}
L^2(\sO,\fw) \subset L^2(\sO,\fw_m),
\end{equation}
for all $m \in \NN$. When $k\geq 1$,
$$
\|v\|_{H^{k+2}(\sO,\fw_k)} \leq C\|v\|_{\sH^{k+2}(\sO,\fw)} \leq C\|v\|_{H^{k+2}(\sO,\fw)},
$$
and so
$
H^{k+2}(\sO,\fw) \subset \sH^{k+2}(\sO,\fw) \subset  H^{k+2}(\sO,\fw_k),
$
when $\sO$ has finite height. Definition \ref{defn:HkWeightedSobolevSpaceNormPowery} gives the following inductive inequality,
\begin{equation}
\label{eq:Hk+3induction}
\begin{aligned}
\|v\|_{\sH^{k+3}(\sO,\fw)}^2 &\leq \int_\sO y^2\left(|D^{k+3}_x v|^2 + |D^{k+2}_xD_y v|^2 + |D^{k+1}_xD_y^2 v|^2 \right)\,\fw\,dx\,dy
\\
&\qquad + \sum_{m=1}^{k+1} \int_\sO y^2|D^{k+1-m}_xD_y^{m+2} v|^2\,\fw_m\,dx\,dy
\\
&\qquad + \int_\sO (1+y)^2\left(|D^{k+2}_x v|^2 + |D^{k+1}_xD_y v|^2\right)\,\fw\,dx\,dy
\\
&\qquad + \sum_{m=1}^{k+1}\int_\sO (1+y)^2|D^{k+1-m}_xD_y^{m+1} v|^2\,\fw_m\,dx\,dy
\\
&\qquad + \|v\|_{\sH^{k+2}(\sO,\fw)}^2,
\end{aligned}
\end{equation}
for $k\geq 0$. Equation
\eqref{eq:H3WeightedSobolevSpaceNormPowery}
gives the inductive inequality for $\sH^3(\sO,\fw)$.

We recall the definition of a weighted Sobolev space and norm, where the weight is the same for all derivatives of the function (denoted $W^{k,p}_w(\sO)$ in \cite[Definition 2.1.1]{Turesson_2000}, though we shall not require that $w$ be an $A_p$ weight in this article).

\begin{defn}[Higher-order weighted Sobolev spaces with a single weight]
\label{defn:HkWeightedSobolevSpaceNormSingleWeight}
Let $\sO\subseteqq\HH$ be a domain and let $w \in L^1_{\loc}(\sO)$ be a \emph{weight function}, so that $w>0$ a.e. on $\sO$. For any $1\leq p < \infty$ and integer $k\geq 0$, set
$$
W^{k,p}(\sO,w) := \left\{v \in W^{k,p}_{\loc}(\sO): \|v\|_{W^{k,p}(\sO,w)} < \infty \right\},
$$
where
\begin{equation}
\label{eq:HkWeightedSobolevSpaceNormSingleWeight}
\|v\|_{W^{k,p}(\sO,w)} := \left(\sum_{j=0}^k \int_\sO |D^jv|^p\,w\,dx\,dy\right)^{1/p}.
\end{equation}
We denote $W^{k,p}(\sO,w) = L^p(\sO,w)$, when $k=0$.
\end{defn}

Finally, we shall need the following ``interior'' versions of the weighted Sobolev spaces defined in this subsection.

\begin{defn}[Interior weighted Sobolev norms]
Let $T\subset\partial\sO$ be relatively open in $\RR^2$ and let $k\geq 0$ be an integer. We say that $v \in H^k_{\loc}(\sO\cup T, \fw)$ (respectively, $\sH^k_{\loc}(\sO\cup T, \fw)$ or $W^{k,p}_{\loc}(\sO\cup T, \fw)$) if for every subdomain $U\subset\sO$ such that $U\Subset\sO\cup T$, we have $v \in H^k(U, \fw)$ (respectively, $\sH^k(U, \fw)$ or $W^{k,p}(U, \fw)$).
\end{defn}

\subsection{Interior $H^2$ regularity for first-order derivatives parallel to the degenerate boundary}
\label{subsec:InteriorH2DxRegularity}
We proceed in a manner similar to that in \cite[p. 186]{GilbargTrudinger}.

\begin{lem}[Variational equation for the derivative of a solution with respect to $x$]
\label{lem:VarEqn_Dx}
Let $\sO\subseteqq\HH$ be a domain with \emph{finite height},\footnote{As one can see from the proof, the hypothesis that $\sO$ has finite height is only used in a very mild way and the condition could be removed using more precise bounds, but we shall not need such an extension.} let $f\in L^2(\sO,\fw)$, and suppose that $u\in H^1(\sO,\fw)$ satisfies the variational equation \eqref{eq:HestonVariationalEquation}. If\footnote{While the right-hand side of the identity \eqref{eq:IntroHestonWeakMixedProblemHomogeneous_Dx} is well-defined when $f_x \in L^2(\sO,\fw)$, we appeal to an approximation argument requiring $f\in W^{1,2}(\sO,\fw)$.}
$$
f\in W^{1,2}(\sO,\fw), \quad u \in H^2(\sO,\fw), \quad\hbox{and}\quad u_x \in H^1(\sO,\fw),
$$
then $u_x \in H^1(\sO,\fw)$ obeys
\begin{equation}
\label{eq:IntroHestonWeakMixedProblemHomogeneous_Dx}
\fa(u_x,v) = (f_x,v)_{L^2(\sO,\fw)},
\end{equation}
for all $v\in H^1_0(\underline{\sO},\fw)$.
\end{lem}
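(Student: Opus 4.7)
The plan is to exploit the fact that the coefficients of the Heston operator $A$ in \eqref{eq:OperatorHestonIntro} are independent of $x$, so formally $A(u_x) = (Au)_x = f_x$. Since $u_x$ is only assumed to lie in $H^1(\sO,\fw)$, $Au_x$ does not make pointwise sense, so we implement this identity at the level of the bilinear form rather than at the level of the PDE, by testing the variational equation \eqref{eq:HestonVariationalEquation} satisfied by $u$ against a tailored test function that encodes both $-v_x$ and the needed correction for the $x$-dependence of the weight $\fw$.

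By density of $C^\infty_0(\underline{\sO})$ in $H^1_0(\underline{\sO},\fw)$ and continuity of $\fa$ and of $(\cdot,\cdot)_{L^2(\sO,\fw)}$, it suffices to establish \eqref{eq:IntroHestonWeakMixedProblemHomogeneous_Dx} for $v \in C^\infty_0(\underline{\sO})$. Setting $\psi(x) := \gamma x/\sqrt{1+x^2}$, so that $\fw_x = -\psi\fw$ by \eqref{eq:HestonWeight}, I define
\[
\tilde v := -v_x + \psi v \in C^\infty_0(\underline{\sO}) \subset H^1_0(\underline{\sO},\fw),
\]
the $\psi v$ term being precisely the multiplier that absorbs the $x$-derivative of the weight when integrating by parts. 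Plugging $\tilde v$ into the variational equation for $u$ then gives $\fa(u, \tilde v) = (f, \tilde v)_{L^2(\sO,\fw)}$, and the proof reduces to identifying the two sides of this identity with those of \eqref{eq:IntroHestonWeakMixedProblemHomogeneous_Dx}.

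For the right-hand side, since $f \in W^{1,2}(\sO,\fw)$ has a weak $x$-derivative in $L^2(\sO,\fw)$ and $v$ is compactly supported in $\underline{\sO}$, the fibrewise one-dimensional integration by parts in $x$ (justified by Fubini) yields
\[
(f, \tilde v)_{L^2(\sO,\fw)} = -\int_\sO f v_x\,\fw\,dx\,dy + \int_\sO f v \psi\,\fw\,dx\,dy = \int_\sO f_x v\,\fw\,dx\,dy = (f_x, v)_{L^2(\sO,\fw)},
\]
because the contribution $-\int f v \fw_x = \int f v \psi\fw$ produced by the integration by parts cancels against the $\psi v$ piece of $\tilde v$. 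Here the finite-height hypothesis on $\sO$ is used only to ensure integrability.

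The main technical step, and the principal obstacle, is to show $\fa(u, \tilde v) = \fa(u_x, v)$. Because $u \in H^2(\sO,\fw)$, the mixed weak derivatives $u_{xx}, u_{xy}$ exist in $L^2_{\loc}(\sO)$, so the second-order derivatives of $v$ appearing in $\tilde v_x = -v_{xx} + \psi' v + \psi v_x$ and $\tilde v_y = -v_{xy} + \psi v_y$ can be transferred onto $u$ by fibrewise integration by parts in $x$ within the compact support of $v$, no boundary contributions appearing. Each such integration by parts spawns a factor $\fw_x/\fw = -\psi$, and the resulting $\psi$-terms then cancel against the $\psi v_x$ component of $\tilde v_x$, while the newly produced $\psi'$-terms cancel against the analogous $\psi'$-terms extracted by one further integration by parts from the weight-correction term $-\frac{\gamma}{2}\int(u_x+\varrho\sigma u_y)\tilde v\,\frac{x}{\sqrt{1+x^2}}y\fw$ already present in \eqref{eq:HestonWithKillingBilinearForm}. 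The lower-order terms of $\fa$ involving $a_1$ and $c_0$ (recall $b_1 = 0$ by Assumption \ref{assump:HestonCoefficientb1}) reduce in the same way to their $\fa(u_x, v)$-counterparts, the $\psi$-contributions from integration by parts again cancelling against those from $\tilde v$. The calculation is tedious but routine; the point is that the precise structure of $\fa$ in \eqref{eq:HestonWithKillingBilinearForm} has exactly the weight-derivative corrections needed for the cancellations to occur. Combining with the right-hand side computation yields $\fa(u_x, v) = (f_x, v)_{L^2(\sO,\fw)}$ for $v \in C^\infty_0(\underline{\sO})$, and density extends this to all $v \in H^1_0(\underline{\sO},\fw)$, completing the proof.
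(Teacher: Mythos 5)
Your proposal is correct in substance and takes a genuinely different route from the paper, so it is worth comparing.

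The paper's proof factors the computation through the packaged integration-by-parts identity $\fa(u,\cdot) = (Au,\cdot)_{L^2(\sO,\fw)}$ of Lemma~\ref{lem:HestonIntegrationByParts}: starting from smooth $u$ it writes $\fa(u,-v_x) = (Au,-v_x)_{L^2}$, uses $[D_x, A]=0$ together with $\fw_x = \fw(\log\fw)_x$ to produce $(Au_x, v)_{L^2} + (Au, v(\log\fw)_x)_{L^2} = \fa(u_x,v) + (Au, v(\log\fw)_x)_{L^2}$, then substitutes $Au=f$ (valid a.e.\ since $u\in H^2(\sO,\fw)$); a parallel computation of $(f,-v_x)_{L^2}$ produces the same extra term $(f, v(\log\fw)_x)_{L^2}$, and the two cancel. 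The passage from smooth $u$ to $u\in H^2(\sO,\fw)$ with $u_x\in H^1(\sO,\fw)$ is carried out by the density theorem (Theorem~\ref{thm:KufnerPowerWeightBoundedDerivatives}) on a subdomain $\sO'\Supset\supp v$ with boundary $C^1$-orthogonal to $\partial\HH$. You instead pre-package the weight correction into the test function $\tilde v = -v_x + \psi v$ with $\psi = -(\log\fw)_x$, and then verify the single identity $\fa(u,\tilde v) = \fa(u_x,v)$ directly term by term using fibrewise (one-variable) integration by parts in $x$. The cancellation pattern you identify is exactly right: the $\psi v_x$ (resp.\ $\psi v_y$) parts of $\tilde v_x$ (resp.\ $\tilde v_y$) cancel the $\fw_x$-terms produced by moving $\partial_x$ off $v$, the $\psi' v$ part of $\tilde v_x$ cancels against the $\psi'$ contribution from integrating by parts in the $-\frac{\gamma}{2}\int(u_x+\varrho\sigma u_y)\tilde v\,\frac{x}{\sqrt{1+x^2}}\,y\fw$ term, and the $a_1$ and $c_0$ terms close similarly. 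I checked these cancellations and they all work out.

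What each approach buys: the paper's version is modular and short because the weight bookkeeping is entirely delegated to Lemma~\ref{lem:HestonIntegrationByParts}, and the same template carries over cleanly to the more delicate $D_y$ case (Lemma~\ref{lem:VarEqn_Dy}), where the commutator $[D_y,A]$ is nontrivial. Your version avoids the smooth approximation of $u$ (you argue directly on the weak derivatives using Fubini-justified one-variable integration by parts), which is arguably more elementary and in fact only uses $f_x\in L^2(\sO,\fw)$ rather than the full $f\in W^{1,2}(\sO,\fw)$, but it front-loads a long term-by-term computation that you quite rightly flag as tedious and do not carry out in full. If you wrote this up, you would want to display the calculation showing $\fa(u,\tilde v)=\fa(u_x,v)$ rather than asserting it, since that identity is the whole content of the lemma; as it stands the proposal is a correct but incomplete sketch. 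One small point worth making explicit: the fibrewise integration by parts uses that $\supp v$ is compact in $\underline\sO$ so that, for a.e.\ $y>0$, the $x$-slice of $\supp v$ is compactly contained in the $x$-slice of $\sO$; only then are there genuinely no boundary contributions from $\partial_1\sO$, and no contribution ever arises from $y=0$ since the derivative being moved is in $x$.
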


\begin{proof}
Suppose first that $u \in C^\infty(\bar\sO)$ and $v \in C^\infty_0(\underline{\sO})$. Then $v_x \in C^\infty_0(\underline{\sO})$ and
\begin{align*}
\fa(u,-v_x) &= (Au,-v_x)_{L^2(\sO,\fw)} \quad\hbox{(by Lemma \ref{lem:HestonIntegrationByParts})}
\\
&= (Au_x,v)_{L^2(\sO,\fw)} + (Au,v(\log\fw)_x)_{L^2(\sO,\fw)}
\\
&= \fa(u_x,v) + (Au,v(\log\fw)_x)_{L^2(\sO,\fw)},
\end{align*}
where from \eqref{eq:HestonWeight} we see that
$$
(\log \fw)_x = -\gamma\frac{x}{\sqrt{1+x^2}} \quad\hbox{on }\HH.
$$
Now suppose, more generally, that $u\in H^2(\sO,\fw)$ with $u_x\in H^1(\sO,\fw)$, as in our hypotheses. For $v\in C^\infty_0(\underline\sO)$, we may choose a subdomain $\sO'\Subset\underline\sO$ such that $\supp v \subset\underline\sO'$ and $\partial_1\sO'$ is $C^1$-orthogonal to $\partial\HH$ in the sense of Definition \ref{defn:C1Orthogonal}. According to Theorem \ref{thm:KufnerPowerWeightBoundedDerivatives}, there is a sequence $\{u_n\}_{n\in \NN} \subset C^\infty(\bar\sO')$ such that $u_n \to u$ in $H^2(\sO',\fw)$ as $n\to\infty$ and hence, for each $v \in C^\infty_0(\underline{\sO})$
with $\supp v \subset\underline\sO'$,
$$
(Au_n, v)_{L^2(\sO,\fw)} \to (Au, v)_{L^2(\sO,\fw)} \quad\hbox{and}\quad \fa(u_{n,x},v) \to \fa(u_x,v), \quad n\to\infty,
$$
since, in the second case,  by \eqref{eq:HestonWithKillingBilinearForm} we see that
\begin{align*}
\left|\fa(u_{n,x} - u_x,v)\right| &\leq C\left(\|yD(u_{n,x}-u_x)\|_{L^2(\sO,\fw)} + \|u_{n,x}-u_x\|_{L^2(\sO,\fw)}\right)\|v\|_{W^{1,2}(\sO,\fw)}
\\
&\leq C\|u_n - u\|_{H^2(\sO,\fw)}\|v\|_{W^{1,2}(\sO,\fw)},
\end{align*}
where $C=C(\height(\sO))$. Therefore, by approximation and also the fact that $v\in C^\infty_0(\underline{\sO})$ is arbitrary,
the preceding variational equation continues to hold for $u\in H^2(\sO,\fw)$ with $u_x\in H^1(\sO,\fw)$, that is,
$$
\fa(u,-v_x) = \fa(u_x,v) + (Au,v(\log\fw)_x)_{L^2(\sO,\fw)}, \quad\forall v\in C^\infty_0(\underline{\sO}).
$$
Since $u\in H^2(\sO,\fw)$, then \eqref{eq:HestonVariationalEquation} implies that $Au = f$ a.e. on $\sO$ by Lemma \ref{lem:HestonIntegrationByParts},
and thus the preceding identity gives
$$
\fa(u,-v_x) = \fa(u_x,v) + (f,v(\log\fw)_x)_{L^2(\sO,\fw)}, \quad\forall v\in C^\infty_0(\underline{\sO}).
$$
Moreover, by substituting $-v_x$ for $v$ in \eqref{eq:HestonVariationalEquation}, using the fact that $f\in W^{1,2}(\sO,\fw)$ by hypothesis, so $f_x\in L^2(\sO,\fw)$, and appealing to Theorem \ref{thm:KufnerPowerWeightBoundedDerivatives} to choose
a sequence $\{f_n\}_{n\in \NN} \subset C^\infty(\bar\sO)$ such that $f_n \to f$ in $W^{1,2}(\sO,\fw)$ and so $f_{n,x} \to f_x$ in $L^2(\sO,\fw)$ as $n\to\infty$, we obtain
\begin{align*}
\fa(u,-v_x) &= (f,-v_x)_{L^2(\sO,\fw)}
\\
&= (f_x,v)_{L^2(\sO,\fw)} + (f,v(\log\fw)_x)_{L^2(\sO,\fw)}, \quad\forall v\in C^\infty_0(\underline{\sO}),
\end{align*}
where the integration-by-parts identity is justified by approximation, just as in the proof of Lemma \ref{lem:HestonIntegrationByParts}.
Combining these identities yields \eqref{eq:IntroHestonWeakMixedProblemHomogeneous_Dx} for all $v\in C^\infty_0(\underline{\sO})$, and hence for all $v\in H^1_0(\underline{\sO},\fw)$.
\end{proof}

\begin{rmk}[Need for the regularity condition on $u$ in Lemma \ref{lem:VarEqn_Dx}]
If we only knew that $u\in H^2(\sO,\fw)$, then the definition \eqref{eq:H2WeightedSobolevSpace} of $H^2(\sO,\fw)$ would imply that $y|D^2u|, \ (1+y)|Du| \in L^2(\sO,\fw)$ and so $y^{1/2}|Du_x|, \ (1+y)^{1/2}u_x \in L^2(\sO,y\fw) = L^2(\sO,\fw_1)$ and thus $u_x \in H^1(\sO,\fw_1)$, \emph{but not necessarily $H^1(\sO,\fw)$}, by the definition \eqref{eq:H1WeightedSobolevSpace} of $H^1(\sO,\fw)$.
\end{rmk}

\begin{lem}[Variational equation for higher-order derivatives of a solution with respect to $x$]
\label{lem:VarEqn_Dkx}
Let $\sO\subseteqq\HH$ be a domain with \emph{finite height}, let $k\geq 1$ be an integer, let $f\in L^2(\sO,\fw)$, and suppose that $u\in H^1(\sO,\fw)$ satisfies the variational equation \eqref{eq:HestonVariationalEquation}. If
$$
D_x^k f \in L^2(\sO,\fw), \quad u \in \sH^{k+1}(\sO,\fw), \quad\hbox{and}\quad D_x^k u \in H^1(\sO,\fw),
$$
then $D_x^k u \in H^1(\sO,\fw)$ obeys
\begin{equation}
\label{eq:IntroHestonWeakMixedProblemHomogeneous_Dkx}
\fa(D_x^k u, v) = (D_x^k f, v)_{L^2(\sO,\fw)}, \quad v\in H^1_0(\underline{\sO},\fw).
\end{equation}
\end{lem}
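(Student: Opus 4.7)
The plan is to induct on $k\geq 1$. The base case $k=1$ is essentially Lemma \ref{lem:VarEqn_Dx}: although that lemma formally assumes $f\in W^{1,2}(\sO,\fw)$, only the $x$-derivative of $f$ enters the key integration-by-parts (the weight factor contributes merely $(\log\fw)_x=-\gamma x/\sqrt{1+x^2}$), so the weaker hypothesis $D_xf\in L^2(\sO,\fw)$ suffices once the $W^{1,2}(\sO,\fw)$-mollification in the proof is replaced by a one-dimensional convolution in the $x$-variable alone, which preserves the $y$-behavior of $f$ and uses only that $f$ and $f_x$ lie in $L^2(\sO,\fw)$.

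For the inductive step from $k$ to $k+1$, I assume $D_x^{k+1}f\in L^2(\sO,\fw)$, $u\in\sH^{k+2}(\sO,\fw)$, and $D_x^{k+1}u\in H^1(\sO,\fw)$, and begin by verifying the level-$k$ hypotheses. Inspection of \eqref{eq:HkWeightedSobolevSpaceNormPowery} shows $\sH^{k+2}(\sO,\fw)\hookrightarrow\sH^{k+1}(\sO,\fw)$; the terms $(1+y)^2|D_x^{k+1}v|^2$, $(1+y)^2|D_x^kD_yv|^2$, and $(1+y)v^2$ in the $\sH^{k+2}$-norm give $D_x^ku\in H^1(\sO,\fw)$; and the inclusion $\sH^{k+2}(\sO,\fw)\subset H^2(\sO,\fw)$, combined with Lemma 2.29 of \cite{Daskalopoulos_Feehan_statvarineqheston}, shows that $Au=f$ a.e.\ on $\sO$, so $D_x^kf=D_x^kAu=AD_x^ku$ lies in $L^2(\sO,\fw)$: the required square-integrability of $yD_x^{k+2}u,\ yD_x^{k+1}D_yu,\ yD_x^kD_y^2u,\ (1+y)D_x^{k+1}u,\ (1+y)D_x^kD_yu$ and $D_x^ku$ is exactly what the $\sH^{k+2}$-norm encodes.

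With the level-$k$ identity $\fa(D_x^ku,w)=(D_x^kf,w)_{L^2(\sO,\fw)}$ (valid for all $w\in H^1_0(\underline{\sO},\fw)$) in hand, I would then mimic the proof of Lemma \ref{lem:VarEqn_Dx} with $(u,f)$ replaced by $(D_x^ku,D_x^kf)$. The required regularity $D_x^ku\in H^2(\sO,\fw)$ is again read off \eqref{eq:HkWeightedSobolevSpaceNormPowery} (it corresponds to the terms in \eqref{eq:H2NormHeston} applied to $D_x^ku$). Testing with $-v_x$ for $v\in C_0^\infty(\underline{\sO})$ and integrating by parts in $x$ on both sides using Lemma \ref{lem:SimpleCommutatorInnerProduct}, the extra contributions involving $(\log\fw)_x$ are $(AD_x^ku,v(\log\fw)_x)_{L^2(\sO,\fw)}$ on the left and $(D_x^kf,v(\log\fw)_x)_{L^2(\sO,\fw)}$ on the right; these cancel because $AD_x^ku=D_x^kAu=D_x^kf$ a.e., leaving $\fa(D_x^{k+1}u,v)=(D_x^{k+1}f,v)_{L^2(\sO,\fw)}$. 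Density of $C^\infty_0(\underline{\sO})$ in $H^1_0(\underline{\sO},\fw)$ extends this to all test functions.

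The main obstacle will be the approximation argument needed to justify the two $x$-direction integration-by-parts identities, since $D_x^ku$ and $D_x^kf$ lie only in weighted Sobolev spaces. My plan, paralleling the approximation scheme in the proof of Lemma \ref{lem:VarEqn_Dx}, is to localize to subdomains $\sO'\Subset\underline{\sO}$ whose non-degenerate boundary $\partial_1\sO'$ is $C^1$ and $C^1$-orthogonal to $\partial\HH$, apply Theorem \ref{thm:KufnerPowerWeightBoundedDerivatives} from the Appendix to approximate $D_x^ku$ in $H^2(\sO',\fw)$ by functions in $C^\infty(\bar\sO')$, and mollify $D_x^kf$ in the $x$-variable alone to secure simultaneous $L^2(\sO',\fw)$-convergence of $D_x^kf$ and $D_x^{k+1}f$. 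The finite-height hypothesis on $\sO$ keeps the weight-dependent constants uniform throughout this passage to the limit.
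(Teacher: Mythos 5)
Your proof is correct and takes essentially the same route as the paper, only with the induction unrolled in the opposite direction. The paper establishes the base case (that $u_x$ solves the variational equation with source $f_x$, via Lemma \ref{lem:VarEqn_Dx}) and then applies the level-$(k-1)$ result to the pair $(u_x,f_x)$; you establish the level-$k$ identity for $(D_x^ku,D_x^kf)$ and then take one more $x$-derivative by re-running the Lemma \ref{lem:VarEqn_Dx} argument for the pair $(D_x^ku,D_x^kf)$. These are logically equivalent, and both pass through the same verifications of which derivatives are controlled by $\sH^{k+2}(\sO,\fw)$.

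You are in fact more explicit than the paper on two technical points. First, the paper's proof opens with ``By hypothesis, $f_x\in L^2(\sO,\fw)$,'' but for $k\geq 2$ the stated hypothesis only gives $D_x^kf\in L^2(\sO,\fw)$; the intermediate integrability actually comes from $Au=f$ a.e.\ together with $u\in\sH^{k+1}(\sO,\fw)$, exactly as you note when you write $D_x^jf = AD_x^ju$ and read off the required square-integrability from the $\sH$-norm. Second, you flag the mismatch between the $f\in W^{1,2}(\sO,\fw)$ hypothesis of Lemma \ref{lem:VarEqn_Dx} and the weaker $D_x^kf\in L^2(\sO,\fw)$ assumed here, and correctly observe that the approximation step only needs convergence of $D_x^kf$ and $D_x^{k+1}f$, which a one-dimensional $x$-mollifier on the localized subdomain provides (the weight's $x$-dependence is smooth and locally bounded, and its $y$-dependence is untouched by $x$-convolution). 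These are the right reasons the stated hypothesis suffices; the paper's own footnote to Lemma \ref{lem:VarEqn_Dx} gestures in this direction but does not carry it out.

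Two small slips, neither of which affects the argument. The citation to Lemma \ref{lem:SimpleCommutatorInnerProduct} is misplaced --- that lemma concerns the commutator $[A,\zeta]$ for a cutoff $\zeta$, whereas the integration by parts you need (with the $(\log\fw)_x$ term) is the one carried out in the proof of Lemma \ref{lem:VarEqn_Dx} itself. And in verifying $D_x^ku\in H^1(\sO,\fw)$, the zeroth-order term of the $H^1$-norm, $(1+y)(D_x^ku)^2$, comes from the summand $(1+y)^2|D_x^{j+1}v|^2$ with $j=k-1$ in \eqref{eq:HkWeightedSobolevSpaceNormPowery}, not from the $(1+y)v^2$ term you cite.
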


\begin{proof}
By hypothesis,
$$
f_x \in L^2(\sO,\fw), \quad u \in H^2(\sO,\fw), \quad\hbox{and}\quad u_x \in H^1(\sO,\fw),
$$
and so Lemma \ref{lem:VarEqn_Dx} implies that $u_x$ obeys
$$
\fa(u_x,v) = (f_x,v)_{L^2(\sO,\fw)}, \quad \forall v \in H^1_0(\underline\sO,\fw).
$$
By induction we may assume that the conclusion holds when $k$ is replaced by $k-1$. Note that $u \in \sH^{k+1}(\sO,\fw)$ by hypothesis and so, by Definition \ref{defn:HkWeightedSobolevSpaceNormPowery}, we see that $u_x$ obeys
\begin{align*}
yD_x^k u_x, \ yD_x^{k-1}D_y u_x, \ yD_x^{k-2}D_y^2 u_x &\in L^2(\sO,\fw),
\\
yD_x^{k-m}D_y^m u_x &\in L^2(\sO,\fw_{m-2}), \quad 3\leq m\leq k.
\end{align*}
Therefore, $u \in \sH^{k+1}(\sO,\fw)$ implies that $u_x \in \sH^k(\sO,\fw)$ when $k\geq 2$. Since
$$
D_x^{k-1} f_x = D_x^k f  \in L^2(\sO,\fw), \quad u_x \in \sH^k(\sO,\fw), \quad\hbox{and}\quad D_x^{k-1} u_x = D_x^k u \in H^1(\sO,\fw),
$$
we can apply Lemma \ref{lem:VarEqn_Dkx} to the preceding variational equation, with $k-1$ and $u_x \in H^1(\sO,\fw)$ and $f_x \in L^2(\sO,\fw)$ replacing $k$ and $u \in H^1(\sO,\fw)$ and $f \in L^2(\sO,\fw)$, respectively, to give
$$
\fa(D_x^k u,v) = \fa(D_x^{k-1} u_x, v) = (D_x^{k-1} f_x, v)_{L^2(\sO,\fw)} = (D_x^k f ,v)_{L^2(\sO,\fw)}, 
$$
for all $v \in H^1_0(\underline\sO,\fw)$. This completes the proof.
\end{proof}

In order to establish a refinement of Lemma \ref{lem:VarEqn_Dx} which yields $u_x \in H^1(\sO,\fw)$ as a conclusion, assuming only $u\in H^2(\sO,\fw)$, we shall need to substitute $v \in C^\infty_0(\underline{\sO}')$, where $\sO'\Subset\underline{\sO}$, by a finite difference, $-\delta_x^{-h}v$, rather than $-v_x$, by analogy with the proof of \cite[Theorem 8.8]{GilbargTrudinger}.

\begin{prop}[Variational equation for the derivative of a solution with respect to $x$]
\label{prop:VarEqn_H1Dx}
Let $\sO\subseteqq\HH$ be a domain and let $d_1, \Upsilon$ be positive constants. Then there is a positive constant, $C=C(\Lambda,\nu_0,d_1,\Upsilon)$, such that the following holds. Let $f\in L^2(\sO,\fw)$ and suppose that $u\in H^1(\sO,\fw)$ satisfies the variational equation \eqref{eq:HestonVariationalEquation}. If
$
f_x \in L^2(\sO,\fw),
$
then $u_x \in H^1_{\loc}(\underline\sO,\fw)$ and, for any subdomain $\sO'\subset\sO$ with $\bar\sO'\subset\underline\sO$ and $\dist(\partial_1\sO',\partial_1\sO)\geq d_1$ and $\height(\sO')\leq \Upsilon$, one has
$
u_x \in H^1(\sO',\fw),
$
and
\begin{equation}
\label{eq:IntroHestonWeakMixedProblemHomogeneous_DxSubdomain}
\fa(u_x,v) = (f_x,v)_{L^2(\sO',\fw)}, \quad\forall v\in H^1_0(\underline{\sO}',\fw),
\end{equation}
and
$$
\|u_x\|_{H^1(\sO',\fw)} \leq C\left(\|f_x\|_{L^2(\sO,\fw)} + \|f\|_{L^2(\sO,\fw)} + \|u\|_{L^2(\sO,\fw)}\right).
$$
\end{prop}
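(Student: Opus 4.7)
The plan is to establish the proposition via a finite-difference argument in the $x$-direction, paralleling the proof of Theorem \ref{thm:InteriorPartialH2Parallel} but aiming at the stronger conclusion $u_x\in H^1(\sO',\fw)$, rather than merely $yDu_x\in L^2(\sO',\fw)$, by exploiting the additional hypothesis $f_x\in L^2(\sO,\fw)$. Fix intermediate subdomains $\sO'\Subset \sO_1\Subset \sO_2\subset \sO$ with $\dist(\partial_1\sO_j,\partial_1\sO_{j+1})\geq d_1/4$ and $\height(\sO_j)\leq\Upsilon+1$. For $v_0\in C^\infty_0(\underline{\sO}_1)$ and $0<|h|<d_1/8$, the test function $-\delta_x^{-h}v_0$ is supported in $\underline{\sO}_2$ and lies in $H^1_0(\underline{\sO},\fw)$; substituting into \eqref{eq:HestonVariationalEquation} and applying \eqref{eq:FDIntegrationByParts} term by term to every summand of \eqref{eq:HestonWithKillingBilinearForm} yields, after rearrangement, an identity
\[
\fa^h(\delta_x^h u,v_0)=\bigl((\fw^h/\fw)\,\delta_x^h f,\,v_0\bigr)_{L^2(\sO_1,\fw)}+R_h(u,f;v_0),
\]
where $\fw^h(x,y):=\fw(x+h,y)$, $\fa^h$ is obtained from \eqref{eq:HestonWithKillingBilinearForm} by substituting $\fw^h$ for $\fw$ in the quadratic part, and $R_h$ collects the terms with coefficient $\delta_x^h\fw/\fw$. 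By the explicit form \eqref{eq:HestonWeight} of $\fw$, both $\fw^h/\fw$ and $\delta_x^h\fw/\fw$ are bounded uniformly for $|h|\leq 1$, and the modified form $\fa^h$ inherits the ellipticity estimate \eqref{eq:HestonModulusEllipticity} with constants independent of $h$.

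Next, I test this identity with $v_0=\zeta^2\delta_x^h u$ for a cutoff $\zeta\in C^\infty(\bar\sO_1)$ satisfying $\zeta\equiv 1$ on $\sO'$ and $\supp\zeta\subset\underline{\sO}_1$, and mimic the coercivity-plus-Young's-inequality argument used in Lemma \ref{lem:ExistenceUniquenessEllipticHestonAprioriEstimate} (in its localized form, Lemma \ref{lem:H1BoundSolutionHestonVarEqnSubdomainInterior}) to derive
\[
\|\delta_x^h u\|_{H^1(\sO',\fw)}\leq C\bigl(\|\delta_x^h f\|_{L^2(\sO_1,\fw)}+\|f\|_{L^2(\sO_2,\fw)}+\|u\|_{L^2(\sO_2,\fw)}+\|yDu\|_{L^2(\sO_2,\fw)}\bigr),
\]
uniformly in $h$, the factors $(1+y)$ being absorbed using $\height(\sO_1)\leq\Upsilon+1$. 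Lemma \ref{lem:FDBounds}\,\ref{item:FDBounds_DiffBound} bounds the first summand by $C\|f_x\|_{L^2(\sO,\fw)}$, and Lemma \ref{lem:AuxiliaryWeightedH1Estimate} controls $\|yDu\|_{L^2(\sO_2,\fw)}$ by $C\bigl(\|f\|_{L^2(\sO,\fw)}+\|u\|_{L^2(\sO,\fw)}\bigr)$, so
\[
\sup_{0<|h|<d_1/8}\|\delta_x^h u\|_{H^1(\sO',\fw)}\leq C\bigl(\|f_x\|_{L^2(\sO,\fw)}+\|f\|_{L^2(\sO,\fw)}+\|u\|_{L^2(\sO,\fw)}\bigr).
\]

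Since $H^1(\sO',\fw)$ is a separable Hilbert space, a subsequence $\delta_x^{h_n}u\rightharpoonup w$ converges weakly; pairing against $\phi\in C^\infty_0(\sO')$ gives $\langle\delta_x^{h_n}u,\phi\rangle\to\langle u_x,\phi\rangle$, so $w=u_x$, which therefore lies in $H^1(\sO',\fw)$ and inherits the bound by weak lower semicontinuity. Varying $\sO'$ yields $u_x\in H^1_{\loc}(\underline\sO,\fw)$. To obtain \eqref{eq:IntroHestonWeakMixedProblemHomogeneous_DxSubdomain}, pass to the limit $h\to 0$ in the finite-difference identity for $v_0\in C^\infty_0(\underline{\sO}')$: $\fw^h/\fw\to 1$ and $\delta_x^h\fw/\fw\to(\log\fw)_x=-\gamma x/\sqrt{1+x^2}$ uniformly on $\sO_1$, the weak convergence $\delta_x^h u\rightharpoonup u_x$ in $H^1(\sO_1,\fw)$ handles the bilinear form, and Lemma \ref{lem:FDBounds}\,\ref{item:FDBounds_DiffBound} applied to $f_x$ gives $\delta_x^h f\to f_x$ strongly in $L^2(\sO_1,\fw)$. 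The residual contributions collected in $R_h$ cancel the $(\log\fw)_x$-weighted terms arising on the left, leaving \eqref{eq:IntroHestonWeakMixedProblemHomogeneous_DxSubdomain} for $v_0\in C^\infty_0(\underline{\sO}')$; density then extends it to all $v\in H^1_0(\underline{\sO}',\fw)$.

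The main obstacle is the bookkeeping of the error terms generated by the non-translation-invariance of the weight $\fw=y^{\beta-1}e^{-\gamma\sqrt{1+x^2}-\mu y}$ in $x$: every application of \eqref{eq:FDIntegrationByParts} spawns a factor $\delta_x^h\fw/\fw$, and each of these must be shown uniformly bounded and either absorbed via Young's inequality or matched against the smooth limit $(\log\fw)_x$ when passing to $h\to 0$. This is precisely the type of computation carried out in the proof of Theorem \ref{thm:InteriorPartialH2Parallel}, so no genuinely new idea is required beyond careful tracking of these lower-order perturbations.
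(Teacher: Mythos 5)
Your finite-difference strategy follows the same skeleton as the paper's proof (derive a variational identity for $\delta_x^h u$, apply the interior $H^1$ a priori estimate, bound the difference quotients, pass to a weak limit), but it has a genuine gap at the crucial step: bounding $\|\delta_x^h u\|_{L^2(\sO_2,\fw)}$ uniformly in $h$.

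After applying the coercivity/Young's argument (equivalently, Lemma \ref{lem:H1BoundSolutionHestonVarEqnSubdomainInterior} applied to $\delta_x^h u$), the term that must be controlled on the right-hand side is $\|\delta_x^h u\|_{L^2(\sO_2,\fw)}$ (there is no $y$-weight on it, because the zeroth-order piece of the $H^1$ norm is $\|(1+y)^{1/2}\cdot\|_{L^2}$, not $\|y^{1/2}\cdot\|_{L^2}$). By Lemma \ref{lem:FDBounds}\,\eqref{item:FDBounds_DiffBound}, a uniform bound on this quantity requires $u_x\in L^2(\sO,\fw)$ with no $y$-weight. You propose to close this with Lemma \ref{lem:AuxiliaryWeightedH1Estimate}, but that lemma only controls $\|yDu\|_{L^2(\sO,\fw)}$, which is strictly weaker: since $y$ is small near $\partial_0\sO$, $\|yDu\|_{L^2(\sO,\fw)}\lesssim\|Du\|_{L^2(\sO,\fw)}$ but \emph{not} conversely, so it gives you no handle on $\|u_x\|_{L^2}$. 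This is exactly the difficulty the paper flags in \S\ref{subsec:Highlights} --- upgrading the $L^2$-estimate for $y^{1/2}Du$ (implicit in the $H^1$ a priori bound) to an $L^2$-estimate for $Du$ itself is nontrivial. The paper's proof invokes Proposition \ref{prop:InteriorKochEstimate} (the interior Koch estimate) precisely here to obtain $\|u_x\|_{L^2(\sO''',\fw)}\leq C(\|f\|_{L^2(\sO,\fw)}+\|u\|_{L^2(\sO,\fw)})$, and that proposition in turn rests on the Koch trick on the half-plane (Proposition \ref{prop:KochEstimate}) rather than on the weighted first-derivative estimate you cite. Without the Koch estimate, your chain of inequalities does not close, so $u_x\in L^2(\sO',\fw)$ is never established, and the rest of the argument (weak convergence of $\delta_x^h u$ in $H^1(\sO',\fw)$, hence $u_x\in H^1$) is moot.

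Separately, and less importantly: your "term-by-term" application of \eqref{eq:FDIntegrationByParts} to the bilinear form introduces a residual $R_h$ and a modified form $\fa^h$ which you then need to track back to the limit. The paper avoids this bookkeeping by working at the level of the strong operator and using the commutation $\delta_x^h A w = A\delta_x^h w$ together with a density argument via Theorem \ref{thm:KufnerPowerWeightBoundedDerivatives}, which yields the clean identity $\fa(\delta_x^h u,v)=(\delta_x^h f,v)_{L^2(\sO'',\fw)}$ for test functions $v$ supported in $\underline{\sO}''$ with no residual terms. Your version is not wrong, but it is more fragile and unnecessary; once the Koch estimate is in place, the paper's derivation of the FD variational identity is the cleaner route.
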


\begin{proof}
We partially follow the idea of the proof of \cite[Theorem 8.8]{GilbargTrudinger}, but the argument is simpler here because of the relatively strong hypothesis that $f_x\in L^2(\sO,\fw)$, as well as $f\in L^2(\sO,\fw)$).

Choose a subdomain $\sO''\subset\sO$ such that $\bar\sO'\subset\underline\sO''$ and $\bar\sO''\subset\underline\sO$ and $\partial_1\sO''$ is $C^1$-orthogonal to $\partial\HH$, while $\dist(\partial_1\sO',\partial_1\sO'')\geq d_1/4$ and $\dist(\partial_1\sO'',\partial_1\sO)\geq d_1/2$ and $\height(\sO'')\leq 2\Upsilon$. Observe that if $(x,y)\in \sO''$, then $(x\pm h,y)\in\sO$ provided $\dist((x,y), \partial_1\sO)<|h|$, so in choosing $h$, we shall always assume that $0<|h|<\frac{1}{2}\dist(\sO'',\partial_1\sO)$. For any $v\in C^\infty_0(\underline{\sO}'')$, observe that $\delta_x^{-h}v \in C^\infty_0(\underline{\sO})$, so we may substitute $-\delta_x^{-h}v$ for $v$ as a test function in \eqref{eq:HestonVariationalEquation}.

For any $w\in C^\infty(\bar\sO)$, noting that $\delta_x^h Aw = A\delta_x^h w$ on $\sO''$, we obtain
\begin{align*}
-\fa(w, \delta_x^{-h}v) &= -(Aw, \delta_x^{-h}v)_{L^2(\sO,\fw)} \quad\hbox{(by Lemma \ref{lem:HestonIntegrationByParts})}
\\
&= (\delta_x^{h}Aw, (\fw^h/\fw)v)_{L^2(\sO,\fw)} + (Aw, (\delta_x^{h}\fw/\fw)v)_{L^2(\sO,\fw)} \quad\hbox{(by \eqref{eq:FDIntegrationByParts})}
\\
&= (A\delta_x^{h}w, (\fw^h/\fw)v)_{L^2(\sO,\fw)} + (Aw, (\delta_x^{h}\fw/\fw)v)_{L^2(\sO,\fw)}
\\
&= \fa(\delta_x^{h}w, (\fw^h/\fw)v) + \fa(w, (\delta_x^{h}\fw/\fw)v),
\quad\forall v\in C^\infty_0(\underline{\sO}''),
\end{align*}
where, in the last equality, we use the fact that $(\fw^h/\fw)v \in C^\infty_0(\underline{\sO}'')$ when $v\in C^\infty_0(\underline{\sO}'')$, recalling by \eqref{eq:HestonWeight} that
$
\fw(x,y) = y^{\beta-1}e^{-\gamma\sqrt{1+x^2} - \mu y}, 
$
for all $(x,y) \in \HH$.
Recall that $\supp v \subset\underline\sO''$. Since $C^\infty(\bar\sO'')$ is dense in $H^1(\sO'',\fw)$ by Theorem \ref{thm:KufnerPowerWeightBoundedDerivatives},
we may choose $\{w_n\}_{n\in\NN}\subset C^\infty(\bar\sO'')$ with $w_n\to u$ strongly in $H^1(\sO'',\fw)$ to see that
$$
-\fa(u, \delta_x^{-h}v) = \fa(\delta_x^{h}u, (\fw^h/\fw)v) + \fa(u, (\delta_x^{h}\fw/\fw)v),
\quad\forall v\in C^\infty_0(\underline{\sO}'').
$$
Therefore, for all $v\in C^\infty_0(\underline{\sO}'')$, the preceding identity yields
\begin{align*}
\fa(\delta_x^{h}u, (\fw^h/\fw)v) &= -\fa(u, \delta_x^{-h}v) - \fa(u, (\delta_x^{h}\fw/\fw)v)
\\
&= -(f, \delta_x^{-h}v)_{L^2(\sO,\fw)} - (f, (\delta_x^{h}\fw/\fw)v)_{L^2(\sO,\fw)} \quad\hbox{(by \eqref{eq:HestonVariationalEquation})}
\\
&= (\delta_x^{h}f, (\fw^h/\fw)v)_{L^2(\sO,\fw)} \quad\hbox{(by \eqref{eq:FDIntegrationByParts})},
\end{align*}
and consequently,
$
\fa(\delta_x^{h}u, v) = (\delta_x^{h}f, v)_{L^2(\sO'',\fw)}, 
$
for all $v\in C^\infty_0(\underline{\sO}'')$.
Since $C^\infty_0(\underline{\sO}'')$ is dense in $H^1_0(\underline{\sO}'',\fw)$ by definition, we obtain
\begin{equation}
\label{eq:BilinearFormFDTestFunction}
\fa(\delta_x^{h}u, v) = (\delta_x^{h}f, v)_{L^2(\sO'',\fw)}, \quad\forall v\in H^1_0(\underline{\sO}'',\fw).
\end{equation}
The interior a priori estimate \eqref{eq:H1BoundSolutionHestonVarEqnSubdomain} for solutions to the preceding equation yields
$$
\|\delta_x^{h}u\|_{H^1(\sO',\fw)} \leq C\left(\|\delta_x^{h}f\|_{L^2(\sO'',\fw)} + \|\delta_x^{h}u\|_{L^2(\sO'',\fw)}\right), 
$$
for $0<2|h|<\dist(\sO'',\partial_1\sO)$,
where $C=C(\Lambda,\nu_0,\Upsilon)$ is a positive constant. Choose a subdomain $\sO'''\subset\sO$ such that $\bar\sO''\subset\underline\sO'''$ and $\bar\sO'''\subset\underline\sO$, while $\dist(\partial_1\sO''',\partial_1\sO)\geq d_1/4$ and $\dist(\partial_1\sO'',\partial_1\sO''')\geq d_1/8$ and $\height(\sO'')\leq 4\Upsilon$. By Lemma \ref{lem:FDBounds} \eqref{item:FDBounds_DiffBound} and the facts that $f_x\in L^2(\sO,\fw)$ by hypothesis and $u_x\in L^2(\sO''',\fw)$ by Proposition \ref{prop:InteriorKochEstimate}, we see that
$$
\|\delta_x^{h}f\|_{L^2(\sO'',\fw)} + \|\delta_x^{h}u\|_{L^2(\sO'',\fw)} \leq \|f_x\|_{L^2(\sO''',\fw)} + \|u_x\|_{L^2(\sO''',\fw)},
$$
for $0<2|h|<\dist(\sO'',\partial_1\sO''')$,
and so
$$
\|\delta_x^{h}u\|_{H^1(\sO',\fw)} \leq C\left(\|f_x\|_{L^2(\sO''',\fw)} + \|u_x\|_{L^2(\sO''',\fw)}\right),
$$
for $0<2|h|<\dist(\sO'',\partial_1\sO''')$.
Therefore, since $(\delta_x^{h}u)_x = \delta_x^{h}u_x$ and $(\delta_x^{h}u)_y = \delta_x^{h}u_y$, we have
\begin{align*}
\|y^{1/2}\delta_x^{h}u_x\|_{L^2(\sO',\fw)} &\leq C_1,
\\
\|y^{1/2}\delta_x^{h}u_y\|_{L^2(\sO',\fw)} &\leq C_1,
\\
\|\delta_x^{h}u\|_{L^2(\sO',\fw)} &\leq C_1, \quad\hbox{for } 0<2|h|<\dist(\sO',\partial_1\sO''),
\end{align*}
where $C_1 := C(\|f_x\|_{L^2(\sO''',\fw)} + \|u_x\|_{L^2(\sO''',\fw)})$. Lemma \ref{lem:FDBounds} \eqref{item:FDBounds_WeakDerivExist} (and its proof) gives $u_x\in H^1(\sO',\fw)$ and weak convergence, after passing to a diagonal subsequence,
$$
y^{1/2}\delta_x^h u_x \rightharpoonup y^{1/2}u_{xx}, \quad y^{1/2}\delta_x^h u_y \rightharpoonup y^{1/2}u_{xy}, \quad \delta_x^h u \rightharpoonup u_x
\quad\hbox{weakly in $L^2(\sO',\fw)$ as $h\to 0$,}
$$
and thus,
$$
\delta_x^h u \rightharpoonup u_x \quad\hbox{weakly in $H^1(\sO',\fw)$ as $h\to 0$.}
$$
Therefore, we have that
$$
\|u_x\|_{H^1(\sO',\fw)} \leq \liminf_{h\to 0}\|\delta_x^{h}u\|_{H^1(\sO',\fw)}.
$$
and, by combining the preceding inequalities, we obtain
$$
\|u_x\|_{H^1(\sO',\fw)} \leq C\left(\|f_x\|_{L^2(\sO''',\fw)} + \|u_x\|_{L^2(\sO''',\fw)}\right).
$$
Finally, Proposition \ref{prop:InteriorKochEstimate} yields
$$
\|u_x\|_{L^2(\sO''',\fw)} \leq C\left(\|f\|_{L^2(\sO,\fw)} + \|u\|_{L^2(\sO,\fw)}\right),
$$
for $C=C(\Lambda,\nu_0,d_1,\Upsilon)$ and the conclusion follows from the preceding two estimates.
\end{proof}

Clearly, by repeatedly applying Proposition \ref{prop:VarEqn_H1Dx}, induction on $k\geq 1$ yields the following refinement of Lemma \ref{lem:VarEqn_Dkx}.

\begin{prop}[Variational equation for higher-order derivatives of a solution with respect to $x$]
\label{prop:VarEqn_H1Dkx}
Let $\sO\subseteqq\HH$ be a domain, let $d_1, \Upsilon$ be positive constants, and let $k\geq 1$ be an integer. Then there is a positive constant, $C=C(\Lambda,\nu_0,d_1,k,\Upsilon)$,
such that the following holds. Let $f\in L^2(\sO,\fw)$ and suppose that $u\in H^1(\sO,\fw)$ satisfies the variational equation \eqref{eq:HestonVariationalEquation}. If
$
D_x^j f \in L^2(\sO,\fw), 
$
for $1\leq j\leq k$,
then $D_x^k u \in H^1_{\loc}(\underline\sO,\fw)$ and, for any subdomain $\sO'\subset\sO$ with $\bar\sO'\subset\underline\sO$ and $\dist(\partial_1\sO',\partial_1\sO)\geq d_1$ and $\height(\sO')\leq \Upsilon$, one has
$
D_x^k u \in H^1(\sO',\fw),
$
and
$$
\fa(D_x^k u,v) = (D_x^k f,v)_{L^2(\sO',\fw)}, \quad\forall v\in H^1_0(\underline{\sO}',\fw),
$$
and
$$
\|D_x^k u\|_{H^1(\sO',\fw)} \leq C\left(\sum_{j=0}^k\|D_x^j f\|_{L^2(\sO,\fw)} + \|u\|_{L^2(\sO,\fw)}\right).
$$
\end{prop}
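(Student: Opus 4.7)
The plan is to proceed by induction on $k \geq 1$, using Proposition \ref{prop:VarEqn_H1Dx} as both the base case and the single-step engine of the induction. The base case $k=1$ is literally the statement of Proposition \ref{prop:VarEqn_H1Dx}, so no argument is needed there.

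For the inductive step, I assume the proposition holds with $k$ replaced by $k-1$ (for some $k \geq 2$), for all pairs of subdomains with the prescribed distance and height constraints. Given $\sO' \Subset \underline{\sO}$ with $\dist(\partial_1\sO', \partial_1\sO) \geq d_1$ and $\height(\sO') \leq \Upsilon$, I would interpose an intermediate subdomain $\sO''$ with $\sO' \Subset \underline{\sO}'' \Subset \underline{\sO}$ chosen so that $\dist(\partial_1\sO', \partial_1\sO'') \geq d_1/2$, $\dist(\partial_1\sO'', \partial_1\sO) \geq d_1/2$, and $\height(\sO'') \leq 2\Upsilon$. By the inductive hypothesis applied to the pair $(\sO'', \sO)$, we obtain $D_x^{k-1} u \in H^1(\sO'', \fw)$, the variational identity
\begin{equation*}
\fa(D_x^{k-1} u, v) = (D_x^{k-1} f, v)_{L^2(\sO'',\fw)}, \quad \forall v \in H^1_0(\underline{\sO}'',\fw),
\end{equation*}
and an estimate controlling $\|D_x^{k-1} u\|_{H^1(\sO'',\fw)}$ in terms of $\sum_{j=0}^{k-1}\|D_x^j f\|_{L^2(\sO,\fw)} + \|u\|_{L^2(\sO,\fw)}$.

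Next, I would apply Proposition \ref{prop:VarEqn_H1Dx} itself on the pair $(\sO', \sO'')$, with $D_x^{k-1} u \in H^1(\sO'',\fw)$ playing the role of the solution and $D_x^{k-1} f \in L^2(\sO'',\fw)$ playing the role of the right-hand side. The hypothesis $(D_x^{k-1} f)_x = D_x^k f \in L^2(\sO,\fw) \subset L^2(\sO'',\fw)$ is furnished directly by the assumption of the proposition. Proposition \ref{prop:VarEqn_H1Dx} then yields $D_x^k u = (D_x^{k-1} u)_x \in H^1(\sO', \fw)$, the variational equation
\begin{equation*}
\fa(D_x^k u, v) = (D_x^k f, v)_{L^2(\sO',\fw)}, \quad \forall v \in H^1_0(\underline{\sO}',\fw),
\end{equation*}
and the one-step estimate
\begin{equation*}
\|D_x^k u\|_{H^1(\sO',\fw)} \leq C\left( \|D_x^k f\|_{L^2(\sO'',\fw)} + \|D_x^{k-1} f\|_{L^2(\sO'',\fw)} + \|D_x^{k-1} u\|_{L^2(\sO'',\fw)} \right),
\end{equation*}
with $C = C(\Lambda, \nu_0, d_1, \Upsilon)$. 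Substituting the inductive bound for $\|D_x^{k-1} u\|_{L^2(\sO'',\fw)}$ and absorbing constants gives the required estimate.

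The argument is essentially a chain of bookkeeping reductions, and no serious analytical obstacle arises beyond what is already handled in Proposition \ref{prop:VarEqn_H1Dx}. The main point requiring care is the nesting of the intermediate subdomains and tracking of the height and distance parameters through the induction, so that the constant $C$ in the final inequality depends only on $\Lambda, \nu_0, d_1, k, \Upsilon$ and not on the particular chain of subdomains chosen; this is routine since each step of the induction costs only a factor depending on $\Lambda, \nu_0, d_1, \Upsilon$, and these accumulate at most $k$ times.
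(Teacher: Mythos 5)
Your proof is correct and is essentially identical to the paper's own argument: induction on $k$ with Proposition \ref{prop:VarEqn_H1Dx} serving as the base case and the inductive engine, an interposed subdomain $\sO''$ between $\sO'$ and $\sO$, and chaining of the resulting estimates. The only differences from the paper are cosmetic choices in the distance/height parameters for $\sO''$.
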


\begin{proof}
Proposition \ref{prop:VarEqn_H1Dx} yields the conclusion when $k=1$ and so we can take $k\geq 2$ and assume, by induction, that the result holds for $k-1$ in place of $k$. Choose a subdomain $\sO''\subset\sO$ with $\bar\sO'\subset\underline\sO''$ and $\bar\sO''\subset\underline\sO$, while $\dist(\partial_1\sO',\partial_1\sO'')\geq d_1/4$ and $\dist(\partial_1\sO'',\partial_1\sO)\geq d_1/2$ and $\height(\sO'')\leq 2\Upsilon$. By the induction hypothesis, $D_x^{k-1} u \in H^1_{\loc}(\underline\sO,\fw)\cap H^1(\sO'',\fw)$ and $D_x^{k-1} u$ obeys
$$
\fa(D_x^{k-1} u,v) = (D_x^{k-1} f,v)_{L^2(\sO'',\fw)}, \quad\forall v\in H^1_0(\underline{\sO}'',\fw).
$$
Hence, by applying Proposition \ref{prop:VarEqn_H1Dx} to the preceding variational equation in place of \eqref{eq:HestonVariationalEquation}, we see that $D_x^k u \in H^1(\sO',\fw)$ and, because the choice of subdomain $\sO''\subset\sO$ with $\bar\sO''\subset\underline\sO$ was arbitrary, that also $D_x^k u \in H^1_{\loc}(\underline\sO,\fw)$. Moreover, Proposition \ref{prop:VarEqn_H1Dx} yields
$$
\|D_x^k u\|_{H^1(\sO',\fw)} \leq C\left(\|D_x^k f\|_{L^2(\sO'',\fw)} + \|D_x^{k-1} f\|_{L^2(\sO'',\fw)} + \|D_x^{k-1} u\|_{L^2(\sO'',\fw)}\right),
$$
for a positive constant, $C=C(\Lambda,\nu_0,d_1,\Upsilon)$, while the induction hypothesis gives
$$
\|D_x^{k-1} u\|_{H^1(\sO'',\fw)} \leq C\left(\sum_{j=0}^{k-1}\|D_x^j f\|_{L^2(\sO,\fw)} + \|u\|_{L^2(\sO,\fw)}\right),
$$
for $C=C(\Lambda,\nu_0,d_1,k,\Upsilon)$. We obtain the conclusion by combining the preceding estimates.
\end{proof}

As the regularity questions of interest to us only concern regularity of a solution to the variational equation \eqref{eq:HestonVariationalEquation}, it will be convenient to consider, for $z_0\in\partial\HH$ and $0<R<R_1<R_0$, half-balls $U\subset U' \subset V$, where
\begin{equation}
\label{eq:UVHalfBalls}
U := B_R^+(z_0), \quad U' := B_{R_1}^+(z_0), \quad\hbox{and}\quad V = B_{R_0}^+(z_0),
\end{equation}
and we recall that $B_R^+(z_0) := B_R(z_0)\cap\sO$, for any $R>0$ and $z_0\in\RR^2$. Note that $U\Subset \underline{U}'$ and $U'\Subset \underline{V}$.

\begin{prop}[Interior $H^2$ regularity for higher-order derivatives of a solution with respect to $x$]
\label{prop:SobolevRegularity_ukx}
Let $R<R_0$ be positive constants and let $k\geq 1$ be an integer. Then there is a positive constant, $C=C(\Lambda,\nu_0,k,R,R_0)$, such that the following holds. Let $\sO\subseteqq\HH$ be a domain, let $f\in L^2(\sO,\fw)$, and suppose that $u \in H^1(\sO,\fw)$ is a solution to the variational equation \eqref{eq:HestonVariationalEquation}. If $U\subset V$ are as in \eqref{eq:UVHalfBalls} with $V\Subset\underline{\sO}$, and
$
D_x^j f \in L^2(V,\fw),
$
for all $1\leq j\leq k$, then
$
D_x^k u \in H^2(U,\fw),
$
and
\begin{equation}
\label{eq:SobolevRegularity_ukx}
\|D_x^k u\|_{H^2(U,\fw)} \leq C\left(\sum_{j=0}^k\|D_x^j f\|_{L^2(V,\fw)} + \|u\|_{L^2(V,\fw)}\right).
\end{equation}
\end{prop}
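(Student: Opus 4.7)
The plan is to assemble \eqref{eq:SobolevRegularity_ukx} from two results that are already in hand, namely Proposition \ref{prop:VarEqn_H1Dkx} (which gives a variational equation for $D_x^k u$ with source $D_x^k f$, together with an $H^1$-estimate on an interior subdomain) and Theorem \ref{thm:H2BoundSolutionHestonVarEqnSubdomainInterior} (which upgrades $H^1$ solutions of the variational equation to $H^2$ solutions on slightly smaller half-balls). Both of these theorems were proved without using Proposition \ref{prop:SobolevRegularity_ukx}, so no circularity arises.

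Concretely, I would choose an intermediate radius $R<R_1<R_0$ (its precise value depending only on $R$ and $R_0$) and set $W:=B^+_{R_1}(z_0)$, so that $U\Subset\underline{W}$ and $W\Subset\underline{V}$. The restriction $u|_V\in H^1(V,\fw)$ still solves the variational equation on $V$, because any test function in $H^1_0(\underline{V},\fw)$ extends by zero to an element of $H^1_0(\underline{\sO},\fw)$ (here one uses $V\subset\sO$ with $\partial_0 V\subset\partial_0\sO$, which follows from $z_0\in\partial_0\sO$ and $B_{R_0}(z_0)\cap\HH\subset\sO$). Then I would apply Proposition \ref{prop:VarEqn_H1Dkx} with $\sO$ replaced by $V$, subdomain $W$, geometric data $d_1=R_0-R_1$ and $\Upsilon=R_0$: this yields $D_x^k u\in H^1(W,\fw)$ satisfying
\[
\fa(D_x^k u,v)=(D_x^k f,v)_{L^2(W,\fw)},\qquad \forall\,v\in H^1_0(\underline{W},\fw),
\]
together with $\|D_x^k u\|_{H^1(W,\fw)}\leq C\bigl(\sum_{j=0}^k\|D_x^j f\|_{L^2(V,\fw)}+\|u\|_{L^2(V,\fw)}\bigr)$, with $C=C(\Lambda,\nu_0,k,R,R_0)$.

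Next, I would view $D_x^k u\in H^1(W,\fw)$ as a solution, on the domain $W$, of the variational equation with source $D_x^k f\in L^2(W,\fw)$, and apply Theorem \ref{thm:H2BoundSolutionHestonVarEqnSubdomainInterior} to it with radii $R<R_1$. The required containment $\HH\cap B_{R_1}(z_0)\subset W$ follows immediately from $\HH\cap B_{R_0}(z_0)\subset\sO$. This gives $D_x^k u\in H^2(B_R^+(z_0),\fw)=H^2(U,\fw)$ together with
\[
\|D_x^k u\|_{H^2(U,\fw)}\leq C\bigl(\|D_x^k f\|_{L^2(W,\fw)}+\|D_x^k u\|_{L^2(W,\fw)}\bigr),
\]
with $C=C(\Lambda,\nu_0,R,R_1)$. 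Combining this with the bound for $\|D_x^k u\|_{H^1(W,\fw)}$ from the previous step and using $\|D_x^k u\|_{L^2(W,\fw)}\leq\|D_x^k u\|_{H^1(W,\fw)}$ yields \eqref{eq:SobolevRegularity_ukx} with a constant that depends only on the data listed in the statement.

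In short, the argument is purely a gluing of two results, and I do not anticipate a serious obstacle. The only technical points that deserve careful attention are (i) selecting the intermediate radius $R_1$ and tracking the dependence of constants on $R,R_0$, and (ii) verifying that the restriction of $u$ to $V$ genuinely solves the variational equation against test functions supported in $\underline{V}$ so that Proposition \ref{prop:VarEqn_H1Dkx} applies with $V$ in place of $\sO$.
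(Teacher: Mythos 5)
Your proof is correct and follows essentially the same two-step route as the paper's own argument: choose an intermediate half-ball, apply Proposition \ref{prop:VarEqn_H1Dkx} to obtain the variational equation and an $H^1$ estimate for $D_x^k u$ there, then apply Theorem \ref{thm:H2BoundSolutionHestonVarEqnSubdomainInterior} to upgrade to $H^2(U,\fw)$ and combine the estimates. The paper fixes $R_1=(R+R_0)/2$ and leaves implicit the point you flag in (ii), namely that $u|_V$ solves the variational equation against test functions in $H^1_0(\underline{V},\fw)$; your explicit extension-by-zero argument is the right justification and is a welcome clarification rather than a departure.
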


\begin{proof}
Choose an auxiliary half-ball, $U'$ as in \eqref{eq:UVHalfBalls}, with $U'=B^+_{R_1}(z_0)$ and $U\subset U'\subset V$, and fix $R_1=(R+R_0)/2$. Since $D_x^j f \in L^2(V,\fw)$, $1\leq j\leq k$ by hypothesis, we can apply Proposition \ref{prop:VarEqn_H1Dkx} to give $D_x^k u\in H^1(U',\fw)$ and
$$
\fa(D_x^k u, v) = (D_x^k f, v)_{L^2(U',\fw)}, \quad\forall v\in H^1_0(\underline U',\fw).
$$
We can now apply Theorem \ref{thm:H2BoundSolutionHestonVarEqnSubdomainInterior} to the preceding variational equation to give $D_x^k u \in H^2(U,\fw)$ and
$$
\|D_x^k u\|_{H^2(U,\fw)} \leq C\left(\|D_x^k f\|_{L^2(U',\fw)} + \|D_x^k u\|_{L^2(U',\fw)}\right),
$$
where $C=C(\Lambda,\nu_0,R,R_1)$ is a positive constant. But
$$
\|D_x^k u\|_{L^2(U',\fw)} \leq \|D_x^k u\|_{H^1(U',\fw)},
$$
and by Proposition \ref{prop:VarEqn_H1Dkx}, we obtain
$$
\|D_x^k u\|_{H^1(U',\fw)} \leq C\left(\sum_{j=0}^k\|D_x^j f\|_{L^2(V,\fw)} + \|u\|_{L^2(V,\fw)}\right),
$$
where $C=C(\Lambda,\nu_0,k,R_1,R_0)$ is a positive constant. Combining the preceding estimates completes the proof.
\end{proof}

\subsection{Interior $H^2$ regularity for first-order derivatives orthogonal to the degenerate boundary}
\label{subsec:InteriorH2DyRegularity}
We have the following analogue of Lemma \ref{lem:VarEqn_Dx}. Observe that if $u\in H^2(\sO,\fw)$, then the definition \eqref{eq:H2WeightedSobolevSpace} of $H^2(\sO,\fw)$ implies that $y|D^2u|, \ (1+y)|Du| \in L^2(\sO,\fw)$ and so $y^{1/2}|Du_y|, \ (1+y)^{1/2}u_y \in L^2(\sO,y\fw) = L^2(\sO,\fw_1)$ and thus $u_y \in H^1(\sO,\fw_1)$ by the definition \eqref{eq:H1WeightedSobolevSpace} of $H^1(\sO,\fw)$.

\begin{lem}[Variational equation for the derivative of a solution with respect to $y$]
\label{lem:VarEqn_Dy}
Let $\sO\subseteqq\HH$ be a domain, let $f\in L^2(\sO,\fw)$, and suppose that $u\in H^1(\sO,\fw)$ satisfies the variational equation \eqref{eq:HestonVariationalEquation}. If\footnote{While the right-hand side of the identity \eqref{eq:IntroHestonWeakMixedProblemHomogeneous_Dy} is well-defined when $f_y \in L^2(\sO,\fw_1)$, we appeal to an approximation argument requiring at least $f\in H^1(\sO,\fw)$ to justify integration by parts involving $f$.}
$$
f \in H^1(\sO,\fw), \quad u \in H^2(\sO,\fw), \quad\hbox{and}\quad u_{xx} \in L^2(\sO,\fw_1),
$$
then $u_y$ obeys
\begin{equation}
\label{eq:IntroHestonWeakMixedProblemHomogeneous_Dy}
\fa_1(u_y,v) = (f_y,v)_{L^2(\sO,\fw_1)}  - (Bu,v)_{L^2(\sO,\fw_1)},
\end{equation}
for all $v\in H^1_0(\underline{\sO},\fw_1)$.
\end{lem}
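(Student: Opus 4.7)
The strategy is to combine the commutator identity \eqref{eq:RefinedCommutator} with $m=1$ — namely $A_1 u_y = \partial_y(Au) - Bu$ — with the operator/bilinear form identity \eqref{eq:HestonWithKillingBilinearForm_powery} for the shifted operator $A_1$ and weight $\fw_1 = y\fw$, and then to extend from smooth $u$ to the general case by density. A preliminary check shows every term in \eqref{eq:IntroHestonWeakMixedProblemHomogeneous_Dy} is well-defined under the hypotheses: $u_y \in H^1(\sO,\fw_1)$ since the $H^2(\sO,\fw)$-norm controls $y|Du_y|$ in $L^2(\sO,\fw)$ (hence $y^{1/2}|Du_y|\in L^2(\sO,\fw_1)$) and $(1+y)|u_y| \in L^2(\sO,\fw)$ dominates $(1+y)^{1/2}u_y$ in $L^2(\sO,\fw_1)$ via $y(1+y) \leq (1+y)^2$; $f_y \in L^2(\sO,\fw_1)$ follows from $f \in H^1(\sO,\fw)$; and $Bu \in L^2(\sO,\fw_1)$ from the hypothesis $u_{xx} \in L^2(\sO,\fw_1)$ together with $u_x \in L^2(\sO,\fw_1)$ (the latter from $u \in H^1(\sO,\fw)$).

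In the smooth case with $u \in C^\infty(\bar\sO')$ and $v \in C^\infty_0(\underline\sO')$ on a bounded subdomain $\sO'$, Lemma \ref{lem:AlternativeDyAcommutator} with $m=1$ together with \eqref{eq:HestonWithKillingBilinearForm_powery} yields
\[
\fa_1(u_y, v) = (A_1 u_y, v)_{L^2(\sO',\fw_1)} = (\partial_y(Au), v)_{L^2(\sO',\fw_1)} - (Bu, v)_{L^2(\sO',\fw_1)},
\]
which becomes \eqref{eq:IntroHestonWeakMixedProblemHomogeneous_Dy} once $Au = f$. To extend, fix $v \in C^\infty_0(\underline\sO)$ and choose a subdomain $\sO' \Subset \underline\sO$ with $\supp v \subset \underline\sO'$ whose non-degenerate boundary is $C^1$-orthogonal to $\partial\HH$ in the sense of Definition \ref{defn:C1Orthogonal}, so that Theorem \ref{thm:KufnerPowerWeightBoundedDerivatives} produces $u_n \in C^\infty(\bar\sO')$ with $u_n \to u$ in $H^2(\sO',\fw)$. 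Setting $f_n := Au_n$, the explicit formula \eqref{eq:OperatorHestonIntro} and the $H^2(\sO',\fw)$-convergence give $f_n \to Au = f$ in $L^2(\sO',\fw)$; here $Au = f$ a.e.\ by Lemma \ref{lem:HestonIntegrationByParts} applied to \eqref{eq:HestonVariationalEquation}. The smooth identity then reads $\fa_1(u_{n,y}, v) = (\partial_y f_n, v)_{L^2(\sO',\fw_1)} - (Bu_n, v)_{L^2(\sO',\fw_1)}$, and it remains to pass to the limit.

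The main obstacle lies on the right-hand side, because $H^2(\sO',\fw)$-convergence yields only $L^2(\sO',y^2\fw)$-convergence of $u_{n,xx}$ and no $H^1$-control of $f_n$, whereas both target terms pair against the strictly stronger (near $y=0$) weight $\fw_1 = y\fw$. The resolution is to integrate by parts at the approximation stage, shifting one derivative off of $u_n$ and $f_n$ onto the smooth compactly supported test function $v$ together with the weight. In the $y$-direction,
\[
(\partial_y f_n, v)_{L^2(\sO',\fw_1)} = -\int_{\sO'} f_n\bigl(v_y y + v + v y(\log\fw)_y\bigr)\fw\,dx\,dy,
\]
with the surface term at $y=0$ vanishing because $y\fw \sim y^\beta$ with $\beta>0$ and $v$ has compact support in $\underline\sO'$; this passes to the limit using $f_n \to f$ in $L^2(\sO',\fw)$ and the boundedness of $y(\log\fw)_y = \beta - 1 - \mu y$, and an IBP backward — now justified by $f \in H^1(\sO',\fw)$ — recovers $(f_y, v)_{L^2(\sO',\fw_1)}$. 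An analogous $x$-IBP on $(Bu_n, v)_{L^2(\sO',\fw_1)}$ transfers the $x$-derivative in $-\frac{1}{2}u_{n,xx}$ onto the test function, leaving only the convergence $u_{n,x} \to u_x$ in $L^2(\sO',\fw_1)$ — which follows from $(1+y)u_{n,x} \to (1+y)u_x$ in $L^2(\sO',\fw)$ together with the finite height of $\sO'$ — while IBP back is justified by the hypothesis $u_{xx} \in L^2(\sO',\fw_1)$. Convergence of the left-hand side $\fa_1(u_{n,y},v) \to \fa_1(u_y,v)$ is then routine by Cauchy–Schwarz against the $H^2(\sO',\fw)$-norm of $u_n - u$. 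This establishes \eqref{eq:IntroHestonWeakMixedProblemHomogeneous_Dy} for every $v \in C^\infty_0(\underline\sO)$, and the density of $C^\infty_0(\underline\sO)$ in $H^1_0(\underline\sO,\fw_1)$ together with the $H^1(\sO,\fw_1)$-continuity of both sides completes the proof.
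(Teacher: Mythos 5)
Your proof is correct, and it reaches the conclusion by a genuinely different organization than the paper's. The paper tests the original variational equation \eqref{eq:HestonVariationalEquation} against the shifted test function $(yv)_y \in C^\infty_0(\underline\sO)$, computing $\fa(u,(yv)_y)$ two ways: once by Lemma~\ref{lem:HestonIntegrationByParts} plus the commutator identity to produce $-\fa_1(u_y,v) - (Bu,v)_{L^2(\sO,\fw_1)} - (Au, yv(\log\fw)_y)_{L^2(\sO,\fw)}$, and once by the variational equation plus a $y$-integration-by-parts to produce $-(f_y,v)_{L^2(\sO,\fw_1)} - (f, yv(\log\fw)_y)_{L^2(\sO,\fw)}$; after using $Au = f$ a.e., the $(\log\fw)_y$ terms cancel. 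You instead attack the target identity directly: you prove it for smooth $u_n$ and $f_n := Au_n$ and then pass to the limit, moving the troublesome $y$- and $x$-derivatives off $f_n$ and $u_{n,xx}$ onto the fixed smooth test function and weight. Both routes face the same technical obstacle — $H^2(\sO',\fw)$-convergence of $u_n$ controls $y u_{n,xx}$ in $L^2(\sO',\fw)$ but not $u_{n,xx}$ in $L^2(\sO',\fw_1)$ — and both ultimately use $u_{xx}\in L^2(\sO,\fw_1)$ and $f\in H^1(\sO,\fw)$ to close the gap. Where they differ: the paper sidesteps the $x$-IBP entirely by rewriting $(u_{n,xx}-u_{xx},v)_{L^2(\sO',\fw_1)} = (y(u_{n,xx}-u_{xx}),v)_{L^2(\sO',\fw)}$ and invoking Cauchy--Schwarz in the $\fw$-weight (a slicker way of saying exactly what your $x$-IBP accomplishes), while your version makes the role of the hypothesis $u_{xx}\in L^2(\sO,\fw_1)$ more visible as the thing that licenses the IBP back. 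The paper's $(yv)_y$ trick buys a tidier bookkeeping of the $(\log\fw)_y$ correction terms since they appear once on each side and cancel, whereas you incur and then re-absorb them twice; your version, in compensation, never needs to invoke the bilinear form $\fa(u,\cdot)$ of the original operator in the proof, only $\fa_1$. One small remark: your convergence check for the left-hand side $\fa_1(u_{n,y},v)\to\fa_1(u_y,v)$ is indeed routine, but it does use that the lower-order coefficients of $\fa_1$ pick up an extra power of $y$ from the weight $\fw_1 = y\fw$ relative to $\fw$; this is what lets the $H^2(\sO',\fw)$-norm of $u_n - u$ control even the first-order terms, and is worth spelling out since the coefficient $b_{1,1}$ of the shifted operator $A_1$ is nonzero even under Assumption~\ref{assump:HestonCoefficientb1}.
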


\begin{proof}
Again, suppose first that $u \in C^\infty(\bar\sO)$ and $v \in C^\infty_0(\underline{\sO})$. Then $(yv)_y \in C^\infty_0(\underline{\sO})$ too and
\begin{align*}
\fa(u,(yv)_y) &= (Au,(yv)_y)_{L^2(\sO,\fw)} \quad\hbox{(by Lemma \ref{lem:HestonIntegrationByParts})}
\\
&= -((Au)_y,yv)_{L^2(\sO,\fw)}  - (Au,yv(\log\fw)_y)_{L^2(\sO,\fw)}
\\
&= -(A_1u_y,yv)_{L^2(\sO,\fw)} -(Bu,yv)_{L^2(\sO,\fw)} - (Au,yv(\log\fw)_y)_{L^2(\sO,\fw)}
\quad\hbox{(by \eqref{eq:RefinedCommutator})}
\\
&= -(A_1u_y,v)_{L^2(\sO,\fw_1)} - (Bu,v)_{L^2(\sO,\fw_1)} - (Au,yv(\log\fw)_y)_{L^2(\sO,\fw)}
\quad\hbox{(by \eqref{eq:HestonWeightn})}
\\
&= -\fa_1(u_y,v) - (Bu,v)_{L^2(\sO,\fw_1)} - (Au,yv(\log\fw)_y)_{L^2(\sO,\fw)}
\quad\hbox{(by \eqref{eq:HestonWithKillingBilinearForm_powery})},
\end{align*}
where from \eqref{eq:HestonWeight} we see that
$$
(\log \fw)_y = (\beta-1)y^{-1} - \mu \quad\hbox{on }\HH.
$$
As in the proof of Lemma \ref{lem:VarEqn_Dx}, for $v\in C^\infty_0(\underline\sO)$, we may choose a subdomain $\sO'\Subset\underline\sO$ such that $\supp v \subset\underline\sO'$ and $\partial_1\sO'$ is $C^1$-orthogonal to $\partial\HH$.
If we now assume only that $u \in H^2(\sO,\fw)$ and $u_{xx} \in L^2(\sO,\fw_1)$, as in our hypotheses, there is a sequence, $\{u_n\}_{n\in \NN} \subset C^\infty(\bar\sO')$, such that $u_n \to u$ in $H^2(\sO',\fw)$ as $n\to\infty$ by Theorem \ref{thm:KufnerPowerWeightBoundedDerivatives}.
But then $u_{n,xx}\rightharpoonup u_{xx}$ weakly in $L^2(\sO',\fw_1)$ as $n\to\infty$ since, for $v\in C^\infty_0(\underline{\sO})$ with $\supp\subset\underline\sO'$ and all $n\in\NN$,
\begin{align*}
\left|(u_{n,xx} - u_{xx}, v)_{L^2(\sO,\fw_1)}\right| &= \left|(yu_{n,xx} - yu_{xx}, v)_{L^2(\sO,\fw)}\right|
\\
&\leq \|y(u_{n,xx} - u_{xx})\|_{L^2(\sO,\fw)}\|v\|_{L^2(\sO,\fw)}
\\
&\leq \|u_n - u\|_{H^2(\sO,\fw)}\|v\|_{L^2(\sO,\fw)}.
\end{align*}
Therefore, by approximation, the variational identity continues to hold for $u\in H^2(\sO,\fw)$, which ensures $u_x \in L^2(\sO,\fw)\subset L^2(\sO,\fw_1)$, and $u_{xx} \in L^2(\sO,\fw_1)$ (thus $Bu \in L^2(\sO,\fw_1)$), that is,
\begin{align*}
\fa(u,(yv)_y) = -\fa_1(u_y,v) - (Bu,v)_{L^2(\sO,\fw_1)}
- (Au,yv(\log\fw)_y)_{L^2(\sO,\fw)}, 
\end{align*}
for all $v\in C^\infty_0(\underline{\sO})$.
Also, since $u\in H^2(\sO,\fw)$, then \eqref{eq:HestonVariationalEquation} implies that $Au = f$ a.e. on $\sO$ by Lemma \ref{lem:HestonIntegrationByParts}. Hence, \eqref{eq:HestonVariationalEquation} and the fact that $f\in H^1(\sO,\fw)$, and thus $f_y\in L^2(\sO,\fw_1)$ by hypothesis, yields
\begin{align*}
\fa(u,(yv)_y) &= (f,(yv)_y)_{L^2(\sO,\fw)}
\\
&= -(f_y,yv)_{L^2(\sO,\fw)} - (f,yv(\log\fw)_y)_{L^2(\sO,\fw)}
\\
&= -(f_y,v)_{L^2(\sO,\fw_1)} - (f,yv(\log\fw)_y)_{L^2(\sO,\fw)}, \quad\forall v\in C^\infty_0(\underline{\sO}),
\end{align*}
while the preceding variational identity gives
\begin{align*}
\fa(u,(yv)_y) = -\fa_1(u_y,v) - (Bu,v)_{L^2(\sO,\fw_1)}
-(f,yv(\log\fw)_y)_{L^2(\sO,\fw)}, 
\end{align*}
for all $v\in C^\infty_0(\underline{\sO})$.
Combining these variational identities yields \eqref{eq:IntroHestonWeakMixedProblemHomogeneous_Dy}, for all $v\in C^\infty_0(\underline{\sO})$, and hence the variational identity holds for all $v\in H^1_0(\underline{\sO},\fw)$.
\end{proof}

\begin{prop}[Interior $H^2$ regularity for a derivative of a solution with respect to $y$]
\label{prop:SobolevRegularity_uy}
Let $R<R_0$ be positive constants. Then there is a positive constant, $C=C(\Lambda,\nu_0,R,R_0)$, such that the following holds. Let $\sO\subseteqq\HH$ be a domain and let $U\subset V$ be as in \eqref{eq:UVHalfBalls}, with $V\Subset\underline{\sO}$. Suppose that $f\in L^2(\sO,\fw)$ and $u \in H^1(\sO,\fw)$ is a solution to the variational equation \eqref{eq:HestonVariationalEquation}. If
$$
f \in W^{1,2}(V,\fw), \quad u \in H^2(V,\fw), \quad\hbox{and}\quad u_x \in H^1(V,\fw),
$$
then $u_y \in H^2(U,\fw_1)$ and
\begin{equation}
\label{eq:SobolevRegularity_uy}
\|u_y\|_{H^2(U,\fw_1)} \leq C\left(\|f\|_{W^{1,2}(V,\fw)} + \|u\|_{L^2(V,\fw)}\right).
\end{equation}
\end{prop}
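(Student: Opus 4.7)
The plan is to apply Lemma \ref{lem:VarEqn_Dy} on $V$ to derive a variational equation satisfied by $u_y$ with respect to the shifted bilinear form $\fa_1$, and then invoke the $A_1$-analogue of Theorem \ref{thm:H2BoundSolutionHestonVarEqnSubdomainInterior} on a half-ball $U\subset V'\subset V$ intermediate between $U$ and $V$.

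First, I verify the hypotheses of Lemma \ref{lem:VarEqn_Dy} on $V$. By hypothesis, $f\in W^{1,2}(V,\fw)$ and $u\in H^2(V,\fw)$. From the definition \eqref{eq:H2WeightedSobolevSpace} of the $H^2$-norm, $yu_{xx}\in L^2(V,\fw)$, which means $u_{xx}\in L^2(V,\fw_1)$ since $\fw_1=y\fw$. Thus all three requirements of Lemma \ref{lem:VarEqn_Dy} are met on $V$, and that lemma yields $u_y\in H^1(V,\fw_1)$ together with
\begin{equation}
\label{eq:proofsketch_uy_vareqn}
\fa_1(u_y,v) = (f_y - Bu, v)_{L^2(V,\fw_1)}, \quad \forall\, v\in H^1_0(\underline{V},\fw_1),
\end{equation}
where $Bu = -\tfrac{1}{2}u_{xx} + \tfrac{1}{2}u_x$ as in \eqref{eq:B}.

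Next, I observe that $A_1$ is itself a Heston-type operator satisfying Assumption \ref{assump:HestonCoefficients} with the modified constants $\theta_1 = \theta + \sigma^2/(2\kappa) > 0$, $q_1 = q - \varrho\sigma$, and $c_{0,1} = c_0 + \kappa \geq 0$, while $\sigma$ and $\varrho$ are unchanged, so the ellipticity constant $\nu_0$ is preserved. The weight $\fw_1$ is exactly the weight naturally associated with $A_1$ by \eqref{eq:DefnBetaMu}, since $\beta_1=\beta+1=2\kappa\theta_1/\sigma^2$. Hence Theorem \ref{thm:H2BoundSolutionHestonVarEqnSubdomainInterior} applies verbatim to the variational equation \eqref{eq:proofsketch_uy_vareqn}, with $(A,\fw,u,f)$ replaced by $(A_1,\fw_1,u_y,f_y-Bu)$. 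Choosing an auxiliary half-ball $V'=B_{R'}^+(z_0)$ with $R<R'<R_0$, this gives $u_y\in H^2(U,\fw_1)$ with
\begin{equation}
\label{eq:proofsketch_uy_H2est}
\|u_y\|_{H^2(U,\fw_1)}
\leq C\bigl(\|f_y - Bu\|_{L^2(V',\fw_1)} + \|u_y\|_{L^2(V',\fw_1)}\bigr),
\end{equation}
where $C=C(\Lambda,\nu_0,R,R')$ (the constants defining $A_1$ are absorbed into a constant depending only on $\Lambda$ and $\nu_0$).

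Finally, I estimate each term on the right-hand side of \eqref{eq:proofsketch_uy_H2est} in terms of the norms appearing in \eqref{eq:SobolevRegularity_uy}. Because $V'$ has finite height (bounded by $R_0$), $\|f_y\|_{L^2(V',\fw_1)} \leq C\|f_y\|_{L^2(V',\fw)} \leq C\|f\|_{W^{1,2}(V,\fw)}$. Similarly, using $\fw_1=y\fw$,
\[
\|Bu\|_{L^2(V',\fw_1)} \leq C\bigl(\|yu_{xx}\|_{L^2(V',\fw)} + \|u_x\|_{L^2(V',\fw)}\bigr) \leq C\|u\|_{H^2(V',\fw)},
\]
and $\|u_y\|_{L^2(V',\fw_1)} \leq C\|yu_y\|_{L^2(V',\fw)}^{1/2}\|u_y\|_{L^2(V',\fw)}^{1/2}$ or more simply $\leq C\|u\|_{H^2(V',\fw)}$, since $V'$ has finite height. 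To remove the $H^2$-norm on the right-hand side and express everything in terms of $\|u\|_{L^2(V,\fw)}$ and $\|f\|_{L^2(V,\fw)}$, I apply Theorem \ref{thm:H2BoundSolutionHestonVarEqnSubdomainInterior} to $u$ itself with the pair $V'\Subset V$, obtaining
\[
\|u\|_{H^2(V',\fw)} \leq C\bigl(\|f\|_{L^2(V,\fw)} + \|u\|_{L^2(V,\fw)}\bigr).
\]
Combining this with \eqref{eq:proofsketch_uy_H2est} yields \eqref{eq:SobolevRegularity_uy} with $C=C(\Lambda,\nu_0,R,R_0)$ after fixing $R'=(R+R_0)/2$.

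The main (mild) obstacle is checking that Theorem \ref{thm:H2BoundSolutionHestonVarEqnSubdomainInterior} transfers cleanly from $(A,\fw)$ to $(A_1,\fw_1)$; this is a matter of verifying that $A_1$ still satisfies Assumption \ref{assump:HestonCoefficients} with the same ellipticity constant $\nu_0$ and that $\fw_1$ is the associated weight, after which the cited theorem applies without modification.
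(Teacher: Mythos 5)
Your overall strategy — apply Lemma \ref{lem:VarEqn_Dy} to get the variational equation for $u_y$ with respect to $\fa_1$, then invoke Theorem \ref{thm:H2BoundSolutionHestonVarEqnSubdomainInterior} (transferred to $A_1,\fw_1$), and finally absorb the right-hand side — is exactly the paper's approach. But there is a genuine gap in the way you close the estimate, and it stems from a sign error in a weight inclusion that you use twice.

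You write that from $u\in H^2(V,\fw)$ one gets $yu_{xx}\in L^2(V,\fw)$ and hence $u_{xx}\in L^2(V,\fw_1)$. This implication is false: $yu_{xx}\in L^2(V,\fw)$ means $\int y^2 u_{xx}^2\,\fw<\infty$, i.e., $u_{xx}\in L^2(V,\fw_2)$, whereas $u_{xx}\in L^2(V,\fw_1)$ requires $\int y u_{xx}^2\,\fw<\infty$, a \emph{stronger} condition near $y=0$ since $y>y^2$ there. The hypothesis you actually need — and which is present in the statement but that you do not use — is $u_x\in H^1(V,\fw)$, which is precisely what gives $y^{1/2}u_{xx}\in L^2(V,\fw)$, i.e., $u_{xx}\in L^2(V,\fw_1)$. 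The same erroneous inclusion reappears quantitatively when you estimate $\|Bu\|_{L^2(V',\fw_1)}\leq C\big(\|yu_{xx}\|_{L^2(V',\fw)}+\|u_x\|_{L^2(V',\fw)}\big)\leq C\|u\|_{H^2(V',\fw)}$: on a half-ball touching $\partial\HH$ one cannot bound $\int y u_{xx}^2\,\fw$ by a constant times $\int y^2 u_{xx}^2\,\fw$, so $\|u_{xx}\|_{L^2(V',\fw_1)}$ is not controlled by $\|u\|_{H^2(V',\fw)}$. In other words, the quantity $\|Bu\|_{L^2(V',\fw_1)}$ lives one power of $y$ below what the $H^2(\cdot,\fw)$-norm of $u$ controls.

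The paper closes this gap differently: it bounds $\|u_{xx}\|_{L^2(U',\fw_1)}\leq\|u_x\|_{H^2(U',\fw_1)}\leq C\|u_x\|_{H^2(U',\fw)}$ (which is legitimate since both factors involve the same sign of weight comparison on a finite-height domain) and then invokes Proposition \ref{prop:SobolevRegularity_ukx} with $k=1$ to get $\|u_x\|_{H^2(U',\fw)}\leq C(\|f_x\|_{L^2(V,\fw)}+\|f\|_{L^2(V,\fw)}+\|u\|_{L^2(V,\fw)})$. This is the step your argument is missing: you need interior $H^2$-regularity of $u_x$ (not just of $u$), and that is the reason both the hypothesis $u_x\in H^1(V,\fw)$ and the hypothesis $f\in W^{1,2}(V,\fw)$ (rather than just $f\in H^1(V,\fw)$) appear in the statement. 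Apart from this, your observations that $A_1$ remains a Heston operator with the same ellipticity constant $\nu_0$ and that $\fw_1$ is its natural weight are correct and match the paper's setup.
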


\begin{proof}
The argument is similar to the proof of Proposition \ref{prop:SobolevRegularity_ukx}, except that the appeal to Proposition \ref{prop:VarEqn_H1Dx} is replaced by an appeal to Lemma \ref{lem:VarEqn_Dy} and we need to keep track of the different Sobolev weights which now arise. Notice that $u \in H^1(\sO,\fw)$ by hypothesis, and so $y^{1/2}u_x \in L^2(V,\fw)$ or equivalently $u_x \in L^2(V,\fw_1)$. Moreover, $f \in H^1(V,\fw)$, since $f \in W^{1,2}(V,\fw)$ by hypothesis. Also, $u_x \in H^1(V,\fw)$ by hypothesis, and so $y^{1/2}u_{xx} \in L^2(V,\fw)$ or, equivalently, $u_{xx} \in L^2(V,\fw_1)$. Finally, the hypothesis $u \in H^2(\sO,\fw)$ implies $u_y \in H^1(V,\fw_1)$. Therefore, Lemma \ref{lem:VarEqn_Dy}, with $V$ in place of $\sO$, gives
$$
\fa_1(u_y,v) = (f_y,v)_{L^2(V,\fw_1)}  - (Bu,v)_{L^2(V,\fw_1)}, \quad \forall v\in H^1_0(\underline{V},\fw_1).
$$
Choose an auxiliary half-ball, $U'$ as in \eqref{eq:UVHalfBalls}, with $U'=B^+_{R_1}(z_0)$ and $U\subset U'\subset V$, and fix $R_1=(R+R_0)/2$. We can apply Theorem \ref{thm:H2BoundSolutionHestonVarEqnSubdomainInterior} to the preceding equation in place of \eqref{eq:HestonVariationalEquation} to deduce that $u_y \in H^2(U,\fw_1)$ and
\begin{align*}
\|u_y\|_{H^2(U,\fw_1)} &\leq C\left(\|f_y-Bu\|_{L^2(U',\fw_1)} + \|u_y\|_{L^2(U',\fw_1)}\right)
\\
&\leq C\left(\|f_y\|_{L^2(U',\fw_1)} + \|u_x\|_{L^2(U',\fw_1)} + \|u_{xx}\|_{L^2(U',\fw_1)} + \|u_y\|_{L^2(U',\fw_1)}\right),
\end{align*}
where $C=C(\Lambda,\nu_0,R,R_1)$ is a positive constant. But
$$
\|Du\|_{L^2(U',\fw_1)} \leq \|u\|_{H^2(U',\fw_1)} \leq C\|u\|_{H^2(U',\fw)},
$$
where the first inequality follows from \eqref{eq:H2WeightedSobolevSpace} and the second from \eqref{eq:SobolevInclusionDifferentWeights}, with $C=C(R_1)$. By Theorem \ref{thm:H2BoundSolutionHestonVarEqnSubdomainInterior}, since $u$ obeys \eqref{eq:HestonVariationalEquation}, we obtain
$$
\|u\|_{H^2(U',\fw)} \leq C\left(\|f\|_{L^2(V,\fw)} + \|u\|_{L^2(V,\fw)}\right),
$$
where $C=C(\Lambda,\nu_0,R_1,R_0)$ is a positive constant. Finally,
$$
\|u_{xx}\|_{L^2(U',\fw_1)} \leq \|u_x\|_{H^2(U',\fw_1)} \leq C\|u_x\|_{H^2(U',\fw)},
$$
and applying Proposition \ref{prop:SobolevRegularity_ukx}, we obtain
$$
\|u_x\|_{H^2(U',\fw)} \leq C\left(\|f_x\|_{L^2(V,\fw)} + \|f\|_{L^2(V,\fw)} + \|u\|_{L^2(V,\fw)}\right).
$$
Combining the preceding estimates gives
\begin{align*}
\|u_y\|_{H^2(U,\fw_1)} &\leq C\left(\|f_y\|_{L^2(U',\fw_1)} + \|f_x\|_{L^2(V,\fw)} + \|f\|_{L^2(V,\fw)} + \|u\|_{L^2(V,\fw)}\right)
\\
&\leq C\left(\|f_y\|_{L^2(V,\fw)} + \|f_x\|_{L^2(V,\fw)} + \|f\|_{L^2(V,\fw)} + \|u\|_{L^2(V,\fw)}\right),
\end{align*}
and this completes the proof.
\end{proof}

\subsection{Interior $\sH^3$  regularity}
\label{subsec:InteriorH3Regularity}
By combining Propositions \ref{prop:SobolevRegularity_ukx} and \ref{prop:SobolevRegularity_uy}, we obtain

\begin{thm}[Interior $\sH^3$ regularity]
\label{thm:H3SobolevRegularityInterior}
Let $R<R_0$ be positive constants. Then there is a positive constant, $C=C(\Lambda,\nu_0,R,R_0)$, such that the following holds. Let $\sO\subseteqq\HH$ be a domain and let $U\subset V$ be as in \eqref{eq:UVHalfBalls}, with $ V\Subset\underline{\sO}$. Suppose that $f\in L^2(\sO,\fw)$ and that $u \in H^1(\sO,\fw)$ is a solution to the variational equation \eqref{eq:HestonVariationalEquation}. If
$
f\in W^{1,2}(V,\fw),
$
then $u \in \sH^3(U,\fw)$ and
\begin{equation}
\label{eq:H3SobolevRegularity}
\|u\|_{\sH^3(U,\fw)} \leq C\left(\|f\|_{W^{1,2}(V,\fw)} + \|u\|_{L^2(V,\fw)}\right).
\end{equation}
\end{thm}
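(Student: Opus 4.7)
\textbf{Proof plan for Theorem \ref{thm:H3SobolevRegularityInterior}.} The strategy is to combine Propositions \ref{prop:SobolevRegularity_ukx} and \ref{prop:SobolevRegularity_uy}, together with Theorem \ref{thm:H2BoundSolutionHestonVarEqnSubdomainInterior}, by checking that the three outputs---$H^2(U,\fw)$-regularity of $u$, $H^2(U,\fw)$-regularity of $u_x$, and $H^2(U,\fw_1)$-regularity of $u_y$---together supply exactly the mixed-weight norms that appear in Definition \ref{defn:HkWeightedSobolevSpaceNormPowery} for $\sH^3$ (see the expanded form \eqref{eq:H3WeightedSobolevSpaceNormPowery}).

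I would first introduce a single intermediate half-ball $V' = B_{R_1}^+(z_0)$ with $R<R_1<R_0$, nested as $U \Subset V' \Subset V$. Then I would:
\begin{enumerate}
\item Apply Proposition \ref{prop:SobolevRegularity_ukx} with $k=1$ on the pair $(U,V)$. The hypothesis $f_x \in L^2(V,\fw)$ is immediate from $f\in W^{1,2}(V,\fw)$, and the conclusion is $u_x \in H^2(U,\fw)$ with the estimate \eqref{eq:SobolevRegularity_ukx} controlling $\|u_x\|_{H^2(U,\fw)}$ by $\|f\|_{W^{1,2}(V,\fw)} + \|u\|_{L^2(V,\fw)}$.
\item Apply Theorem \ref{thm:H2BoundSolutionHestonVarEqnSubdomainInterior} on the pair $(V',V)$ to obtain $u \in H^2(V',\fw)$ with the usual bound; in particular $u_x \in H^1(V',\fw)$ (indeed, by Step 1 applied to a suitable intermediate pair, $u_x \in H^2(V',\fw)$, which is stronger).
\item With the hypotheses of Proposition \ref{prop:SobolevRegularity_uy} now verified on $V'$ in place of $V$ (namely $f \in W^{1,2}(V',\fw)$, $u \in H^2(V',\fw)$, and $u_x \in H^1(V',\fw)$), apply it on the pair $(U,V')$ to conclude $u_y \in H^2(U,\fw_1)$ with a bound by $\|f\|_{W^{1,2}(V,\fw)} + \|u\|_{L^2(V,\fw)}$.
\end{enumerate}

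It remains to match terms. Inspecting \eqref{eq:H3WeightedSobolevSpaceNormPowery}, the derivatives $u_{xxx}$, $u_{xxy}$, $u_{xyy}$ with weight $y^2\fw$, and $u_{xx}$, $u_{xy}$ with weight $(1+y)^2 \fw$, are all controlled by $\|u_x\|_{H^2(U,\fw)}$ via the definition \eqref{eq:H2NormHeston}. The term $y^3 u_{yyy}^2 \fw$ and $(1+y)^2 y\, u_{yy}^2 \fw$ are controlled by $\|u_y\|_{H^2(U,\fw_1)}$ since $\fw_1=y\fw$: explicitly, $y^2 u_{yyy}^2 \fw_1 = y^3 u_{yyy}^2\fw$ and $(1+y)^2 u_{yy}^2 \fw_1 = (1+y)^2 y u_{yy}^2 \fw$. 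Finally, the zeroth- and first-order terms $(1+y)^2(u_x^2+u_y^2)\fw$ and $(1+y)u^2\fw$ come from $\|u\|_{H^2(V',\fw)}$ (and the $H^1(\sO,\fw)$ estimate for $u$ itself, e.g. Lemma \ref{lem:H1BoundSolutionHestonVarEqnSubdomainInterior}). Summing the three estimates and absorbing the $V'$ norms into the $V$ norms yields \eqref{eq:H3SobolevRegularity}.

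The main conceptual point (and the only place any genuine matching has to be done) is the bookkeeping in Step 3 above: one must verify that the bounds coming from $H^2(U,\fw_1)$ on $u_y$, when recast in terms of $\fw$ rather than $\fw_1$, combine with the bounds on $u_x$ in $H^2(U,\fw)$ in precisely the weighted pattern dictated by Definition \ref{defn:HkWeightedSobolevSpaceNormPowery}. Since each derivative appearing in \eqref{eq:H3WeightedSobolevSpaceNormPowery} at most once differentiates in $y$ (once the other two derivatives are taken in $x$) or is the pure third $y$-derivative, the two sources $u_x \in H^2(U,\fw)$ and $u_y \in H^2(U,\fw_1)$ together suffice, with no overlap or missing term. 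No finite-difference arguments are needed at this stage; they have already been absorbed into the cited propositions, and the present proof is purely organizational.
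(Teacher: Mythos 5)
Your proposal is correct and matches the paper's proof in all essentials: both establish the result by applying Theorem \ref{thm:H2BoundSolutionHestonVarEqnSubdomainInterior} for $u$, Proposition \ref{prop:SobolevRegularity_ukx} (with $k=1$) for $u_x$, and Proposition \ref{prop:SobolevRegularity_uy} for $u_y$ on appropriately nested half-balls, then verify that the resulting $H^2(\cdot,\fw)$ and $H^2(\cdot,\fw_1)$ norms exactly assemble the $\sH^3$ norm in \eqref{eq:H3WeightedSobolevSpaceNormPowery}. The only cosmetic difference is that the paper applies Proposition \ref{prop:SobolevRegularity_ukx} once at the intermediate level $U'$ (so $u_x\in H^2(U',\fw)$ supplies both the estimate and the hypothesis $u_x\in H^1(U',\fw)$ for Proposition \ref{prop:SobolevRegularity_uy}), whereas you apply it twice on two nested pairs; this is immaterial.
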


\begin{proof}
Since $f\in L^2(\sO,\fw)$, Theorem \ref{thm:H2BoundSolutionHestonVarEqnSubdomainInterior} implies that $u\in H^2(V,\fw)$. Choose an auxiliary half-ball, $U'$ as in \eqref{eq:UVHalfBalls}, with $U'=B^+_{R_1}(z_0)$ and $U\subset U'\subset V$, and fix $R_1=(R+R_0)/2$. By hypothesis, we have $f\in W^{1,2}(V,\fw)$ and so Proposition \ref{prop:SobolevRegularity_ukx} yields $u_x\in H^2(U',\fw)$ and
\begin{align*}
{}&\|yu_{xxx}\|_{L^2(U',\fw)} + \|yu_{xxy}\|_{L^2(U',\fw)}
+ \|yu_{xyy}\|_{L^2(U',\fw)}
 + \|(1+y)u_{xx}\|_{L^2(U',\fw)} + \|(1+y)u_{xy}\|_{L^2(U',\fw)}
\\
&\quad \leq \|u_x\|_{H^2(U',\fw)}
\\
&\quad \leq C\left(\|f_x\|_{L^2(V,\fw)} + \|f\|_{L^2(V,\fw)} + \|u\|_{L^2(V,\fw)}\right).
\end{align*}
Because $f\in W^{1,2}(U',\fw)$ by hypothesis, and $u\in H^2(U',\fw)$, and $u_x\in H^1(U',\fw)$ (since $u_x\in H^2(U',\fw)$), then Proposition \ref{prop:SobolevRegularity_uy} gives $u_y\in H^2(U,\fw_1)$ and
\begin{align*}
{}&\|yu_{yyy}\|_{L^2(U,\fw_1)} + \|(1+y)u_{yy}\|_{L^2(U,\fw_1)}
\\
&\quad \leq \|u_y\|_{H^2(U,\fw_1)}
\\
&\quad \leq C\left(\|Df\|_{L^2(V,\fw)} + \|f\|_{L^2(V,\fw)} + \|u\|_{L^2(V,\fw)}\right).
\end{align*}
Because $u\in H^2(V,\fw)$ by hypothesis, we obtain $u\in \sH^3(U,\fw)$ from Definition \ref{defn:HkWeightedSobolevSpaceNormPowery}, since
\begin{align*}
\|u\|_{\sH^3(U,\fw)}^2 &= \|yu_{xxx}\|_{L^2(U,\fw)}^2 + \|yu_{xxy}\|_{L^2(U,\fw)}^2
+ \|yu_{xyy}\|_{L^2(U,\fw)}^2
 + \|yu_{yyy}\|_{L^2(U,\fw_1)}^2
\\
&\quad + \|(1+y)u_{xx}\|_{L^2(U,\fw)}^2 + \|(1+y)u_{xy}\|_{L^2(U,\fw)}^2 + \|(1+y)u_{yy}\|_{L^2(U,\fw_1)}^2
\\
&\quad + \|(1+y)u_x\|_{L^2(U,\fw)}^2 + \|(1+y)u_y\|_{L^2(U,\fw)}^2 + \|(1+y)^{1/2}u\|_{L^2(U,\fw)}^2,
\end{align*}
and hence
\begin{equation}
\label{eq:H3induction}
\begin{aligned}
\|u\|_{\sH^3(U,\fw)}^2 &\leq \|yu_{xxx}\|_{L^2(U,\fw)}^2 + \|yu_{xxy}\|_{L^2(U,\fw)}^2
+ \|yu_{xyy}\|_{L^2(U,\fw)}^2
+ \|yu_{yyy}\|_{L^2(U,\fw_1)}^2
\\
&\quad + \|(1+y)u_{xx}\|_{L^2(U,\fw)}^2 + \|(1+y)u_{xy}\|_{L^2(U,\fw)}^2 + \|(1+y)u_{yy}\|_{L^2(U,\fw_1)}^2
\\
&\quad + \|u\|_{H^2(U,\fw)}^2.
\end{aligned}
\end{equation}
Since $u$ obeys \eqref{eq:HestonVariationalEquation}, Theorem \ref{thm:H2BoundSolutionHestonVarEqnSubdomainInterior} yields
$$
\|u\|_{H^2(U,\fw)} \leq C\left(\|f\|_{L^2(V,\fw)} + \|u\|_{L^2(V,\fw)}\right),
$$
and combining the preceding estimates gives \eqref{eq:H3SobolevRegularity}.
\end{proof}

\subsection{Interior $\sH^{k+2}$ regularity}
\label{subsec:InteriorHk+2Regularity}
We can iterate the preceding arguments, used to establish $u\in \sH^3(U,\fw)$, given $u\in H^2(V,\fw)$ and additional hypotheses on $f$, to give higher-order Sobolev regularity, where $U\subset V$ are as in \eqref{eq:UVHalfBalls} and $V\Subset\underline{\sO}$. We begin with the following combined generalization of Lemmas \ref{lem:VarEqn_Dkx} and \ref{lem:VarEqn_Dy}.

\begin{prop}[Variational equation for higher-order derivatives of a solution with respect to $x$ and $y$]
\label{prop:VarEqn_Dkxy}
Let $\sO\subseteqq\HH$ be a domain with \emph{finite height}\footnote{Proposition \ref{prop:VarEqn_Dkxy} should, of course, hold without a hypothesis that $\sO$ has finite height, but its already technical proof is simpler with this hypothesis included and we shall only apply the result to domains of finite height.}, let $k\geq 1$ and $0\leq m\leq k$ be integers, let $f\in L^2(\sO,\fw)$, and suppose that $u\in H^1(\sO,\fw)$ satisfies the variational equation \eqref{eq:HestonVariationalEquation}. If
$$
f \in W^{k,2}(\sO,\fw),  \quad u \in \sH^{k+1}(\sO, \fw), \quad\hbox{and}\quad D_x^k u \in H^1(\sO,\fw) \quad (m=0,1),
$$
then $D_x^{k-m}D_y^m u \in H^1(\sO,\fw)$ obeys
\begin{equation}
\label{eq:IntroHestonWeakMixedProblemHomogeneous_Dkxy}
\fa_m(D_x^{k-m}D_y^m u,v) = (D_x^{k-m}D_y^m f,v)_{L^2(\sO,\fw_m)}  - m(BD_x^{k-m}D_y^{m-1} u,v)_{L^2(\sO,\fw_m)},
\end{equation}
for all $v\in H^1_0(\underline{\sO},\fw_m)$.
\end{prop}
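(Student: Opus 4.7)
The plan is to prove the identity in two stages: first an inner induction on $m$ that establishes a pure $y$-derivative variational equation, and then an application of the $x$-derivative machinery of Lemma \ref{lem:VarEqn_Dkx} to add the remaining $k-m$ derivatives. The base case $m = 0$ of the proposition is immediate from Lemma \ref{lem:VarEqn_Dkx} applied to \eqref{eq:HestonVariationalEquation}, with hypotheses supplied by $u \in \sH^{k+1}(\sO,\fw)$, $f \in W^{k,2}(\sO,\fw)$, and the auxiliary condition $D_x^k u \in H^1(\sO,\fw)$.

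For $m \geq 1$, the first step is to establish by induction on $m$ the pure $y$-derivative identity
\begin{equation*}
\fa_m(D_y^m u, v) = (D_y^m f, v)_{L^2(\sO,\fw_m)} - m(B D_y^{m-1} u, v)_{L^2(\sO,\fw_m)}, \quad \forall v \in H^1_0(\underline{\sO}, \fw_m),
\end{equation*}
which I denote $(\ast_m)$. The case $m = 0$ is \eqref{eq:HestonVariationalEquation}. Assuming $(\ast_{m-1})$, whose source is $g_{m-1} := D_y^{m-1} f - (m-1) B D_y^{m-2} u$, I apply Lemma \ref{lem:VarEqn_Dy} with $(A, \fw, f, u) \to (A_{m-1}, \fw_{m-1}, g_{m-1}, D_y^{m-1} u)$. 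Since $A_{m-1}$ is structurally identical to $A$ (only the constants $\theta, q, c_0$ are shifted, per Lemma \ref{lem:AlternativeDyAcommutator}), the proof of Lemma \ref{lem:VarEqn_Dy} transfers verbatim. Using that $B$ commutes with $D_y$ so that $(g_{m-1})_y = D_y^m f - (m-1) B D_y^{m-1} u$, and combining the two $B$-terms, yields $(\ast_m)$ with coefficient $m$.

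Second, for each $m$ I apply an analogue of Lemma \ref{lem:VarEqn_Dkx} valid for the pair $(A_m, \fw_m)$ to add $k - m$ $x$-derivatives to $(\ast_m)$. This analogue holds by the same proof, since $D_x$ commutes with $A_m$ and $\fw_m = y^m \fw$ has the same $x$-dependence as $\fw$. Because $B$ also commutes with $D_x$, the resulting source is $D_x^{k-m}(D_y^m f - m B D_y^{m-1} u) = D_x^{k-m} D_y^m f - m B D_x^{k-m} D_y^{m-1} u$, matching the right-hand side of \eqref{eq:IntroHestonWeakMixedProblemHomogeneous_Dkxy}.

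The main obstacle is the bookkeeping of regularity hypotheses required at each induction step: one must verify that the iterated derivatives $D_x^i D_y^j u$ and their sources lie in the appropriate weighted spaces $H^1(\sO, \fw_j)$ or $L^2(\sO, \fw_j)$. The space $\sH^{k+1}(\sO, \fw)$ is engineered in Definition \ref{defn:HkWeightedSobolevSpaceNormPowery} so that each $y$-derivative is compensated by one factor of $y$ in the weight, which precisely matches the shift $\fw \to \fw_m = y^m \fw$ produced by $m$ iterations of Lemma \ref{lem:VarEqn_Dy}. Together with $f \in W^{k,2}(\sO, \fw)$ and the finite-height hypothesis on $\sO$ (which gives the embedding $L^2(\sO, \fw) \hookrightarrow L^2(\sO, \fw_j)$ for $j \geq 0$), every intermediate regularity condition can be verified. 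The auxiliary condition $D_x^k u \in H^1(\sO, \fw)$ in the cases $m = 0, 1$ is a technical strengthening required because, prior to any $y$-differentiation raising the weight to $\fw_m$ with $m \geq 2$, the $\sH^{k+1}$ bound does not by itself control the needed first-order $H^1$ norm of $D_x^k u$ in the unshifted weight $\fw$; see Appendix \ref{app:VarEqn_DkxyAuxiliarymZero} for the detailed discussion.
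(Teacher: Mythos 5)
Your proposal takes a genuinely different route through the lattice of derivative orders from the one the paper follows. The paper applies all $k-m$ $x$-derivatives first (Lemma~\ref{lem:VarEqn_Dkx}, with no $y$-derivatives present yet) and then appends the $m$ $y$-derivatives one at a time, via the inductive step $(k-1,m-1)\to(k,m)$, which keeps the number of $x$-derivatives $k-m$ fixed throughout the $y$-chain. You reverse the order: build the pure $y$-derivative chain $(\ast_m)$ first, shifting $(A,\fw)\to(A_m,\fw_m)$ at each step, and only then apply an $(A_m,\fw_m)$-analogue of Lemma~\ref{lem:VarEqn_Dkx} to add $k-m$ $x$-derivatives. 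Both routes perform the same $m$ weight-shifting $y$-steps and $k-m$ weight-preserving $x$-steps, and your reorganization is legitimate: $D_x$ commutes with every $A_j$ and with $B$, $\fw_m=y^m\fw$ has the same $x$-dependence as $\fw$, and the coefficient $m$ on the $B$-term accumulates exactly as you describe. The auxiliary hypothesis $D_x^ku\in H^1(\sO,\fw)$ also plays the same role in your route as in the paper's: for $m=0$ it is the hypothesis of Lemma~\ref{lem:VarEqn_Dkx} itself, while for $m=1$ it supplies $D_x^{k+1}u\in L^2(\sO,\fw_1)$, needed because $BD_x^{k-1}u$ appears in your final $x$-step and is strictly stronger than the $D_x^{k+1}u\in L^2(\sO,\fw_2)$ that $\sH^{k+1}(\sO,\fw)$ provides by itself. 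Be aware, though, that the regularity bookkeeping you defer to a closing sentence is not incidental: it is essentially the entire proof. The paper's Claim~\ref{claim:IntroHestonWeakMixedProblemHomogeneous_Dkxy_prelimLHS_Sobolevu} is a term-by-term comparison of exponents against Definition~\ref{defn:HkWeightedSobolevSpaceNormPowery}, and your reordering requires a verification of comparable length at each of the $m$ $y$-steps (confirming that $g_{m-1}$, $D_y^{m-1}u$, and $D_x^2D_y^{m-1}u$ lie in the spaces Lemma~\ref{lem:VarEqn_Dy} demands for $(A_{m-1},\fw_{m-1})$) and at each of the $k-m$ $x$-steps, before the argument is complete.
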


\begin{rmk}[Need for the auxiliary regularity condition when $m=0,1$]
The role of the auxiliary regularity condition, $D_x^k u \in H^1(\sO,\fw)$, when $m=0$ or $1$ is explained in Appendix \ref{app:VarEqn_DkxyAuxiliarymZero}.
\end{rmk}

\begin{proof}[Proof of Proposition \ref{prop:VarEqn_Dkxy}]
Lemma \ref{lem:VarEqn_Dkx} implies that \eqref{eq:IntroHestonWeakMixedProblemHomogeneous_Dkxy} holds when $m=0$ and \emph{any} $k\geq 1$, while Lemma \ref{lem:VarEqn_Dy} gives the conclusion when $k=m=1$. So we may assume without loss of generality that $k\geq 2$ and $m\geq 1$ in our proof of Proposition \ref{prop:VarEqn_Dkxy}. Therefore, to establish \eqref{eq:IntroHestonWeakMixedProblemHomogeneous_Dkxy}, it suffices to consider the inductive step $(k,m-1)\implies (k,m)$ (one extra derivative with respect to $y$), assuming \eqref{eq:IntroHestonWeakMixedProblemHomogeneous_Dkxy} holds with $m$ replaced by $m-1$. The argument for this inductive step follows the pattern of proof of Lemma \ref{lem:VarEqn_Dy}.

As usual, suppose first that $u \in C^\infty(\bar\sO)$ and $v \in C^\infty_0(\underline{\sO})$. Then $(yv)_y \in C^\infty_0(\underline{\sO})$ too and
\begin{align*}
{}&\fa_{m-1}(D_x^{k-m}D_y^{m-1} u, (yv)_y)
\\
&= (A_{m-1}D_x^{k-m}D_y^{m-1} u, (yv)_y)_{L^2(\sO,\fw_{m-1})} \quad\hbox{(by Lemma \ref{lem:HestonIntegrationByParts})}
\\
&= -((A_{m-1}D_x^{k-m}D_y^{m-1} u)_y, yv)_{L^2(\sO,\fw_{m-1})}
- (A_{m-1}D_x^{k-m}D_y^{m-1} u, yv(\log\fw)_y)_{L^2(\sO,\fw_{m-1})}
\\
&= -(A_mD_x^{k-m}D_y^m u, yv)_{L^2(\sO,\fw_{m-1})} -(BD_x^{k-m}D_y^{m-1} u, yv)_{L^2(\sO,\fw_{m-1})}
\\
&\quad - (A_{m-1}D_x^{k-m}D_y^{m-1} u, yv(\log\fw)_y)_{L^2(\sO,\fw_{m-1})} \quad\hbox{(by \eqref{eq:RefinedCommutator}),}
\end{align*}
that is, by Lemma \ref{lem:HestonIntegrationByParts} and \eqref{eq:HestonWeightn}, we have that
\begin{equation}
\label{eq:IntroHestonWeakMixedProblemHomogeneous_Dkxy_prelimLHS}
\begin{aligned}
\fa_{m-1}(D_x^{k-m}D_y^{m-1} u, (yv)_y) &= -\fa_m(D_x^{k-m}D_y^m u,v)  -(BD_x^{k-m}D_y^{m-1} u, v)_{L^2(\sO,\fw_m)}
\\
&\quad - (A_{m-1}D_x^{k-m}D_y^{m-1} u, yv(\log\fw)_y)_{L^2(\sO,\fw_{m-1})},
\end{aligned}
\end{equation}
for all $v\in C^\infty_0(\underline{\sO})$. We next establish the

\begin{claim}
\label{claim:IntroHestonWeakMixedProblemHomogeneous_Dkxy_prelimLHS_Sobolevu}
The identity \eqref{eq:IntroHestonWeakMixedProblemHomogeneous_Dkxy_prelimLHS} continues to hold when the requirement $u \in C^\infty(\bar\sO)$ is relaxed to $u \in \sH^{k+1}(\sO,\fw)$ together with, when $m=1$, $D_x^{k+1} u \in L^2(\sO,\fw_1)$.
\end{claim}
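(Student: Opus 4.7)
[Proof plan for Claim \ref{claim:IntroHestonWeakMixedProblemHomogeneous_Dkxy_prelimLHS_Sobolevu}]
The plan is an approximation argument of the same type used in the proofs of Lemmas \ref{lem:VarEqn_Dx} and \ref{lem:VarEqn_Dy}. Since $v \in C^\infty_0(\underline\sO)$ has compact support, I first choose a \emph{bounded} subdomain $\sO' \Subset \underline\sO$ with $\supp v \subset \underline{\sO'}$ and with $\partial_1\sO'$ being $C^1$-orthogonal to $\partial\HH$ in the sense of Definition \ref{defn:C1Orthogonal}. Let $M := \sup_{\sO'} y < \infty$; on $\sO'$ one then has the comparisons $\fw_m \leq M\fw_{m-1}$, so that
\begin{equation*}
L^2(\sO',\fw_{m-1}) \subset L^2(\sO',\fw_m), \quad m \geq 1.
\end{equation*}
By Theorem \ref{thm:KufnerPowerWeightBoundedDerivatives}, I can choose a sequence $\{u_n\}_{n\in\NN} \subset C^\infty(\bar{\sO'})$ such that $u_n \to u$ in $\sH^{k+1}(\sO',\fw)$ as $n \to \infty$; when $m = 1$, the same theorem, applied in addition to the power weight $\fw_1$, allows me to arrange that simultaneously $D_x^{k+1}u_n \to D_x^{k+1}u$ in $L^2(\sO',\fw_1)$.

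Since the identity \eqref{eq:IntroHestonWeakMixedProblemHomogeneous_Dkxy_prelimLHS} is already known to hold with $u_n$ in place of $u$, it suffices to verify that each of the four terms converges to its claimed limit as $n \to \infty$. For this, I expand the bilinear forms $\fa_{m-1}$ and $\fa_m$ via \eqref{eq:HestonWithKillingBilinearForm}, and the operators $A_{m-1}$ and $B$ via \eqref{eq:OperatorHestonIntro} and \eqref{eq:B}, and apply Cauchy-Schwarz with the fixed test functions $v$ and $(yv)_y$ in $C^\infty_0(\underline{\sO'})$. This reduces the problem to showing that the following quantities tend to zero as $n \to \infty$:
\begin{align*}
& \| D_x^{k-m+1}D_y^{m-1}(u_n - u) \|_{L^2(\sO',\fw_m)}, \quad \| D_x^{k-m}D_y^m (u_n - u) \|_{L^2(\sO',\fw_m)}, \\
& \| D_x^{k-m+1}D_y^m(u_n - u) \|_{L^2(\sO',\fw_{m+1})}, \quad \| D_x^{k-m}D_y^{m+1}(u_n - u) \|_{L^2(\sO',\fw_{m+1})},
\end{align*}
together with an $L^2(\sO',\fw_m)$-bound on $B D_x^{k-m}D_y^{m-1}(u_n - u)$ and an $L^2(\sO',\fw_{m-1})$-bound on $A_{m-1}D_x^{k-m}D_y^{m-1}(u_n - u)$ against the factor $yv(\log\fw)_y$ (which is bounded on $\sO'$ up to the factor $y \cdot y^{-1} = 1$ coming from $(\log\fw)_y$).

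Using the pattern of weights in Definition \ref{defn:HkWeightedSobolevSpaceNormPowery} together with the inclusion $L^2(\sO',\fw_{\ell'}) \subset L^2(\sO',\fw_\ell)$ for $\ell' \leq \ell$, every one of these norms is controlled by $\|u_n - u\|_{\sH^{k+1}(\sO',\fw)}$ \emph{except} for the contribution of $D_x^{k+1}u$ arising, when $m = 1$, from the $B D_x^{k-1}u$ term, which requires control of $D_x^{k+1}(u_n - u)$ in $L^2(\sO',\fw_1)$. This latter control is exactly the auxiliary hypothesis $D_x^{k+1}u \in L^2(\sO,\fw_1)$, together with the refined approximation chosen above. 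Once all these convergences have been verified, passing to the limit $n \to \infty$ term-by-term in \eqref{eq:IntroHestonWeakMixedProblemHomogeneous_Dkxy_prelimLHS} (written with $u_n$) yields the identity for $u$.

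The main obstacle is the bookkeeping required to match each integrand appearing in $\fa_{m-1}$, $\fa_m$, $BD_x^{k-m}D_y^{m-1}u$, and $A_{m-1}D_x^{k-m}D_y^{m-1}u$ against a weighted $L^2$ norm contained in $\sH^{k+1}(\sO,\fw)$, since the weights jump between $\fw_{m-1}$, $\fw_m$, and $\fw_{m+1}$ according to whether the derivative falls on the top-order $y^2$-weighted terms of Definition \ref{defn:HkWeightedSobolevSpaceNormPowery} or on the lower-order $(1+y)^2$-weighted terms; it is precisely this bookkeeping that isolates the $m = 1$ term that fails to be controlled by $\sH^{k+1}(\sO,\fw)$ alone and so necessitates the auxiliary hypothesis.
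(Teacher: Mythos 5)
Your plan is correct and follows essentially the same route as the paper: approximate $u$ by smooth functions on a bounded subdomain $\sO'$ with $\supp v\subset\underline{\sO'}$ and $\partial_1\sO'$ $C^1$-orthogonal to $\partial\HH$, using Theorem~\ref{thm:KufnerPowerWeightBoundedDerivatives}, and pass to the limit term-by-term via Cauchy--Schwarz and the weighted-$L^2$ bookkeeping dictated by Definition~\ref{defn:HkWeightedSobolevSpaceNormPowery}.

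One small point where your convergence step is heavier than it needs to be: for the $B$-term when $m=1$ you ask the approximating sequence $u_n$ to converge simultaneously in $\sH^{k+1}(\sO',\fw)$ \emph{and} with $D_x^{k+1}u_n\to D_x^{k+1}u$ in $L^2(\sO',\fw_1)$, which is a stronger use of Theorem~\ref{thm:KufnerPowerWeightBoundedDerivatives} than its statement provides and would need a separate justification. The paper avoids this by moving the weight $y^m$ onto the derivative factor before applying Cauchy--Schwarz:
\begin{align*}
\bigl|(D_x^{k+2-m}D_y^{m-1}(u_n-u),v)_{L^2(\sO,\fw_m)}\bigr|
&\leq \|y^m D_x^{k+2-m}D_y^{m-1}(u_n-u)\|_{L^2(\sO,\fw)}\,\|v\|_{L^2(\sO,\fw)}
\\
&\leq C\,\|u_n-u\|_{\sH^{k+1}(\sO,\fw)}\,\|v\|_{L^2(\sO,\fw)},
\end{align*}
so that $\sH^{k+1}$-convergence alone already forces the pairing to converge, for every $1\leq m\leq k$, the constant absorbing the extra powers of $y$ on the bounded $\sO'$. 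The auxiliary hypothesis $D_x^{k+1}u\in L^2(\sO,\fw_1)$ is then invoked by the paper not for the passage to the limit but to guarantee that $BD_x^{k-1}u$ genuinely lies in $L^2(\sO,\fw_1)$, i.e.\ to make the limit expression a bona fide $L^2$-pairing rather than a distributional one. Your plan reaches the same conclusion, so this is an observation about economy rather than correctness.
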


\begin{proof}
The terms in the right and left-hand sides of the identity \eqref{eq:IntroHestonWeakMixedProblemHomogeneous_Dkxy_prelimLHS} are well-defined when
$D_x^{k-m}D_y^{m-1} u \in H^1(\sO,\fw_{m-1})$, $D_x^{k-m}D_y^m u \in H^1(\sO,\fw_m)$,
$D_x^{k+1-m}D_y^{m-1} u$, $D_x^{k+2-m}D_y^{m-1} u \in L^2(\sO,\fw_m)$,
and $D_x^{k-m}D_y^{m-1} u \in H^2(\sO,\fw_{m-1})$.
We consider each of the five preceding terms in turn. First, according to Definition \ref{defn:HkWeightedSobolevSpaceNormPowery}, we have that $u \in \sH^{k+1}(\sO,\fw)$ implies
\begin{align*}
D_x^{k-m}D_y^{m-1} u &\in \begin{cases}L^2(\sO,\fw_{m-2}), &m \geq 3, \\ L^2(\sO,\fw), &m =1,2, \end{cases}
\\
D_x^{k+1-m}D_y^{m-1} u &\in \begin{cases}L^2(\sO,\fw_{m-2}), &m \geq 3, \\ L^2(\sO,\fw), &m =1,2, \end{cases}
\\
D_x^{k-m}D_y^m u &\in \begin{cases}L^2(\sO,\fw_{m-1}), &m \geq 2, \\ L^2(\sO,\fw), &m =1. \end{cases}
\end{align*}
Since $L^2(\sO,\fw_{m-2}) \subset L^2(\sO,\fw_{m-1})$ (for any $m\geq 2$) and $L^2(\sO,\fw) \subset L^2(\sO,\fw_{m-1})$ (for any $m\geq 1$), then
$D_x^{k-m}D_y^{m-1} u$, $y^{1/2}D_x^{k+1-m}D_y^{m-1} u$, $y^{1/2}D_x^{k-m}D_y^m u \in L^2(\sO,\fw_{m-1})$, when $1\leq m \leq k$,
and the definition \eqref{eq:H1WeightedSobolevSpace} of $H^1(\sO,\fw_{m-1})$ gives
$$
u \in \sH^{k+1}(\sO,\fw) \implies D_x^{k-m}D_y^{m-1} u \in H^1(\sO,\fw_{m-1}), \quad 1\leq m \leq k.
$$
Second, according to Definition \ref{defn:HkWeightedSobolevSpaceNormPowery}, we see $u \in \sH^{k+1}(\sO,\fw)$ implies
\begin{align*}
D_x^{k-m}D_y^m u &\in \begin{cases}L^2(\sO,\fw_{m-1}), &m \geq 2, \\ L^2(\sO,\fw), &m =1, \end{cases}
\\
yD_x^{k+1-m}D_y^m u &\in \begin{cases}L^2(\sO,\fw_{m-2}), &m \geq 3, \\ L^2(\sO,\fw), &m =1,2, \end{cases}
\\
yD_x^{k-m}D_y^{m+1} u &\in \begin{cases}L^2(\sO,\fw_{m-1}), &m \geq 2, \\ L^2(\sO,\fw), &m =1, \end{cases}
\end{align*}
that is,
\begin{align*}
D_x^{k-m}D_y^m u &\in L^2(\sO,\fw_{m-1}), \quad m \geq 1,
\\
D_x^{k+1-m}D_y^m u &\in \begin{cases}L^2(\sO,\fw_m), &m \geq 3, \\ L^2(\sO,\fw_2), &m =1,2, \end{cases}
\\
D_x^{k-m}D_y^{m+1} u &\in L^2(\sO,\fw_{m+1}), \quad m \geq 1.
\end{align*}
Therefore, using $L^2(\sO,\fw_{m-1}) \subset L^2(\sO,\fw_m)$ (for any $m\geq 1$), we obtain that the weighted derivatives
$D_x^{k-m}D_y^m u$, $y^{1/2}D_x^{k+1-m}D_y^m u$, and $y^{1/2}D_x^{k-m}D_y^{m+1} u$ belong to $L^2(\sO,\fw_m)$, for $1\leq m \leq k$, and the definition \eqref{eq:H1WeightedSobolevSpace} of $H^1(\sO,\fw_m)$ gives
$$
u \in \sH^{k+1}(\sO,\fw) \implies D_x^{k-m}D_y^m u \in H^1(\sO,\fw_m), \quad 1\leq m \leq k.
$$
Third, we have seen that $u \in \sH^{k+1}(\sO,\fw)$ implies
$$
D_x^{k+1-m}D_y^{m-1} u \in \begin{cases}L^2(\sO,\fw_{m-2}), &m \geq 3, \\ L^2(\sO,\fw), &m =1,2, \end{cases}
$$
and so, using $L^2(\sO,\fw_{m-2}) \subset L^2(\sO,\fw_m)$ (for any $m\geq 2$), we obtain
$$
D_x^{k+1-m}D_y^{m-1} u \in L^2(\sO,\fw_m), \quad 1\leq m \leq k.
$$
For the fifth term, $D_x^{k-m}D_y^{m-1} u$ (we shall consider the fourth term last), observe that $u \in \sH^{k+1}(\sO,\fw)$ implies
\begin{align*}
yD_x^{k+2-m}D_y^{m-1} u  &\in \begin{cases}L^2(\sO, \fw_{m-3}), & m \geq 4, \\ L^2(\sO, \fw), & m = 1,2,3,\end{cases}
\\
yD_x^{k+1-m}D_y^{m} u  &\in \begin{cases}L^2(\sO, \fw_{m-2}), & m \geq 3, \\ L^2(\sO, \fw), & m = 1,2,\end{cases}
\\
yD_x^{k-m}D_y^{m+1} u  &\in \begin{cases}L^2(\sO, \fw_{m-1}), & m \geq 2, \\ L^2(\sO, \fw), & m = 1,\end{cases}
\\
D_x^{k+1-m}D_y^{m-1} u  &\in \begin{cases}L^2(\sO, \fw_{m-2}), & m \geq 3, \\ L^2(\sO, \fw), & m = 1,2,\end{cases}
\\
D_x^{k-m}D_y^{m} u  &\in \begin{cases}L^2(\sO, \fw_{m-1}), & m \geq 2, \\ L^2(\sO, \fw), & m = 1,\end{cases}
\\
D_x^{k-m}D_y^{m-1} u  &\in \begin{cases}L^2(\sO, \fw_{m-2}), & m \geq 3, \\ L^2(\sO, \fw), & m = 1,2.\end{cases}
\end{align*}
Hence, from the definition \eqref{eq:H2WeightedSobolevSpace} of $H^2(\sO, \fw_{m-1})$, we see that
$
D_x^{k-m}D_y^{m-1} u \in H^2(\sO, \fw_{m-1}),
$
for all $1\leq m \leq k$.
Finally, considering the fourth term\footnote{As explained in Appendix \ref{app:VarEqn_DkxyAuxiliarymZero}, it is only in the case $m=1$ that $D_x^{k+2-m}D_y^{m-1} u \in L^2(\sO,\fw_m)$ is not implied by $u\in \sH^{k+1}(\sO,\fw)$, and this case is explicitly covered by the additional hypothesis, $D_x^k u \in H^1(\sO,\fw)$, which ensures, by definition \eqref{eq:H1WeightedSobolevSpace} of $H^1(\sO,\fw)$, that $y^{1/2}D_x^{k+1} u \in L^2(\sO,\fw)$ or, equivalently, $D_x^{k+1} u \in L^2(\sO,\fw_1)$.}, observe that for each $v\in C^\infty_0(\underline\sO)$, we may choose a subdomain $\sO'\Subset\underline\sO$ such that $\supp v \subset\underline\sO'$ and $\partial_1\sO'$ is $C^1$-orthogonal to $\partial\HH$, in the sense of Definition \ref{defn:C1Orthogonal}. According to Theorem \ref{thm:KufnerPowerWeightBoundedDerivatives}, there is a sequence $\{u_n\}_{n\in \NN} \subset C^\infty(\bar\sO')$ such that $u_n \to u$ in $\sH^{k+1}(\sO',\fw)$ as $n\to\infty$ and hence, for each $v \in C^\infty_0(\underline{\sO})$ with
$\supp v \subset\underline\sO'$, we have
$$
(D_x^{k+2-m}D_y^{m-1} u_n, v)_{L^2(\sO, \fw_m)} \to  (D_x^{k+2-m}D_y^{m-1} u, v)_{L^2(\sO, \fw_m)},
 \quad\hbox{ as }n\to\infty,
$$
since, for all $n\in\NN$,
\begin{align*}
{}& \left|(D_x^{k+2-m}D_y^{m-1}(u_n - u), v)_{L^2(\sO,\fw_m)}\right|
\\
&= \left|(y^mD_x^{k+2-m}D_y^{m-1}(u_n - u), v)_{L^2(\sO,\fw)}\right|
\\
&\leq \|y^mD_x^{k+2-m}D_y^{m-1}(u_n - u)\|_{L^2(\sO,\fw)}\|v\|_{L^2(\sO,\fw)}
\\
&\leq C\|u_n - u\|_{\sH^{k+1}(\sO,\fw)}\|v\|_{L^2(\sO,\fw)},
\end{align*}
where $C=C(\height(\sO))$ is a positive constant, noting that $u \in \sH^{k+1}(\sO,\fw)$ implies, by Definition \ref{defn:HkWeightedSobolevSpaceNormPowery},
$$
yD_x^{k+2-m}D_y^{m-1} u \in \begin{cases} L^2(\sO,\fw_{m-3}), &m \geq 4, \\ L^2(\sO,\fw), &m = 1,2,3, \end{cases}
$$
and, for $C=C(\height(\sO))$, a positive constant,
\begin{align*}
\|y^mD_x^{k+2-m}D_y^{m-1}(u_n - u)\|_{L^2(\sO,\fw)} &= \|y^{3+(m-3)/2}D_x^{k+2-m}D_y^{m-1}(u_n - u)\|_{L^2(\sO,\fw_{m-3})}
\\
&\leq C\|yD_x^{k+2-m}D_y^{m-1}(u_n - u)\|_{L^2(\sO,\fw_{m-3})}
\\
&\leq C\|u_n - u\|_{\sH^{k+1}(\sO,\fw)}, \quad m\geq 4,
\\
\|y^mD_x^{k+2-m}D_y^{m-1}(u_n - u)\|_{L^2(\sO,\fw)} &= \|y^{1+(m-1)}D_x^{k+2-m}D_y^{m-1}(u_n - u)\|_{L^2(\sO,\fw)}
\\
&\leq \|yD_x^{k+2-m}D_y^{m-1}(u_n - u)\|_{L^2(\sO,\fw)}
\\
&\leq C\|u_n - u\|_{\sH^{k+1}(\sO,\fw)}, \quad m = 1,2,3.
\end{align*}
Therefore, by approximation, the identity \eqref{eq:IntroHestonWeakMixedProblemHomogeneous_Dkxy_prelimLHS} continues to hold for $u\in \sH^{k+1}(\sO,\fw)$ and, when $m=1$, that $D_x^{k+1} u \in L^2(\sO,\fw_1)$. This completes the proof of Claim \ref{claim:IntroHestonWeakMixedProblemHomogeneous_Dkxy_prelimLHS_Sobolevu}.
\end{proof}

By induction on $m$, the identity \eqref{eq:IntroHestonWeakMixedProblemHomogeneous_Dkxy} holds for $(k-1,m-1)$ in place of $(k,m)$, and so for all $v\in C^\infty_0(\underline{\sO})$, and thus $(yv)_y\in C^\infty_0(\underline{\sO})$, we have
\begin{align*}
{}&\fa_{m-1}(D_x^{k-m}D_y^{m-1} u, (yv)_y)
\\
&= (D_x^{k-m}D_y^{m-1} f, (yv)_y)_{L^2(\sO,\fw_{m-1})} - (m-1)(BD_x^{k-m}D_y^{m-2} u, (yv)_y)_{L^2(\sO,\fw_{m-1})}.
\end{align*}
Therefore, integrating by parts with respect to $y$ on the right-hand side of the preceding identity and applying \eqref{eq:HestonWeightn} yields
\begin{align*}
{}&\fa_{m-1}(D_x^{k-m}D_y^{m-1} u, (yv)_y)
\\
&= -(D_x^{k-m}D_y^m f, v)_{L^2(\sO,\fw_m)}  + (m-1)(BD_x^{k-m}D_y^{m-1} u, v)_{L^2(\sO,\fw_m)}
\\
&\quad - (D_x^{k-m}D_y^{m-1} f, yv(\log\fw)_y)_{L^2(\sO,\fw_{m-1})} + (m-1)(BD_x^{k-m}D_y^{m-2} u, yv(\log\fw)_y)_{L^2(\sO,\fw_{m-1})},
\\
&\qquad\forall v\in C^\infty_0(\underline{\sO}).
\end{align*}
But $D_x^{k-m}D_y^{m-1} f \in L^2(\sO,\fw_{m-1})$, since $f\in W^{k,2}(\sO,\fw)$ by hypothesis, and $Au=f$ a.e. on $\sO$ yields
\begin{align*}
D_x^{k-m}D_y^{m-1} f &= D_x^{k-m}D_y^{m-1} Au \quad\hbox{a.e. on }\sO
\\
&= A_{m-1}D_x^{k-m}D_y^{m-1} u + (m-1)BD_x^{k-m}D_y^{m-2} u \quad\hbox{a.e. on }\sO \quad\hbox{(by \eqref{eq:RefinedCommutator})},
\end{align*}
noting that $u \in \sH^{k+1}(\sO,\fw)$ by hypothesis, and so (by an analysis very similar to that in the proof of Claim \ref{claim:IntroHestonWeakMixedProblemHomogeneous_Dkxy_prelimLHS_Sobolevu}),
$$
A_{m-1}D_x^{k-m}D_y^{m-1} u, \ BD_x^{k-m}D_y^{m-2} u \in L^2(\sO,\fw_{m-1}).
$$
Substituting this identity for $D_x^{k-m}D_y^{m-1} f$ into the preceding variational equation yields
\begin{equation}
\label{eq:IntroHestonWeakMixedProblemHomogeneous_Dkxy_prelimRHS}
\begin{aligned}
{}&\fa_{m-1}(D_x^{k-m}D_y^{m-1} u, (yv)_y)
\\
&= -(D_x^{k-m}D_y^m f, v)_{L^2(\sO,\fw_m)}  + (m-1)(BD_x^{k-m}D_y^{m-1} u, v)_{L^2(\sO,\fw_m)}
\\
&\quad - (A_{m-1}D_x^{k-m}D_y^{m-1} u, yv(\log\fw)_y)_{L^2(\sO,\fw_{m-1})}, 
\end{aligned}
\end{equation}
for all $v\in C^\infty_0(\underline{\sO})$.
Combining the variational equations \eqref{eq:IntroHestonWeakMixedProblemHomogeneous_Dkxy_prelimLHS} and \eqref{eq:IntroHestonWeakMixedProblemHomogeneous_Dkxy_prelimRHS} yields
$$
\fa_m(D_x^{k-m}D_y^m u,v) = (D_x^{k-m}D_y^m f,v)_{L^2(\sO,\fw_m)}  - m(BD_x^{k-m}D_y^{m-1} u,v)_{L^2(\sO,\fw_m)}, \quad v\in C^\infty_0(\underline{\sO}),
$$
and hence \eqref{eq:IntroHestonWeakMixedProblemHomogeneous_Dkxy} holds for all $v\in H^1_0(\underline{\sO},\fw)$. This completes the proof of Proposition \ref{prop:VarEqn_Dkxy}.
\end{proof}

We now show that $D_x^{k-m}D_y^m u \in H^2(U,\fw_m)$, where $U\Subset\underline{\sO}$ is as in \eqref{eq:UVHalfBalls}, for any $k\geq 1$ and $0\leq m \leq k$, and provide estimates for these derivatives analogous to those in Propositions \ref{prop:SobolevRegularity_ukx} and Propositions \ref{prop:SobolevRegularity_uy}.

\begin{prop}[Interior $H^2$ regularity for higher-order derivatives of a solution with respect to $x$ and $y$]
\label{prop:SobolevRegularity_ukxy}
Let $R<R_0$ be positive constants and let $k\geq 1$ and $0\leq m\leq k$ be integers. Then there is a positive constant, $C=C(\Lambda,\nu_0,k,m,R,R_0)$, such that the following holds. Let $\sO\subseteqq\HH$ be a domain and let $U\subset V$ be as in \eqref{eq:UVHalfBalls}, with $V\Subset\underline{\sO}$. Suppose that $f\in L^2(\sO,\fw)$ and $u \in H^1(\sO,\fw)$ is a solution to the variational equation \eqref{eq:HestonVariationalEquation}. If
$f\in W^{k,2}(V,\fw)$ and $u \in H^2(V,\fw)$,
then
$
D_x^{k-m}D_y^m u \in H^2(U,\fw_m),
$
and
\begin{equation}
\label{eq:SobolevRegularity_ukxy}
\|D_x^{k-m}D_y^m u\|_{H^2(U,\fw_m)} \leq C\left(\|f\|_{W^{k,2}(V,\fw)} + \|u\|_{L^2(V,\fw)}\right).
\end{equation}
\end{prop}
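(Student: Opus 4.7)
The proof proposal is a double induction, on $k\geq 1$ and for fixed $k$ on $0\leq m\leq k$. The base case $k=1, m=0$ is precisely the statement of Proposition \ref{prop:SobolevRegularity_ukx} (with $k=1$), while $k=m=1$ is Proposition \ref{prop:SobolevRegularity_uy} (noting that $u\in H^2(V,\fw)$ and $u_x\in H^1(V,\fw)$ follow from the already-established $\sH^3$-regularity given by Theorem \ref{thm:H3SobolevRegularityInterior}). For the general inductive step, I would assume the conclusion for all pairs $(k',m')$ with $k'<k$ and for $(k,m')$ with $m'<m$, and also assume as a companion inductive statement that $u\in \sH^{k+1}_{\loc}(\underline\sO,\fw)$ with the corresponding estimate. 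This companion statement is exactly what gets assembled from the $H^2(U',\fw_{m'})$-bounds for $D_x^{k-m'}D_y^{m'}u$, $0\leq m'\leq k-1$, via Definition \ref{defn:HkWeightedSobolevSpaceNormPowery} on nested half-balls, in exactly the same way as \eqref{eq:H3induction} is assembled in the proof of Theorem \ref{thm:H3SobolevRegularityInterior}.

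For the inductive step from $(k,m-1)$ to $(k,m)$, I would first choose an auxiliary half-ball $U'$ as in \eqref{eq:UVHalfBalls} with $U\subset U'\subset V$, and apply Proposition \ref{prop:VarEqn_Dkxy} on $U'$ to obtain
\begin{equation*}
\fa_m(D_x^{k-m}D_y^m u,v) = (D_x^{k-m}D_y^m f - mBD_x^{k-m}D_y^{m-1} u,v)_{L^2(U',\fw_m)},
\end{equation*}
for all $v\in H^1_0(\underline{U}',\fw_m)$. This is a variational equation for the modified Heston operator $A_m$ with weight $\fw_m$ and source function $\tilde f := D_x^{k-m}D_y^m f - m B D_x^{k-m}D_y^{m-1} u$. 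Since $A_m$ has exactly the same structural form as $A$ (only coefficient values differ, and these obey Assumption \ref{assump:HestonCoefficients}), the $H^2$-regularity result Theorem \ref{thm:H2BoundSolutionHestonVarEqnSubdomainInterior} applies verbatim to this variational equation on $U'$, and yields $D_x^{k-m}D_y^m u\in H^2(U,\fw_m)$ together with the bound
\begin{equation*}
\|D_x^{k-m}D_y^m u\|_{H^2(U,\fw_m)} \leq C\bigl(\|\tilde f\|_{L^2(U',\fw_m)} + \|D_x^{k-m}D_y^m u\|_{L^2(U',\fw_m)}\bigr).
\end{equation*}

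The main obstacle is verifying the hypotheses of Proposition \ref{prop:VarEqn_Dkxy}, namely $u\in \sH^{k+1}(U',\fw)$ together with the auxiliary condition $D_x^k u\in H^1(U',\fw)$ when $m=0,1$. This is precisely where the companion inductive statement on $\sH^{k+1}$-regularity is used: proceeding from $k-1$ to $k$, one first establishes by the previous induction level that $u\in \sH^k_{\loc}(\underline\sO,\fw)$ with the expected bound, and then to push up to $\sH^{k+1}$ one runs the present $m$-induction on $H^2(U'',\fw_{m'})$-estimates for $D_x^{k-1-m'}D_y^{m'}u$, $0\leq m'\leq k-1$, reassembling as in \eqref{eq:H3induction}. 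The auxiliary condition $D_x^k u\in H^1$ is handled separately by Proposition \ref{prop:VarEqn_H1Dkx}, which provides exactly $D_x^k u\in H^1(U',\fw)$ with the bound $\|D_x^k u\|_{H^1(U',\fw)}\leq C(\|f\|_{W^{k,2}(V,\fw)}+\|u\|_{L^2(V,\fw)})$ given the hypothesis $f\in W^{k,2}(V,\fw)$.

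Finally, the right-hand side $\|\tilde f\|_{L^2(U',\fw_m)}$ is controlled by $\|D_x^{k-m}D_y^m f\|_{L^2(U',\fw_m)}\leq C\|f\|_{W^{k,2}(V,\fw)}$ (using $L^2(U',\fw)\subset L^2(U',\fw_m)$ since $U'$ has finite height) together with $\|BD_x^{k-m}D_y^{m-1}u\|_{L^2(U',\fw_m)}$, which involves at most $k+1$ derivatives of $u$ with at most $m-1$ in the $y$-direction; by the $(k,m-1)$ inductive hypothesis (or the $\sH^{k+1}$-companion statement when $m-1=0$), this is bounded by $C(\|f\|_{W^{k,2}(V'',\fw)}+\|u\|_{L^2(V'',\fw)})$ on a slightly larger half-ball $V''\subset V$. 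The term $\|D_x^{k-m}D_y^m u\|_{L^2(U',\fw_m)}$ is similarly absorbed into the $\sH^{k+1}(U',\fw)$-bound provided by the companion statement. Combining these bounds yields \eqref{eq:SobolevRegularity_ukxy} and closes the induction.
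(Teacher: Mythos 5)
Your proposal is correct and essentially identical in structure to the paper's proof: the same double induction on $(k,m)$, the same reduction via Proposition \ref{prop:VarEqn_Dkxy} and Theorem \ref{thm:H2BoundSolutionHestonVarEqnSubdomainInterior}, and your ``companion $\sH^{k+1}$-regularity statement'' is exactly the paper's Claim \ref{claim:uHk+1}. One small imprecision: the parenthetical suggestion that the $\sH^{k+1}$-companion alone controls the case $m-1=0$ is not quite right, since $u\in\sH^{k+1}$ only gives $D_x^{k+1}u\in L^2(\cdot,\fw_2)$, not $L^2(\cdot,\fw_1)$ as required for $\|BD_x^{k-1}u\|_{L^2(U',\fw_1)}$ --- but this is harmless because you separately invoke Proposition \ref{prop:VarEqn_H1Dkx} (or equivalently the $(k,0)$ base case, Proposition \ref{prop:SobolevRegularity_ukx}), which does supply $D_x^{k+1}u\in L^2(\cdot,\fw_1)$.
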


\begin{proof}
Proposition \ref{prop:SobolevRegularity_ukx} yields the conclusion for any $k\geq 1$, when $m=0$, while Proposition \ref{prop:SobolevRegularity_uy} gives the conclusion when $k=m=1$. Therefore, we may assume that $k\geq 2$ and $m\geq 1$.

By Proposition \ref{prop:VarEqn_Dkxy}, we see that $D_x^{k-m}D_y^m u \in H^1(\sO,\fw)$ obeys \eqref{eq:IntroHestonWeakMixedProblemHomogeneous_Dkxy}, that is
$$
\fa_m(D_x^{k-m}D_y^m u,v) = (D_x^{k-m}D_y^m f,v)_{L^2(V,\fw_m)}  - m(BD_x^{k-m}D_y^{m-1} u,v)_{L^2(V,\fw_m)},
$$
for all $v\in H^1_0(\underline{V},\fw_m)$, \emph{provided} (in addition to $f\in W^{k,2}(V,\fw)$) that $u \in \sH^{k+1}(V,\fw)$ and $D_x^k u \in H^1(\sO,\fw)$, when $m=0,1$.
The condition $D_x^k u \in H^1(\sO,\fw)$ follows from Proposition \ref{prop:SobolevRegularity_ukx}, since $f\in W^{k,2}(V,\fw)$ by hypothesis.

Thus, it remains to verify that $u \in \sH^{k+1}(V,\fw)$ and justify the application of Proposition \ref{prop:VarEqn_Dkxy}, noting that, by induction on $k$, we may assume that Proposition \ref{prop:SobolevRegularity_ukxy} holds for $k$ replaced by $k-1$, and so we may assume that $u \in \sH^k(V,\fw)$.

\begin{claim}
\label{claim:uHk+1}
$u \in \sH^{k+1}(V,\fw)$, for $k\geq 2$.
\end{claim}

\begin{proof}
According to \eqref{eq:Hk+3induction}, we have
\begin{align*}
\|v\|_{\sH^{k+1}(U,\fw)}^2 &\leq \|yD^{k+1}_x u\|_{L^2(U,\fw)}^2 + \|yD^k_xD_y u\|_{L^2(U,\fw)}^2 + \|yD^{k-1}_xD_y^2 u\|_{L^2(U,\fw)}^2
\\
&+ \sum_{m=1}^{k-1}\|yD^{k-1-m}_xD_y^{m+2} u\|_{L^2(U,\fw_m)}^2
\\
&+ \|(1+y)D^k_x u\|_{L^2(U,\fw)}^2 + \|(1+y)D^{k-1}_xD_y u\|_{L^2(U,\fw)}^2
\\
&+ \sum_{m=1}^{k-1}\|(1+y)D^{k-1-m}_xD_y^{m+1} u\|_{L^2(U,\fw_m)}^2
+ \|v\|_{\sH^k(U,\fw)}^2,
\end{align*}
and so we may conclude that $u \in \sH^{k+1}(U,\fw)$, if the terms on the right hand side are finite.

By induction on $k$, Proposition \ref{prop:SobolevRegularity_ukxy} gives $D^{k-1-m}_xD_y^m u \in H^2(U,\fw_m)$, for $0\leq m\leq k-1$, and
$$
\|D^{k-1-m}_xD_y^m u\|_{H^2(U,\fw_m)} \leq C\left(\|f\|_{W^{k-1,2}(V,\fw)} + \|u\|_{L^2(V,\fw)}\right),
$$
where $C=C(\Lambda,\nu_0,k,m,R,R_0)$ is a positive constant. The preceding estimate yields
\begin{align*}
{}&\|yD^{k+1}_x u\|_{L^2(U,\fw)}^2 + \|yD^k_xD_y u\|_{L^2(U,\fw)}^2 + \|yD^{k-1}_xD_y^2 u\|_{L^2(U,\fw)}^2
+ \sum_{m=1}^{k-1}\|yD^{k-1-m}_xD_y^{m+2} u\|_{L^2(U,\fw_m)}^2
\\
&+ \|(1+y)D^k_x u\|_{L^2(U,\fw)}^2 + \|(1+y)D^{k-1}_xD_y u\|_{L^2(U,\fw)}^2
+ \sum_{m=1}^{k-1}\|(1+y)D^{k-1-m}_xD_y^{m+1} u\|_{L^2(U,\fw_m)}^2
\\
&\quad\leq C\left(\|f\|_{W^{k-1,2}(V,\fw)} + \|u\|_{L^2(V,\fw)}\right)^2.
\end{align*}
Combining the preceding estimates yields $u \in \sH^{k+1}(U,\fw)$, for $k\geq 2$, and completes the proof of Claim \ref{claim:uHk+1}.
\end{proof}

We now proceed to verify the estimate \eqref{eq:SobolevRegularity_ukxy}. Because $D_x^{k-m}D_y^m f \in L^2(V,\fw_m)$ by hypothesis, we can apply Theorem \ref{thm:H2BoundSolutionHestonVarEqnSubdomainInterior} to \eqref{eq:IntroHestonWeakMixedProblemHomogeneous_Dkxy} and conclude that $D_x^{k-m}D_y^m u \in H^2(U,\fw_m)$ and
\begin{align*}
\|D_x^{k-m}D_y^m u\|_{H^2(U,\fw_m)} &\leq C\left(\|D_x^{k-m}D_y^m f\|_{L^2(U',\fw_m)} + \|D_x^{k+1-m}D_y^{m-1} u\|_{L^2(U',\fw_m)} \right.
\\
&\qquad + \left. \|D_x^{k+2-m}D_y^{m-1} u\|_{L^2(U',\fw_m)} + \|D_x^{k-m}D_y^m u\|_{L^2(U',\fw_m)}\right),
\end{align*}
where $U'$ is as in \eqref{eq:UVHalfBalls}, with $U'=B^+_{R_1}(z_0)$ and $U\subset U'\subset V$ and $R_1=(R+R_0)/2$, and $C=C(\Lambda,\nu_0,R,R_1)$ is a positive constant.

We now estimate the terms on the right-hand side of the preceding inequality. Observe that
$$
\|D_x^{k+1-m}D_y^{m-1} u\|_{L^2(U',\fw_m)} \leq \|D_x^{k-m}D_y^{m-1} u\|_{H^2(U',\fw_m)} \leq C\|D_x^{k-m}D_y^{m-1} u\|_{H^2(U',\fw_{m-1})},
$$
where the first inequality follows from \eqref{eq:H2WeightedSobolevSpace} and the second from \eqref{eq:SobolevInclusionDifferentWeights}, with $C=C(R_1)$. By induction on $k$ and $m$, we may assume that Proposition \ref{prop:SobolevRegularity_ukxy} holds for $k-1$ in place of $k$ and $m-1$ in place of $m$ and so $D_x^{k-m}D_y^{m-1} u  = D_x^{k-1-(m-1)}D_y^{m-1} u \in H^2(U',\fw_{m-1})$ with
$$
\|D_x^{k-m}D_y^{m-1} u\|_{H^2(U',\fw_{m-1})} \leq C\left(\|f\|_{W^{k-1,2}(V,\fw)} + \|u\|_{L^2(V,\fw)}\right),
$$
where $C=C(\Lambda,\nu_0,k,R_1,R_0)$ is a positive constant. Similarly, observe that
$$
\|D_x^{k-m}D_y^m u\|_{L^2(U',\fw_m)} \leq \|D_x^{k-m}D_y^{m-1} u\|_{H^2(U',\fw_m)} \leq C\|D_x^{k-m}D_y^{m-1} u\|_{H^2(U',\fw_{m-1})},
$$
where the last term is estimated above. Finally, we notice that
$$
\|D_x^{k+2-m}D_y^{m-1} u\|_{L^2(U',\fw_m)} \leq \|D_x^{k+1-m}D_y^{m-1} u\|_{H^2(U',\fw_m)} \leq C\|D_x^{k+1-m}D_y^{m-1} u\|_{H^2(U',\fw_{m-1})},
$$
where $C=C(R_1)$ is a positive constant. For a given $k\geq 2$, we may assume by induction on $m$ that Proposition \ref{prop:SobolevRegularity_ukxy} holds for $m-1$ in place of $m$, and so
$$
D_x^{k-(m-1)}D_y^{m-1} u = D_x^{k+1-m}D_y^{m-1} u \in H^2(U',\fw_{m-1}),
$$
with
$$
\|D_x^{k+1-m}D_y^{m-1} u\|_{H^2(U',\fw_{m-1})} \leq C\left(\|f\|_{W^{k,2}(V,\fw)} + \|u\|_{L^2(V,\fw)}\right),
$$
where $C=C(\Lambda,\nu_0,k,m,R_1,R_0)$ is a positive constant. Combining the preceding estimates gives \eqref{eq:SobolevRegularity_ukxy}.
\end{proof}

We can now combine our results for higher-order derivatives with respect to $x$ and $y$ to prove the extension, Theorem \ref{thm:HkSobolevRegularityInterior}, of Theorem \ref{thm:H3SobolevRegularityInterior} from the case $k=1$ to $k\geq 1$.

\begin{proof}[Proof of Theorem \ref{thm:HkSobolevRegularityInterior}]
When $k=0$, the conclusion is given by Theorem \ref{thm:H2BoundSolutionHestonVarEqnSubdomainInterior} while if $k=1$, the conclusion follows from Theorem \ref{thm:H3SobolevRegularityInterior}, so we may assume that $k\geq 2$. According to \eqref{eq:Hk+3induction}, we have
\begin{align*}
\|v\|_{\sH^{k+2}(U,\fw)}^2 &\leq \|yD^{k+2}_x u\|_{L^2(U,\fw)}^2 + \|yD^{k+1}_xD_y u\|_{L^2(U,\fw)}^2 + \|yD^{k}_xD_y^2 u\|_{L^2(U,\fw)}^2
\\
&+ \sum_{m=1}^{k}\|yD^{k-m}_xD_y^{m+2} u\|_{L^2(U,\fw_m)}^2
\\
&+ \|(1+y)D^{k+1}_x u\|_{L^2(U,\fw)}^2 + \|(1+y)D^{k}_xD_y u\|_{L^2(U,\fw)}^2
\\
&+ \sum_{m=1}^{k}\|(1+y)D^{k-m}_xD_y^{m+1} u\|_{L^2(U,\fw_m)}^2
 + \|v\|_{\sH^{k+1}(U,\fw)}^2,
\end{align*}
and so we may conclude that $u \in \sH^{k+2}(U,\fw)$, if the terms on the right hand side are finite.

By induction on $k$, we may assume that Theorem \ref{thm:HkSobolevRegularityInterior} holds for $k-1$ in place of $k$, and so $u \in \sH^{k+1}(U,\fw)$ and
$$
\|u\|_{\sH^{k+1}(U,\fw)} \leq C\left(\|f\|_{W^{k-1,2}(V,\fw)} + \|u\|_{L^2(V,\fw)}\right).
$$
Moreover, Proposition \ref{prop:SobolevRegularity_ukxy} gives $D^{k-m}_xD_y^m u \in H^2(U,\fw_m)$, for $0\leq m\leq k$, and
$$
\|D^{k-m}_xD_y^m u\|_{H^2(U,\fw_m)} \leq C\left(\|f\|_{W^{k,2}(V,\fw)} + \|u\|_{L^2(V,\fw)}\right),
$$
where $C=C(\Lambda,\nu_0,k,m,R,R_0)$ is a positive constant. The preceding estimate yields
\begin{align*}
{}&\|yD^{k+2}_x u\|_{L^2(U,\fw)}^2 + \|yD^{k+1}_xD_y u\|_{L^2(U,\fw)}^2 + \|yD^{k}_xD_y^2 u\|_{L^2(U,\fw)}^2
+ \sum_{m=1}^{k}\|yD^{k-m}_xD_y^{m+2} u\|_{L^2(U,\fw_m)}^2
\\
&+ \|(1+y)D^{k+1}_x u\|_{L^2(U,\fw)}^2 + \|(1+y)D^{k}_xD_y u\|_{L^2(U,\fw)}^2
+ \sum_{m=1}^{k}\|(1+y)D^{k-m}_xD_y^{m+1} u\|_{L^2(U,\fw_m)}^2
\\
&\quad\leq C\left(\|f\|_{W^{k,2}(V,\fw)} + \|u\|_{L^2(V,\fw)}\right)^2.
\end{align*}
Combining the preceding estimates yields $u \in \sH^{k+2}(U,\fw)$ and \eqref{eq:HkSobolevRegularity}, for $k\geq 2$.
\end{proof}

Next, we have

\begin{proof}[Proof of Theorem \ref{thm:HkSobolevRegularityDomain}]
The proof follows by a standard covering argument with the aid of \cite[Theorem 8.10]{GilbargTrudinger} and Theorem \ref{thm:HkSobolevRegularityInterior};
see \cite[Section 4.5]{Feehan_Pop_higherregularityweaksoln_v1} for a detailed proof.
\end{proof}

Similarly, we obtain

\begin{proof}[Proof of Theorem \ref{thm:ExistUniqueHkSobolevRegularityDomain}]
Uniqueness of a solution $u\in H^1(\sO,\fw)$ to the variational inequality \eqref{eq:HestonVariationalEquation} with boundary condition,
$u-g \in H^1_0(\underline\sO,\fw)$ defined by $g\in H^1(\sO,\fw)$, follows from \cite[Theorem 8.15]{Feehan_maximumprinciple}, noting that $f\in L^\infty(\sO,\fw)$ by hypothesis and that $L^\infty(\sO)\subset L^2(\sO,\fw)$ since $\vol(\sO,\fw)<\infty$, while $(1+y)g\in W^{2,\infty}(\sO)$ implies $g\in H^1(\sO,\fw)$. When $g\equiv 0$ on $\sO$, existence of a solution $u\in H^1(\sO,\fw)$ to the variational inequality \eqref{eq:HestonVariationalEquation} follows from \cite[Theorem 3.16]{Daskalopoulos_Feehan_statvarineqheston}, again noting that $f\in L^\infty(\sO)$ by hypothesis. For a non-zero $g$ with $(1+y)g\in W^{2,\infty}(\sO)$, we have $g\in H^2(\sO,\fw)$ and
$
\fa(g,v) = (Ag,v)_{L^2(\sO,\fw)}, 
$
for all $v \in H^1_0(\underline\sO,\fw)$,
by Lemma \ref{lem:HestonIntegrationByParts}. By replacing $u\in H^1(\sO,\fw)$ with $\tilde u := u-g \in H^1_0(\underline\sO,\fw)$ and noting that $\tilde f := f-Ag\in L^\infty(\sO)$, existence of a solution $\tilde u \in H^1_0(\underline\sO,\fw)$ to the variational inequality,
$
\fa(\tilde u,v) = (\tilde f,v)_{L^2(\sO,\fw)}, 
$
for all $v \in H^1_0(\underline\sO,\fw)$,
again follows from \cite[Theorem 3.16]{Daskalopoulos_Feehan_statvarineqheston}. Therefore, we obtain existence of a solution $u\in H^1(\sO,\fw)$ to the variational inequality \eqref{eq:HestonVariationalEquation} with boundary condition, $u-g \in H^1_0(\underline\sO,\fw)$. The facts that $u\in \sH^{k+2}_{\loc}(\underline\sO)$ and $u$ obeys \eqref{eq:HkSobolevRegularityDomain} follow from Theorem \ref{thm:HkSobolevRegularityDomain}.
\end{proof}

\section{Higher-order H\"older regularity for solutions to the variational equation}
\label{sec:HolderRegularity}
In this section, we extend the $C^\alpha_s(\underline\sO)$-regularity results from \cite{Feehan_Pop_regularityweaksoln} for solutions, $u\in H^1(\sO,\fw)$, to the variational equation \eqref{eq:HestonVariationalEquation} to $C^{k,\alpha}_s(\underline\sO)$-regularity, for any integer $k\geq 1$. We begin in \S \ref{subsec:HolderRegularityGradient} by proving $C^\alpha_s(\underline\sO)$-regularity (Theorem \ref{thm:HolderContinuity_Du}) for the gradient of a solution, $u \in H^1(\sO,\fw)$, to the variational equation \eqref{eq:HestonVariationalEquation}. In \S \ref{subsec:HolderRegularityHigherOrder}, we establish $C^\alpha_s(\underline\sO)$-regularity (Proposition \ref{prop:HolderContinuity_ukxy}) for higher-order derivatives of a solution, $u \in H^1(\sO,\fw)$, concluding with a proof of one of our main results, Theorem \ref{thm:HolderContinuity_Dku_Interior}, giving $C^{k,\alpha}_s(\underline\sO)$-regularity of a solution, $u \in H^1(\sO,\fw)$. We conclude in \S \ref{subsec:UniformHolderExponent} with proofs
of our remaining principal results, namely, Corollary \ref{cor:CinftyGlobal}, Theorem \ref{thm:CkalphasHolderContinuityDomain}, Corollary \ref{cor:CkalphasHolderContinuityDomainStrip}, Theorem \ref{thm:ExistUniqueCk2+alphasHolderContinuityDomain}, and Corollaries \ref{cor:ExistUniqueCk2+alphasHolderContinuityDomain} and \ref{cor:Ck2+alphasHolderContinuityDomainStrip}.

\subsection{H\"older regularity for first-order derivatives of solutions to the variational equation}
\label{subsec:HolderRegularityGradient}
We begin with

\begin{prop} [Interior $C^{\alpha}_s$ H\"older continuity of $u_x$ for a solution $u$ to the variational equation]
\label{prop:HolderContinuity_ux}
Let $p > \max\{4,2+\beta\}$ and let $R_0$ be a positive constant. Then there are positive constants, $R_1=R_1(R_0) < R_0$, and $C=C(\Lambda,\nu_0,p,R_0)$, and
$\alpha=\alpha(A,p,R_0)\in(0,1)$ such that the following holds. Let $\sO\subseteqq\HH$ be a domain. If $f \in L^2(\sO,\fw)$, and $u \in H^1(\sO,\fw)$ satisfies the variational equation \eqref{eq:HestonVariationalEquation}, and $z_0 \in \partial_0\sO$ is such that
$
B_{R_0}(z_0) \cap \HH \subset \sO,
$
and
$
f_x \in L^p(B^+_{R_0}(z_0),y^{\beta-1}),
$
then
$
u_x \in C^\alpha_s(\bar B_{R_1}^+(z_0)),
$
and
\begin{equation}
\label{eq:HolderContinuity_ux}
\begin{aligned}
\|u_x\|_{C^\alpha_s(\bar B_{R_1}^+(z_0))} &\leq \left(\|f_x\|_{L^p(B^+_{R_0}(z_0),y^{\beta-1})} + \|f\|_{L^2(B^+_{R_0}(z_0),y^{\beta-1})} \right.
\\
&\qquad + \left. \|u\|_{L^2(B^+_{R_0}(z_0),y^{\beta-1})}\right).
\end{aligned}
\end{equation}
\end{prop}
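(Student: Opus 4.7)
The strategy is to bootstrap Theorem \ref{thm:MainContinuityInteriorL2RHSu} (the $C^\alpha_s$-regularity theorem for solutions to the variational equation \eqref{eq:HestonVariationalEquation}) by applying it to $u_x$ in place of $u$. The crucial input is Proposition \ref{prop:VarEqn_H1Dx}, which shows that $u_x$ itself satisfies the same variational equation, with $f$ replaced by $f_x$, on a slightly smaller half-ball. The argument is organized around three nested half-balls $B^+_{R_1}(z_0) \subset B^+_{R_2}(z_0) \subset B^+_{R_3}(z_0) \subset B^+_{R_0}(z_0)$, with $R_2$ and $R_3$ chosen to depend only on $R_0$, and $R_1$ dictated by Theorem \ref{thm:MainContinuityInteriorL2RHSu} applied at scale $R_2$.

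First, viewing $B^+_{R_3}(z_0)$ as the ambient domain (which is a subdomain of $\sO$ by hypothesis), I would invoke Proposition \ref{prop:VarEqn_H1Dx} with the subdomain $B^+_{R_2}(z_0)$, whose non-degenerate boundary lies at distance $R_3-R_2>0$ from that of $B^+_{R_3}(z_0)$. Since $p > 4 \geq 2$ and $B^+_{R_3}(z_0)$ is bounded, the hypothesis $f_x \in L^p(B^+_{R_0}(z_0), y^{\beta-1})$ gives $f_x \in L^2(B^+_{R_3}(z_0), \fw)$, so the proposition applies and yields $u_x \in H^1(B^+_{R_2}(z_0), \fw)$ satisfying
$$
\fa(u_x, v) = (f_x, v)_{L^2(B^+_{R_2}(z_0), \fw)}, \quad \forall v \in H^1_0(\underline{B^+_{R_2}(z_0)}, \fw),
$$
together with an a priori bound controlling $\|u_x\|_{H^1(B^+_{R_2}(z_0),\fw)}$ in terms of $\|f_x\|, \|f\|, \|u\|$ measured in $L^2(B^+_{R_3}(z_0),\fw)$.

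Next, I would treat $B^+_{R_2}(z_0)$ as the ambient domain in the hypotheses of Theorem \ref{thm:MainContinuityInteriorL2RHSu} and apply the theorem with $(u_x, f_x)$ playing the roles of $(u, f)$ and $R_2$ playing the role of $R_0$. The hypotheses are met: $z_0 \in \partial_0 B^+_{R_2}(z_0)$, the inclusion $\HH \cap B_{R_2}(z_0) \subset B^+_{R_2}(z_0)$ holds trivially, $u_x$ solves the variational equation on $B^+_{R_2}(z_0)$ by the previous step, and $f_x \in L^p(B^+_{R_2}(z_0), y^{\beta-1})$ by hypothesis. The theorem produces $R_1 = R_1(R_2) < R_2$, an exponent $\alpha \in (0,1)$ depending on $(\Lambda, \nu_0, p, R_2)$, and a constant $C$ such that $u_x \in C^\alpha_s(\bar B^+_{R_1}(z_0))$ with
$$
\|u_x\|_{C^\alpha_s(\bar B^+_{R_1}(z_0))} \leq C\bigl(\|f_x\|_{L^p(B^+_{R_2}(z_0), y^{\beta-1})} + \|u_x\|_{L^2(B^+_{R_2}(z_0))}\bigr).
$$
The second term on the right is absorbed into the desired right-hand side of \eqref{eq:HolderContinuity_ux} using the $H^1$-bound from the first step together with pointwise comparisons between the weights $1$, $\fw$, and $y^{\beta-1}$ on the bounded half-ball.

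The main difficulty I anticipate is the bookkeeping with weights and nested radii, and in particular ensuring that the final constant in \eqref{eq:HolderContinuity_ux} depends only on $(\Lambda, \nu_0, p, R_0)$ and \emph{not} on the base point $z_0 \in \partial_0 \sO$. This $z_0$-independence rests on the observation that, on $B_{R_0}(z_0)$, the factor $e^{-\gamma\sqrt{1+x^2} - \mu y}$ in the definition \eqref{eq:HestonWeight} of $\fw$ lies between two constant multiples of $e^{-\gamma\sqrt{1+x_0^2}}$, with multiplicative constants depending only on $R_0$, $\gamma$, and $\mu$; the $z_0$-dependent prefactor $e^{-\gamma\sqrt{1+x_0^2}}$ therefore appears symmetrically on both sides of any bound and cancels when one passes from $\fw$-weighted norms on $B^+_{R_3}(z_0)$ to the $y^{\beta-1}$-weighted norms on $B^+_{R_0}(z_0)$ that appear in \eqref{eq:HolderContinuity_ux}.
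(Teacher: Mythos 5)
Your proposal is correct and follows essentially the same route as the paper's own proof: apply Proposition \ref{prop:VarEqn_H1Dx} to transfer the variational equation from $u$ with source $f$ to $u_x$ with source $f_x$ on a smaller half-ball, then invoke Theorem \ref{thm:MainContinuityInteriorL2RHSu} for $(u_x, f_x)$, and finally trade between the $\fw$- and $y^{\beta-1}$-weighted norms using the boundedness of the exponential factor $e^{-\gamma\sqrt{1+x^2}-\mu y}$ on $B^+_{R_0}(z_0)$ so that the $z_0$-dependent prefactor cancels. The only cosmetic difference is your introduction of an extra intermediate radius $R_3$, where the paper simply uses $R_0$ itself as the ambient scale when applying Proposition \ref{prop:VarEqn_H1Dx}.
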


\begin{proof}
By hypothesis, $f_x \in L^2(B^+_{R_0}(z_0),\fw)$ since $p>2$ and so Proposition \ref{prop:VarEqn_H1Dx} implies that $u_x \in H^1(B^+_{R_2}(z_0),\fw)$, for any $R_2=R_2(R_0)$, to be determined, in the range $0<R_2<R_0$ and $u_x$ obeys
$$
\fa(u_x, v) = (f_x, v)_{L^2(B^+_{R_2}(z_0), \fw)}, \quad \forall v \in H^1_0(\underline{B}^+_{R_2}(z_0), \fw),
$$
and
$$
\|u_x\|_{H^1(B^+_{R_2}(z_0),\fw)} \leq C\left(\|f_x\|_{L^2(B^+_{R_0}(z_0),\fw)} + \|f\|_{L^2(B^+_{R_0}(z_0),\fw)} + \|u\|_{L^2(B^+_{R_0}(z_0),\fw)}\right).
$$
The conclusion $u_x \in C^\alpha_s(\bar B_{R_1}^+(z_0))$ and estimate, with $C=C(\Lambda,\nu_0,p,R_0)$, and $\alpha=\alpha(\Lambda,\nu_0,p,R_0)\in(0,1)$, and $R_1=R_1(R_2)<R_2$,
$$
\|u_x\|_{C^\alpha_s(\bar B_{R_1}^+(z_0))} \leq C\left(\|f_x\|_{L^p(B^+_{R_2}(z_0),y^{\beta-1})} + \|u_x\|_{L^2(B^+_{R_2}(z_0),y^{\beta-1})}\right),
$$
follow by applying Theorem \ref{thm:MainContinuityInteriorL2RHSu} to the variational equation for $u_x$ on $B^+_{R_2}(z_0)$. By definition \eqref{eq:HestonWeight} of $\fw$, we have
(see \cite[Section 5.1]{Feehan_Pop_higherregularityweaksoln_v1} for additional details)
\begin{align*}
\|u_x\|_{L^2(B^+_{R_2}(z_0),y^{\beta-1})}
&=  \left(\int_{B^+_{R_0}(z_0)} u_x^2 \ y^{\beta-1} \, dx\,dy \right)^{1/2}
\\
&\leq  Ce^{\frac{1}{2}\gamma |x_0|} \|u\|_{H^1(B^+_{R_2}(z_0),\fw)},
\end{align*}
for $C=C(\Lambda,\nu_0,R_2)$, a positive constant. Finally, we note that
\begin{align*}
\|f_x\|_{L^2(B^+_{R_0}(z_0),\fw)} &= \left(\int_{B^+_{R_0}(z_0)} f_x^2 \ y^{\beta-1}e^{-\gamma\sqrt{1+x^2}-\mu y} \, dx\,dy \right)^{1/2}
\\
&\leq Ce^{-\frac{1}{2}\gamma|x_0|}\|f_x\|_{L^p(B^+_{R_0}(z_0), y^{\beta-1})},
\end{align*}
where $C=C(p,R_0,R_2,\beta)$, and similarly for the terms $\|f\|_{L^2(B^+_{R_0}(z_0),\fw)}$ and $\|u\|_{L^2(B^+_{R_0}(z_0),\fw)}$. We may choose $R_2=R_0/2$ and combining the preceding inequalities yields \eqref{eq:HolderContinuity_ux}, with $C=C(\Lambda,\nu_0,p,R_0)$, a positive constant.
\end{proof}

Next, we have

\begin{prop} [Interior $C^{\alpha}_s$ H\"older continuity of $u_y$ for a solution $u$ to the variational equation]
\label{prop:HolderContinuity_uy}
Let $p > \max\{4,3+\beta\}$ and let $R_0$ be a positive constant. Then there are positive constants, $R_1=R_1(R_0) < R_0$, and $C=C(\Lambda,\nu_0,p,R_0)$, and
$\alpha=\alpha(\Lambda,\nu_0,p,R_0)\in(0,1)$, such that the following holds. Let $\sO\subseteqq\HH$ be a domain. If $f \in L^2(\sO,\fw)$, and $u \in H^1(\sO,\fw)$ satisfies the variational equation \eqref{eq:HestonVariationalEquation}, and $z_0 \in \partial_0\sO$ is such that
$
B_{R_0}(z_0) \cap \HH \subset \sO,
$
and $f$ obeys
\begin{equation}
\label{eq:Lsfxxfycondition_ball}
f_x, \ f_y, \ f_{xx} \in L^p(B^+_{R_0}(z_0),y^{\beta-1}),
\end{equation}
then
$
u_y \in C^\alpha_s(\bar B_{R_1}^+(z_0)),
$
and
\begin{equation}
\label{eq:HolderContinuity_uy}
\begin{aligned}
\|u_y\|_{C^\alpha_s(\bar B_{R_1}^+(z_0))}
&\leq C\left( \|f_{xx}\|_{L^p(B^+_{R_0}(z_0),y^{\beta-1})} + \|f\|_{W^{1,p}(B^+_{R_0}(z_0),y^{\beta-1})}  \right.
\\
&\qquad + \left. \|u\|_{L^2(B^+_{R_0}(z_0),y^{\beta-1})} \right).
\end{aligned}
\end{equation}
\end{prop}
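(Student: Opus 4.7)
The plan is to derive a variational equation satisfied by $u_y$ via Lemma \ref{lem:VarEqn_Dy} and then apply the analogue of Theorem \ref{thm:MainContinuityInteriorL2RHSu} for the shifted Heston operator $A_1$. Recall from Lemma \ref{lem:AlternativeDyAcommutator} that $A_1$ has the same structure as $A$, with only the constants $\theta, q, c_0, \beta$ replaced by $\theta_1, q_1, c_{0,1}, \beta_1 = \beta+1$; the ellipticity condition \eqref{eq:EllipticHeston} and the Feller condition $\kappa\theta_1 > 0$ are preserved. The numerical condition $p > \max\{4, 3+\beta\} = \max\{4, 2+\beta_1\}$ is precisely what is needed to invoke Theorem \ref{thm:MainContinuityInteriorL2RHSu} with $A$ replaced by $A_1$, $\fw$ by $\fw_1$, and $\beta$ by $\beta_1$.

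First, I would fix intermediate radii $R_1 < R_1' < R_1'' < R_2 < R_0$. By Theorem \ref{thm:H2BoundSolutionHestonVarEqnSubdomainInterior} and Theorem \ref{thm:HkSobolevRegularityInterior} (applied with $k=1$, using $f \in W^{1,2}(B^+_{R_0}(z_0),\fw)$ which follows from $f_x, f_y \in L^p(B^+_{R_0}(z_0), y^{\beta-1})$ on a bounded half-ball), we have $u \in \sH^3(B^+_{R_2}(z_0), \fw)$. In particular $u \in H^2(B^+_{R_2}(z_0),\fw)$ with $(1+y)u_{xx} \in L^2(B^+_{R_2}(z_0),\fw)$, so $u_{xx} \in L^2(B^+_{R_2}(z_0),\fw_1)$ since $y$ is bounded. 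These verify the hypotheses of Lemma \ref{lem:VarEqn_Dy} on $B^+_{R_2}(z_0)$, yielding
\begin{equation*}
\fa_1(u_y, v) = (f_y - Bu, v)_{L^2(B^+_{R_2}(z_0), \fw_1)}, \quad \forall v \in H^1_0(\underline{B}^+_{R_2}(z_0), \fw_1),
\end{equation*}
where $Bu = -\tfrac{1}{2}u_{xx} + \tfrac{1}{2}u_x$.

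Next, I would control $Bu$ by iterating Proposition \ref{prop:HolderContinuity_ux}. Proposition \ref{prop:HolderContinuity_ux}, applied directly to $u$ on $B^+_{R_0}(z_0)$ using $f_x \in L^p(y^{\beta-1})$, yields $u_x \in C^\alpha_s(\bar B^+_{R_1''}(z_0))$. By Proposition \ref{prop:VarEqn_H1Dx}, $u_x$ itself satisfies the variational equation with source $f_x$ on $B^+_{R_1''}(z_0)$; applying Proposition \ref{prop:HolderContinuity_ux} again, now using $f_{xx} \in L^p(y^{\beta-1})$, gives $u_{xx} \in C^\alpha_s(\bar B^+_{R_1'}(z_0))$ (possibly with a smaller $\alpha$). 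Hence $Bu \in C^\alpha_s(\bar B^+_{R_1'}(z_0))$, bounded, with its norm controlled by $\|f_{xx}\|_{L^p(y^{\beta-1})} + \|f_x\|_{L^p(y^{\beta-1})} + \|f\|_{L^2(y^{\beta-1})} + \|u\|_{L^2(y^{\beta-1})}$.

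Now apply the $A_1$-analogue of Theorem \ref{thm:MainContinuityInteriorL2RHSu} to the variational equation for $u_y$. The source term $f_y - Bu$ lies in $L^p(B^+_{R_1'}(z_0), y^{\beta_1-1}) = L^p(B^+_{R_1'}(z_0), y^\beta)$: indeed, $f_y \in L^p(y^{\beta-1}) \subset L^p(y^\beta)$ on the finite-height half-ball, and $Bu$ being bounded belongs to $L^p(y^\beta)$ since $\int y^\beta < \infty$ locally. This yields $u_y \in C^\alpha_s(\bar B^+_{R_1}(z_0))$ with
\begin{equation*}
\|u_y\|_{C^\alpha_s(\bar B^+_{R_1}(z_0))} \leq C\bigl(\|f_y - Bu\|_{L^p(B^+_{R_1'}(z_0), y^\beta)} + \|u_y\|_{L^2(B^+_{R_1'}(z_0), y^\beta)}\bigr).
\end{equation*}
The $L^2$ norm of $u_y$ is estimated via the $H^1$-bound of Proposition \ref{prop:VarEqn_H1Dx} together with the weight conversion identical to that performed at the end of the proof of Proposition \ref{prop:HolderContinuity_ux}, which absorbs the factors $e^{\pm\gamma|x_0|/2}$ from interchanging the weight $\fw$ with $y^{\beta-1}$. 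Combining all these estimates gives \eqref{eq:HolderContinuity_uy}.

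The main obstacle is the justification of invoking the $A_1$-version of Theorem \ref{thm:MainContinuityInteriorL2RHSu}; although the theorem is stated in \cite{Feehan_Pop_regularityweaksoln} only for the Heston operator $A$ with weight $\fw$ and parameter $\beta$, its proof uses only the ellipticity \eqref{eq:EllipticHeston}, the Feller condition, and the structural form of the weight, all of which transfer to $A_1$ (see also Remark \ref{rmk:HigherDimensions} for a closely related extension). The only other delicate point is verifying that the hypothesis $u_{xx} \in L^2(\fw_1)$ of Lemma \ref{lem:VarEqn_Dy} holds, which is handled via the $\sH^3$-regularity provided by Theorem \ref{thm:HkSobolevRegularityInterior}.
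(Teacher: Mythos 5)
Your proof is correct and follows the same overall architecture as the paper's: establish $u \in \sH^3$ to satisfy the hypotheses of Lemma \ref{lem:VarEqn_Dy}, obtain the variational equation $\fa_1(u_y,v) = (f_y - Bu, v)_{L^2(\fw_1)}$, and then apply the $A_1$-analogue of Theorem \ref{thm:MainContinuityInteriorL2RHSu}, observing that $p>\max\{4,3+\beta\}=\max\{4,2+\beta_1\}$ is exactly the threshold needed for the shifted operator $A_1$ and weight $\fw_1$. The only deviation is in how you verify that $Bu = -\frac12 u_{xx} + \frac12 u_x$ has the required $L^p(y^\beta)$-integrability: you apply Proposition \ref{prop:HolderContinuity_ux} twice, once to $u$ and once to $u_x$ (via Proposition \ref{prop:VarEqn_H1Dx}), to get $u_x, u_{xx} \in C^\alpha_s$ and hence bounded, whereas the paper applies the cheaper supremum estimate Theorem \ref{thm:MainSupremumEstimatesInterior} to the variational equations for $u_x$ and $u_{xx}$ (derived from Proposition \ref{prop:VarEqn_H1Dkx} with $k=1,2$) to obtain $u_x, u_{xx} \in L^\infty$ directly, without going through H\"older continuity. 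Both routes are valid and deliver the same bound; the paper's is slightly more economical since only boundedness is needed here, while your iteration of Proposition \ref{prop:HolderContinuity_ux} carries the H\"older seminorm as unneeded extra baggage.
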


\begin{proof}
Since $p>2$ and so $f\in W^{1,2}(B^+_{R_0}(z_0), y^{\beta-1})=W^{1,2}(B^+_{R_0}(z_0), \fw)$ by hypothesis, Theorem \ref{thm:H3SobolevRegularityInterior} ensures that $u\in\sH^3(B^+_{R_0/2}(z_0), \fw)$. By Definition \ref{defn:HkWeightedSobolevSpaceNormPowery}, we therefore have $u \in H^2(B^+_{R_0/2}(z_0),\fw)$ and $u_{xx} \in L^2(B^+_{R_0/2}(z_0),\fw_1)$ and so from Lemma \ref{lem:VarEqn_Dy}, we obtain that
$
u_y \in H^1(B^+_{R_0/2}(z_0), \fw_1),
$
and $u_y$ obeys the variational equation,
$$
\fa_1(u_y, v) = (f_y - Bu, v)_{L^2(B^+_{R_0/2}(z_0), \fw_1)}, \quad \forall v \in H^1_0(\underline{B}^+_{R_0/2}(z_0), \fw_1).
$$
We note that the preceding variational equation continues to hold on $B^+_{R_2}(z_0)$, for any $R_2$ in the range $0<R_2\leq R_0/2$ and still to be determined. To apply Theorem \ref{thm:MainContinuityInteriorL2RHSu} to the preceding variational equation and conclude that $u_y \in C^\alpha_s(\bar B_{R_1}^+(z_0))$ for some
$R_1=R_1(R_2)<R_2$ and, for a positive constant $C=C(\Lambda,\nu_0,p,R_2)$,
\begin{equation}
\label{eq:HolderContinuity_uyprelim}
\begin{aligned}
\|u_y\|_{C^\alpha_s(\bar B_{R_1}^+(z_0))} &\leq C\left( \|f_y\|_{L^p(B^+_{R_2}(z_0),y^\beta)}
+ \|u_x\|_{L^p(B^+_{R_2}(z_0),y^\beta)} + \|u_{xx}\|_{L^p(B^+_{R_2}(z_0),y^\beta)} \right.
\\
&\qquad + \left. \|u_y\|_{L^2(B^+_{R_2}(z_0),y^\beta)} \right),
\end{aligned}
\end{equation}
we need $f_y-Bu$ to obey the integrability condition \eqref{eq:Lsfcondition_ball} obeyed by $f$, with $\beta+1$ and $R_2$ in place of $\beta$ and $R_0$, respectively. In other words, $u$ must obey
\begin{equation}
\label{eq:SufficientConditionMainContinuityInterioruy}
u_x, \ u_{xx} \in L^p(B^+_{R_2}(z_0),y^\beta),
\end{equation}
while our hypothesis \eqref{eq:Lsfxxfycondition_ball} on $f$ ensures that $f_y \in L^p(B^+_{R_2}(z_0),y^{\beta-1}) \subset L^p(B^+_{R_2}(z_0),y^\beta)$. For the condition \eqref{eq:SufficientConditionMainContinuityInterioruy} on $u$, it is enough to show that $u_x, \ u_{xx} \in L^\infty(B^+_{R_2}(z_0))$.

Since $f_x, f_{xx} \in L^2(B^+_{R_0}(z_0),y^{\beta-1})$ by hypothesis, Proposition \ref{prop:VarEqn_H1Dkx} (with $k=1,2$) implies that $u_x, \ u_{xx}\in H^1_0(B^+_{R_0/2}(z_0), \fw)$ and that they obey
\begin{align*}
\fa(u_x, v) &= (f_x, v)_{L^2(B^+_{R_0/2}(z_0), \fw)},
\\
\fa(u_{xx}, v) &= (f_{xx}, v)_{L^2(B^+_{R_0/2}(z_0), \fw)},
\quad \forall v \in H^1_0(\underline{B}^+_{R_0/2}(z_0), \fw).
\end{align*}
Also, because $f_x, f_{xx} \in L^p(B^+_{R_0}(z_0),y^{\beta-1})$ by hypothesis, we can apply Theorem \ref{thm:MainSupremumEstimatesInterior}
to the preceding variational equations to give a positive constant $R_3=R_3(R_0)<R_0/2$, such that $u_x, u_{xx} \in L^\infty(B^+_{R_3}(z_0))$ and
\begin{align}
\label{eq:LinftyBoundux}
\|u_x\|_{L^\infty(B^+_{R_3}(z_0))} &\leq C\left(\|f_x\|_{L^p(B^+_{R_0/2}(z_0), y^{\beta-1})} + \|u_x\|_{L^2(B^+_{R_0/2}(z_0), y^{\beta-1})}\right),
\\
\label{eq:LinftyBounduxx}
\|u_{xx}\|_{L^\infty(B^+_{R_3}(z_0))} &\leq C\left(\|f_{xx}\|_{L^p(B^+_{R_0/2}(z_0), y^{\beta-1})} + \|u_{xx}\|_{L^2(B^+_{R_0/2}(z_0), y^{\beta-1})}\right).
\end{align}
We now choose $R_2=R_3$ and observe that condition \eqref{eq:SufficientConditionMainContinuityInterioruy} holds and the estimate \eqref{eq:HolderContinuity_uyprelim} is justified with
\begin{equation}
\label{eq:uxuxxLpLinfty}
\begin{aligned}
\|u_x\|_{L^p(B^+_{R_3}(z_0), y^{\beta-1})} &\leq C\|u_x\|_{L^\infty(B^+_{R_3}(z_0))},
\\
\|u_{xx}\|_{L^p(B^+_{R_3}(z_0), y^{\beta-1})} &\leq C\|u_{xx}\|_{L^\infty(B^+_{R_3}(z_0))},
\end{aligned}
\end{equation}
where $C=C(\beta,p,R_3)=(\int_{B^+_{R_3}(z_0)}y^{\beta-1}\,dx\,dy)^{1/p}$. The definition \eqref{eq:HkWeightedSobolevSpaceNormPowery} of $\sH^3(\sO, \fw)$ and the definition \eqref{eq:HestonWeight} of $\fw$ imply that
\begin{align*}
{}&\|u_y\|_{L^2(B^+_{R_0/2}(z_0), y^{\beta-1})} + \|u_x\|_{L^2(B^+_{R_0/2}(z_0), y^{\beta-1})} + \|u_{xx}\|_{L^2(B^+_{R_0/2}(z_0), y^{\beta-1})}
\\
&\leq Ce^{\frac{\gamma}{2}|x_0|}\left(\|u_y\|_{L^2(B^+_{R_0/2}(z_0), \fw)} + \|u_x\|_{L^2(B^+_{R_0/2}(z_0), \fw)} + \|u_{xx}\|_{L^2(B^+_{R_0/2}(z_0), \fw)}\right)
\\
&\leq Ce^{\frac{\gamma}{2}|x_0|}\|u\|_{\sH^3(B^+_{R_0/2}(z_0), \fw)},
\end{align*}
where the positive constant, $C=C(\Lambda,\nu_0,R_0)$, and the factor, $e^{\frac{\gamma}{2}|x_0|}$, arise just as in the proof of Proposition \ref{prop:HolderContinuity_ux}. Thus, by Theorem \ref{thm:H3SobolevRegularityInterior},
\begin{align*}
{}&\|u_y\|_{L^2(B^+_{R_0/2}(z_0), y^{\beta-1})} + \|u_x\|_{L^2(B^+_{R_0/2}(z_0), y^{\beta-1})} + \|u_{xx}\|_{L^2(B^+_{R_0/2}(z_0), y^{\beta-1})}
\\
&\leq Ce^{\frac{\gamma}{2}|x_0|}\left(\|f\|_{W^{1,2}(B^+_{R_0}(z_0), \fw)} + \|u\|_{L^2(B^+_{R_0}(z_0), \fw)}\right)
\\
&\leq Ce^{\frac{\gamma}{2}|x_0|}e^{-\frac{\gamma}{2}|x_0|}\left(\|f\|_{W^{1,2}(B^+_{R_0}(z_0), y^{\beta-1})} + \|u\|_{L^2(B^+_{R_0}(z_0), y^{\beta-1})}\right)
\\
&\leq C\left(\|f\|_{W^{1,p}(B^+_{R_0}(z_0), y^{\beta-1})} + \|u\|_{L^2(B^+_{R_0}(z_0), y^{\beta-1})}\right),
\end{align*}
where the positive constant, $C=C(\Lambda,\nu_0,p,R_0))$, and the factor, $e^{-\frac{\gamma}{2}|x_0|}$, arise just as in the proof of Proposition \ref{prop:HolderContinuity_ux}. The estimate \eqref{eq:HolderContinuity_uy} is obtained by combining the preceding inequality with \eqref{eq:HolderContinuity_uyprelim}, \eqref{eq:LinftyBoundux}, and \eqref{eq:LinftyBounduxx}, and \eqref{eq:uxuxxLpLinfty}.
\end{proof}

We may combine Propositions \ref{prop:HolderContinuity_ux} and \ref{prop:HolderContinuity_uy} to give

\begin{prop} [Interior $C^{\alpha}_s$ H\"older continuity of $Du$ for a solution $u$ to the variational equation]
\label{prop:HolderContinuity_Du}
Let $p > \max\{4, 3+\beta\}$ and let $R_0$ be a positive constant. Then there are positive constants, $R_1=R_1(R_0) < R_0$ and $C=C(\Lambda,\nu_0,p,R_0)$ and $\alpha=\alpha(\Lambda,\nu_0,p,R_0)\in(0,1)$, such that the following holds. Let $\sO\subseteqq\HH$ be a domain. If $f \in L^2(\sO,\fw)$ and $u \in H^1(\sO,\fw)$ satisfies the variational equation \eqref{eq:HestonVariationalEquation}, and $z_0 \in \partial_0\sO$ is such that
$
B_{R_0}(z_0) \cap \HH \subset \sO,
$
and $f$ obeys \eqref{eq:Lsfxxfycondition_ball}, then
$
u_x, \ u_y \in C^\alpha_s(\bar B_{R_1}^+(z_0)),
$
and
\begin{equation}
\label{eq:HolderContinuity_Du}
\begin{aligned}
\|Du\|_{C^\alpha_s(\bar B_{R_1}^+(z_0))}
&\leq
C\left( \|f_{xx}\|_{L^p(B^+_{R_0}(z_0),y^{\beta-1})} + \|f\|_{W^{1,p}(B^+_{R_0}(z_0),y^{\beta-1})}  \right.
\\
&\qquad + \left. \|u\|_{L^2(B^+_{R_0}(z_0),y^{\beta-1})} \right).
\end{aligned}
\end{equation}
\end{prop}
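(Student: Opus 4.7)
The plan is straightforward: Proposition \ref{prop:HolderContinuity_Du} is essentially a combination of the two preceding propositions, so the proof reduces to applying them simultaneously and reconciling the resulting constants, radii, and H\"older exponents.

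First, I would observe that the hypotheses of both Propositions \ref{prop:HolderContinuity_ux} and \ref{prop:HolderContinuity_uy} are satisfied. Indeed, \eqref{eq:Lsfxxfycondition_ball} gives $f_x \in L^p(B^+_{R_0}(z_0), y^{\beta-1})$, which is precisely the hypothesis needed in Proposition \ref{prop:HolderContinuity_ux}, and \eqref{eq:Lsfxxfycondition_ball} exactly supplies the hypothesis of Proposition \ref{prop:HolderContinuity_uy}. The standing assumption $p > \max\{4, 3+\beta\}$ is stronger than the assumption $p > \max\{4, 2+\beta\}$ of Proposition \ref{prop:HolderContinuity_ux} and matches that of Proposition \ref{prop:HolderContinuity_uy}, so both results apply with the same $p$.

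Next, denote by $R_1^{(x)}, \alpha^{(x)}, C^{(x)}$ the constants produced by Proposition \ref{prop:HolderContinuity_ux} and by $R_1^{(y)}, \alpha^{(y)}, C^{(y)}$ those produced by Proposition \ref{prop:HolderContinuity_uy}. Set
\[
R_1 := \min\{R_1^{(x)}, R_1^{(y)}\}, \qquad \alpha := \min\{\alpha^{(x)}, \alpha^{(y)}\}, \qquad C := C^{(x)} + C^{(y)}.
\]
Because $B_{R_1}^+(z_0) \subset B_{R_1^{(x)}}^+(z_0) \cap B_{R_1^{(y)}}^+(z_0)$ and because $C^{\alpha'}_s \subset C^{\alpha}_s$ whenever $\alpha \leq \alpha'$ on a set of bounded cycloidal diameter (with the embedding constant depending only on $R_0$, via the fact that $s(z_1,z_2) \leq C(R_0)$ on $\bar B_{R_1}^+(z_0)$), the estimates \eqref{eq:HolderContinuity_ux} and \eqref{eq:HolderContinuity_uy} both hold on $\bar B_{R_1}^+(z_0)$ with H\"older exponent $\alpha$. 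Note that the right-hand side of \eqref{eq:HolderContinuity_ux} is controlled by the right-hand side of \eqref{eq:HolderContinuity_Du}, since $\|f_x\|_{L^p} + \|f\|_{L^2}$ is bounded by $\|f\|_{W^{1,p}(B_{R_0}^+(z_0), y^{\beta-1})}$ (again using that the measure of $B_{R_0}^+(z_0)$ with respect to $y^{\beta-1}\,dx\,dy$ is finite, so $L^p \hookrightarrow L^2$ with constant depending only on $R_0$ and $\beta$).

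Finally, adding the two resulting estimates gives
\[
\|u_x\|_{C^\alpha_s(\bar B_{R_1}^+(z_0))} + \|u_y\|_{C^\alpha_s(\bar B_{R_1}^+(z_0))}
\leq C\bigl( \|f_{xx}\|_{L^p(B^+_{R_0}(z_0),y^{\beta-1})} + \|f\|_{W^{1,p}(B^+_{R_0}(z_0),y^{\beta-1})} + \|u\|_{L^2(B^+_{R_0}(z_0),y^{\beta-1})} \bigr),
\]
which is exactly \eqref{eq:HolderContinuity_Du} after noting that $\|Du\|_{C^\alpha_s} \leq \|u_x\|_{C^\alpha_s} + \|u_y\|_{C^\alpha_s}$ up to a universal factor. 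No genuine obstacle arises: the proof is purely bookkeeping, with the only mild care required in tracking the dependence of constants on $R_0$ (rather than $R_1$) and in reconciling $L^p$ with $L^2$ norms on sets of finite weighted volume.
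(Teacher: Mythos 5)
Your proof is correct and follows the same approach as the paper, which simply combines Propositions \ref{prop:HolderContinuity_ux} and \ref{prop:HolderContinuity_uy}. You have merely spelled out the bookkeeping of reconciling the radii, H\"older exponents (noting correctly that the two exponents can differ, since the second proposition works with the bilinear form $\fa_1$ rather than $\fa$), and the $L^p$-versus-$W^{1,p}$ norms on the right-hand side, all of which the paper leaves implicit.
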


\begin{proof}
The conclusion $u_x,\ u_y \in C^\alpha_s(\bar B_{R_1}^+(z_0))$ and estimate \eqref{eq:HolderContinuity_Du} follow from Propositions \ref{prop:HolderContinuity_ux} and \ref{prop:HolderContinuity_uy}.
\end{proof}

Finally, we may combine Theorems \ref{thm:MainContinuityInteriorL2RHSu} and \ref{thm:H3SobolevRegularityInterior} and Proposition \ref{prop:HolderContinuity_Du} to give

\begin{thm} [Interior $C^{1,\alpha}_s$ H\"older continuity for a solution $u$ to the variational equation]
\label{thm:HolderContinuity_Du}
Let $p > \max\{4, 3+\beta\}$ and let $R_0$ be a positive constant. Then there are positive constants, $R_1=R_1(R_0) < R_0$ and $C=C(\Lambda,\nu_0,p,R_0)$ and $\alpha=\alpha(\Lambda,\nu_0,p,R_0)\in(0,1)$ such that the following holds. Let $\sO\subseteqq\HH$ be a domain. If $f \in L^2(\sO,\fw)$, and $u \in H^1(\sO,\fw)$ satisfies the variational equation \eqref{eq:HestonVariationalEquation}, and $z_0 \in \partial_0\sO$ is such that
$
B_{R_0}(z_0) \cap \HH \subset \sO,
$
and $f$ obeys \eqref{eq:Lsfcondition_ball} and \eqref{eq:Lsfxxfycondition_ball}, then
$
u \in C^{1,\alpha}_s(\bar B_{R_1}^+(z_0)),
$
and
\begin{equation}
\label{eq:HolderSobolevContinuity_C1alphasu}
\begin{aligned}
\|u\|_{C^{1,\alpha}_s(\bar B_{R_1}^+(z_0))}
&\leq
C\left(\|f_{xx}\|_{L^p(B^+_{R_0}(z_0),y^{\beta-1})} + \|f\|_{W^{1,p}(B^+_{R_0}(z_0),y^{\beta-1})} \right.
\\
&\qquad + \left. \|u\|_{L^2(B^+_{R_0}(z_0),y^{\beta-1})}\right).
\end{aligned}
\end{equation}
\end{thm}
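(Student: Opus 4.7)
The plan is to assemble the theorem by combining a H\"older estimate for $u$ itself with the gradient H\"older estimate of Proposition \ref{prop:HolderContinuity_Du}. Recall that, by Definition \ref{defn:DHspaces} with $k=1$,
\[
\|u\|_{C^{1,\alpha}_s(\bar B_{R_1}^+(z_0))} = \|u\|_{C^\alpha_s(\bar B_{R_1}^+(z_0))} + \|Du\|_{C^\alpha_s(\bar B_{R_1}^+(z_0))},
\]
so it suffices to estimate each of the two terms on the right by the right-hand side of \eqref{eq:HolderSobolevContinuity_C1alphasu}.

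First, because $f$ satisfies the integrability condition \eqref{eq:Lsfcondition_ball} from the hypothesis, Theorem \ref{thm:MainContinuityInteriorL2RHSu} applies: it produces a radius $R_1'=R_1'(R_0)<R_0$, a constant $C_1 = C_1(\Lambda,\nu_0,p,R_0)$, and an exponent $\alpha_1 = \alpha_1(\Lambda,\nu_0,p,R_0) \in (0,1)$ such that $u \in C^{\alpha_1}_s(\bar B_{R_1'}^+(z_0))$ with
\[
\|u\|_{C^{\alpha_1}_s(\bar B_{R_1'}^+(z_0))} \leq C_1 \left(\|f\|_{L^p(B^+_{R_0}(z_0),y^{\beta-1})} + \|u\|_{L^2(B^+_{R_0}(z_0),y^{\beta-1})}\right).
\]

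Second, since $f$ also satisfies the stronger condition \eqref{eq:Lsfxxfycondition_ball}, Proposition \ref{prop:HolderContinuity_Du} applies and yields a radius $R_1''=R_1''(R_0)<R_0$, a constant $C_2 = C_2(\Lambda,\nu_0,p,R_0)$, and an exponent $\alpha_2 = \alpha_2(\Lambda,\nu_0,p,R_0) \in (0,1)$ such that $Du \in C^{\alpha_2}_s(\bar B_{R_1''}^+(z_0))$, satisfying the estimate \eqref{eq:HolderContinuity_Du}. (Internally, Proposition \ref{prop:HolderContinuity_Du} rests on Propositions \ref{prop:HolderContinuity_ux} and \ref{prop:HolderContinuity_uy}, which in turn use Theorem \ref{thm:H3SobolevRegularityInterior} to establish the $\sH^3$-regularity needed to invoke the variational equation \eqref{eq:IntroHestonWeakMixedProblemHomogeneous_Dy} for $u_y$, and Theorems \ref{thm:MainContinuityInteriorL2RHSu} and \ref{thm:MainSupremumEstimatesInterior} to convert regularity of $u_x$ and $u_{xx}$ into pointwise bounds.)

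To conclude, set $R_1 := \min\{R_1', R_1''\}$ and $\alpha := \min\{\alpha_1,\alpha_2\}$. The embedding $C^{\alpha_i}_s(\bar U) \embed C^\alpha_s(\bar U)$ with constant depending only on $\diam(U)$ (immediate from the definition \eqref{eq:CalphasSeminorm} and the estimate $s(z_1,z_2)\leq \diam(U)^{(\alpha_i-\alpha)/\alpha_i}s(z_1,z_2)^{\alpha/\alpha_i}$ applied on the cycloidal-bounded set $\bar B_{R_1}^+(z_0)$), together with the monotonicity $\bar B_{R_1}^+(z_0) \subset \bar B_{R_1'}^+(z_0) \cap \bar B_{R_1''}^+(z_0)$, allows both displayed estimates above to be written with the common exponent $\alpha$. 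Adding them and observing that the right-hand side of \eqref{eq:HolderContinuity_Du} dominates the right-hand side of the first display (since $\|f\|_{L^p} \leq \|f\|_{W^{1,p}}$ and the $L^2$ term is the same) yields \eqref{eq:HolderSobolevContinuity_C1alphasu} with a constant $C = C(\Lambda,\nu_0,p,R_0)$. The main (minor) obstacle is the bookkeeping needed to synchronize the two H\"older exponents and radii produced by the separate theorems; no new estimate is required.
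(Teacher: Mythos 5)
Your proof is correct and follows essentially the same route as the paper: decompose $\|u\|_{C^{1,\alpha}_s}$ into $\|u\|_{C^\alpha_s}+\|Du\|_{C^\alpha_s}$, estimate the first via Theorem \ref{thm:MainContinuityInteriorL2RHSu} and the second via Proposition \ref{prop:HolderContinuity_Du}, and add. The only difference is that you make the synchronization of the two H\"older exponents and radii explicit (taking minima and invoking the elementary $C^{\alpha_i}_s\hookrightarrow C^\alpha_s$ embedding on a cycloidally bounded set), whereas the paper leaves this bookkeeping implicit.
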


\begin{proof}
Since $f, \ f_x, \ f_y \in L^p(B^+_{R_0}(z_0), y^{\beta-1}) \subset L^2(B^+_{R_0}(z_0), \fw)$, Theorem \ref{thm:H3SobolevRegularityInterior} implies that $u \in \sH^3(B_{R_2}^+(z_0),\fw)$, for any $0 < R_2 < R_0$. By applying Theorem \ref{thm:MainContinuityInteriorL2RHSu} and Proposition \ref{prop:HolderContinuity_Du}, with $R_2$ in place of $R_0$, we obtain $u \in C^{1,\alpha}_s(\bar B_{R_1}^+(z_0))$ for some $R_1<R_2$ and, say, $R_2=R_0/2$. The inequality \eqref{eq:HolderSobolevContinuity_C1alphasu} is obtained by combining \eqref{eq:MainContinuity3L2RHSu}
and \eqref{eq:HolderContinuity_Du}.
\end{proof}

\subsection{H\"older regularity for higher-order derivatives of solutions to the variational equation}
\label{subsec:HolderRegularityHigherOrder}
We first give an extension of Proposition \ref{prop:HolderContinuity_ux} from the case $k=1$ to arbitrary $k\geq 1$.

\begin{prop} [Interior $C^{\alpha}_s$ H\"older continuity of higher-order derivatives with respect to $x$ for a solution to the variational equation]
\label{prop:HolderContinuity_ukx}
Let $p>\max\{4, 2+\beta\}$, let $R_0$ be a positive constant, and let $k\geq 1$ be an integer. Then there are positive constants, $R_1=R_1(R_0) < R_0$, and $C=C(\Lambda,\nu_0,k,p,R_0)$, and $\alpha=\alpha(\Lambda,\nu_0,p,R_0)\in(0,1)$ such that the following holds. Let $\sO\subseteqq\HH$ be a domain. If $f \in L^2(\sO,\fw)$, and $u \in H^1(\sO,\fw)$ satisfies the variational equation \eqref{eq:HestonVariationalEquation}, and $z_0 \in \partial_0\sO$ is such that
$
B_{R_0}(z_0) \cap \HH \subset \sO,
$
and
$D_x^j f \in L^p(B^+_{R_0}(z_0),y^{\beta-1})$, for all $1 \leq j \leq k$, then
$
D_x^k u \in C^\alpha_s(\bar B_{R_1}^+(z_0)),
$
and
\begin{equation}
\label{eq:HolderContinuity_ukx}
\|D_x^k u\|_{C^\alpha_s(\bar B_{R_1}^+(z_0))}
\leq
C\left(\sum_{j=0}^k\|D_x^j f\|_{L^p(B^+_{R_0}(z_0),y^{\beta-1})}  + \|u\|_{L^2(B^+_{R_0}(z_0),y^{\beta-1})}\right).
\end{equation}
\end{prop}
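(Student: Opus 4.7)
The plan is to reduce this to Theorem \ref{thm:MainContinuityInteriorL2RHSu} applied to $w := D_x^k u$, exploiting the fact that $[D_x, A] = 0$ (since the coefficients of $A$ in \eqref{eq:OperatorHestonIntro} are $x$-independent), so that differentiating \eqref{eq:HestonVariationalEquation} with respect to $x$ preserves its structure. This is the higher-$k$ version of the two-step strategy used to prove Proposition \ref{prop:HolderContinuity_ux}.

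First, since $D_x^j f \in L^p(B_{R_0}^+(z_0), y^{\beta-1}) \subset L^2(B_{R_0/2}^+(z_0), \fw)$ for $1 \leq j \leq k$ (H\"older's inequality with $p > 2$ together with the fact that on $B_{R_0/2}^+(z_0)$ the ratio $\fw/y^{\beta-1}$ is bounded by $C e^{-\gamma|x_0|/2}$ and its reciprocal by $C e^{\gamma|x_0|/2}$), Proposition \ref{prop:VarEqn_H1Dkx} applied on an auxiliary half-ball $B_{R_2}^+(z_0)$ with $R_2 = R_0/2$ yields $D_x^k u \in H^1(B_{R_2}^+(z_0), \fw)$, the variational equation
\[
\fa(D_x^k u, v) = (D_x^k f, v)_{L^2(B_{R_2}^+(z_0), \fw)}, \qquad \forall\, v \in H^1_0(\underline{B}_{R_2}^+(z_0), \fw),
\]
and the interior $H^1$-estimate
\[
\|D_x^k u\|_{H^1(B_{R_2}^+(z_0), \fw)} \leq C \Bigl(\sum_{j=0}^k \|D_x^j f\|_{L^2(B_{R_0}^+(z_0), \fw)} + \|u\|_{L^2(B_{R_0}^+(z_0), \fw)}\Bigr),
\]
with $C = C(\Lambda, \nu_0, k, R_0)$.

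Second, apply Theorem \ref{thm:MainContinuityInteriorL2RHSu} to $w := D_x^k u$, viewed as a solution of the above variational equation on $B_{R_2}^+(z_0)$ with source $D_x^k f \in L^p(B_{R_2}^+(z_0), y^{\beta-1})$. This produces $R_1 = R_1(R_2) < R_2$, $\alpha = \alpha(A, p, R_0) \in (0,1)$, and a constant $C$, with
\[
\|D_x^k u\|_{C^\alpha_s(\bar B_{R_1}^+(z_0))} \leq C \Bigl(\|D_x^k f\|_{L^p(B_{R_2}^+(z_0), y^{\beta-1})} + \|D_x^k u\|_{L^2(B_{R_2}^+(z_0), y^{\beta-1})}\Bigr).
\]
Convert the weights as in the proof of Proposition \ref{prop:HolderContinuity_ux}: the factor $e^{\gamma|x_0|/2}$ picked up when passing $\|D_x^k u\|_{L^2(B_{R_2}^+(z_0), y^{\beta-1})}$ to $\|D_x^k u\|_{L^2(B_{R_2}^+(z_0), \fw)}$ is cancelled by the factor $e^{-\gamma|x_0|/2}$ arising when one passes each $\|D_x^j f\|_{L^2(B_{R_0}^+(z_0), \fw)}$ to $\|D_x^j f\|_{L^p(B_{R_0}^+(z_0), y^{\beta-1})}$ (using $p > 2 + \beta$). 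Chaining these estimates together with the $H^1$-bound from the first step yields \eqref{eq:HolderContinuity_ukx} with a constant $C$ depending only on $\Lambda, \nu_0, k, p, R_0$.

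The argument is essentially a bookkeeping exercise once Proposition \ref{prop:VarEqn_H1Dkx} is in hand, so there is no serious analytical obstacle beyond what was already overcome there; the only mild subtlety is to verify that the constant can be taken independent of the center $z_0 \in \partial_0 \sO$, which works because the Koch-style weighted norm $L^p(\cdot, y^{\beta-1})$ is translation-invariant in $x$ and the exponential factors introduced in converting between $\fw$ and $y^{\beta-1}$ cancel cleanly, exactly as in the base case $k = 1$.
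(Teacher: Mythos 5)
Your proposal follows exactly the paper's own two-step argument: first invoke Proposition \ref{prop:VarEqn_H1Dkx} to obtain the variational equation and $H^1$-estimate for $D_x^k u$ on an auxiliary half-ball, then apply Theorem \ref{thm:MainContinuityInteriorL2RHSu} to $D_x^k u$ as the new unknown, and finally convert between the $\fw$ and $y^{\beta-1}$ weights with the cancelling factors $e^{\pm\gamma|x_0|/2}$ exactly as in the base case Proposition \ref{prop:HolderContinuity_ux}. The reasoning, including the choice $R_2 = R_0/2$ and the observation that $[D_x,A]=0$ makes the inductive structure trivial, matches the paper's proof.
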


\begin{proof}
The argument is similar to the proof of Proposition \ref{prop:HolderContinuity_ux}. By hypothesis, $D_x^j f \in L^2(B^+_{R_0}(z_0),\fw)$, $1\leq j\leq k$, since $p>2$ and so Proposition \ref{prop:VarEqn_H1Dkx} implies that $D_x^k u \in H^1(B^+_{R_2}(z_0),\fw)$, for any $R_2=R_2(R_0)$, to be determined, in the range $0<R_2<R_0$, and that $D_x^k u$ obeys
$$
\fa(D_x^k u, v) = (D_x^k f, v)_{L^2(B^+_{R_2}(z_0), \fw)}, \quad \forall v \in H^1_0(\underline{B}^+_{R_2}(z_0), \fw),
$$
and, for a positive constant, $C=C(\Lambda,\nu_0,k,R_0)$,
$$
\|D_x^k u\|_{H^1(B^+_{R_2}(z_0),\fw)} \leq C\left(\sum_{j=0}^k\|D_x^j f\|_{L^2(B^+_{R_0}(z_0), \fw)} + \|u\|_{L^2(B^+_{R_0}(z_0),\fw)}\right).
$$
The conclusion $D_x^k u \in C^\alpha_s(\bar B_{R_1}^+(z_0))$ and estimate, with $C=C(\Lambda,\nu_0,p,R_0)$, and\break
$\alpha=\alpha(\Lambda,\nu_0,p,R_0)\in(0,1)$, and
$R_1=R_1(R_2)<R_2$ (recall that $R_2=R_2(R_0)$),
$$
\|D_x^k u\|_{C^\alpha_s(\bar B_{R_1}^+(z_0))} \leq C\left(\|D_x^k f\|_{L^p(B^+_{R_2}(z_0),y^{\beta-1})} + \|D_x^k u\|_{L^2(B^+_{R_2}(z_0),y^{\beta-1})}\right),
$$
follow by applying Theorem \ref{thm:MainContinuityInteriorL2RHSu} to the variational equation for $D_x^k u$ on $B^+_{R_2}(z_0)$. As in the proof of Proposition \ref{prop:HolderContinuity_ux}, we have
\begin{align*}
\|D_x^k u\|_{L^2(B^+_{R_2}(z_0),y^{\beta-1})} &\leq Ce^{\frac{\gamma}{2}|x_0|}\|D_x^k u\|_{L^2(B^+_{R_2}(z_0),\fw)}
\\
&\leq Ce^{\frac{\gamma}{2}|x_0|}\|D_x^k u\|_{H^1(B^+_{R_2}(z_0),\fw)},
\end{align*}
where $C=C(\Lambda,\nu_0,R_2)$ is a positive constant. We may choose $R_2=R_0/2$ and observe that, by combining the preceding inequalities, we obtain \eqref{eq:HolderContinuity_ukx}, with $C=C(\Lambda,\nu_0,k,p,R_0)$, noting that the factor, $e^{\frac{\gamma}{2}|x_0|}$, cancels just as in the proof of Proposition \ref{prop:HolderContinuity_ux}.
\end{proof}

The extension of Propositions \ref{prop:HolderContinuity_ukx} and \ref{prop:HolderContinuity_uy} to the case of derivatives of the form $D_x^{k-m}D_y^m u$, when $1\leq m\leq k$, is best illustrated by an example when $k=2$ and $m=0,1,2$. The case $k=2, m=0$ is given by Proposition \ref{prop:HolderContinuity_ukx}, so
$$
\|u_{xx}\|_{C^\alpha_s(\bar B_{R_1}^+(z_0))}
\leq
C\left(\sum_{j=0}^2\|D_x^j f\|_{L^p(B^+_{R_0}(z_0),y^{\beta-1})} + \|u\|_{L^2(B^+_{R_0}(z_0),y^{\beta-1})}\right).
$$
For $k=2, m=1$, the pattern of proof of Proposition \ref{prop:HolderContinuity_uy} shows that a $C^\alpha_s$ estimate of $u_{xy}$ requires an $L^p$-bound on $f_{xy}$, $u_{xx}$, and $u_{xxx}$, thus an additional $L^\infty$-bound on $u_{xxx}$, and hence an additional $L^p$ bound on $f_{xxx}$, to give
\begin{align*}
\|u_{xy}\|_{C^\alpha_s(\bar B_{R_1}^+(z_0))}
&\leq
C\left(\sum_{i=0}^3\|D_x^i f\|_{L^p(B^+_{R_0}(z_0),y^{\beta-1})} + \|f_{xy}\|_{L^p(B^+_{R_0}(z_0),y^{\beta-1})} \right.
\\
&\qquad + \left. \|u\|_{L^2(B^+_{R_0}(z_0),y^{\beta-1})}\right),
\end{align*}
and thus the following will suffice,
$$
\|u_{xy}\|_{C^\alpha_s(\bar B_{R_1}^+(z_0))}
\leq
C\left(\|f\|_{W^{3,p}(B^+_{R_0}(z_0),y^{\beta-1})} + \|u\|_{L^2(B^+_{R_0}(z_0),y^{\beta-1})}\right).
$$
For $k=2, m=2$, the pattern of proof of Proposition \ref{prop:HolderContinuity_uy} shows that a $C^\alpha_s$ estimate of $u_{yy}$ requires an $L^p$-bound on $f_{yy}$, $u_{xy}$, and $u_{xxy}$, thus $L^\infty$-bounds on $u_{xy}$ and $u_{xxy}$, hence additional $L^p$ bounds on $f_{xxy}$, $f_{xxxx}$, to give
\begin{align*}
\|u_{yy}\|_{C^\alpha_s(\bar B_{R_1}^+(z_0))}
&\leq
C\left(\sum_{j=0}^4\|D_x^j f\|_{L^p(B^+_{R_0}(z_0),y^{\beta-1})} + \sum_{i=1}^2\|D_x^i f_y\|_{L^p(B^+_{R_0}(z_0),y^{\beta-1})} \right.
\\
&\qquad + \left. \|f_{yy}\|_{L^p(B^+_{R_0}(z_0),y^{\beta-1})} + \|u\|_{L^2(B^+_{R_0}(z_0),y^{\beta-1})}\right),
\end{align*}
and thus the following will suffice,
$$
\|u_{yy}\|_{C^\alpha_s(\bar B_{R_1}^+(z_0))}
\leq
C\left(\|f\|_{W^{4,p}(B^+_{R_0}(z_0),y^{\beta-1})} + \|u\|_{L^2(B^+_{R_0}(z_0),y^{\beta-1})}\right).
$$
The preceding examples motivate the statement of the following combined extension of Propositions \ref{prop:HolderContinuity_uy} and \ref{prop:HolderContinuity_ukx}.

\begin{prop} [Interior $C^{\alpha}_s$ H\"older continuity of higher-order derivatives of a solution to the variational equation]
\label{prop:HolderContinuity_ukxy}
Let $R_0$ be a positive constant, let $m, k$ be integers with $k\geq 1$ and $0\leq m\leq k$, and let $p > \max\{4, 2+m+\beta\}$. Then there are positive constants, $R_1=R_1(m,R_0) < R_0$, and $C=C(\Lambda,\nu_0,k,m,p,R_0)$, and $\alpha=\alpha(\Lambda,\nu_0,m,p,R_0)\in(0,1)$, such that the following holds. Let $\sO\subseteqq\HH$ be a domain. If $f \in L^2(\sO,\fw)$, and $u \in H^1(\sO,\fw)$ satisfies the variational equation \eqref{eq:HestonVariationalEquation}, and $z_0 \in \partial_0\sO$ is such that
$
B_{R_0}(z_0) \cap \HH \subset \sO,
$
and
$
f \in W^{k+m,p}(B^+_{R_0}(z_0),y^{\beta-1}),
$
then
$
D_x^{k-m}D_y^m u \in C^\alpha_s(\bar B_{R_1}^+(z_0)),
$
and
\begin{equation}
\label{eq:HolderContinuity_ukxy}
\|D_x^{k-m}D_y^m u\|_{C^\alpha_s(\bar B_{R_1}^+(z_0))}
\leq
C\left(\|f\|_{W^{k+m,p}(B^+_{R_0}(z_0),y^{\beta-1})} + \|u\|_{L^2(B^+_{R_0}(z_0),y^{\beta-1})}\right).
\end{equation}
\end{prop}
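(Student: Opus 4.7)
The plan is to induct on $m$, with Proposition~\ref{prop:HolderContinuity_ukx} providing the base case $m=0$ (for every $k\geq 1$). For the inductive step, I fix $k$ and $m\geq 1$ and assume the conclusion for every pair $(k',m')$ with $m'\leq m-1$. The key observation driving the argument is that the operator $A_m$ of Lemma~\ref{lem:AlternativeDyAcommutator} has the same Heston form as $A$ (with shifted constants $\theta_m,q_m,c_{0,m}$ and exponent $\beta_m=\beta+m$), and the weight $\fw_m=y^{\beta_m-1}e^{-\gamma\sqrt{1+x^2}-\mu y}$ is the corresponding Heston weight, so every estimate of Sections~\ref{sec:ReviewHolderContinuity}--\ref{sec:SobolevRegularity} applies \emph{verbatim} to $A_m$ with weight $\fw_m$ (if needed, after the affine normalization ensuring $b_1^{(m)}=0$ in the analog of Assumption~\ref{assump:HestonCoefficientb1}).

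First I would verify the hypotheses of Proposition~\ref{prop:VarEqn_Dkxy} on $B_{R_2}^+(z_0)$ for an intermediate radius $R_2\in(0,R_0)$: since $f\in W^{k+m,p}(B_{R_0}^+(z_0),y^{\beta-1})\subset W^{k,2}(B_{R_2}^+(z_0),\fw)$ on the bounded half-ball, Theorem~\ref{thm:HkSobolevRegularityInterior} gives $u\in\sH^{k+2}(B_{R_2}^+(z_0),\fw)$, and Proposition~\ref{prop:VarEqn_H1Dkx} supplies the auxiliary condition $D_x^k u\in H^1(B_{R_2}^+(z_0),\fw)$ needed in the case $m=1$. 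Proposition~\ref{prop:VarEqn_Dkxy} then shows that $w:=D_x^{k-m}D_y^m u\in H^1(B_{R_2}^+(z_0),\fw_m)$ satisfies
\begin{equation*}
\fa_m(w,v)=(g,v)_{L^2(B_{R_2}^+(z_0),\fw_m)},\quad\forall v\in H^1_0(\underline{B_{R_2}^+(z_0)},\fw_m),
\end{equation*}
with source $g:=D_x^{k-m}D_y^m f-mBD_x^{k-m}D_y^{m-1}u$. Applying Theorem~\ref{thm:MainContinuityInteriorL2RHSu} to this variational equation with $A_m,\fw_m,\beta_m$ in place of $A,\fw,\beta$---for which the required condition $p>\max\{4,2+\beta_m\}=\max\{4,2+m+\beta\}$ is exactly the standing hypothesis---yields an $R_1<R_2$ and $\alpha\in(0,1)$ such that
\begin{equation*}
\|w\|_{C^\alpha_s(\bar B_{R_1}^+(z_0))}\leq C\bigl(\|g\|_{L^p(B_{R_2}^+(z_0),y^{\beta+m-1})}+\|w\|_{L^2(B_{R_2}^+(z_0),y^{\beta+m-1})}\bigr).
\end{equation*}

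Estimating the right-hand side is the heart of the argument. The source contribution $D_x^{k-m}D_y^m f$ is bounded in $L^p(y^{\beta+m-1})$ by $\|f\|_{W^{k+m,p}(B_{R_0}^+(z_0),y^{\beta-1})}$ via the pointwise comparison $y^{\beta+m-1}\leq Cy^{\beta-1}$ on the bounded half-ball. The contribution $BD_x^{k-m}D_y^{m-1}u$ involves only $D_x^{k-m+1}D_y^{m-1}u$ and $D_x^{k-m+2}D_y^{m-1}u$, which by the induction hypothesis applied to the pairs $(k,m-1)$ and $(k+1,m-1)$---both requiring only $f\in W^{k+m,p}$, since $(k+1)+(m-1)=k+m$---belong to $C^{\alpha'}_s(\bar B_{R'_1}^+(z_0))\subset L^\infty$ with norms controlled by $\|f\|_{W^{k+m,p}(y^{\beta-1})}+\|u\|_{L^2(y^{\beta-1})}$. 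Finally, $\|w\|_{L^2(B_{R_2}^+(z_0),y^{\beta+m-1})}$ is dominated by $\|u\|_{\sH^{k+2}(B_{R_2}^+(z_0),\fw)}$ through Definition~\ref{defn:HkWeightedSobolevSpaceNormPowery} together with a weight comparison on the bounded half-ball, and Theorem~\ref{thm:HkSobolevRegularityInterior} then bounds the latter by $\|f\|_{W^{k,2}(B_{R_0}^+(z_0),\fw)}+\|u\|_{L^2}$, all of which is dominated by the right-hand side of \eqref{eq:HolderContinuity_ukxy}.

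The principal obstacle is the weight bookkeeping: each $y$-differentiation shifts the Sobolev weight from $\fw$ to $\fw_m$ and the effective Heston exponent from $\beta$ to $\beta+m$, and one must track the precise weight in every intermediate estimate while confirming that the H\"older regularity and Sobolev regularity machinery of Sections~\ref{sec:ReviewHolderContinuity}--\ref{sec:SobolevRegularity} transfers along the entire family $\{A_m\}$. The induction setup must also allow the $x$-derivative count $k$ to grow---the step $(k,m)\mapsto(k+1,m-1)$, forced by the two $x$-derivatives hidden inside $B=-\tfrac{1}{2}D_{xx}+\tfrac{1}{2}D_x$, is precisely what dictates the regularity hypothesis $f\in W^{k+m,p}$ in the statement.
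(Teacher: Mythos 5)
Your proposal is essentially the paper's own argument: induction on the number of $y$-derivatives $m$, with base case $m=0$ from Proposition~\ref{prop:HolderContinuity_ukx}, passage to the variational equation for $D_x^{k-m}D_y^m u$ via Proposition~\ref{prop:VarEqn_Dkxy}, application of Theorem~\ref{thm:MainContinuityInteriorL2RHSu} to the shifted operator $A_m$ with weight $\fw_m$ and exponent $\beta_m=\beta+m$, and use of the inductive hypothesis at $(k,m-1)$ and $(k+1,m-1)$ to furnish the $L^\infty$ bounds on $D_x^{k-m+1}D_y^{m-1}u$ and $D_x^{k-m+2}D_y^{m-1}u$ appearing in the commutator term $mB D_x^{k-m}D_y^{m-1}u$. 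You have also correctly identified the two points the paper treats as the technical core: that $R_1$ and $\alpha$ depend on $m$ through the chain of radius shrinkages and through the coefficients of $A_m$, and that the degree $k+m$ in the hypothesis $f\in W^{k+m,p}$ is forced precisely by the $(k,m)\mapsto(k+1,m-1)$ step coming from the two $x$-derivatives in $B$.
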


\begin{proof}
For arbitrary $\ell\geq 1$ and $f \in W^{\ell,p}(B^+_{R_0}(z_0),y^{\beta-1})$, Proposition \ref{prop:HolderContinuity_ukx} already implies that
$$
\|D^\ell_x u\|_{C^\alpha_s(\bar B_{R_3}^+(z_0))}
\leq
C\left(\|f\|_{W^{\ell,p}(B^+_{R_0}(z_0),y^{\beta-1})} + \|u\|_{L^2(B^+_{R_0}(z_0),y^{\beta-1})}\right),
$$
for some $R_2=R_2(R_0)$, and hence that \eqref{eq:HolderContinuity_ukxy} holds when $m=0$, so we may assume without loss of generality that $m\geq 1$ in our proof of Proposition \ref{prop:HolderContinuity_ukxy}. Therefore, to establish \eqref{eq:HolderContinuity_ukxy}, it suffices to consider the inductive step $(k,m-1)\implies (k,m)$ (one extra derivative with respect to $y$), assuming
\begin{equation}
\label{eq:HolderSobolevContinuityDkuInductiveHypothesis}
\begin{gathered}
\|D^{\ell-n}_xD_y^n u\|_{C^\alpha_s(\bar B_{R_2}^+(z_0))}
\leq
C\left(\|f\|_{W^{\ell+n,p}(B^+_{R_0}(z_0),y^{\beta-1})} + \|u\|_{L^2(B^+_{R_0}(z_0),y^{\beta-1})}\right),
\\
\hbox{for all }\ell \geq 1, \quad\hbox{all $n$ such that } 0\leq n \leq m-1, \quad\hbox{and } 1\leq m \leq k,
\end{gathered}
\end{equation}
where $R_2=R_2(m-1,R_0)$ (we point out the origin of the dependence on $m$ further along in the proof).
The proof of this inductive step follows the pattern of proof of Proposition \ref{prop:HolderContinuity_uy}.

By our hypotheses on $f$, Theorem \ref{thm:HkSobolevRegularityInterior} implies that
$$
u \in \sH^{k+2+m}(B^+_{R_2}(z_0),\fw) \subset \sH^{k+3}(B^+_{R_2}(z_0),\fw),
$$
(since we assume $m\geq 1$ for the inductive step), for any $R_2$ in the range $0<R_2<R_0$ and to be determined, and that
\begin{equation}
\label{eq:Hk+3Boundu}
\|u\|_{\sH^{k+3}(B^+_{R_2}(z_0),\fw)} \leq C\left(\|f\|_{W^{k+1,2}(B^+_{R_0}(z_0),\fw)} + \|u\|_{L^2(B^+_{R_0}(z_0), \fw)}\right).
\end{equation}
We have $D_x^{k-m}D_y^m u \in H^1(B^+_{R_2}(z_0),\fw_m)$ by Definition \ref{defn:HkWeightedSobolevSpaceNormPowery} of $\sH^{k+3}(B^+_{R_2}(z_0),\fw)$, since $u \in \sH^{k+3}(B^+_{R_2}(z_0),\fw)$ implies
\begin{align*}
D_x^{k+1-m}D_y^m u, \quad D_x^{k-m}D_y^m u
&\in \begin{cases} L^2(B^+_{R_2}(z_0),\fw_{m-1}), &2\leq m\leq k, \\ L^2(B^+_{R_2}(z_0),\fw), &m=1, \end{cases}
\\
D_x^{k-m}D_y^{m+1} u &\in L^2(B^+_{R_2}(z_0),\fw_m), \quad 1\leq m\leq k,
\end{align*}
and we have $D_x^k u \in H^1(B^+_{R_2}(z_0), \fw)$ by Proposition \ref{prop:VarEqn_H1Dkx}. Proposition \ref{prop:VarEqn_Dkxy} then ensures that $D_x^{k-m}D_y^mu$ obeys the variational equation on $B^+_{R_2}(z_0)$,
\begin{align*}
\fa_m(D_x^{k-m}D_y^m u, v) &= (D_x^{k-m}D_y^m f, v)_{L^2(B^+_{R_2}(z_0),\fw_m)}
\\
&\quad - m(BD_x^{k-m}D_y^{m-1} u,v)_{L^2(B^+_{R_2}(z_0),\fw_m)},
\end{align*}
for all $v \in H^1_0(\underline{B}^+_{R_2}(z_0),\fw_m)$. By hypothesis, $D_x^{k-m}D_y^m f \in L^p(B^+_{R_2}(z_0),y^{\beta+m-1})$, and \emph{provided} we also know that $BD_x^{k-m}D_y^{m-1} \in L^p(B^+_{R_2}(z_0),y^{\beta+m-1})$, that is,
\begin{equation}
\label{eq:SufficientConditionMainContinuityInteriorDk+2u}
\begin{aligned}
D_x^{k+1-m}D_y^{m-1} u &\in L^p(B^+_{R_2}(z_0),y^{\beta+m-1}),
\\
D_x^{k+2-m}D_y^{m-1} u &\in L^p(B^+_{R_2}(z_0),y^{\beta+m-1}), \quad 1\leq m\leq k,
\end{aligned}
\end{equation}
we can apply Theorem \ref{thm:MainContinuityInteriorL2RHSu} to the variational equation for $D_x^{k-m}D_y^m u$ and conclude that $D_x^{k-m}D_y^m u \in C^\alpha_s(\bar B_{R_1}^+(z_0))$, for some\footnote{The dependence on $m$ appears in this step.} $R_1=R_1(R_2)$, obeying $R_1<R_2$, and
\begin{equation}
\label{eq:CalphasBoundDkmxmyu_prelim}
\begin{aligned}
\|D_x^{k-m}D_y^m u\|_{C^\alpha_s(\bar B_{R_1}^+(z_0))} &\leq C\left(\|D_x^{k-m}D_y^m f\|_{L^p(B^+_{R_2}(z_0),y^{\beta+m-1})} \right.
\\
&\qquad + \sum_{j=1}^2\|D_x^{k+j-m}D_y^{m-1} u\|_{L^p(B^+_{R_2}(z_0), y^{\beta+m-1})}
\\
&\qquad + \left. \|D_x^{k-m}D_y^m u\|_{L^2(B^+_{R_2}(z_0), y^{\beta+m-1})} \right), \quad 1\leq m \leq k.
\end{aligned}
\end{equation}
It is important to note that the H\"older exponent, $\alpha$, in \eqref{eq:CalphasBoundDkmxmyu_prelim} depends on the coefficients defining the bilinear map, $\fa_m$, that is, on the coefficients of $A_m$ and thus on the coefficients of $A$ and on $m$, the number of derivatives with respect to $y$, and this is why we write $\alpha=\alpha(\Lambda,\nu_0,m,p,R_0)$ in the statement of Proposition \ref{prop:HolderContinuity_ukxy}. The integrability conditions \eqref{eq:SufficientConditionMainContinuityInteriorDk+2u} are implied by
\begin{equation}
\label{eq:LinftyConditionMainContinuityInteriorDk+2u}
\begin{aligned}
D_x^{k+1-m}D_y^{m-1} u &\in L^\infty(B^+_{R_2}(z_0)),
\\
D_x^{k+2-m}D_y^{m-1} u &\in L^\infty(B^+_{R_2}(z_0)), \quad 1\leq m\leq k.
\end{aligned}
\end{equation}
Note that $D_x^{k+1-m}D_y^{m-1} u = D_x^{k-(m-1)}D_y^{m-1} u$ and $D_x^{k+2-m}D_y^{m-1} u = D_x^{k+1-(m-1)}D_y^{m-1} u$ and the properties \eqref{eq:LinftyConditionMainContinuityInteriorDk+2u} hold
by the inductive hypothesis \eqref{eq:HolderSobolevContinuityDkuInductiveHypothesis}. Therefore, the inductive hypothesis \eqref{eq:HolderSobolevContinuityDkuInductiveHypothesis} gives
\begin{gather}
\label{eq:LinftyInteriorDkuBound}
\begin{aligned}
{}&\|D_x^{k+1-m}D_y^{m-1} u\|_{L^\infty(B^+_{R_2}(z_0))}
\\
&\quad \leq C\left(\|f\|_{W^{k+m-1,p}(B^+_{R_0}(z_0),y^{\beta-1})} + \|u\|_{L^2(B^+_{R_0}(z_0),y^{\beta-1})}\right),
\end{aligned}
\\
\label{eq:LinftyInteriorDk+1uBound}
\begin{aligned}
{}&\|D_x^{k+2-m}D_y^{m-1} u\|_{L^\infty(B^+_{R_2}(z_0))}
\\
&\quad \leq C\left(\|f\|_{W^{k+m,p}(B^+_{R_0}(z_0),y^{\beta-1})} + \|u\|_{L^2(B^+_{R_0}(z_0),y^{\beta-1})}\right),
\end{aligned}
\end{gather}
for $1\leq m\leq k$. Hence, the integrability conditions for the derivatives of $u$ in \eqref{eq:SufficientConditionMainContinuityInteriorDk+2u} are satisfied, and the estimate \eqref{eq:CalphasBoundDkmxmyu_prelim} holds.

Applying the Definition \ref{defn:HkWeightedSobolevSpaceNormPowery} of $\sH^{k+3}(B^+_{R_2}(z_0), \fw)$ and the $L^\infty$ estimates \eqref{eq:LinftyInteriorDkuBound} and \eqref{eq:LinftyInteriorDk+1uBound} for the derivatives of $u$, the inequality \eqref{eq:CalphasBoundDkmxmyu_prelim} yields
\begin{equation}
\label{eq:CalphasBoundDkmxmyu}
\|D_x^{k-m}D_y^m u\|_{C^\alpha_s(\bar B_{R_1}^+(z_0))} \leq C\left(\|f\|_{W^{k+m,p}(B^+_{R_0}(z_0),y^{\beta-1})}
+ \|u\|_{\sH^{k+3}(B^+_{R_2}(z_0), \fw)} \right).
\end{equation}
Combining inequalities \eqref{eq:Hk+3Boundu} and \eqref{eq:CalphasBoundDkmxmyu} completes the proof.
\end{proof}

Theorem \ref{thm:HolderContinuity_Dku_Interior} now follows easily, extending Theorem \ref{thm:HolderContinuity_Du} from the case $k=1$ to any $k\geq 1$.

\begin{proof}[Proof of Theorem \ref{thm:HolderContinuity_Dku_Interior}]
When $k=0$ or $1$, then the conclusion follows from Theorems \ref{thm:MainContinuityInteriorL2RHSu} or \ref{thm:HolderContinuity_Du}, respectively, so we may assume that $k\geq 2$ and, by induction, that the conclusion holds for $k-1$ in place of $k$. Since
\begin{equation}
\label{eq:InductiveCkalphasBound}
\|u\|_{C^{k,\alpha}_s(\bar B_{R_1}^+(z_0))} = \|D^ku\|_{C^\alpha_s(\bar B_{R_1}^+(z_0))} + \|u\|_{C^{k-1,\alpha}_s(\bar B_{R_1}^+(z_0))},
\end{equation}
by Definition \ref{defn:DHspaces}, it suffices to show that $D^{k-m}_xD_y^m u \in C^\alpha_s(\bar B_{R_1}^+(z_0))$, for $0\leq m\leq k$, but this inclusion and estimate are given by Proposition \ref{prop:HolderContinuity_ukxy}.
\end{proof}

We may combine Theorem \ref{thm:HolderContinuity_Dku_Interior} with standard results from \cite{GilbargTrudinger} for linear, second-order, elliptic differential equations to give a \emph{weak} version of Theorem \ref{thm:CkalphasHolderContinuityDomain} which will, nonetheless, provide a useful stepping stone to the proof of Theorem \ref{thm:CkalphasHolderContinuityDomain} itself. Although their statements appear similar, Proposition \ref{prop:CkalphasHolderContinuityDomain_Precompact} is nevertheless strictly weaker than Theorem \ref{thm:CkalphasHolderContinuityDomain}, despite the more relaxed hypothesis on $f$ because, in the former case, $\alpha=\alpha(\Lambda,\nu_0,d_1,k,p)$ depends on the choice of precompact subdomain, $\sO''\Subset\underline\sO$, through the constant $d_1$ whereas in the latter case, $\alpha=\alpha(\Lambda,\nu_0,k,p)$ is \emph{independent} of the choice of precompact subdomain, $\sO''\Subset\underline\sO$.

\begin{prop} [Interior $C^{k,\alpha}_s$ regularity on subdomains]
\label{prop:CkalphasHolderContinuityDomain_Precompact}
Let $k\geq 0$ be an integer, let $d_1<\Upsilon$ be positive constants, and let $p>\max\{4,2+k+\beta\}$. Then there are positive constants
$\alpha=\alpha(\Lambda,\nu_0,d_1,k,p)\in(0,1)$ and $C=C(\Lambda,\nu_0,k,d_1,\Upsilon,p)$ such that the following holds. If $f \in L^2(\sO,\fw)$ and $u \in H^1(\sO,\fw)$ is a solution to the variational equation \eqref{eq:HestonVariationalEquation}, and $f \in W^{2k,p}_{\loc}(\underline\sO,\fw)$, and $\sO''\subset\sO$ is a subdomain such that
$\sO''\Subset\underline\sO$ with $\sO''\subset(-\Upsilon,\Upsilon)\times(0,\Upsilon)$ and $\dist(\partial_1\sO'', \partial_1\sO) \geq d_1$, then
$$
u \in C^{k,\alpha}_s(\underline\sO'')\cap C^{\alpha}_s(\bar\sO'').
$$
Moreover, $u$ solves \eqref{eq:IntroBoundaryValueProblem} on $\sO''$ and if $\sO'\subset\sO''$ is a subdomain with $\sO'\Subset\underline\sO''$ and $\dist(\partial_1\sO', \partial_1\sO'') \geq d_1$, then
\begin{equation}
\label{eq:CkalphasHolderContinuityDomain_Precompact}
\|u\|_{C^{k,\alpha}_s(\bar\sO')} \leq C\left(\|f\|_{W^{2k,p}(\sO'',\fw)} + \|u\|_{L^2(\sO'',\fw)}\right).
\end{equation}
\end{prop}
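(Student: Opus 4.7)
The strategy is to combine Theorem \ref{thm:HolderContinuity_Dku_Interior}, which handles $C^{k,\alpha}_s$ regularity on half-balls abutting $\partial_0\sO$, with standard interior $L^p$ and Schauder theory from \cite{GilbargTrudinger} applied to $A$ regarded as a uniformly elliptic operator on compact subsets of $\sO$ bounded away from $\partial\HH$. A finite covering argument then yields the global estimate \eqref{eq:CkalphasHolderContinuityDomain_Precompact}, using that on sets bounded away from $\{y=0\}$ the Koch cycloidal metric $s$ is bi-Lipschitz equivalent to the Euclidean metric, and that on the bounded set $\sO''$ the weights $\fw$ and $y^{\beta-1}$ are equivalent up to multiplicative constants depending only on $\Upsilon$ (since the factor $e^{-\gamma\sqrt{1+x^2}-\mu y}$ in $\fw$ is then bounded above and below by positive constants).

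First fix $R_0 = R_0(d_1) > 0$ small enough that $B_{R_0}(z)\cap\HH \subset \sO$ whenever $z \in \overline{\partial_0\sO}\cap\bar\sO''$ (possible since $\dist(\partial_1\sO'',\partial_1\sO)\geq d_1$), let $R_1 = R_1(k,R_0)$ be the radius produced by Theorem \ref{thm:HolderContinuity_Dku_Interior}, and set $\eps := R_1/2$. Partition $\bar\sO'$ into the subset $S_0$ of points at Euclidean distance $<\eps$ from $\partial\HH$ and $S_1 := \bar\sO' \setminus S_0$. Each $z \in S_0$ lies in the half-ball $B_{R_1}^+(z')$, where $z'$ is the nearest point of $\partial\HH$ to $z$, and $z' \in \partial_0\sO$; each $z \in S_1$ lies in an open Euclidean ball $B_r(\zeta) \Subset \sO \cap \{y > \eps/2\}$ for some $r = r(\eps, d_1) > 0$. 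By compactness of $\bar\sO'$, finitely many such half-balls $\{B_{R_1}^+(z_i)\}_{i\in I}$ and Euclidean balls $\{B_r(\zeta_j)\}_{j \in J}$ cover $\bar\sO'$, with $|I| + |J|$ bounded in terms of $d_1, \Upsilon$.

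On each half-ball $B_{R_1}^+(z_i)$, Theorem \ref{thm:HolderContinuity_Dku_Interior} yields $u \in C^{k,\alpha_1}_s(\bar B_{R_1}^+(z_i))$ together with the estimate of the right-hand side of \eqref{eq:HolderSobolevContinuity_Dku}, for some $\alpha_1 = \alpha_1(\Lambda,\nu_0,k,p,R_0) \in (0,1)$. On each interior Euclidean ball $B_r(\zeta_j) \Subset \sO \cap \{y > \eps/2\}$, the operator $A$ is uniformly elliptic with smooth coefficients (ellipticity constants depending only on $\eps$), so an iterated application of \cite[Theorem 9.11]{GilbargTrudinger} --- initialized by the $H^2_{\loc}(\underline\sO,\fw)$ regularity furnished by Theorem \ref{thm:H2BoundSolutionHestonVarEqnSubdomainInterior} --- gives $u \in W^{k+2,p}(B_{r/2}(\zeta_j))$, and Morrey's embedding (using $p > 2$) then yields $u \in C^{k+1,\alpha_2}(\bar B_{r/2}(\zeta_j))$ with $\alpha_2 = 1 - 2/p$. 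Since $\bar B_{r/2}(\zeta_j)$ lies at positive Euclidean distance from $\partial\HH$, on this set $s$ is bi-Lipschitz equivalent to the Euclidean metric, so $u \in C^{k,\alpha_2}_s(\bar B_{r/2}(\zeta_j))$ with comparable norm.

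Set $\alpha := \min(\alpha_1,\alpha_2) \in (0,1)$, depending only on $A, d_1, k, p$. Summing the local estimates over the finite covering yields \eqref{eq:CkalphasHolderContinuityDomain_Precompact}. Applying the same argument along an increasing family of subdomains exhausting $\underline\sO''$ gives $u \in C^{k,\alpha}_s(\underline\sO'')$; the claim $u \in C^\alpha_s(\bar\sO'')$ then follows because the remaining portion of $\partial\sO''$ lies inside $\sO$ and is handled by the same standard strictly elliptic theory. Finally, $u$ solves \eqref{eq:IntroBoundaryValueProblem} a.e.\ on $\sO''$ because $u \in H^2_{\loc}(\underline\sO,\fw)$ by Theorem \ref{thm:H2BoundSolutionHestonVarEqnSubdomainInterior}, so \cite[Lemma 2.29]{Daskalopoulos_Feehan_statvarineqheston} converts the variational equation to the pointwise equation. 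The principal obstacle is purely the bookkeeping across two weights and two metrics in assembling the covering estimates; no essentially new analytic ingredient is required beyond those already developed in \S\ref{sec:SobolevRegularity} and \S\ref{subsec:HolderRegularityHigherOrder}.
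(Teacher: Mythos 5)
Your proposal is correct and takes essentially the same route as the paper's own proof: cover $\bar\sO'$ by half-balls $B_{R_1}^+$ centered on $\partial_0\sO$ (where Theorem \ref{thm:HolderContinuity_Dku_Interior} gives $C^{k,\alpha}_s$ regularity and the estimate \eqref{eq:HolderSobolevContinuity_Dku}) and by interior Euclidean balls $B_r \Subset \sO$ (where standard $L^p$ elliptic theory, bootstrapped from the $H^2_{\loc}$ regularity of \S\ref{sec:H2Regularity}, yields $u \in W^{k+2,p}(B_r)\hookrightarrow C^{k+1}(\bar B_r)$), then sum over the finite cover after normalizing the weights on the bounded set $\sO''$. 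The only superficial difference is that you invoke iterated \cite[Theorem 9.11]{GilbargTrudinger} while the paper cites \cite[Theorems 9.11 \& 9.19]{GilbargTrudinger} for the same bootstrapping chain, and you explicitly observe the bi-Lipschitz equivalence of $s$ with the Euclidean metric away from $\{y=0\}$ where the paper simply uses the embedding $C^{k+1}(\bar B)\hookrightarrow C^{k,\alpha}_s(\bar B)$; these are cosmetic distinctions, not different arguments.
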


\begin{proof}
Choose $R_0=d_1/2$ and let $R_1=R_1(k,R_0)<R_0$ be defined by Theorem \ref{thm:HolderContinuity_Dku_Interior}. Since $\sO''\subset(-\Upsilon,\Upsilon)\times(0,\Upsilon)$ and the rectangle is covered by balls, $B_{R_1}(z_l)\subset\RR^2$, with a finite sequence of centers $\{z_l\}\subset [-\Upsilon,\Upsilon]\times[0,\Upsilon]$ on rectangular grid with square cells of width $R_1$. We may now choose finite subsequences of points, $\{z_{0,i}\}\subset\{z_l\}\cap\partial_0\sO$ and $\{z_{1,j}\}\subset\{z_l\}\cap\sO$, such that
$$
\bar\sO'' \subset \bigcup_{i,j} B_{R_1}^+(z_{0,i}) \cup B_{R_1}(z_{1,j}),
$$
where $\HH\cap B_{R_0}(z_{0,i}) \subset \sO$ for all $i$ and $B_{R_0}(z_{1,j})\Subset\sO$ for all $j$. Let
$\alpha=\alpha(\Lambda,\nu_0,k,p,R_0)=\alpha(\Lambda,\nu_0,d_1,k,p)\in(0,1)$ be the constant defined by Theorem \ref{thm:HolderContinuity_Dku_Interior}.

According to \cite[Theorem 9.19]{GilbargTrudinger}, for each $r>0$ and ball $B_r\Subset\sO$ of radius $r$ and integer $k\geq 0$, we have $u \in W^{k+2,p}(B_r)$, since $f\in W^{2k,p}_{\loc}(\underline\sO)$ by hypothesis and, in particular, $f\in W^{k,p}_{\loc}(\sO)$. By the Sobolev embedding \cite[Theorem 5.4 (C${}'$)]{Adams_1975}, since $p>2$, there is a continuous embedding, $W^{k+2,p}(B_r)\hookrightarrow C^{k+1}(\bar B_r)$. Thus, $u \in C^{k,\alpha}(\sO)$ because, a fortiori,
\begin{equation}
\label{eq:uCk+1alphaB}
u \in C^{k+1}(\bar B_r), \quad \forall B_r\Subset\sO.
\end{equation}
Moreover, letting $B_{r/2}\subset B_{3r/4} \subset B_r$ denote concentric balls,
\begin{equation}
\label{eq:uCk+1lambdaBestimate}
\|u\|_{C^{k+1}(\bar B_{r/2})} \leq C\left(\|f\|_{W^{k,p}(B_r)} + \|u\|_{L^2(B_r)}\right),
\end{equation}
where $C=C(\Lambda,\nu_0,k,p,r)$ is a positive constant, since\footnote{The estimate \eqref{eq:uCk+1lambdaBestimate} also follows from \cite[Corollary 6.3 and Theorem 6.17]{GilbargTrudinger}.}
\begin{align*}
\|u\|_{C^{k+1}(\bar B_{r/2})} &\leq C\|u\|_{W^{k+2,p}(B_{r/2})}
\quad\hbox{(by $p>2$ and \cite[Theorem 5.4 (C${}'$)]{Adams_1975})}
\\
&\leq C\left(\|f\|_{W^{k,p}(B_{3r/4})} + \|u\|_{L^p(B_{3r/4})}\right) \quad\hbox{(by \cite[Theorems 9.11 and 9.19]{GilbargTrudinger})}
\\
&\leq C\left(\|f\|_{W^{k,p}(B_{3r/4})} + \|u\|_{W^{1,2}(B_{3r/4})}\right) \quad\hbox{(by $p>2$ and \cite[Theorem 5.4 (B)]{Adams_1975})}
\\
&\leq C\left(\|f\|_{W^{k,p}(B_r)} + \|u\|_{L^2(B_r)}\right) \quad\hbox{(by $p>2$ and \cite[Exercise 8.2]{GilbargTrudinger})}.
\end{align*}
The conclusion $u \in C^{k,\alpha}_s(\underline\sO'')\cap C(\bar\sO'')$ now follows from Theorem \ref{thm:HolderContinuity_Dku_Interior}.

For the estimate \eqref{eq:CkalphasHolderContinuityDomain_Precompact} of $u$ over $\sO'\Subset\underline\sO''$, observe that, since $\dist(\partial_1\sO', \partial_1\sO'')\geq d_1$, the closure $\bar\sO'$ is covered by finitely many half-balls, $B_{R_0}^+(z_{0,i})$ with $\HH\cap B_{R_0}(z_{0,i})\Subset\underline\sO''$, and balls, $B_{R_0}(z_{1,j})\Subset\underline\sO''$, where the total number of balls and half-balls of radius $R_0=R_0(d_1)$ is determined by $d_1$ and $\Upsilon$. We obtain \eqref{eq:CkalphasHolderContinuityDomain_Precompact} by applying \eqref{eq:HolderSobolevContinuity_Dku} to each half-ball $B_{R_0}^+(z_{0,i}) \subset \sO$ and applying \eqref{eq:uCk+1lambdaBestimate} to each ball $B_{R_0}(z_{1,j})\Subset\sO$, noting that, by definition \eqref{eq:HestonWeight} of $\fw$,
\begin{equation}
\label{eq:NontranslationalInvariantWeightAdjustment}
\|f\|_{W^{k,p}(B_r)} \leq e^{\gamma\sqrt{1+\Upsilon^2}}\|f\|_{W^{k,p}(B_r,\fw)},
\end{equation}
for each ball $B_r\Subset\sO''$, together with $\|f\|_{W^{k,p}(B_r,\fw)} \leq \|f\|_{W^{2k,p}(\sO'',\fw)}$. This completes the proof.
\end{proof}

\subsection{Proofs of Corollary \ref{cor:CinftyGlobal}, Theorem \ref{thm:CkalphasHolderContinuityDomain}, Corollary \ref{cor:CkalphasHolderContinuityDomainStrip}, Theorem \ref{thm:ExistUniqueCk2+alphasHolderContinuityDomain}, and Corollaries \ref{cor:ExistUniqueCk2+alphasHolderContinuityDomain} and \ref{cor:Ck2+alphasHolderContinuityDomainStrip}}
\label{subsec:UniformHolderExponent}
We first have the easy

\begin{proof}[Proof of Corollary \ref{cor:CinftyGlobal}]
For any $z_0\in \sO$, there is a constant $R_0>0$ and a ball $B_{R_0}(z_0)$ such that $B_{R_0}(z_0)\subset\sO$ and \cite[Theorem 6.17]{GilbargTrudinger} implies that $u\in C^\infty(B_{R_0/2}(z_0))$. If $z_0\in\partial_0\sO$, there is a constant $R_0>0$ such that $\HH\cap B_{R_0}(z_0) \subset \sO$ and Theorem \ref{thm:HolderContinuity_Dku_Interior} implies that there is a positive constant  $R_1=R_1(k,R_0)<R_0$ such that $u\in C^{k,\alpha}_s(B_{R_1}^+(z_0))$, for any integer $k\geq 1$. Hence, by combining these observations, $u\in C^\infty(\underline\sO)$.
\end{proof}

In order to prove Theorem \ref{thm:CkalphasHolderContinuityDomain} and obtain $u \in C^{k,\alpha}_s(\underline\sO)$ with an a priori interior estimate \eqref{eq:CkalphasHolderContinuityDomain} on each pair of subdomains $\sO'\subset\sO''\subset\sO$ with $\sO'\Subset\underline\sO''$ and
$\sO''\subset (-\Upsilon, \Upsilon) \times (0,\Upsilon)$, we shall need to examine $u$ near the ``corner points'', $z_0\in\overline{\partial_0\sO}\cap\overline{\partial_1\sO}$, as well as $u$ near ``interior'' points, $z_0 \in \partial_0\sO$, and $u$ near points $z_0 \in \sO$ away from $\partial_0\sO$ (where classical results from \cite{GilbargTrudinger} apply). Otherwise, as noted prior to the statement of Proposition \ref{prop:CkalphasHolderContinuityDomain_Precompact}, we would not obtain a H\"older exponent, $\alpha\in(0,1)$, which is independent of $\sO'$ and $\sO''$.

\begin{proof}[Proof of Theorem \ref{thm:CkalphasHolderContinuityDomain}]
Choose $R_0=1$ and let $R_1=R_1(k)<1$ and $\alpha=\alpha(\Lambda,\nu_0,k,p)\in(0,1)$ be the constants defined by Theorem \ref{thm:HolderContinuity_Dku_Interior}. Since $f \in W^{k,p}_{\loc}(\underline\sO)$ because, a fortiori, $f \in W^{2k+2,p}_{\loc}(\underline\sO)$ by hypothesis, we know from the proof of Proposition \ref{prop:CkalphasHolderContinuityDomain_Precompact} that $u\in C^{k+1}(\sO) \subset C^{k,\alpha}(\sO)$ because of \eqref{eq:uCk+1alphaB}
and that the estimate \eqref{eq:uCk+1lambdaBestimate} holds for any ball $B_r\Subset\sO$.

To complete the proof that $u \in C^{k,\alpha}_s(\underline\sO)$, it remains to check that for every point $z_0\in\partial_0\sO$, there is an open ball
$B_r(z_0)$, for some $r>0$, such that $\HH\cap B_r(z_0)\Subset\underline\sO$ and $u \in C^{k,\alpha}_s(\bar B_{r/2}^+(z_0))$, \emph{with $\alpha\in(0,1)$ as fixed at the beginning of the proof}. According to Theorem \ref{thm:HolderContinuity_Dku_Interior}, for each point $z_0 \in \partial_0\sO$ such that $\HH\cap B_{R_0}(z_0) \subset \sO$, we have $u \in C^{k,\alpha}_s(\bar B_{R_1}^+(z_0))$.

It remains to consider points $z_0 \in \partial_0\sO$ such that $\HH\cap B_{R_0}(z_0) \not\subset \sO$; in fact, our analysis of this case is valid regardless of whether $\HH\cap B_{R_0}(z_0) \subset \sO$ or $\HH\cap B_{R_0}(z_0) \not\subset \sO$. Choose $r>0$ small enough that $r\leq R_1$ and $\HH\cap B_r(z_0)\Subset \underline\sO$. Let $\zeta\in C^\infty_0(\bar\HH)$ be a cutoff function such that $0\leq\zeta\leq 1$ on $\HH$ and $\zeta=1$ on $B_{r/2}^+(z_0)$ and $\supp\zeta\subset \underline B_r^+(z_0)$. To prove $u \in C^{k,\alpha}_s(\bar B_{r/2}^+(z_0))$, it suffices to show that
\begin{equation}
\label{eq:CkalphasHolderContinuityDomain_SufficientRegularityConclusion}
D_x^{\ell-m}D_y^m u \in C^\alpha_s(\bar B_{r/2}^+(z_0)), \quad 0\leq \ell \leq k, \quad 0\leq m \leq \ell,
\end{equation}
and since the argument will be similar for any $0\leq\ell\leq k$,
it is enough to consider $\ell=k$.

We have $f \in W^{k,p}_{\loc}(\underline\sO)$ since, a fortiori, $f \in W^{2k+2,p}_{\loc}(\underline\sO)$ by hypothesis, and therefore
$u\in\sH^{k+2}(B_r^+(z_0),\fw)$,  by Theorem \ref{thm:HkSobolevRegularityInterior}, and so $D_x^{k-m}D_y^m u \in H^1(B_r^+(z_0), \fw_m)$ and \break
$D_x^k u \in H^1(B_r^+(z_0), \fw)$,
by Definition \ref{defn:HkWeightedSobolevSpaceNormPowery} of $\sH^{k+2}(B_r^+(z_0),\fw)$. Thus, Proposition \ref{prop:VarEqn_Dkxy} implies that $D_x^{k-m}D_y^m u$ obeys the variational equation on $B_r^+(z_0)$,
$$
\fa_m(D_x^{k-m}D_y^m u, v) = (f_{k,m,u}, v)_{L^2(B_r^+(z_0),\fw_m)}, \quad\forall v \in H^1_0(\underline B_r^+(z_0),\fw_m),
$$
where
$
f_{k,m,u} := D_x^{k-m}D_y^m f - mBD_x^{k-m}D_y^{m-1} u.
$
Consequently, since $\supp\zeta \subset\underline\sO$, Lemma \ref{lem:SimpleCommutatorInnerProduct} implies that $\zeta D_x^{k-m}D_y^m u$ obeys the variational equation on $\HH$,
$$
\fa_m(\zeta D_x^{k-m}D_y^m u, v) = (\zeta f_{k,m,u} + [A, \zeta]D_x^{k-m}D_y^m u, v)_{L^2(\HH,\fw_m)}, 
$$
for all $v \in H^1_0(\HH,\fw_m)$.
Moreover, $\zeta D^{k-m}_xD_y^m u \in H^1(\HH, \fw)$ and, \emph{provided}
\begin{equation}
\label{eq:CutfkmuCommutatorLpIntegrability}
\zeta f_{k,m,u} + [A, \zeta]D_x^{k-m}D_y^m u \in L^p(B_r^+(z_0),y^{\beta-1}),
\end{equation}
noting that $\supp\zeta\subset \underline B_r^+(z_0)$ and $r\leq R_0$ (in fact, $r\leq R_1<R_0$), Theorem \ref{thm:HolderContinuity_Dku_Interior} will apply to give
$
\zeta D^{k-m}_xD_y^m u \in C^\alpha_s(\bar B_r^+(z_0)),
$
where $\alpha$ was fixed at the beginning of the proof, and a positive constant, $C=C(\Lambda,\nu_0,k,p)$, noting that $R_0=1$ and $\supp\zeta\subset\underline B_r^+(z_0)$, such that
\begin{equation}
\label{eq:CkalphasHolderContinuityDomain_Corner}
\begin{aligned}
{}&\|\zeta D_x^{k-m}D_y^m u\|_{C^\alpha_s(\bar B_r^+(z_0))}
\\
&\quad \leq C\left(\|\zeta  f_{k,m,u}\|_{L^p(B_r^+(z_0),y^{\beta-1})} + \|[A, \zeta]D_x^{k-m}D_y^m u\|_{L^p(B_r^+(z_0),y^{\beta-1})} \right.
\\
&\qquad + \left. \|\zeta D_x^{k-m}D_y^m u\|_{L^2(B_r^+(z_0),y^{\beta-1})}\right).
\end{aligned}
\end{equation}
We observe that
\begin{align*}
{}&\|\zeta f_{k,m,u}\|_{L^p(B_r^+(z_0),y^{\beta-1})}
\\
&\quad \leq \|D_x^{k-m}D_y^m f\|_{L^p(B_r^+(z_0), y^{\beta-1})} + \frac{m}{2}\|D_x^{k+1-m}D_y^{m-1} u\|_{L^p(B_r^+(z_0), y^{\beta-1})}
\\
&\qquad + \frac{m}{2}\|D_x^{k+2-m}D_y^{m-1} u\|_{L^p(B_r^+(z_0), y^{\beta-1})}
\\
&\quad \leq C\left(\|D_x^{k-m}D_y^m f\|_{L^p(B_r^+(z_0), y^{\beta-1})} + m\|D_x^{k+1-m}D_y^{m-1} u\|_{L^\infty(B_r^+(z_0))}\right.
\\
&\qquad\left.
+ m\|D_x^{k+2-m}D_y^{m-1} u\|_{L^\infty(B_r^+(z_0))}\right),
\end{align*}
for $C=C(r,p)$, a positive constant, while Lemma \ref{lem:CommutatorEstimate} implies that
\begin{align*}
{}&\|[A, \zeta]D_x^{k-m}D_y^m u\|_{L^p(B_r^+(z_0),y^{\beta-1})}
\\
&\quad \leq C\left( \|yD_x^{k+1-m}D_y^m u\|_{L^p(B_r^+(z_0), y^{\beta-1})} + \|yD_x^{k-m}D_y^{m+1} u\|_{L^p(B_r^+(z_0), y^{\beta-1})} \right.
\\
&\qquad + \left. \|(1+y)D_x^{k-m}D_y^m u\|_{L^p(B_r^+(z_0), y^{\beta-1})} \right)
\\
&\quad \leq C\left( \|D_x^{k+1-m}D_y^m u\|_{L^\infty(B_r^+(z_0))} + \|D_x^{k-m}D_y^{m+1} u\|_{L^\infty(B_r^+(z_0))}
+ \|D_x^{k-m}D_y^m u\|_{L^\infty(B_r^+(z_0))} \right),
\end{align*}
where $C=C(\Lambda,\nu_0,r)$, a positive constant, and $\zeta$ is chosen such that $\|\zeta\|_{C^2(\bar B_r^+(z_0))} \leq Mr^{-2}$, where $M>0$ is a universal constant.
But $f\in W^{2k+2,p}_{\loc}(\underline\sO)$ by hypothesis, and from Proposition \ref{prop:CkalphasHolderContinuityDomain_Precompact} (applied with $k$ replaced by $k+1$ and $p>\max\{4, 2+(k+1)+\beta\} = \max\{4, 3+k+\beta\}$), we know that
\begin{align*}
D_x^{k+1-m}D_y^{m-1} u, \quad D_x^{k+2-m}D_y^{m-1} u &\in C(\bar B_r^+(z_0)), \quad 1\leq m \leq k,
\\
D_x^{k+1-m}D_y^m u, \quad D_x^{k-m}D_y^{m+1} u, \quad D_x^{k-m}D_y^m u &\in C(\bar B_r^+(z_0)), \quad 0\leq m \leq k,
\end{align*}
since $B_r^+(z_0) \Subset \underline\sO$ and $u \in C^{k+1}(\bar B_r^+(z_0))$. The preceding inequalities and boundedness conditions ensure that the integrability condition \eqref{eq:CutfkmuCommutatorLpIntegrability} holds.

In particular, $\zeta D_x^{k-m}D_y^m u \in C^\alpha_s(\bar B_{R_1}^+(z_0))$ and, because $\zeta=1$ on $B_{r/2}^+(z_0)$, we obtain
that $D_x^{k-m}D_y^m u \in C^\alpha_s(\bar B_{r/2}^+(z_0))$, as desired. This completes the proof of
\eqref{eq:CkalphasHolderContinuityDomain_SufficientRegularityConclusion} (when $\ell=k$) and hence that $u \in C^{k,\alpha}_s(\underline\sO)$.

Finally, we prove the a priori estimate \eqref{eq:CkalphasHolderContinuityDomain}. Since $\sO'\Subset\underline\sO''$ and $\dist(\partial_1\sO',\partial_1\sO'')\geq d_1$ and $\sO''\subset(-\Upsilon,\Upsilon)\times(0,\Upsilon)$ by hypothesis, there are a finite number of balls of radius $r:=\min\{d_1/4, R_1\}$ (the number of balls is determined by $r=r(d_1,R_1)=r(d_1,k)$ and $\Upsilon$) such that
\begin{equation}
\label{eq:OpenCoverOprime}
\sO' \subset \bigcup_{i,j} B_{r/2}^+(z_{0,i})\cup B_{r/2}(z_{1,j}),
\end{equation}
with
$\HH\cap B_{4r}(z_{0,i}) \subset \sO''$ and $B_{4r}(z_{1,j}) \subset \sO''$. For each $i$, we let $\zeta_i\in C^\infty_0(\bar\HH)$ be a cutoff function such that $0\leq\zeta_i\leq 1$ on $\HH$, and $\zeta_i=1$ on $B_{r/2}^+(z_{0,i})$, and $\supp\zeta_i\subset \underline B_r^+(z_{0,i})$, and $\|\zeta_i\|_{C^2(\bar B_r^+(z_{0,i}))} \leq Mr^{-2}$.  The preceding $L^p$ estimate for $\zeta f_{k,m,u}$ yields
$$
\|\zeta_i f_{k,m,u}\|_{L^p(B_r^+(z_{0,i}))}
\leq
C\left(\|f\|_{W^{k,p}(B_r^+(z_{0,i})), y^{\beta-1})} + \|u\|_{C^{k+1}_s(\bar B_r^+(z_{0,i}))}\right), 
$$
for all $0\leq m\leq k$,
with $C=C(k,p,r)$. Thus, Proposition \ref{prop:CkalphasHolderContinuityDomain_Precompact} (noting that $B_r^+(z_0)\Subset\underline B_{2r}^+(z_0)$ and $B_{2r}^+(z_0)\Subset\underline\sO''$ with $\dist(\partial_1 B_{2r}^+(z_0), \partial_1 \sO'')\geq 2r$ since $B_{4r}^+(z_0)\subset\sO''$) and the definition  of
$\fw$
in \eqref{eq:HestonWeight} gives
\begin{equation}
\label{eq:Cutoff_fkmu}
\|\zeta_i f_{k,m,u}\|_{L^p(B_r^+(z_{0,i}))}
\leq
C\left(\|f\|_{W^{2k+2,p}(B_{2r}^+(z_{0,i})), \fw)} + \|u\|_{L^2(B_{2r}^+(z_{0,i})), \fw)}\right), 
\end{equation}
for all $0\leq m\leq k$,
with $C=C(\Lambda,\nu_0,d_1,k,p,r)=C(\Lambda,\nu_0,d_1,k,p)$. Similarly, the preceding $L^p$ estimate for
the commutator $[A, \zeta]D_x^{k-m}D_y^m u$ yields
$$
\|[A, \zeta_i]D_x^{k-m}D_y^m u\|_{L^p(B_r^+(z_{0,i}))}
\leq
C\left(\|f\|_{W^{k,p}(B_r^+(z_{0,i}), y^{\beta-1})} + \|u\|_{C^{k+1}_s(\bar B_r^+(z_{0,i}))}\right), 
$$
for all $0\leq m\leq k$,
with $C=C(\Lambda,\nu_0,p,r)$. Thus, Proposition \ref{prop:CkalphasHolderContinuityDomain_Precompact} now gives, for all $0\leq m\leq k$,
\begin{equation}
\label{eq:CutoffCommutatorDkmu}
\|[A, \zeta_i]D_x^{k-m}D_y^m u\|_{L^p(B_r^+(z_{0,i}))}
\leq
C\left(\|f\|_{W^{2k+2,p}(B_{2r}^+(z_{0,i})), \fw)} + \|u\|_{L^2(B_{2r}^+(z_{0,i})), \fw)}\right),
\end{equation}
with $C=C(\Lambda,\nu_0,d_1,k,p)$, a positive constant. By combining \eqref{eq:CkalphasHolderContinuityDomain_Corner}, \eqref{eq:Cutoff_fkmu}, and \eqref{eq:CutoffCommutatorDkmu}, and recalling
that $\zeta_i=1$ on $B_{r/2}^+(z_{0,i})$ and $\supp\zeta_i\subset\underline B_r^+(z_{0,i})$, we obtain, for $0\leq m\leq k$,
$$
\|D_x^{k-m}D_y^m u\|_{C^\alpha_s(B_{r/2}^+(z_{0,i}))}
\leq
C\left(\|f\|_{W^{2k+2,p}(B_{2r}^+(z_{0,i}), \fw)} + \|u\|_{L^2(B_{2r}^+(z_{0,i}), \fw)}\right),
$$
with $C=C(\Lambda,\nu_0,d_1,k,p)$, a positive constant. Therefore, by the same argument, for any $0\leq m\leq \ell\leq k$, we have
\begin{equation}
\label{eq:CkalphasHolderContinuityDomain_NearCorner}
\|D_x^{\ell-m}D_y^m u\|_{C^\alpha_s(B_{r/2}^+(z_{0,i}))}
\leq
C\left(\|f\|_{W^{2k+2,p}(B_{2r}^+(z_{0,i}), \fw)} + \|u\|_{L^2(B_{2r}^+(z_{0,i}), \fw)}\right),
\end{equation}
with $C=C(\Lambda,\nu_0,d_1,k,p)$, a positive constant.

On the other hand, by applying \eqref{eq:uCk+1lambdaBestimate} to the balls $B_r(z_{1,j})$, we obtain
\begin{equation}
\label{eq:uCk+1lambdaBestimate_FullyInteriorBall}
\|u\|_{C^{k+1}(\bar B_{r/2}(z_{1,j}))}
\leq
C\left(\|f\|_{W^{2k,p}(B_r(z_{1,j}))} + \|u\|_{L^2(B_r(z_{1,j}))}\right),
\end{equation}
with $C=C(\Lambda,\nu_0,k,p,r)=C(\Lambda,\nu_0,d_1,k,p)$, a positive constant. The desired a priori estimate \eqref{eq:CkalphasHolderContinuityDomain} now follows by combining the a priori estimates \eqref{eq:CkalphasHolderContinuityDomain_NearCorner} and \eqref{eq:uCk+1lambdaBestimate_FullyInteriorBall}, noting that
$C^{k+1}(\bar B_{r/2}^+(z_{1,j})) \hookrightarrow C^{k,\alpha}(\bar B_{r/2}^+(z_{1,j}))$ and using \eqref{eq:NontranslationalInvariantWeightAdjustment} to give
$$
\|f\|_{W^{2k,p}(B_r(z_{1,j}))} \leq e^{\gamma\sqrt{1+\Upsilon^2}}\|f\|_{W^{2k,p}(B_r(z_{1,j}),\fw)},
$$
and $\|f\|_{W^{2k,p}(B_r(z_{1,j}),\fw)} \leq \|f\|_{W^{2k,p}(\sO'',\fw)}$. This completes the proof.
\end{proof}

Next, we have the

\begin{proof}[Proof of Corollary \ref{cor:CkalphasHolderContinuityDomainStrip}]
As in the proof of Theorem \ref{thm:HkSobolevRegularityDomain}, it suffices to choose a cover \eqref{eq:OpenCoverOprime} of $\sO'$ by open balls $B_{r/2}(z_{1,j})$ or half-balls $B_{r/2}^+(z_{0,j})$ contained in $\sO''$ which is \emph{uniformly locally finite}. Again using the definition  of
$\fw$
in \eqref{eq:HestonWeight} to replace the integral weights $\fw$ in \eqref{eq:CkalphasHolderContinuityDomain_NearCorner} and $1$ in \eqref{eq:uCk+1lambdaBestimate_FullyInteriorBall}, respectively, by $y^{\beta-1}$ on the right-hand side, and arguing just as in the proof of Proposition \ref{prop:HolderContinuity_ux} to eliminate factors such as $e^{\frac{\gamma}{2}|x_{0,i}|}$ or $e^{\frac{\gamma}{2}|x_{1,j}|}$, we see that
\begin{align*}
\|u\|_{C^{k,\alpha}_s(\bar B_{r/2}^+(z_{0,i}))} &\leq C\left(\|f\|_{W^{2k+2,p}(B_{2r}^+(z_{0,i}), y^{\beta-1})} + \|u\|_{L^2(B_{2r}^+(z_{0,i}), y^{\beta-1})}\right)
\quad\hbox{(by \eqref{eq:CkalphasHolderContinuityDomain_NearCorner})},
\\
\|u\|_{C^{k+1}(\bar B_{r/2}(z_{1,j}))}
&\leq
C\left(\|f\|_{W^{2k,p}(B_r(z_{1,j}), y^{\beta-1})} + \|u\|_{L^2(B_r(z_{1,j}), y^{\beta-1})}\right)
\quad\hbox{(by \eqref{eq:uCk+1lambdaBestimate_FullyInteriorBall})},
\end{align*}
with $C=C(\Lambda,\nu_0,d_1,k,p)$, a positive constant. Therefore, using the fact that $C^{k+1}(\bar B_{r/2}(z_{1,j})) \hookrightarrow C^{k,\alpha}_s(\bar B_{r/2}(z_{1,j}))$, we obtain
$$
\sup_i\|u\|_{C^{k,\alpha}_s(\bar B_{r/2}^+(z_{0,i})} + \sup_j\|u\|_{C^{k,\alpha}_s(\bar B_{r/2}(z_{1,j})}
\leq
C\left(\|f\|_{W^{2k+2,p}(\sO'', y^{\beta-1})} + \|u\|_{L^2(\sO'', y^{\beta-1})}\right),
$$
with $C=C(\Lambda,\nu_0,d_1,k,p)$, a positive constant. Since
$$
\|u\|_{C^k(\bar\sO')} \leq \sup_i \|u\|_{C^k(\bar B_{R_1}^+(z_{0,i}))} + \sup_j \|u\|_{C^k(\bar B_{R_1}(z_{1,j}))},
$$
and, denoting
$$
[D^k u]_{C^\alpha_s(\bar U)} := \max_{0\leq m\leq k}[D_x^{k-m}D_y^m u]_{C^\alpha_s(\bar U)},
$$
for any open set $U\subset\HH$, we see that
\begin{align*}
\|u\|_{C^{k,\alpha}_s(\bar\sO')} &\equiv \|u\|_{C^k(\bar\sO')} + [D^k u]_{C^\alpha_s(\bar\sO')} \quad\hbox{(Definition \ref{defn:DHspaces})}
\\
&\leq C(r)\|u\|_{C^k(\bar\sO')} + \sup_i[D^k u]_{C^\alpha_s(\bar B_{r/2}^+(z_{0,i}))} + \sup_j[D^k u]_{C^\alpha_s(\bar B_{r/2}(z_{1,j}))}.
\end{align*}
Combining the preceding estimates and recalling that $r\equiv\min\{d_1/4,R_1\}$ in \eqref{eq:OpenCoverOprime}, so $r=r(d_1,k)$, yields the desired a priori bound \eqref{eq:CkalphasHolderContinuityDomainStrip} for $\|u\|_{C^{k,\alpha}_s(\bar\sO')}$.
\end{proof}

Lastly, we turn to the

\begin{proof}[Proof of Theorem \ref{thm:ExistUniqueCk2+alphasHolderContinuityDomain}]
By hypothesis, we have $f\in C(\bar\sO)$ and $(1+y)g \in W^{2,\infty}(\sO)$, since $g \in C(\bar\sO)$ with $(1+y)g\in C^2(\bar\sO)$. Therefore, Theorem \ref{thm:ExistUniqueHkSobolevRegularityDomain} (with $k=0$) implies that there exists a unique
solution $u \in H^1(\sO,\fw)$ to the variational equation \eqref{eq:HestonVariationalEquation}, with boundary condition $u-g\in H^1_0(\underline\sO,\fw)$.

By \cite[Corollary 8.28]{GilbargTrudinger}, we must have $u\in C(\sO\cup\partial_1\sO)$, since $u=g$ on $\partial\sO$ in the sense of $H^1(\sO,\fw)$ and $g\in C(\overline{\partial_1\sO})$, so $u=g$ on $\partial_1\sO$, that is, $u$ obeys the boundary condition \eqref{eq:IntroBoundaryValueProblemBC}. Because $f\in C(\bar\sO)$ by hypothesis, the maximum principle \cite[Theorem 8.15]{Feehan_maximumprinciple}
implies that $u$ is bounded, that is, $u\in L^\infty(\sO)$.

By hypothesis, we also have $f\in C^{2k+6,\alpha}_s(\underline\sO)$. For any $1\leq p\leq\infty$, there is a continuous embedding $C^{2k+6,\alpha}_s(\bar U) \hookrightarrow W^{2k+6, p}(U, \fw)$, for any subset $U\Subset\bar\HH$, by Definition \ref{defn:DHspaces} of $C^{\ell,\alpha}_s(\bar U)$, for $\ell\geq 0$, and hence $f\in W^{2k+6,p}_{\loc}(\underline\sO, \fw)=W^{2(k+2)+2,p}_{\loc}(\underline\sO, \fw)$. Choose
$p = \max\{4, 3+(k+2)+\beta\}+1 = 6+k+\beta$ and observe that Theorem \ref{thm:CkalphasHolderContinuityDomain} implies that $u\in C^{k+2,\alpha}_s(\underline\sO)$. From the Definition \ref{defn:DH2spaces} of $C^{k,2+\alpha}_s(\bar U)$, it follows that there is a continuous embedding $C^{k+2,\alpha}_s(\bar U) \hookrightarrow C^{k,2+\alpha}_s(\bar U)$, for any open set $U\Subset\bar\HH$. Hence, it follows that $u\in C^{k,2+\alpha}_s(\underline\sO)$.

Since the equation \eqref{eq:IntroBoundaryValueProblem} and the desired Schauder a priori estimate \eqref{eq:ExistUniqueCk2+alphasHolderContinuityDomain} are invariant under translations with respect to $x$ and $\diam(\sO'')\leq\Upsilon$ by hypothesis, we may assume without loss of generality that $\sO'' \subset(-\Upsilon,\Upsilon)\times(0,\Upsilon)$. Therefore, the desired Schauder estimate \eqref{eq:ExistUniqueCk2+alphasHolderContinuityDomain} follows from Theorem \ref{thm:CkalphasHolderContinuityDomain} and the a priori estimate \eqref{thm:CkalphasHolderContinuityDomain} and the fact that
$$
\|u\|_{C^{k,2+\alpha}_s(\bar\sO')} \leq C\|u\|_{C^{k+2,\alpha}_s(\bar\sO')},
$$
where $C=C(\Upsilon)$ is a positive constant.
\end{proof}

\begin{proof}[Proof of Corollary \ref{cor:ExistUniqueCk2+alphasHolderContinuityDomain}]
Theorem \ref{thm:MainContinuityBoundaryL2RHSu} implies that $u\in C^\alpha_s(\bar B_{R_1}^+(z_0))$, for each corner point $z_0\in\overline{\partial_0\sO}\cap\overline{\partial_1\sO}$. Thus, $u\in C^\alpha_{s,\loc}(\bar\sO)$ and, because $u$ is bounded, we obtain $u \in C(\bar\sO)$. Moreover, because $u$ is uniformly $C^\alpha_s(\bar B^+)$-H\"older continuous, for all balls $B\subset\RR^2$, where $\alpha=\alpha(\Lambda,\nu_0,k,K)\in(0,1)$ is the smallest H\"older exponent in Theorems \ref{thm:CkalphasHolderContinuityDomain} and \ref{thm:MainContinuityBoundaryL2RHSu}
and in \cite[Theorem 8.29]{GilbargTrudinger}, we have $u \in C^\alpha_{\loc,s}(\bar\sO)$.
\end{proof}

\begin{proof}[Proof of Corollary \ref{cor:Ck2+alphasHolderContinuityDomainStrip}]
The desired Schauder estimate \eqref{eq:Ck2+alphasHolderContinuityDomainStrip} follows by replacing the role of the a priori estimate \eqref{thm:CkalphasHolderContinuityDomain} with that of the a priori estimate \eqref{eq:CkalphasHolderContinuityDomainStrip} in the proof of Theorem \ref{thm:ExistUniqueCk2+alphasHolderContinuityDomain}.
\end{proof}

\appendix

\section{Appendix}
\label{sec:Appendix}
For the convenience of the reader, we collect here some useful facts from some our earlier articles for easier reference, together with some technical proofs of results used in the body of this article. In \S \ref{app:ApproximationSmoothFunctions}, we describe approximation results for the weighted Sobolev spaces appearing in this article and which are used, for example, to prove integration-by-parts formulae, as illustrated in \S \ref{app:IntegrationByParts}. Finally, in \S \ref{app:VarEqn_DkxyAuxiliarymZero}, we explain the need for some of the technical hypotheses in Proposition \ref{prop:VarEqn_Dkxy}.

\subsection{Approximation by smooth functions}
\label{app:ApproximationSmoothFunctions}
We begin with the

\begin{defn}[$C^1$-orthogonal curves in the upper half-space]
\label{defn:C1Orthogonal}
We say that a curve $T\subset\HH$ is \emph{uniformly $C^1$-orthogonal to $\partial\HH$} if $T$ is a relatively open $C^1$-curve and there is a positive constant, $\delta$, such that for each point $z_0 = (x_0,0) \in \bar T\cap\partial\HH$ we have
$$
\bar T\cap B_\delta(z_0) \subset \{(x_0,y) \in \RR^2: y \geq 0\}.
$$
\end{defn}

The next approximation result\footnote{While the conclusion holds for weaker hypotheses on $\partial_1\sO$, the result suffices for applications in this article and counterexamples show that some conditions on the regularity of $\partial_1\sO$ and the geometry of its intersection with $\partial\HH$ are required.} follows from \cite[Corollary A.12]{Daskalopoulos_Feehan_statvarineqheston}.

\begin{thm}[Density of smooth functions]
\label{thm:KufnerPowerWeightBoundedDerivatives}
Let $\sO\subset\HH$ be a domain such that $\partial_1\sO$ is uniformly $C^1$-orthogonal to $\partial\HH$. Then $C^\infty_0(\bar\sO)$ is a dense subset of $H^k(\sO,\fw)$, and $\sH^k(\sO,\fw)$, and $W^k(\sO,\fw)$ for all integers $k\geq 0$.
\end{thm}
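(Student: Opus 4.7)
The plan is to reduce the result to a density theorem for weighted Sobolev spaces with a pure power weight $y^{\beta-1}$ (with $\beta > 0$ by Assumption \ref{assump:HestonCoefficients}), which is essentially \cite[Corollary A.12]{Daskalopoulos_Feehan_statvarineqheston}. The factor $e^{-\gamma\sqrt{1+x^2}-\mu y}$ appearing in the definition \eqref{eq:HestonWeight} of $\fw$ is smooth, strictly positive, and together with all of its derivatives bounded above and below on any strip of finite height; hence the weighted $L^2$-norms defining $H^k(\sO,\fw)$, $\sH^k(\sO,\fw)$, and $W^k(\sO,\fw)$ are, after a finite partition of unity with supports of bounded height, all comparable to a finite collection of integrability conditions of the form $\int_\sO |D^\alpha u|^2\, y^{\alpha_0}\,dxdy < \infty$ for exponents $\alpha_0 \geq \beta - 1 > -1$. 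Thus it suffices to establish density of $C^\infty_0(\bar\sO)$ in the classical Kufner-type power-weighted Sobolev spaces on $\sO$.

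First, I would truncate to a compactly supported approximation: multiply by a cut-off $\eta_R \in C^\infty_0(\RR^2)$ with $\eta_R = 1$ on $B_R$ and $\eta_R = 0$ outside $B_{2R}$; the truncated function converges to $u$ as $R \to \infty$ because the norm controls $(1+y)u^2$ and polynomial-weighted derivatives against $\fw$, whose exponential factor in $x$ forces decay at infinity. I would then cover the support of $\eta_R u$ by finitely many patches of four types: (i) interior balls $B \Subset \sO$, (ii) half-balls centered on $\partial_0\sO$ with closure in $\sO \cup \partial_0\sO$, (iii) patches near $\partial_1\sO$ with closure disjoint from $\partial\HH$, and (iv) corner patches near $\overline{\partial_0\sO}\cap\overline{\partial_1\sO}$, reducing via a subordinate partition of unity to local approximation in each.

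On type-(i) patches, standard Friedrichs mollification with a kernel of radius $\varepsilon \to 0$ yields $C^\infty$ approximants. On type-(ii) patches, I would apply an upward vertical shift $u_h(x,y) := u(x, y+h)$ and then mollify; since $u_h$ is supported at distance at least $h$ from $\partial\HH$, mollification with $\varepsilon \ll h$ produces functions in $C^\infty_0(\bar\sO)$, and $u_h \to u$ in the power-weighted norm as $h \to 0^+$ because $\beta > 0$ makes $y^{\beta-1}$ locally integrable at $y=0$. Type-(iii) patches use a similar interior-pointing horizontal shift, directly analogous to the classical Meyers--Serrin density argument. The crucial role of Definition \ref{defn:C1Orthogonal} appears in type-(iv): the hypothesis that $\partial_1\sO$ is locally vertical in a neighborhood of each point of $\overline{\partial_1\sO} \cap \partial\HH$ means that the same upward vertical shift used for type (ii) simultaneously moves points away from $\partial_1\sO$ in a controlled manner, so that mollification with a sufficiently small radius remains supported in $\bar\sO$.

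The main obstacle is verifying the $L^2(\sO, y^{\beta-1})$-continuity of vertical translations, i.e.\ $\|u_h - u\|_{L^2(\sO, y^{\beta-1})} \to 0$ as $h \to 0^+$. This rests on an $\varepsilon/2$ argument approximating $u$ first by $C^0_c$ functions, and requires $\beta > 0$ so that $\int_0^1 y^{\beta-1}\,dy < \infty$ and the weight is locally integrable at $y=0$; the higher derivative versions follow because the vertical shift commutes with $D_x$ and $D_y$. A secondary technical point is choosing the mollification scale $\varepsilon$ as a suitable function of $h$ at the corner patches so that both the support condition and the $\varepsilon \to 0$ limit hold simultaneously, which is handled by a standard diagonal subsequence argument. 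These steps combine to give the desired density for each of the three families of norms.
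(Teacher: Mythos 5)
The paper's own argument is a two-sentence citation to \cite[Corollary A.12]{Daskalopoulos_Feehan_statvarineqheston} together with the remark that its proof extends; your proposal is a from-scratch Meyers--Serrin--type argument (truncate, partition of unity with interior, degenerate-boundary, fixed-boundary, and corner patches, then shift-and-mollify), which is presumably close in spirit to what the cited Corollary A.12 does.

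There is, however, a genuine gap at the step you single out as ``the main obstacle,'' namely the $L^2(\sO,y^{\beta-1})$-continuity of the upward vertical shift $\tau_h: u \mapsto u(\cdot,\cdot+h)$. You propose to prove $\|\tau_h u - u\|_{L^2(y^{\beta-1})} \to 0$ by an $\varepsilon/2$ argument, approximating $u$ by $\phi \in C^0_c$ and using continuity of the shift on nice functions. The triangle inequality $\|\tau_h u - u\| \le \|\tau_h(u-\phi)\| + \|\tau_h\phi - \phi\| + \|\phi - u\|$ only closes up if $\tau_h$ is bounded on $L^2(y^{\beta-1})$ uniformly in $h$, and this is \emph{false} when $0 < \beta < 1$: the change of variables $s = y + h$ gives $\|\tau_h v\|_{L^2(y^{\beta-1})}^2 = \int_{s>h}|v(x,s)|^2 (s-h)^{\beta-1}\,ds\,dx$, and $(s-h)^{\beta-1}/s^{\beta-1} \to \infty$ as $s \downarrow h$ when $\beta < 1$. (A function behaving like $(s-h)^{-\alpha}$ near $s=h$ with $2\alpha < 1 \le 2\alpha + 1 - \beta$ lies in $L^2(y^{\beta-1})$ but has $\tau_{-h}$ of it outside.) The paper's Assumption \ref{assump:HestonCoefficients} requires only $\kappa,\theta>0$, so $\beta = 2\kappa\theta/\sigma^2$ may well be less than $1$; the Feller condition $\beta \ge 1$ is not imposed anywhere. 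The repair is to stop relying on a density/triangle argument and instead use the derivative control supplied by the $H^1(\sO,\fw)$-norm: writing $u(x,y+h)-u(x,y) = \int_y^{y+h}u_s(x,s)\,ds$, applying Cauchy--Schwarz, and exchanging the order of integration yields
\begin{equation*}
\int_\sO |\tau_h u - u|^2\, y^{\beta-1}\,dx\,dy \;\le\; C(\beta)\, h \int_\sO |u_y|^2\, y^{\beta}\,dx\,dy,
\end{equation*}
using that $\int_{\max(0,s-h)}^{s} y^{\beta-1}\,dy \lesssim \min\{s^\beta, h\,s^{\beta-1}\}$. The weight $y^{\beta}$ appearing on $|u_y|^2$ is exactly what the $H^1(\sO,\fw)$-norm controls, since $y\,\fw \sim y^{\beta}$ locally. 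This also explains why the shift is unproblematic at higher order: on $Du$ the relevant weight is $y^{\beta} \ge y^0$, for which $\tau_h$ \emph{is} contractive. For $k=0$ the density statement is trivial ($C^\infty_c(\sO)$ is dense in $L^2$ of any locally finite measure), so no shift is needed there, and for $k\ge 1$ the derivative bound is available, so the strategy works once this step is corrected.
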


\begin{proof}
When $k=0,1,2$, the conclusion for $H^k(\sO,\fw) = \sH^k(\sO,\fw)$ is given by \cite[Corollary A.12]{Daskalopoulos_Feehan_statvarineqheston}, which asserts that $C^\infty_0(\bar\sO)$ is a dense subset of $H^k(\sO,\fw)$. The proof of \cite[Corollary A.12]{Daskalopoulos_Feehan_statvarineqheston} extends easily to include the remaining cases.
\end{proof}

\subsection{Integration by parts}
\label{app:IntegrationByParts}
We recall the special case of \cite[Lemma 2.23]{Daskalopoulos_Feehan_statvarineqheston}; no hypothesis on $\partial_1\sO$ is required here because we assume $v\in H^1_0(\underline\sO,\fw)$ rather than allow any $v\in H^1(\sO,\fw)$.

\begin{lem}[Integration by parts for the Heston operator]
\label{lem:HestonIntegrationByParts}
Let $\sO\subseteqq\HH$ be a domain. If $u\in H^2(\sO,\fw)$ and $v\in H^1_0(\underline\sO,\fw)$, then $Au\in L^2(\sO,\fw)$ and
\begin{equation}
\label{eq:HestonIntegrationByPartsFormula}
(Au,v)_{L^2(\sO,\fw)} = \fa(u,v).
\end{equation}
\end{lem}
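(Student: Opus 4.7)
The plan is threefold. First, observe that $Au\in L^2(\sO,\fw)$ with $\|Au\|_{L^2(\sO,\fw)}\leq C(\Lambda)\|u\|_{H^2(\sO,\fw)}$; this is immediate since every term of \eqref{eq:OperatorHestonIntro} has one of the forms $yD^2u$, $(1+y)Du$, or $u$, each of which is controlled by the $H^2(\sO,\fw)$ norm in \eqref{eq:H2NormHeston}. Second, verify \eqref{eq:HestonIntegrationByPartsFormula} for smooth test pairs. Third, extend to the claimed generality by a localization-plus-density argument that circumvents the absence of any regularity hypothesis on $\partial_1\sO$.

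For the smooth case I would take $u\in C^\infty(\bar\sO')$ and $v\in C^\infty_0(\underline\sO')$ on a subdomain $\sO'\Subset\underline\sO$, and integrate $(Au,v)_{L^2(\sO',\fw)}$ by parts. Each second-order term in $Au$ sheds one derivative onto $v\fw$, producing the principal symmetric quadratic form in \eqref{eq:HestonWithKillingBilinearForm} along with correction terms proportional to the logarithmic derivatives $(\log\fw)_x=-\gamma x/\sqrt{1+x^2}$ and $(\log\fw)_y=(\beta-1)/y-\mu$. Boundary contributions along $\partial_0\sO'$ vanish because the factor $y^\beta$ with $\beta>0$ (guaranteed by Assumption \ref{assump:HestonCoefficients}) kills them, and those along $\partial_1\sO'$ vanish since $\supp v\subset\underline\sO'$. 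The essential point is that the choices $\beta=2\kappa\theta/\sigma^2$ and $\mu=2\kappa/\sigma^2$ in \eqref{eq:DefnBetaMu} force the first-order residual from integrating the $u_{yy}$-term against $y\fw$ to combine exactly with the $-\kappa(\theta-y)u_y$ term from $A$, leaving precisely the $-\int(a_1y+b_1)u_xv\,\fw$ contribution of $\fa$ together with the $\gamma$-correction from the $x$-direction.

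For the extension I would choose a sequence $\{v_n\}\subset C^\infty_0(\underline\sO)$ converging to $v$ in $H^1(\sO,\fw)$. Since each $\supp v_n$ is a compact subset of $\underline\sO$, I would pick a subdomain $\sO_n'$ with $\supp v_n\subset\underline\sO_n'\Subset\underline\sO$ whose boundary portion $\partial_1\sO_n'$ is uniformly $C^1$-orthogonal to $\partial\HH$ in the sense of Definition \ref{defn:C1Orthogonal}. Theorem \ref{thm:KufnerPowerWeightBoundedDerivatives} applied on $\sO_n'$ then supplies a sequence in $C^\infty_0(\bar\sO_n')$ converging to $u|_{\sO_n'}$ in $H^2(\sO_n',\fw)$, and passing to the limit in the smooth-case identity on $\sO_n'$ yields $(Au,v_n)_{L^2(\sO,\fw)}=\fa(u,v_n)$. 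A final limit $n\to\infty$, using Cauchy--Schwarz on the left and the continuity of $\fa(u,\cdot)$ on $H^1(\sO,\fw)$ evident from \eqref{eq:HestonWithKillingBilinearForm}, completes the proof.

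The hard part will be the algebraic bookkeeping in the smooth case: the interaction between $(\log\fw)_y$ and the degenerate drift coefficient $\kappa(\theta-y)$ must cancel so that no residual $u_yv$ term appears beyond what is already present in $\fa$. This is precisely the reason the weight \eqref{eq:HestonWeight} is tuned to the coefficients of $A$ via \eqref{eq:DefnBetaMu}, and it is what makes the ``no boundary condition on $\partial_0\sO$'' feature of the problem compatible with the bilinear formulation \eqref{eq:HestonWithKillingBilinearForm}.
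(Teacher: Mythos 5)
Your proposal is correct and takes essentially the same route as the paper: establish the identity for smooth pairs by direct integration by parts, then circumvent the lack of a regularity hypothesis on $\partial_1\sO$ by localizing to a subdomain $\sO'\Subset\underline\sO$ with $\partial_1\sO'$ uniformly $C^1$-orthogonal to $\partial\HH$ containing the support of the test function, invoke Theorem \ref{thm:KufnerPowerWeightBoundedDerivatives} to approximate $u$ there, and finish by approximating $v\in H^1_0(\underline\sO,\fw)$ from $C^\infty_0(\underline\sO)$. The only cosmetic difference is that you fix the $v$-approximation first and localize for each $v_n$, while the paper fixes a single smooth $\tilde v$, removes the smoothness hypothesis on $u$, and only then approximates $v$; both orderings are valid and rely on the same continuity of $A:H^2(\sO,\fw)\to L^2(\sO,\fw)$ and of $\fa$ on $H^1(\sO,\fw)\times H^1(\sO,\fw)$.
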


\begin{proof}
When $\tilde u \in C^\infty_0(\bar\sO)$ and $\tilde v \in C^\infty_0(\underline\sO)$, we obtain
$$
(A\tilde u, \tilde v)_{L^2(\sO,\fw)} = \fa(\tilde u, \tilde v)
$$
by direct calculation, as in the proof of \cite[Lemma 2.23]{Daskalopoulos_Feehan_statvarineqheston}. Because $\supp\tilde v \subset \underline\sO$ is compact, we may choose a subdomain $\sO'\Subset\underline\sO$ such that $\partial_1\sO'$ is uniformly $C^1$-orthogonal to $\partial\HH$ and $\supp\tilde v\subset\underline\sO'$. If $u\in H^2(\sO,\fw)$, Theorem \ref{thm:KufnerPowerWeightBoundedDerivatives} implies that there is a sequence $\{u_n\}_{n\in\NN} \subset C^\infty_0(\bar\sO')$ such that $u_n\to u$ strongly in
$H^2(\sO',\fw)$ as $n\to\infty$ and thus
$$
(Au, \tilde v)_{L^2(\sO,\fw)} = (Au, \tilde v)_{L^2(\sO',\fw)} = \lim_{n\to\infty}(Au_n, \tilde v)_{L^2(\sO',\fw)} = \lim_{n\to\infty} \fa(u_n, \tilde v)
= \fa(u, \tilde v),
$$
because $A:H^2(\sO,\fw) \to L^2(\sO,\fw)$ is a continuous linear operator and $\fa: H^1(\sO,\fw)\times H^1(\sO,\fw) \to \RR$ is a continuous bilinear map. Since $v \in H^1_0(\underline\sO,\fw)$, there is a sequence $\{v_n\}_{n\in\NN} \subset C^\infty_0(\underline\sO)$ such that $v_n\to v$ strongly in $H^1(\sO,\fw)$ as $n\to\infty$ and thus
$$
(Au, v)_{L^2(\sO,\fw)} = \lim_{n\to\infty}(Au, v_n)_{L^2(\sO,\fw)} = \lim_{n\to\infty} \fa(u, v_n)
= \fa(u, v).
$$
This completes the proof.
\end{proof}

\subsection{Need for the auxiliary regularity condition in Proposition \ref{prop:VarEqn_Dkxy}}
\label{app:VarEqn_DkxyAuxiliarymZero}
We explain the role of the hypothesis, $D_x^k u \in H^1(\sO,\fw)$, in the statement of Proposition \ref{prop:VarEqn_Dkxy}.

First, we explain the role of the auxiliary regularity condition when $m=0$ in \eqref{eq:IntroHestonWeakMixedProblemHomogeneous_Dkxy}. If $k=1, m=0$ and $u \in H^2(\sO,\fw)$, then we recall from \eqref{eq:H2WeightedSobolevSpace} that while this ensures $(1+y)^{1/2}u_x$ belongs to $L^2(\sO,\fw)$, it does not imply that $y^{1/2}u_{xx}, \ y^{1/2}u_{xy}$ belong to $L^2(\sO,\fw)$, and so $u \in H^2(\sO,\fw)$ does not imply $u_x \in H^1(\sO,\fw)$. However, when $k=1, m=1$, we have seen that $u \in H^2(\sO,\fw)$ does imply $u_y \in H^1(\sO,\fw_1)$.

If $u \in \sH^{k+1}(\sO,\fw)$ and $k\geq 2$, then we recall from Definition \ref{defn:HkWeightedSobolevSpaceNormPowery} that
\begin{align*}
yD_x^{k+1-m}D_y^m u &\in \begin{cases}L^2(\sO,\fw_{m-2}), &3\leq m \leq k, \\ L^2(\sO,\fw), &m =1,2, \end{cases}
\\
yD_x^{k-m}D_y^{m+1} u &\in \begin{cases}L^2(\sO,\fw_{m-1}), &2\leq m \leq k, \\ L^2(\sO,\fw), &m =1, \end{cases}
\end{align*}
that is,
\begin{align*}
y^{1/2}D_x^{k+1-m}D_y^m u &\in \begin{cases}L^2(\sO,\fw_{m-1}), &3\leq m \leq k, \\ L^2(\sO,\fw_1), &m =1,2, \end{cases}
\\
y^{1/2}D_x^{k-m}D_y^{m+1} u &\in \begin{cases}L^2(\sO,\fw_m), &2\leq m \leq k, \\ L^2(\sO,\fw_1), &m =1. \end{cases}
\end{align*}
Note that $L^2(\sO,\fw_{m-1}) \subset L^2(\sO,\fw_m)$, for all $m\geq 1$. Moreover, $u \in \sH^{k+1}(\sO,\fw)$ with $k\geq 2$ implies $(1+y)D_x^{k-m}D_y^m u \in L^2(\sO,\fw_{m-2}) \subset L^2(\sO,\fw_m)$ when $3\leq m\leq k$, and $(1+y)D_x^{k-m}D_y^m u \in L^2(\sO,\fw) \subset L^2(\sO,\fw_m)$, when $m=0,1,2$. Thus, for $k\geq 2$,
$$
u \in \sH^{k+1}(\sO,\fw) \implies D_x^{k-m}D_y^m u \in H^1(\sO,\fw_m), \quad k\geq 2, \ 1 \leq m \leq k.
$$
However, when $k\geq 2$ and $m=0$, the auxiliary condition $D_x^k u \in H^1(\sO,\fw)$ required for the \emph{left-hand side} of \eqref{eq:IntroHestonWeakMixedProblemHomogeneous_Dkxy} to be well-defined is not implied by the hypothesis $u \in \sH^{k+1}(\sO,\fw)$, since the latter condition implies $yD_x^{k+1} u, \ yD_x^kD_y u \in L^2(\sO,\fw)$ but not $y^{1/2}D_x^{k+1} u, \ y^{1/2}D_x^kD_y u \in L^2(\sO,\fw)$.

Second, we explain the role of the auxiliary regularity condition when $m=1$ in \eqref{eq:IntroHestonWeakMixedProblemHomogeneous_Dkxy}. If $u \in \sH^{k+1}(\sO,\fw)$ and $k\geq 2$, then we recall from Definition \ref{defn:HkWeightedSobolevSpaceNormPowery} that
$$
yD_x^{k+2-m}D_y^{m-1} u \in \begin{cases}L^2(\sO,\fw_{m-3}), &m \geq 4, \\ L^2(\sO,\fw), &m =1,2,3, \end{cases}
$$
that is,
$$
D_x^{k+2-m}D_y^{m-1} u \in \begin{cases}L^2(\sO,\fw_{m-1}), &m \geq 4, \\ L^2(\sO,\fw_2), &m =1,2,3. \end{cases}
$$
Hence, when $k\geq 2$ and $m=1$, the auxiliary condition $D_x^{k+1} u \in L^2(\sO,\fw_1)$ required for the \emph{right-hand side} of \eqref{eq:IntroHestonWeakMixedProblemHomogeneous_Dkxy} to be well-defined is not implied by the hypothesis $u \in \sH^{k+1}(\sO,\fw)$. However, the condition $D_x^k u \in H^1(\sO,\fw)$ ensures, by definition \eqref{eq:H1WeightedSobolevSpace} of $H^1(\sO,\fw)$, that $y^{1/2}D_x^{k+1} u \in L^2(\sO,\fw)$ or, equivalently, $D_x^{k+1} u \in L^2(\sO,\fw_1)$.

%
%

\bibliography{mfpde}
\bibliographystyle{amsplain}

\end{document}